\documentclass{amsart}
\usepackage{ben}

%%%%%%%%% bounds in this paper
\newcommand{\phcl}[1]{\tilp_{#1}(\beta)}

\title[Cauchy problem of Lions type with rough data]{On well-posedness for parabolic Cauchy problems of Lions type with rough initial data}

\author{Pascal Auscher}
\address{Universit{\'e} Paris-Saclay, CNRS, Laboratoire de Math\'{e}matiques d'Orsay, 91405 Orsay, France}
\email{pascal.auscher@universite-paris-saclay.fr}

\author{Hedong Hou}
\address{Universit{\'e} Paris-Saclay, CNRS, Laboratoire de Math\'{e}matiques d'Orsay, 91405 Orsay, France}
\email{hedong.hou@universite-paris-saclay.fr}

\date{June 22, 2024}
\keywords{Parabolic Cauchy problems, tent spaces, homogeneous Hardy--Sobolev spaces, heat equation, representation of solutions}
\subjclass{Primary 35K45; %Initial value problems for second-order parabolic systems 
Secondary 42B37, %Harmonic analysis and PDEs
35K05, %Heat equation
42B30, %H^p-spaces
46E35. %Sobolev spaces and other spaces of “smooth” functions, embedding theorems, trace theorems
}

\begin{document}

%%%%% Abstract
\begin{abstract}
    We establish a complete picture for well-posedness of parabolic Cauchy problems with time-independent, uniformly elliptic, bounded measurable complex coefficients. We exhibit a range of $p$ for which tempered distributions in homogeneous Hardy--Sobolev spaces $\DotH^{s,p}$ with regularity index $s \in (-1,1)$ are initial data. Source terms of Lions' type lie in weighted tent spaces, and weak solutions are built with their gradients in weighted tent spaces as well. A similar result can be achieved for initial data in homogeneous Besov spaces $\DotB^{s}_{p,p}$.
\end{abstract}
\maketitle
\tableofcontents
%%%%% Introduction
\section{Introduction}
\label{sec:intro}

The goal of this work is to present a complete picture for existence and uniqueness of weak solutions (see Definition \ref{def:weaksol}) to the Cauchy problem
\begin{equation}
    \label{e:ivp_ic}
    \begin{cases}
    \partial_t u - \Div_x A(x) \nabla_x u = f + \Div_x F, \quad (t,x) \in (0,\infty) \times \bR^n \\ 
    u(0)=u_0
    \end{cases},
\end{equation}
using tent spaces introduced by \cite{Coifman-Meyer-Stein1985_TentSpaces} and their weighted generalizations (see Section \ref{ssec:tent-slice} for definition). We assume the coefficient matrix $A \in L^\infty(\bR^n;\mat_n(\bC))$ is \textit{uniformly elliptic}, that is, there are $\Lambda_0,\Lambda_1>0$ so that for a.e. $x \in \bR^n$ and any $\xi,\eta \in \bC^n$,
\begin{equation}
    \label{e:uniformly_elliptic}
    \Re(\langle A(x)\xi,\xi \rangle) \geq \Lambda_0 |\xi|^2, \quad |\langle A(x)\xi,\eta \rangle| \leq \Lambda_1 |\xi| |\eta|.
\end{equation}
By the initial condition $u(0)=u_0$, we require that $u(t)$ tends to $u_0$ as $t \to 0$ in distributional sense, \textit{i.e.}, in $\scrD'(\bR^n)$.

We refer the reader to the introduction of \cite{Auscher-Monniaux-Portal2019Lp} for motivation to include tools from harmonic analysis such as tent spaces when the initial data are in $L^p(\bR^n)$ and the coefficients are not regular. There, well-posedness is established for weak solutions $u$ to the equation without source terms (even in the non-autonomous case) in a class defined by the non-tangential maximal Kenig--Pipher norm, as well as estimates of $\nabla u$ in tent spaces $T^p_0$ (with limited but explicit range of $p$). Uniqueness in the latter class is obtained in \cite{Zaton2020wp}. It is worth pointing out that $L^p$ is not a trace space, with trace defined in relation to (real) interpolation theory, in contrast to (homogeneous) Besov spaces. So already, the above results do not belong to the realm of maximal regularity using the notion of mild solutions.

Source terms in tent spaces have been considered in nonlinear PDE's, see \cite{Koch-Tataru2001_NSBMO-1,Auscher-Frey2017_NS-KT,Danchin-Vasilyev2023_InhomoNS-tent}, and in deterministic estimates used for stochastic PDE's, see \cite{Auscher-vanNeerven-Portal2014_SPDE-tent,Portal-Veraar2019_SPDE-Lp}. More recently, \cite{Auscher-Portal2023Lions} shows existence of weak solutions with $\nabla u \in T^p_0$ when $F \in T^p_0$, $f=0$ and $u_{0}=0$. For source terms $f$ (and $F=0$), our prior work \cite{Auscher-Hou2023_SIOTent} obtains  well-posedness and maximal regularity of $u$ itself in a weighted tent space, the weight being an expression of regularity (see below). In this case, however, the initial data must be zero.

A subsequent forthcoming work of the second author will treat the non-autono\-mous case.

\subsection{Main results}
\label{ssec:intro_results}

In this paper, we treat initial data in spaces which are not necessarily trace spaces and carry regularity information by control of the gradient of weak solutions in a \emph{weighted} tent space, where the weight is a power of the time variable. Indeed, we shall show that our approach gives access to initial data in homogeneous Hardy--Sobolev spaces defined by Littlewood--Paley decomposition (see Section \ref{ssec:HS}). The regularity range is between $-1$ and 1.  Our method also applies for initial data in homogeneous Besov spaces.

The prototypical example illustrating this is the heat equation.

\subsubsection{Heat equation}
\label{sssec:intro_heat}
Our first result gives a flavor for the homogeneous Cauchy problem (By this, we mean having an initial data but  null source terms), and we also provide representation of heat solutions by studying their traces.

\begin{theorem}[Heat equation and weighted tent spaces]
    \label{thm:heat_Hsp}
    Let $s \in \bR$, $0<p \le \infty$, and $g \in \scrS'(\bR^n)$.

    \begin{enumerate}[label=\normalfont(\roman*)]
        \item \label{item:heat_tent_est} {\normalfont (Weighted tent-space estimates)}
        Suppose $s<1$. If $g \in \DotH^{s,p}$, then the function $(t,x) \mapsto \nabla e^{t\Delta} g(x)$ belongs to $T^p_{s/2}$ with
        \[ \|\nabla e^{t\Delta} g\|_{T^p_{s/2}} \eqsim \|g \|_{\DotH^{s,p}}. \] 

        \item \label{item:heat_rep} {\normalfont (Representation of heat solutions)} 
        Let $u$ be a distributional solution to the heat equation on $\bR^{1+n}_+$ with $\nabla u \in T^p_{{s}/{2}}$. Suppose $s>-1$ and $\frac{n}{n+s+1} \le p \le \infty$. Then there exists a unique $u_0 \in \scrS'$ so that $u(t)=e^{t\Delta} u_0$ for all $t>0$. Moreover,
        \begin{enumerate}[label=\normalfont(\arabic*),ref=(\arabic*)]
            \item \label{item:heat_rep_s>1}
            If $s \ge 1$ and $\frac{n}{n+s-1} \le p \le \infty$, then $u$ is a constant.
            
            \item \label{item:heat_rep_s<1}
            If $-1<s<1$ and $\frac{n}{n+s+1} \le p \le \infty$, then there exist $g \in \DotH^{s,p}$ and $c \in \bC$ such that $u_{0}=g+c$, so $u(t)=e^{t\Delta} g+c$ for all $t>0$.
    \end{enumerate}
    \end{enumerate}
\end{theorem}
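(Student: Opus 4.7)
Writing $\sqrt{t}\,\nabla e^{t\Delta}g = \varphi_{\sqrt{t}}\ast g$ with $\varphi := \nabla e^{\Delta}\delta_{0}$ a Schwartz function of integral zero, the $T^{p}_{s/2}$-norm of $\nabla e^{t\Delta}g$ becomes a Lusin area integral of $\varphi_{\sqrt{t}}\ast g$ weighted by $t^{-s/2}$. This is a heat-semigroup square-function characterization of $\DotH^{s,p}=\dot F^{s,p}_{2}$: standard Littlewood--Paley--Peetre theory gives the two-sided equivalence provided $\varphi$ has enough vanishing moments, and the hypothesis $s<1$ is exactly what makes the single vanishing moment carried by the gradient sufficient. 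For $0<p<\infty$ this rests on Fefferman--Stein's area-function theorem and, for $p<1$, its $H^{p}$ analogue via atomic decomposition; for $p=\infty$ the $T^{\infty}_{s/2}$-norm becomes a Carleson-measure condition matching the Campanato/BMO description of $\DotH^{s,\infty}$.

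\textbf{Part (ii), case $-1<s<1$.} Given a distributional heat solution $u$ with $\nabla u\in T^{p}_{s/2}$, parabolic regularity yields $u\in C^{\infty}((0,\infty)\times\bR^{n})$ and the semigroup identity $u(t)=e^{(t-\tau)\Delta}u(\tau)$ for $0<\tau<t$. The plan is to realize $u_{0}$ as an $\scrS'$-limit of $u(\tau)-c(\tau)$ for suitable constants $c(\tau)\in\bC$, by invoking part (i) \emph{in reverse}. For each $\tau>0$, the shifted map $r\mapsto \nabla e^{r\Delta}u(\tau) = \nabla u(\tau+r,\cdot)$ has its $T^{p}_{s/2}$-norm over $r\in(0,\infty)$ controlled by $\|\nabla u\|_{T^{p}_{s/2}}$, uniformly in $\tau$, the weight swap between $(\tau+r)^{-s/2}$ and $r^{-s/2}$ being harmless. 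The converse direction of part (i) then yields $\|u(\tau)-c(\tau)\|_{\DotH^{s,p}}\lesssim\|\nabla u\|_{T^{p}_{s/2}}$. Weak-$\ast$ compactness along a sequence $\tau_{k}\to 0$ with a diagonal adjustment of the $c(\tau_{k})$ produces $g\in\DotH^{s,p}$ and $c\in\bC$ with $u(\tau_{k})\to g+c$ in $\scrS'$; continuity of $e^{t\Delta}$ on $\scrS'$ identifies $u(t)=e^{t\Delta}g+c$. Uniqueness of $u_{0}$ follows since $e^{t\Delta}u_{0}\to u_{0}$ in $\scrS'$ as $t\to 0$.

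\textbf{Part (ii), rigidity case $s\geq 1$.} Here $\nabla u\in T^{p}_{s/2}$ becomes so restrictive that $u$ must be constant. Heuristically, the weight $t^{-s/2}$ with $s\geq 1$ is singular enough at $t=0$ that finiteness of the tent norm forces $\nabla u(t,\cdot)$ to decay, in an averaged sense, faster than $t^{1/2}$. Combined with $\nabla u$ being itself a smooth heat solution (from $\partial_{t}\nabla u=\Delta\nabla u$) and backwards-uniqueness/Liouville principles for the heat equation, this yields $\nabla u\equiv 0$. Alternatively, one can extend part (i) to $s\geq 1$ via $\nabla^{k}$-characterizations with $k>s$; the hypothesis $p\geq n/(n+s-1)$ is precisely what makes the $H^{p}$ analogue of the Fefferman--Stein theorem applicable at the relevant regularity. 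Either route delivers $\nabla u\equiv 0$, whence $u$ is a global constant.

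\textbf{Main obstacle.} The delicate piece is reversing part (i) with uniform-in-$\tau$ control: converting an integrated-in-$t$ tent-space bound on $\nabla u$ into a pointwise-in-$\tau$ $\DotH^{s,p}$-bound on $u(\tau)$ modulo constants, and then organizing the modular constants coherently so that a single $\scrS'$-limit emerges. Once this is in place, the identification in \ref{item:heat_rep_s<1} and the rigidity in \ref{item:heat_rep_s>1} both follow.
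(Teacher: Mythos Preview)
Your Part~(i) sketch is in line with the paper (which goes via Theorem~\ref{thm:RW_sol} applied to $\nabla g$), so no issue there.

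Part~(ii) has a genuine gap. You claim that parabolic regularity gives the semigroup identity $u(t)=e^{(t-\tau)\Delta}u(\tau)$, but hypoellipticity only gives $u\in C^\infty(\bR^{1+n}_+)$; it says nothing about $u(\tau)$ being a \emph{tempered} distribution, nor about uniqueness of the Cauchy problem from time $\tau$. Tychonoff-type solutions show both points fail without a growth hypothesis. The paper addresses exactly this: Lemma~\ref{lemma:size_nabla_u_tent} converts $\nabla u\in T^p_{s/2}$ into polynomial growth of $\|u\|_{L^2((a,b)\times B(0,R))}$ in $R$, which feeds into the size and uniform-control hypotheses of the external representation theorem \cite[Theorem~1.1]{Auscher-Hou2023_RepHeat}. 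That theorem delivers $u_0\in\scrS'$ with $u(t)=e^{t\Delta}u_0$ directly, and only \emph{afterwards} does the paper invoke Corollary~\ref{cor:RW_grad} to read off whether $u_0\in\DotH^{s,p}+\bC$ or $u_0$ is constant. Your route tries to bypass this and extract $u_0$ by compactness, but the semigroup identity you need as input is precisely what the paper's machinery is there to establish.

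There is a second, more concrete problem: your claim that the weight swap between $(\tau+r)^{-s/2}$ and $r^{-s/2}$ is ``harmless'' is false for $0<s<1$. After the change of variable $t=\tau+r$, the shifted tent norm carries $(t-\tau)^{-s/2}$, which blows up as $t\to\tau^+$ while $t^{-s/2}$ stays bounded; you get no uniform-in-$\tau$ control on $\|u(\tau)-c(\tau)\|_{\DotH^{s,p}}$ this way. (For $s<0$ the inequality goes the right direction on the weight, but the aperture change $B(x,(t-\tau)^{1/2})$ versus $B(x,t^{1/2})$ still needs an argument.) Even granting a uniform bound, the ``diagonal adjustment of the $c(\tau_k)$'' and the passage from weak-$\ast$ convergence in $\DotH^{s,p}$ to $\scrS'$-convergence are not spelled out, and for $0\le s<1$ with large $p$ the embedding $\DotH^{s,p}\hookrightarrow\scrS'$ need not hold.
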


The first point \ref{item:heat_tent_est} shows a precise correspondence between Sobolev regularity of the initial data and the choice of the power weight. The second point \ref{item:heat_rep} shows the regularity range $s<1$ is essentially sharp.  

When $s=0$, the first point is reminiscent to the work of Fefferman--Stein \cite{Fefferman-Stein1972Hp} on defining Hardy spaces by so-called \textit{conical square functions} when translated to parabolic setting. But their point was to take extensions  not related to any kind of equations to obtain an intrinsic definition of Hardy spaces. Here, we have to restrict ourselves to extensions related to the heat equation. It is also reminiscent to Littlewood--Paley theory using rather vertical square functions. The reader can refer to \cite[\S 4.1]{Triebel2020_IV} for more on this topic.

Combining these two points yields in particular well-posedness of the homogeneous Cauchy problem in the following strong form.

\begin{cor}[Isomorphism]
    \label{cor:bijection} 
    Let $-1<s<1$ and $\frac{n}{n+s+1} \le p \le \infty$. The map $g \mapsto (e^{t\Delta} g)_{t>0}$ is a bijection from $\DotH^{s,p} + \bC$ onto the space of distributional solutions $u$ to the heat equation with $\nabla u \in  T^p_{{s}/{2}}$, and 
    \[ \|g\|_{\DotH^{s,p}} \eqsim \|\nabla u \|_{T^p_{{s}/{2}}}. \]
\end{cor}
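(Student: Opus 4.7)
The plan is to assemble the corollary directly from the two halves of Theorem~\ref{thm:heat_Hsp} in the specified range $-1 < s < 1$, $\frac{n}{n+s+1} \le p \le \infty$.

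First I would check well-definedness of the map and the forward norm bound. Given $g \in \DotH^{s,p}$ and $c \in \bC$, the function $u(t) := e^{t\Delta} g + c$ is a distributional solution of the heat equation on $\bR^{1+n}_+$. Since the constant $c$ has zero gradient, $\nabla u = \nabla e^{t\Delta} g$, and part \ref{item:heat_tent_est} of Theorem~\ref{thm:heat_Hsp} yields $\nabla u \in T^p_{s/2}$ with $\|\nabla u\|_{T^p_{s/2}} \eqsim \|g\|_{\DotH^{s,p}}$.

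Surjectivity is then immediate from part \ref{item:heat_rep}\ref{item:heat_rep_s<1}: any distributional solution $u$ of the heat equation with $\nabla u \in T^p_{s/2}$ is of the form $u(t) = e^{t\Delta}g + c = e^{t\Delta}(g + c)$ for some $g \in \DotH^{s,p}$ and $c \in \bC$.

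For injectivity, I would assume $e^{t\Delta}(g_1 + c_1) = e^{t\Delta}(g_2 + c_2)$ for all $t>0$ and set $h := (g_1 - g_2) + (c_1 - c_2)$, so that $e^{t\Delta} h \equiv 0$. Sending $t \to 0$ in $\scrD'(\bR^n)$, which is the sense of initial trace used throughout the paper, forces $h = 0$ as a distribution, whence $g_1 + c_1$ and $g_2 + c_2$ represent the same element of $\DotH^{s,p} + \bC$. The stated norm equivalence is already the one obtained in the first step.

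The only delicate point I anticipate is bookkeeping: one must fix a realization of $\DotH^{s,p}$ as a subspace of $\scrS'$ modulo constants (or low-degree polynomials), so that the sum $\DotH^{s,p} + \bC$ is unambiguous and the distributional limit as $t \to 0$ is meaningful. Both conventions are built into the statement of Theorem~\ref{thm:heat_Hsp}\ref{item:heat_rep}, so no additional analytic work should be required beyond invoking the two parts of that theorem.
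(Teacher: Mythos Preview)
Your proposal is correct and matches the paper's approach: the corollary is stated as an immediate consequence of combining Theorem~\ref{thm:heat_Hsp}\ref{item:heat_tent_est} and \ref{item:heat_rep}\ref{item:heat_rep_s<1}, with no separate proof given. Your injectivity step via the limit $e^{t\Delta}h \to h$ in $\scrS'$ is exactly the uniqueness asserted in Theorem~\ref{thm:heat_Hsp}\ref{item:heat_rep}, and your remark about the realization of $\DotH^{s,p}$ modulo constants is consistent with the discussion after Corollary~\ref{cor:RW_grad}.
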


\subsubsection{Critical exponents for the parabolic problem}
\label{sssec:criticalexponents}
To treat the more general parabolic problems  \eqref{e:ivp_ic}, we introduce the operator $L$ 
\[ L:=-\Div (A\nabla) \]
on $L^2(\bR^n)$ with domain
\[ D(L):=\{f \in W^{1,2}(\bR^n):\Div(A\nabla f) \in L^2(\bR^n)\}. \]
By the theory of maximal-accretive operators, $-L$ generates a bounded analytic semigroup $(e^{-tL})_{t \ge 0}$ on $L^2(\bR^n)$. Our construction of  weak solutions is based on an appropriate extension of the solution map $\cE_L$, which is initially defined from $L^2(\bR^n)$ to $L^2_{\loc}((0,\infty);W^{1,2}_{\loc}(\bR^n))$ by the semigroup as
\begin{equation}
    \label{e:sol_map}
    \cE_L(u_0)(t,x) := (e^{-tL} u_0)(x).
\end{equation}

The $L^p$-theory of the semigroup is ruled by four critical numbers, which are introduced in \cite[Proposition 3.15]{Auscher2007Memoire} for $1<p<\infty$, and later extended to  $p>\frac{n}{n+1}$ in \cite[\S 6]{Auscher-Egert2023OpAdp} to include Hardy spaces $H^p(\bR^n)$. These numbers are 
\begin{itemize}
    \item $p_\pm(L) \in [\frac{n}{n+1},\infty]$ such that $(p_-(L),p_+(L))$ is the largest open set (an interval) of exponents $p$ for which the semigroup $(e^{-tL})_{t \ge 0}$ is uniformly bounded on $L^p$ when $p>1$ and on  $H^p$ when $p \le 1$;
    \item $q_\pm(L) \in [\frac{n}{n+1},\infty]$ such that $(q_-(L),q_+(L))$ is the largest open set (an interval) of exponents $p$ for which  the family $(t^{1/2} \nabla e^{-tL})_{t>0}$ is uniformly bounded on $L^p$ when $p>1$ and on  $H^p$ when $p \le 1$.
    \end{itemize}
It is known that $p_-(L)=q_-(L)<\frac{2n}{n+2}$, $q_+(L)>2$, and $p_+(L) \ge \frac{nq_+(L)}{n-q_{+}(L)}$ (by convention $p_+(L)=\infty$ if $q_{+}(L)\ge n$). The strict inequalities are best possible. We also have the duality relation  $p_{+}(L^*)=\max\{p_{-}(L), 1\}'$, where $p'$ denotes the H\"older conjugate of $p \in [1,\infty]$. For the negative Laplacian, $p_-(-\Delta) = q_-(-\Delta)=\frac{n}{n+1}$ and $p_+(-\Delta)=q_+(-\Delta)=\infty$.   

We extend the critical numbers to our $\DotH^{s,p}$-setting. For $-1 \le s \le 1$, define $p_\pm(s,L)$ as
\begin{equation}
    \label{e:p-(s,L)}
    \frac{1}{p_-(s,L)} := 
    \begin{cases}
    \frac{1}{p_-(L)}+\frac{s}{n} & \text{ if } 0 \le s \le 1 \\ 
    \frac{1+s}{p_-(L)} - \frac{s}{q_+(L^\ast)'} & \text{ if } -1 \le s \le 0
    \end{cases},
\end{equation}
and 
\begin{equation}
    \label{e:p+(s,L)} 
    p_{+}(s,L) := \max\{p_{-}(-s, L^*),1\}'.
    \end{equation}
Notice that $p_\pm(0,L)=p_\pm(L)$, $p_{-}(-1,L)=q_+(L^\ast)'\in [1,2)$, and  that $p_+(1,L)= q_+(L) \in (2,\infty]$. In particular,  $p_-(s,-\Delta)= \frac{n}{n+s+1}$ and $p_{+}(s,-\Delta)=\infty$.

We also introduce several other numbers which will parametrize our results. For convenience, we use a parameter $\beta$ whose relation to the regularity exponent $s$ is given by
\[ \boxed{s=2\beta+1} \]
with $\beta>-1$ and no upper restriction. Define
\begin{equation}
    \label{e:beta_0}
    \beta(L) := -\frac{1}{2}-\frac{n}{2} \left ( \frac{1}{p_-(L)}-1 \right ) \ge -1.
\end{equation}
Note that $\beta(L) \ge -1/2$ if and only if $p_-(L) \ge 1$. For $\beta>-1$, we introduce the numbers $p_L(\beta) \in (0,2)$ given by
\begin{equation}
    \label{e:pLbeta}
    p_{L}(\beta) := \frac{np_-(L)}{n+(2\beta+1)p_-(L)},
\end{equation}
and also $\phcl{L} \in (0,2)$ so that:
\begin{enumerate}[label=(\roman*)]
    \item \label{item:def_phc_p-(L)>1}
    When $p_-(L) \ge 1$, it is given by
    \begin{equation}
        \label{e:phc-p-(L)>1}
        \phcl{L} := 
        \begin{cases}
        p_L(\beta)  & \text{ if } \beta \ge -1/2 \\ 
        p_-(2\beta+1,L) & \text{ if } -1<\beta<-1/2
        \end{cases}.
    \end{equation}

    \item \label{item:def_phc_p-(L)<1}
    When $p_-(L)<1$, it is given by
    \begin{equation}
        \label{e:phc-p-(L)<1}
        \phcl{L} := 
        \begin{cases}
        p_L(\beta)  & \text{ if } \beta \ge \beta (L) \\ 
        \frac{(\beta(L)+1)q_+(L^\ast)}{(\beta(L)+1)q_+(L^\ast)+\beta-\beta(L)} & \text{ if } -1<\beta<\beta (L)
        \end{cases}.
    \end{equation}
\end{enumerate}
Remark that $\tilp_L(\beta(L))=1$ and $\tilp_L(-1)=q_+(L^\ast)'=p_-(-1,L)$. For the negative Laplacian, as $p_-(-\Delta)=\frac{n}{n+1}$, we have
\[ \phcl{-\Delta} = p_{-\Delta}(\beta) = \frac{n}{n+2\beta+2}, \quad \forall \beta>-1. \]
Observe that for any $L$,
\[ \phcl{L} \ge p_L(\beta) \ge \frac{n}{n+2\beta+2}. \]

To illustrate these exponents, we give graphic representations in Figure \ref{fig:exponents}, distinguishing the two cases $p_-(L) \ge 1$ and $p_-(L)<1$. In these figures, we write $p$ for $1/p$ to ease the presentation. When $p<2$, we use red color for the graph of $p_-(2\beta+1,L)$, blue for that of  $\phcl{L}$, and orange for that of $p_{L}(\beta)$. The orange shaded trapezoids are the regions of well-posedness for $\DotH^{2\beta+1,p}$-initial data, while the gray shaded is for constant initial data. Parallel lines to the orange one are lines of embedding for Hardy-Sobolev spaces and weighted tent spaces going downward. 
 
Interestingly, the (smaller) set delimited by the red lines (excluded) and the black lines (included) has a special signification and is called the \textit{identification range} for the operator $L$, and $\cE_L$ defined in \eqref{e:sol_map} can be extended to a semigroup on $\DotH^{s,p}$. The triangular region between the red and blue segments in the case $p_-(L)<1$ is a region of embedding of an adapted space into $\DotH^{s,p}$ without knowing identification. See Section~\ref{sec:hc} and in particular Proposition~\ref{prop:Ladp-id}.
\footnote{In the case where $p>2$, the red colored broken lines for the values of $p_+(2\beta+1,L)$ defined by \eqref{e:p+(s,L)} are obtained by symmetry about the point $(1/p,\beta)=(1/2,-1/2)$ from the ones for $\max\{p_{-}(-2\beta-1, L^\ast),1\}$. Thus it depends on the value of $p_{-}(L^\ast)$. In the first (resp.~second)  picture, this corresponds to having $p_{-}(L^\ast)\ge 1$ (resp.~$p_{-}(L^\ast)<1$). As the values of $p_{-}(L)$ and $p_{-}(L^\ast)$ are independent, we should have in fact 4 such figures.}

\begin{figure}
    \centering
    \begin{minipage}{0.54\textwidth}
        \centering
        \includegraphics[width=\textwidth]{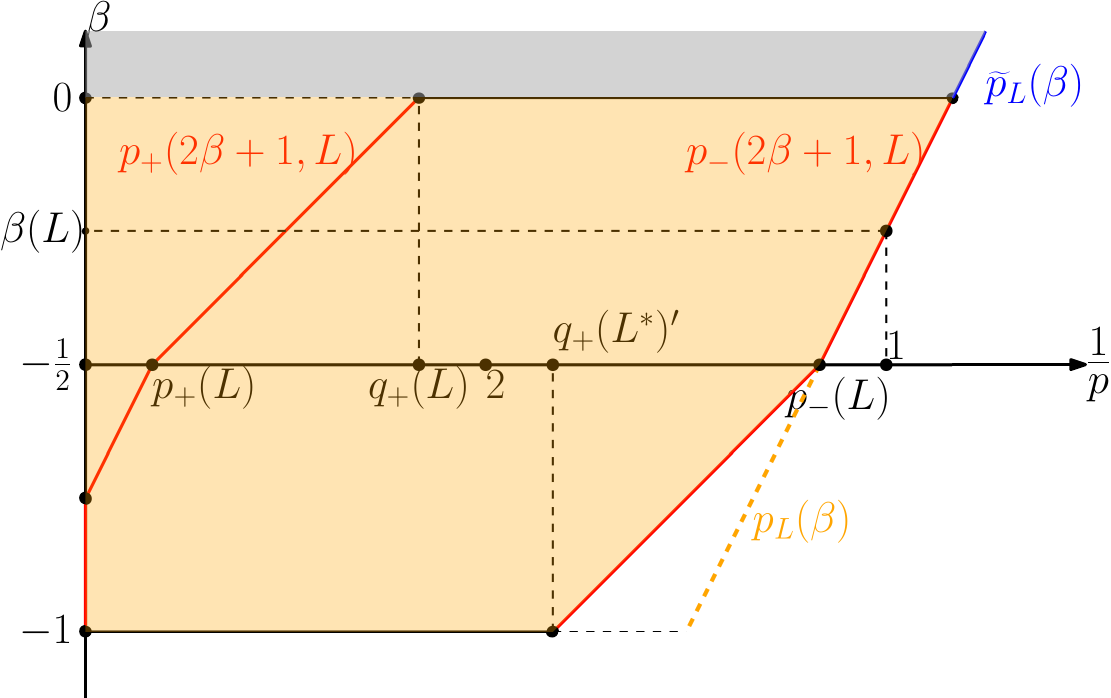}
        Case $p_-(L) \ge 1$
    \end{minipage}
    \hfill
    \begin{minipage}{0.45\textwidth}
        \centering
        \includegraphics[width=\textwidth]{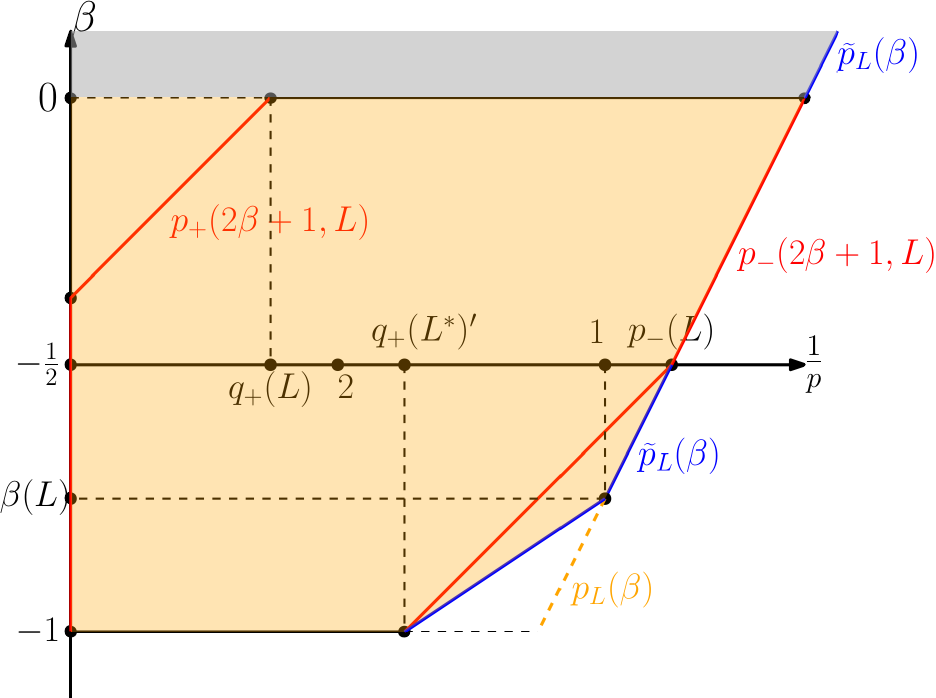}
        Case $p_-(L) < 1$
    \end{minipage}
    \caption{Regions of well-posedness}
    \label{fig:exponents}
\end{figure}

\subsubsection{Parabolic equations of type \eqref{e:ivp_ic}} 
\label{sssec:intromainresults}
We begin with the homogeneous Cauchy problem ($F=0$, $f=0$).

\begin{theorem}[Well-posedness of homogeneous Cauchy problem]
    \label{thm:wp_hc}
    Let $-1<\beta<0$ and $\phcl{L} <p \le \infty$. For any $v_0 \in \DotH^{2\beta+1,p}$, there is a unique global weak solution $v$ to the homogeneous Cauchy problem
    \begin{equation}
        \begin{cases}
        \partial_t v-\Div(A\nabla v) = 0, \quad (t,x) \in (0,\infty) \times \bR^n \\ 
        v(0)=v_0
        \end{cases},
        \tag{HC}
        \label{e:hc}
    \end{equation}
    so that $\nabla v \in T^p_{\beta+{1}/{2}}$,  and one has
    \[ \|\nabla v\|_{T^p_{\beta+{1}/{2}}}\eqsim \|v_{0}\|_{\DotH^{2\beta+1,p}}.
    \]
    Moreover, $v$ belongs to $C([0,\infty);\scrS')$. For $p_-(2\beta+1,L)<p<p_+(2\beta+1,L)$, it further holds that  $v \in C_0([0,\infty);\DotH^{2\beta+1,p}) \cap C^\infty((0,\infty);\DotH^{2\beta+1,p})$.
    \footnote{Here, $C_0([0,\infty);E)$ is the space of continuous functions with limit  0 as $t \to \infty$ in the prescribed topology of $E$.}
\end{theorem}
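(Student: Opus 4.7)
The plan is to parallel the strategy from the heat case (Theorem~\ref{thm:heat_Hsp} and Corollary~\ref{cor:bijection}), with the semigroup $e^{-tL}$ replacing $e^{t\Delta}$. The task splits into two halves: build a solution via an extension of $\cE_L$ with the upper tent-space bound, and recover the data uniquely from any weak solution whose gradient lies in $T^p_{\beta+1/2}$, with the matching lower bound.

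\textbf{Existence and upper estimate.} First I would establish that $\cE_L$, defined on a dense subspace of $\DotH^{2\beta+1,p}\cap L^2$, satisfies
\[
\|\nabla \cE_L(v_0)\|_{T^p_{\beta+1/2}} \lesssim \|v_0\|_{\DotH^{2\beta+1,p}}.
\]
The natural dense subspace is $\DotH^{2\beta+1,p}\cap L^2$ (or its intersection with a suitable Hardy-Sobolev space on which $e^{-tL}$ is well-defined). Once this estimate is available, density gives an extension to all of $\DotH^{2\beta+1,p}$ producing a function $v$ with $\nabla v \in T^p_{\beta+1/2}$, which by construction is a weak solution of the homogeneous equation. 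The key analytic input is a conical square-function characterization: for $p$ in the identification range $p_-(2\beta+1,L)<p<p_+(2\beta+1,L)$, $\DotH^{2\beta+1,p}$ coincides with an $L$-adapted Hardy-Sobolev space (Proposition~\ref{prop:Ladp-id}), for which $(t,x)\mapsto t^{1/2}\nabla e^{-tL}v_0$ realizes the defining square function, so the bound is tautological. Outside this range but within $\phcl{L}<p$, I would use embeddings of the adapted space into $\DotH^{2\beta+1,p}$ together with off-diagonal (Gaffney) estimates for $\sqrt{t}\nabla e^{-tL}$ to handle the remaining geometry, as indicated by the orange/red/blue regions of Figure~\ref{fig:exponents}.

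\textbf{Uniqueness and lower estimate.} Given a weak solution $v$ of $\partial_t v + Lv=0$ with $\nabla v\in T^p_{\beta+1/2}$, the goal is to show $v=\cE_L(v_0)$ for a unique $v_0\in\DotH^{2\beta+1,p}$, and $\|v_0\|_{\DotH^{2\beta+1,p}}\lesssim \|\nabla v\|_{T^p_{\beta+1/2}}$. The argument mirrors Theorem~\ref{thm:heat_Hsp}\ref{item:heat_rep} but with $L$ in place of $-\Delta$: use Caccioppoli-type energy inequalities on parabolic cylinders plus the tent-space bound on $\nabla v$ to control $v$ itself modulo constants (this is where the restriction $-1<\beta<0$, i.e.\ $-1<s<1$, is crucial, since $s<1$ prevents the constant-only obstruction from Theorem~\ref{thm:heat_Hsp}\ref{item:heat_rep_s>1}), then extract a distributional limit $v_0$ as $t\to 0$ and identify $v(t)-\cE_L(v_0)(t)$ as a solution with zero initial trace and zero gradient in $T^p_{\beta+1/2}$, forcing it to vanish. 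The identification of $v_0$ with an element of $\DotH^{2\beta+1,p}$ and the reverse norm inequality come from the same square-function characterization combined with the existence direction applied to $v_0$.

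\textbf{Continuity.} Continuity $v\in C([0,\infty);\scrS')$ at $t=0$ follows from the uniqueness of $v_0$ together with standard tent-space traces; at positive times it is a consequence of the analyticity of $(e^{-tL})_{t>0}$ tested against Schwartz functions using the $L^2$-theory of $L$ and its dual. Inside the identification range, the identification $\DotH^{2\beta+1,p}\simeq\DotH^{2\beta+1,p}_L$ upgrades this to $v\in C_0([0,\infty);\DotH^{2\beta+1,p})\cap C^\infty((0,\infty);\DotH^{2\beta+1,p})$: decay at infinity follows from the boundedness of the semigroup on the adapted space, while the $C^\infty$-regularity in $t$ comes from the bounded analytic calculus of $L$ on $\DotH^{2\beta+1,p}_L$.

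\textbf{Main obstacle.} The hardest step is the uniqueness/representation result, particularly the Caccioppoli-type control outside the identification range where one does not have a direct $\DotH^{s,p}$ boundedness of $e^{-tL}$ to lean on. Carefully exploiting the adapted space structure and the interplay between the exponents $\phcl{L}$, $p_L(\beta)$ and $p_-(2\beta+1,L)$ encoded in Figure~\ref{fig:exponents} will be the technical core of the argument.
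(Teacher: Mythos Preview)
Your overall architecture is right, but the existence direction has a genuine gap. You propose to get $\|\nabla \cE_L(v_0)\|_{T^p_{\beta+1/2}} \lesssim \|v_0\|_{\DotH^{2\beta+1,p}}$ directly from the $L$-adapted Hardy--Sobolev identification, and you correctly note this only covers $p_-(2\beta+1,L)<p<p_+(2\beta+1,L)$. For the remaining range $\phcl{L}<p$ you invoke ``embeddings of the adapted space into $\DotH^{2\beta+1,p}$ together with off-diagonal estimates'', but this embedding goes the wrong way: $\bH^{s,p}_L \hookrightarrow \DotH^{s,p}$ gives $\|f\|_{\DotH^{s,p}} \lesssim \|f\|_{\bH^{s,p}_L}$, which is the opposite of what you need to control the tent-space norm by the $\DotH^{s,p}$-norm of the data. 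There is no evident mechanism here to reach the full interval $(\phcl{L},\infty]$.

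The paper's device, which you are missing, is a perturbation formula that transfers the problem to the Laplacian. On $L^2$ one has
\[
\cE_L(v_0) = \cE_{-\Delta}(v_0) - \cR^L_{1/2}\big((A-\bI)\nabla \cE_{-\Delta}(v_0)\big),
\]
and this is then used to \emph{define} the extension of $\cE_L$ on $\DotH^{2\beta+1,p}$. The upper bound follows immediately from two ingredients already established: the heat-extension estimate $\|\nabla \cE_{-\Delta}(v_0)\|_{T^p_{\beta+1/2}} \lesssim \|v_0\|_{\DotH^{2\beta+1,p}}$ (Corollary~\ref{cor:RW_grad}, valid for all $s<1$ and all $p$), and the boundedness of the Lions' operator $\cR^L_{1/2}$ on $T^p_{\beta+1/2}$ (Theorem~\ref{thm:lions_ext}), which is precisely where the threshold $\phcl{L}$ originates. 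The companion formula $u = \cE_{-\Delta}(u_0) + \cR^{-\Delta}_{1/2}((A-\bI)\nabla u)$ similarly reduces the trace and lower bound to the heat case (Proposition~\ref{prop:trace}). Your sketch of uniqueness (``zero trace and zero gradient forces vanishing'') is also too optimistic: the actual argument (Theorem~\ref{thm:unique}) relies on the interior representation $u(t)=e^{-(t-s)L}u(s)$ and a delicate case analysis, including, outside the identification range when $p_-(L)<1$, the extra embedding of Proposition~\ref{prop:Ladp-id}\ref{item:Ladp-embed} to pass the limit $s\to 0$. The adapted-space machinery you emphasize is indeed used, but only for the strong-continuity statement in the identification range; it is not the engine of the well-posedness itself.
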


For $2\beta+1=0$, \cite{Auscher-Monniaux-Portal2019Lp} treats this problem in the non-autonomous case. In the autonomous case here, we obtain a larger range. 

Also, the last point gives the range in which $\DotH^{2\beta+1,p}$-regularity is preserved. In this range with $p \ge 1$ and $2\beta<n/p$,
\footnote{When $2\beta-n/p \ge 0$, our space $\DotH^{2\beta+1,p}$ is not a Banach space. It is semi-normed.}
it also basically says that $v$ is a strong solution of the abstract Cauchy problem $\partial_{t}v+L_{\beta,p}v=0, v(0)=v_{0}$ in the Banach space $\DotH^{2\beta+1,p}$ (where $-L_{\beta,p}$ is the generator of the extended semigroup on $\DotH^{2\beta+1,p}$), see \cite[Definition 4.1.1]{Lunardi1995_semigp}. Note that $p<1$ is possible here, even though there is no abstract theory for strong solutions in quasi-Banach spaces as far as we understand.

The second result is a converse statement. 

\begin{theorem}[Representation]
    \label{thm:rep}
    Let $\beta>-1$ and $\phcl{L}< p \le \infty$. Let $u$ be a weak solution to the equation
    \[ \partial_t u - \Div (A\nabla u) = 0, \quad (t,x) \in (0,\infty) \times \bR^n, \]
    with $\nabla u \in T^p_{\beta+{1}/{2}}$. Then $u$ has a trace $u_0 \in \scrS'$ in the sense that $u(t)$ converges to $u_0$ in $\scrS'$ as $t\to 0$. Moreover,
    \begin{enumerate}[label=\normalfont(\roman*)]
        \item 
        \label{item:rep_beta>0}
        If $\beta \ge 0$ and $\frac{n}{n+2\beta} \le p \le \infty$, then $u$ is a constant.
        
        \item 
        \label{item:rep_-1<beta<0}
        If $-1<\beta<0$, then there exist $g \in \DotH^{2\beta+1,p}$ and $c \in \bC$ so that $u_0=g+c$ and  $u=\cE_L(g)+c$, where $\cE_L$ is an appropriate extension of the semigroup solution map given by \eqref{e:sol_map}.
    \end{enumerate}
\end{theorem}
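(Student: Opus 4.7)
The strategy is first to extract a distributional trace $u_{0}\in \scrS'$ as $t\to 0$, then identify it modulo a constant with an element of $\DotH^{2\beta+1,p}$ (when $-1<\beta<0$) or rule it out entirely (when $\beta\ge 0$), and finally invoke the forward estimate of Theorem~\ref{thm:wp_hc} to reconstruct $u$ as $\cE_{L}(g)+c$ or as a constant.

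To produce the trace, fix $\phi\in \scrS(\bR^n)$ and set $h_{\phi}(t)=\langle u(t),\phi\rangle$. The weak form of the equation gives
\[
h_{\phi}(t)-h_{\phi}(s)=-\int_{s}^{t}\langle A\nabla u(\tau),\nabla \phi\rangle\, d\tau, \quad 0<s<t.
\]
The right-hand side has a limit as $s\to 0$: pairing the tent-space norm of $\nabla u$ against $\nabla \phi$ via tent-space duality, or directly via embedding of $\scrS$ into the predual of $T^{p}_{\beta+1/2}$, one shows $\int_{0}^{T}|\langle A\nabla u(\tau),\nabla \phi\rangle|\, d\tau<\infty$ for every $T>0$. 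Integrating along $t$ for each $\phi$ furnishes $u_{0}\in \scrS'$ with $u(t)\to u_{0}$ in $\scrS'$.

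When $-1<\beta<0$, the bound $\nabla u\in T^{p}_{\beta+1/2}$, combined with Caccioppoli and Poincar\'e inequalities on parabolic balls, controls averages of $u$ at every scale, so that $u_{0}$ is pinned down up to an additive constant and its ``zero-mean'' part lies in $\DotH^{2\beta+1,p}$ via the Hardy--Sobolev tent-space characterization of Theorem~\ref{thm:heat_Hsp}\ref{item:heat_tent_est} and its $L$-adapted variant from Section~\ref{sec:hc}; the threshold $\phcl{L}$ enters precisely at this identification step, as it is the exponent above which Hardy--Sobolev and $L$-adapted tent-space norms are comparable. Once $u_{0}=g+c$ with $g\in \DotH^{2\beta+1,p}$, Theorem~\ref{thm:wp_hc} produces $\cE_{L}(g)$ with the same trace and $\nabla \cE_{L}(g)\in T^{p}_{\beta+1/2}$, so that $w:=u-\cE_{L}(g)-c$ satisfies the equation, has $\nabla w\in T^{p}_{\beta+1/2}$ and trace $0$. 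A uniqueness argument then forces $w$ to be a constant, which must vanish, yielding the representation.

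For $\beta\ge 0$ the would-be trace has regularity $2\beta+1\ge 1$, outside the well-posedness range. The plan is to shift in time, replacing $u$ by $u(\cdot+\varepsilon)$, which is a smoother weak solution falling under case (ii) after appropriate embeddings; passing $\varepsilon\to 0$ reduces the problem to a rigidity statement parallel to Theorem~\ref{thm:heat_Hsp}\ref{item:heat_rep_s>1}: an element of a Hardy--Sobolev space whose gradient sits in a tent space with a weight corresponding to higher regularity than permitted must be zero, so $u$ collapses to the constant $c$. The main obstacle throughout, in both cases, is the uniqueness/rigidity step, because one has to recover low-frequency information about $u$ from a gradient-only control; this forces working at the exact threshold $\phcl{L}$ and drawing on the fine description of the $L$-adapted Hardy--Sobolev theory of Section~\ref{sec:hc} (in particular, the identification range picture visible in Figure~\ref{fig:exponents}).
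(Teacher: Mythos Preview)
Your outline (extract trace $\to$ identify $\to$ uniqueness) is the right skeleton and matches the paper, but the mechanism you propose for the middle step is not the one that works, and your treatment of case~\ref{item:rep_beta>0} has a genuine gap.

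The paper does \emph{not} identify the trace by working directly with $u$ through Caccioppoli/Poincar\'e estimates or the $L$-adapted Hardy--Sobolev theory of Section~\ref{sec:hc}. The key device you are missing is the reduction to the heat equation (Proposition~\ref{prop:trace}): since $\nabla u\in T^p_{\beta+1/2}$ and $\cR^{-\Delta}_{1/2}$ is bounded from $T^p_{\beta+1/2}$ to $T^p_{\beta+1}$ with gradient back in $T^p_{\beta+1/2}$ (Theorem~\ref{thm:lions_ext} for $-\Delta$), the function
\[
\tilde u := u - \cR^{-\Delta}_{1/2}\big((A-\bI)\nabla u\big)
\]
is a \emph{heat} solution with $\nabla\tilde u\in T^p_{\beta+1/2}$. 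Now Theorem~\ref{thm:heat_Hsp}\ref{item:heat_rep} (via Proposition~\ref{prop:rep_heat} and Corollary~\ref{cor:RW_grad}) gives both the trace $u_0\in\scrS'$ and its structure: $u_0=g+c$ with $g\in\DotH^{2\beta+1,p}$ when $-1<\beta<0$, and $u_0$ constant when $\beta\ge 0$, $p\ge\frac{n}{n+2\beta}$. This step only needs $p>\frac{n}{n+2\beta+2}=\phcl{-\Delta}$, not $\phcl{L}$; the threshold $\phcl{L}$ enters only through Theorem~\ref{thm:wp_hc} (to build $\cE_L(g)$) and Theorem~\ref{thm:unique} (uniqueness), not through any comparison of $L$-adapted and classical Hardy--Sobolev norms as you suggest.

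Your approach to case~\ref{item:rep_beta>0} via the time shift $u(\cdot+\varepsilon)$ does not work: translating in time does not lower the weight index of $\nabla u$ in $T^p_{\beta+1/2}$, so there is no embedding placing $u(\cdot+\varepsilon)$ into the regime $-1<\beta'<0$ of case~\ref{item:rep_-1<beta<0}. The paper handles~\ref{item:rep_beta>0} exactly like~\ref{item:rep_-1<beta<0}: once the heat reduction shows $u_0=c$ is constant, $w:=u-c$ is a global weak solution with $\nabla w\in T^p_{\beta+1/2}$ and $w(0)=0$, and Theorem~\ref{thm:unique} forces $w=0$. No separate rigidity argument or limiting procedure is needed.
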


As we shall see in Proposition \ref{prop:trace}, existence of the trace $u_{0}$ only requires $\phcl{-\Delta}=\frac{n}{n+2\beta+2}< p \le \infty$. The condition $p>\phcl{L}$ is used to prove the rest. Also note that in \ref{item:rep_-1<beta<0}, the extension $\cE_L$ may not be a semigroup if $p$ does not lie in the interval $(p_{-}(2\beta+1,L), p_{+}(2\beta+1,L))$.
 
Theorems \ref{thm:wp_hc} and \ref{thm:rep} also go beyond well-posedness of the homogeneous Cauchy problem. For convenience, define $\cE_L(1)=1$ (constant functions). 

\begin{cor}[Isomorphism for parabolic equations]
    \label{cor:bijectionL} 
    Let $-1<\beta<0$ and $\phcl{L}< p \le \infty$. The map $g \mapsto \cE_L(g)=u$ is a bijection from $\DotH^{2\beta+1,p} + \bC$ onto the space of global weak solutions to $\partial_t u - \Div (A\nabla u) = 0$ with $\nabla u \in  T^p_{\beta+1/{2}}$, and 
    \[ \|g\|_{\DotH^{2\beta+1,p}} \eqsim \|\nabla u \|_{T^p_{\beta+1/{2}}}. \]
\end{cor}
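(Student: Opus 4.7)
The plan is to assemble the corollary directly from Theorems \ref{thm:wp_hc} and \ref{thm:rep}, which carry all the analytical content. No new estimates are required; only bookkeeping of the forward map, the representation, and the norm equivalence is needed. I would proceed in the natural three steps: boundedness/well-definedness, surjectivity, and injectivity.

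First, I would check that the map $g+c \mapsto \cE_L(g)+c$ is well defined into the target space. Given $g \in \DotH^{2\beta+1,p}$ and $c \in \bC$, Theorem \ref{thm:wp_hc} supplies a unique global weak solution $\cE_L(g)$ of $\partial_t u - \Div(A\nabla u)=0$ with $\nabla \cE_L(g) \in T^p_{\beta+1/2}$ and $\|\nabla \cE_L(g)\|_{T^p_{\beta+1/2}}\eqsim \|g\|_{\DotH^{2\beta+1,p}}$. Combined with the convention $\cE_L(1)=1$, adding a constant $c$ yields another weak solution whose gradient coincides with $\nabla \cE_L(g)$, hence lies in $T^p_{\beta+1/2}$ with the same norm equivalence.

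Next, for surjectivity, let $u$ be any global weak solution with $\nabla u \in T^p_{\beta+1/2}$. Since $-1<\beta<0$ and $\phcl{L}<p\le\infty$, the hypotheses of Theorem \ref{thm:rep}\ref{item:rep_-1<beta<0} are met, producing $g \in \DotH^{2\beta+1,p}$ and $c \in \bC$ with $u=\cE_L(g)+c$. For injectivity, suppose $\cE_L(g)+c\equiv 0$; taking gradients gives $\nabla \cE_L(g)=0$ in $T^p_{\beta+1/2}$, so $\|g\|_{\DotH^{2\beta+1,p}}=0$ by the norm equivalence of Theorem \ref{thm:wp_hc}, i.e.\ $g$ is trivial in $\DotH^{2\beta+1,p}$; feeding this back into the equation $\cE_L(g)+c=0$ forces $c=0$. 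The displayed norm equivalence is then exactly the equivalence of Theorem \ref{thm:wp_hc}, since $\nabla u = \nabla \cE_L(g)$.

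There is essentially no obstacle beyond invoking the two input theorems. The only subtlety demanding care is the precise meaning of the algebraic sum $\DotH^{2\beta+1,p}+\bC$ and of the extension $\cE_L$ on constants; this is already handled in the paper via the convention $\cE_L(1)=1$ and via the fact that constants represent the zero class in $\DotH^{2\beta+1,p}$ (they are polynomials killed by the Littlewood--Paley decomposition), so the decomposition $u=\cE_L(g)+c$ is unique modulo this identification. The real work is in Theorem \ref{thm:wp_hc}'s construction of the extended solution map and Theorem \ref{thm:rep}'s trace analysis; the corollary itself is the clean packaging of both as an isomorphism.
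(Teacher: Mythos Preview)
Your proposal is correct and is exactly what the paper intends: the corollary is stated without proof as an immediate consequence of Theorems \ref{thm:wp_hc} and \ref{thm:rep}, and you have unpacked that consequence carefully (well-definedness via the forward estimate, surjectivity via the representation, injectivity via the norm equivalence and the convention $\cE_L(1)=1$). The only point worth tightening is the injectivity step when $\nu(2\beta+1,p)\ge 1$ (i.e.\ constants already lie in $\DotH^{2\beta+1,p}$): there $\|g\|_{\DotH^{2\beta+1,p}}=0$ only gives that $g$ is a constant, but then $\cE_L(g)=g$ and $g+c=0$ follows, which is exactly what is needed since $\DotH^{2\beta+1,p}+\bC=\DotH^{2\beta+1,p}$ in that case --- you acknowledge this identification in your final paragraph.
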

 
Next, we consider the inhomogeneous Cauchy problem (which means null initial data) in the spirit of Lions' theory with source terms of divergence form (that is, $f=0$), for which the weak solutions will be constructed by Duhamel's formula. 

\begin{theorem}[Well-posedness of Lions' equation]
    \label{thm:wp_lions}
    Let $\beta>-1$ and $\phcl{L}<p \le \infty$. Let $F$ be in $T^p_{\beta+{1}/{2}}$. There exists a unique global weak solution $u$ to the Cauchy problem (which we call {\normalfont Lions' equation})
    \[ \begin{cases}
        \partial_t u-\Div(A\nabla u) = \Div F, \quad (t,x) \in (0,\infty) \times \bR^n \\ 
        u(0)=0
    \end{cases}, \]
    with $\nabla u \in T^p_{\beta+{1}/{2}}$. Moreover, $u$ belongs to $T^p_{\beta+1} \cap C([0,\infty);\scrS')$.
\end{theorem}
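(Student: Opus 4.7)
The natural candidate is the Duhamel integral
\[ u(t) := \int_0^t e^{-(t-s)L}\Div F(s)\,ds, \]
interpreted in an appropriate distributional sense. The core task is to establish two tent-space estimates:
\[ \|\nabla u\|_{T^p_{\beta+1/2}} \lesssim \|F\|_{T^p_{\beta+1/2}}, \qquad \|u\|_{T^p_{\beta+1}} \lesssim \|F\|_{T^p_{\beta+1/2}}. \]
Both should follow from the singular-integral machinery on weighted tent spaces developed in \cite{Auscher-Hou2023_SIOTent}, together with the Lions case treated in \cite{Auscher-Portal2023Lions}, applied to the operator-valued kernels $(t,s)\mapsto \nabla e^{-(t-s)L}\Div$ and $(t,s)\mapsto e^{-(t-s)L}\Div$. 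The crucial inputs are the $L^2$ off-diagonal (Davies--Gaffney) bounds for $\sqrt{t}\,\nabla e^{-tL}\Div$ and $e^{-tL}\Div$, available for every $L$ of our type; combined with an atomic or molecular decomposition of $T^p_{\beta+1/2}$, these push the range of boundedness down to every $p>\phcl{L}$, the threshold being precisely how far below $p_-(L)$ one can go while retaining enough off-diagonal decay at the gradient level to compensate the power weight $t^{\beta+1/2}$.

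Once these two bounds are in hand, I would verify that $u$ is a weak solution of $\partial_t u-\Div(A\nabla u)=\Div F$ by testing against smooth compactly supported space-time functions and integrating by parts in the inner variable $s$, using $\partial_s e^{-sL}=-Le^{-sL}$ on $L^2$ followed by a standard density argument. The continuity statement $u\in C([0,\infty);\scrS')$ and the initial condition $u(0)=0$ in $\scrS'$ would follow from the $T^p_{\beta+1}$ bound on $u$ combined with the trace machinery of Proposition \ref{prop:trace}, plus a direct inspection of the Duhamel formula for small $t$: the off-diagonal estimates let one bound $\langle u(t),\varphi\rangle$ for $\varphi\in\scrS$ by a quantity vanishing as $t\to 0$.

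For uniqueness, let $w=u_1-u_2$ be the difference of two such solutions. Then $w$ is a global weak solution of the homogeneous equation with $\nabla w\in T^p_{\beta+1/2}$ and $w(t)\to 0$ in $\scrS'$ as $t\to 0$. By Theorem \ref{thm:rep}, $w$ admits a trace $w_0\in\scrS'$, which must equal $0$. When $-1<\beta<0$, Theorem \ref{thm:rep}\ref{item:rep_-1<beta<0} represents $w=\cE_L(g)+c$ with $g\in\DotH^{2\beta+1,p}$ and $c\in\bC$; zero trace gives $g=-c$, and since $2\beta+1\neq 0$ this forces $c=0$ and $g=0$, hence $w\equiv 0$. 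When $\beta\ge 0$ with $p\ge n/(n+2\beta)$, Theorem \ref{thm:rep}\ref{item:rep_beta>0} identifies $w$ as a constant, which must vanish; the remaining narrow slice $\phcl{L}<p<n/(n+2\beta)$ for $\beta\ge 0$ is handled by a direct embedding/approximation argument reducing it to a case already covered. The main obstacle I anticipate is proving the two boundedness estimates down to the sharp threshold $p>\phcl{L}$, especially in the Hardy regime $p\le 1$, where pairing the $L^2$ off-diagonal decay with the atomic structure of the weighted tent spaces while keeping track of the power weight $t^{\beta+1/2}$ throughout is the delicate point.
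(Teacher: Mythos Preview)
Your approach to existence is essentially the paper's: the candidate is the Duhamel operator $\cR^L_{1/2}$, and the two tent-space bounds are obtained by combining the singular-integral results of \cite{Auscher-Hou2023_SIOTent} (Lemma~\ref{lemma:RL-tent_ext_sio}) with an atomic argument for small $p$ (Lemma~\ref{lemma:RL-tent_ext_pde}), then interpolating. So far so good.

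Two points need correction. First, for continuity and the trace at $0$: Proposition~\ref{prop:trace} does not apply here, as it is stated and proved only for weak solutions of the \emph{homogeneous} equation $\partial_t u-\Div(A\nabla u)=0$. The paper's argument (Section~\ref{sec:cont}) instead rewrites $u$ via the heat semigroup as $u=\Div\cL_1^{-\Delta}\big((A-\bI)\nabla u+F\big)$ (this is the explicit formula \ref{item:lions_formula} of Theorem~\ref{thm:lions_ext}, obtained from the $L^2$ identity by density), and then treats the limit $t\to 0$ case by case in $\beta$ and $p$, landing in slice spaces, $L^p$, or $\DotH^{s,p}$. The reduction to the heat Duhamel is the key device you are missing; a direct inspection of the $L$-Duhamel with only off-diagonal bounds does not obviously yield convergence in $\scrS'$ in the full range.

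Second, your uniqueness argument via Theorem~\ref{thm:rep} is a legitimate detour (Theorem~\ref{thm:rep} does not rely on Theorem~\ref{thm:wp_lions}), but note that Theorem~\ref{thm:rep} is itself proved using the dedicated uniqueness result Theorem~\ref{thm:unique}, which is what the paper invokes directly. Your reasoning ``$g=-c$ and $2\beta+1\neq 0$ forces $c=0$'' is incorrect: whether nonzero constants lie in $\DotH^{2\beta+1,p}$ depends on $\nu(2\beta+1,p)$, not on whether $2\beta+1=0$. The clean argument is that $g=-c$ has $\|g\|_{\DotH^{2\beta+1,p}}=0$, so $\nabla\cE_L(g)=0$ by \eqref{e:hc_reg_nabla-v}, hence $\cE_L(g)\equiv -c$ and $w=-c+c=0$; no case split on $c$ is needed. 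Also, the ``narrow slice'' $\phcl{L}<p<\frac{n}{n+2\beta}$ when $\beta\ge 0$ is empty (check that $\phcl{L}\ge p_L(\beta)\ge \frac{n}{n+2\beta+2}>\frac{n}{n+2\beta}$ fails; actually $p_L(\beta)=\frac{np_-(L)}{n+(2\beta+1)p_-(L)}$ and one verifies $\phcl{L}\ge\frac{n}{n+2\beta}$ cannot hold in general), so that case does require the embedding argument of Lemma~\ref{lemma:unique_embed} as you anticipate.
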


To finish this introduction, let us state the announced complete picture for \eqref{e:ivp_ic}. For $\gamma>-1/2$, define
\begin{equation}
    \label{e:pL^flat(beta)}
     p_L^{\,\flat}(\gamma):=\frac{n\max\{p_-(L),1\}}{n+(2\gamma+1)\max\{p_-(L),1\}}.
\end{equation}

\begin{theorem}[Well-posedness of Cauchy problems of type \eqref{e:ivp_ic}]
    \label{thm:ic-hc-lions}
    Let $\beta>-1$ and $\phcl{L}<p \le \infty$. Let $\gamma>-1/2$ and $ p_L^{\,\flat}(\gamma)<q \le \infty$. Suppose 
    \begin{equation}
        \label{e:ic-hcl-embed-cond}
        \gamma \ge \beta \quad \text{and}\quad 2\beta - \frac{n}{p} = 2\gamma - \frac{n}{q}.
    \end{equation}
    \begin{enumerate}[label=\normalfont(\roman*)]
        \item \label{item:ichcl-beta<0}
        If $\beta<0$, then for any $u_0 \in \DotH^{2\beta+1,p}$, $F \in T^p_{\beta+{1}/{2}}$, and $f \in T^q_\gamma$, there exists a unique global weak solution $u$ to the Cauchy problem
        \[ \begin{cases}
        \partial_t u-\Div(A\nabla u) = f+\Div F, \quad (t,x) \in (0,\infty) \times \bR^n \\ 
        u(0)=u_0
        \end{cases}, \]
    with $\nabla u \in T^p_{\beta+{1}/{2}}$, and one has the estimate    
    \begin{equation}
        \label{e:globalestimate}
        \|\nabla u\|_{T^p_{\beta+{1}/{2}}} \lesssim \|u_{0}\|_{\DotH^{2\beta+1,p}}+\|F\|_{T^p_{\beta+{1}/{2}}} +  \|f\|_{T^q_{\gamma}}. 
    \end{equation}
    Moreover, $u$ belongs to $C([0,\infty);\scrS')$ and $u-\cE_{L}(u_{0})\in T^p_{\beta+{1}}$.
    
    \item \label{item:ichcl-beta>0}
    If $\beta \ge 0$, then the same statement holds when $u_{0}$ is constant (for which $\|u_{0}\|_{\DotH^{2\beta+1,p}}=0$).
    \end{enumerate}
\end{theorem}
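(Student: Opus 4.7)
The plan is to use linearity and decompose the sought-after solution as $u = v_1 + v_2 + v_3$, where $v_1$ carries the initial datum $u_0$, $v_2$ carries the divergence source $\Div F$, and $v_3$ carries the scalar source $f$. The first two summands are produced by results already stated. Theorem~\ref{thm:wp_hc} (for $\beta<0$) or the tautology that constants solve the equation (for $\beta\ge 0$) yields $v_1 = \cE_L(u_0)$ with $\nabla v_1 \in T^p_{\beta+1/2}$ and $\|\nabla v_1\|_{T^p_{\beta+1/2}} \lesssim \|u_0\|_{\DotH^{2\beta+1,p}}$. Theorem~\ref{thm:wp_lions} yields $v_2 = u_F$ with $\nabla v_2 \in T^p_{\beta+1/2}$, $v_2 \in T^p_{\beta+1}$, and $\|\nabla v_2\|_{T^p_{\beta+1/2}} \lesssim \|F\|_{T^p_{\beta+1/2}}$. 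Both summands are continuous in $\scrS'$ from $[0,\infty)$ by the cited theorems.

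The crux is to construct $v_3$ from $f \in T^q_\gamma$ with zero initial data and to prove the mixed-scale estimate $\|\nabla v_3\|_{T^p_{\beta+1/2}} \lesssim \|f\|_{T^q_\gamma}$, together with $v_3 \in T^p_{\beta+1}$. I would set $v_3(t) := \int_0^t e^{-(t-s)L} f(s)\,ds$ via Duhamel and view this as a singular integral in $(t,x)$. The scaling relation $2\beta - n/p = 2\gamma - n/q$ with $\gamma \ge \beta$ is precisely the Sobolev embedding relation between the potential regularities carried by $T^q_\gamma$-valued data and $T^p_{\beta+1/2}$-valued gradients. Two strategies appear natural: either invoke the equal-exponent well-posedness of \cite{Auscher-Hou2023_SIOTent} with parameters $(q,\gamma)$ to first control $v_3$ in $T^q_{\gamma+1}$ with $\nabla v_3 \in T^q_{\gamma+1/2}$, and then deduce $\nabla v_3 \in T^p_{\beta+1/2}$ via an embedding of weighted tent spaces governed by Sobolev scaling; or rerun the Duhamel estimates directly with mixed exponents, exploiting off-diagonal $L^q\to L^p$ bounds for $\sqrt{t-s}\,\nabla e^{-(t-s)L}$ which are available precisely under $p > \phcl{L}$ and $q > p_L^{\,\flat}(\gamma)$.

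Combining the three summands yields a weak solution with the announced control \eqref{e:globalestimate}, satisfies $u - \cE_L(u_0) = v_2 + v_3 \in T^p_{\beta+1}$, and is continuous in $\scrS'$. Uniqueness reduces, by subtracting two candidates, to the zero-data case: if $u$ is any weak solution with $u_0 = 0$, $F = 0$, $f = 0$ and $\nabla u \in T^p_{\beta+1/2}$, then $u$ is a Lions solution with $F = 0$, and Theorem~\ref{thm:wp_lions} forces $u = 0$.

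The main obstacle is the mixed-scale Duhamel estimate for $v_3$. Linear superposition and uniqueness are largely bookkeeping once $v_3$ is under control; the technical heart lies in transferring a tent-space-valued source at scale $(q,\gamma)$ to a tent-space-valued gradient at scale $(p,\beta)$, which forces one to interlock the two independent regions of well-posedness $p > \phcl{L}$ and $q > p_L^{\,\flat}(\gamma)$ along the single Sobolev-type scaling $2\beta - n/p = 2\gamma - n/q$.
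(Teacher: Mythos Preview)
Your approach is correct and matches the paper's: decompose $u = v_1 + v_2 + v_3$ via Theorems~\ref{thm:wp_hc} and~\ref{thm:wp_lions}, and handle $v_3$ by invoking \cite[Theorem~1.2]{Auscher-Hou2023_SIOTent} at scale $(q,\gamma)$ followed by the tent-space embedding $T^q_{\gamma+1/2} \hookrightarrow T^p_{\beta+1/2}$ under \eqref{e:ic-hcl-embed-cond} (your first strategy is exactly the paper's route). One small point you glide over: the $\scrS'$-continuity of $v_3$ on $[0,\infty)$ is not asserted in \cite{Auscher-Hou2023_SIOTent}, and the paper flags that it must be checked separately, by the same argument as in Theorem~\ref{thm:lions_ext}~\ref{item:lions_cont}.
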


The proof is a simple combination of Theorems \ref{thm:wp_hc}, \ref{thm:wp_lions}, and \cite[Theorem 1.2]{Auscher-Hou2023_SIOTent}, where we showed that for source term $f \in T^q_\gamma$ and $\gamma>-1/2$, the corresponding solution $u_1$ satisfies $\nabla u_1 \in T^q_{\gamma+1/2}$.  The constraint $\gamma>-1/2$ is essentially sharp. So the conditions on $(\beta,p)$ and $(\gamma,q)$ in \eqref{e:ic-hcl-embed-cond} allow one to use embedding between weighted tent spaces (see Section \ref{ssec:tent-slice}) to conclude that $\nabla u_1$ belongs to $T^p_{\beta+1/2}$. The continuity in $\scrS'$ is not proved there for $u_{1}$ but it can be shown analogously to the proof of Theorem \ref{thm:lions_ext} \ref{item:lions_cont}.

Moreover, when $\beta \ge 0$, Theorem \ref{thm:rep} \ref{item:rep_-1<beta<0} implies that constant initial data are the only ones compatible with the solution class $\nabla u \in T^p_{\beta+1/2}$.

\subsection{Organization}
\label{ssec:organization}
The paper is organized as follows. 

Section \ref{sec:spaces} summarizes basic properties of the function spaces to be used, and in particular provides the construction of our homogeneous Hardy--Sobolev spaces $\DotH^{s,p}$ in detail.

Section \ref{sec:heat} is concerned with the heat equation. We provide the tent-space estimates of heat extensions and their gradients, and also study the representation of heat (distributional) solutions. The combination of these two parts implies Theorem \ref{thm:heat_Hsp}.

Sections \ref{sec:lions}, \ref{sec:cont} (resp.~\ref{sec:hc}) are devoted to proving existence of the weak solutions asserted in Theorem~\ref{thm:wp_lions} (resp. Theorem~\ref{thm:wp_hc}), see Theorem~\ref{thm:lions_ext}  (resp.~Theorem~\ref{thm:hc_ext_EL}). Uniqueness is established in Theorem~\ref{thm:unique}.
Section~\ref{sec:ur} also presents the proof of Theorem~\ref{thm:rep}.

Section~\ref{sec:besov} contains extension to initial data in homogeneous Besov spaces, and Section \ref{sec:beta=-1} discusses the homogeneous Cauchy problem at the endpoint for regularity index $\beta=-1$.

\subsection{Notation}
\label{ssec:notation}
Throughout the paper, for any $q,r \in (0,\infty]$, we write 
\[ [q,r]:=\frac{1}{q}-\frac{1}{r}, \]
if there is no confusion with closed intervals. We say $X \lesssim Y$ (or $X \lesssim_A Y$, resp.) if $X \le CY$ with an irrelevant constant $C$ (or depending on $A$, resp.), and say $X \eqsim Y$ if $X \lesssim Y$ and $Y \lesssim X$. 

Write $\bR^{1+n}_+:=\bR_+ \times \bR^n = (0,\infty) \times \bR^n$. For any (Euclidean) ball $B \subset \bR^n$, write $r(B)$ for the radius of $B$. For any function $f$ defined on $\bR^{1+n}_+$, denote by $f(t)$ the function $x \mapsto f(t,x)$ for any $t>0$.

Let $(X,\mu)$ be a measure space. For any measurable subset $E \subset X$ with finite positive measure and $f \in L^1(E,\mu)$, we write
\[ \fint_E f d\mu := \frac{1}{\mu(E)} \int_E f d\mu. \]
Write $\|\cdot\|_p$ as an abbreviation for the norm $\|\cdot\|_{L^p(X,\mu)}$. For any $\beta \in \bR$ and $E \subset \bR^{1+n}$, denote by $L^p_\beta(E)$ the space $L^p(E,t^{-p\beta} dtdy)$.

We use the sans-serif font $\ssfc$ in the scripts of function spaces in short of ``with compact support" in the prescribed set, and ${\loc}$ if the prescribed property holds on all compact subsets of the prescribed set. Often, we omit the domain of the function space if it is clear from the context.
%%%%% Homogeneous Hardy--Sobolev spaces
\section{Function spaces}
\label{sec:spaces}

We begin with introducing the spaces for the initial data and next, the spaces for the solutions. Denote by $\scrS$ the space of Schwartz functions on $\bR^n$ and by $\scrS'$ the space of tempered distributions. For any $f \in \scrS'$, write $\hatf$ or $\cF(f)$ for the Fourier transform of $f$.

\subsection{Homogeneous Hardy--Sobolev spaces}
\label{ssec:HS}
Our homogeneous Hardy--Sobolev spaces can be roughly regarded as  ``realizations" of Triebel--Lizorkin spaces in $\scrS'$. We follow the formulation in \cite[\S 2.4.3]{Sawano2018_Besov}, which was originated from an observation of J. Peetre \cite[p.56]{Peetre1976_Besov}, see also \cite{Bourdaud2013_Realization,Moussai2015_Realization_p<1}. 

Denote by $C$ the annulus $\{\xi \in \bR^n:2^{-1} \le |\xi| \le 2^2\}$ and by $2^j C$ the annulus $\{\xi \in \bR^n:2^{j-1} \le |\xi| \le 2^{j+2}\}$ for any $j \in \bZ$. Let $\chi$ be in $\Cc(\bR^n)$ so that $\supp(\chi) \subset C$ and for any $\xi \ne 0$,
\[ \sum_{j \in \bZ} \chi(2^{-j} \xi)=1. \]
Let $\Delta_j$ be the $j$-th \textit{Littlewood--Paley operator} associated with $\chi$, given by
\begin{equation}
    \label{e:L-P-op}
    \Delta_j f := \cF^{-1} (\chi(2^{-j} \cdot) \cF(f)), \quad \forall f \in \scrS'.
\end{equation}
Suppose $s \in \bR$ and $0<p \le \infty$. Let $\frakS_{s,p}$ be the collection of sequences of measu\-rable functions $(f_j)_{j \in \bZ}$ on $\bR^n$  so that
\begin{equation}
    \label{e:frakS_s,p}
    \|(f_j)\|_{\frakS_{s,p}} := \left \| \left ( \sum_j |2^{js} f_j|^2 \right )^{1/2} \right \|_{p} < \infty.
\end{equation}
It is clear that $f_j \in L^p$ for any $j \in \bZ$, whenever $(f_j) \in \frakS_{s,p}$. 

Let $\scrP$ be the space of polynomials $\bC[x_1,\dots,x_n]$, $\scrP_0:=\{0\}$, and $\scrP_m$ be the subspace of $\scrP$ consisting of polynomials of degree less than $m$ for $m \ge 1$. For $p \ne \infty$, the \textit{Triebel--Lizorkin space} $\DotF^s_{p,2}$ consists of $f \in \scrS'/\scrP$ for which
\[ \|f\|_{\DotF^s_{p,2}} := \left\| (\Delta_j f) \right\|_{\frakS_{s,p}} < \infty. \]
For $p=\infty$, the space $\DotF^s_{\infty,2}$ consists of $f \in \scrS'/\scrP$ so that there exists $(f_j) \in \frakS_{s,\infty}$ satisfying that $f=\sum_j \Delta_j f_j$ in $\scrS'/\scrP$. The norm is given by
\[ \|f\|_{\DotF^s_{\infty,2}} := \inf \|(f_j)\|_{\frakS_{s,\infty}}, \]
where the infimum is taken among all $(f_j) \in \frakS_{s,\infty}$ so that $\sum_j \Delta_j f_j = f$ in $\scrS'/\scrP$.

Define $\nu(s,p):=\max\{ 0,[ s-\frac{n}{p} ]+1 \}$. For any $f \in \DotF^s_{p,2}$, the Littlewood--Paley series $\sum_j \Delta_j f$ converges in $\scrS'/\scrP_{\nu(s,p)}$. Moreover, it induces an isometric embedding $\iota: \DotF^s_{p,2} \to \scrS'/\scrP_{\nu(s,p)}$ given by
\[ \iota(f):=\sum_j \Delta_j f. \]

\begin{definition}[Homogeneous Hardy--Sobolev spaces]
    \label{def:Hsp}
    Let $s \in \bR$ and $0<p \le \infty$. The \textit{homogeneous Hardy--Sobolev space} $\DotH^{s,p}$ consists of $f \in \scrS'$ whose class in $\scrS'/\scrP_{\nu(s,p)}$ belongs to $\iota(\DotF^s_{p,2})$. The (quasi-)semi-norm of $\DotH^{s,p}$ is given by
    \[ \|f\|_{\DotH^{s,p}} := \|[f]\|_{\DotF^s_{p,2}}, \]
    where $[f]$ denotes the class of $f$ in $\scrS'/\scrP$.
\end{definition}

Note that for $s=0$, up to equivalent (quasi-)norms, $\DotH^{0,p}$ agrees with the Hardy space $H^p$ if $p \le 1$, the Lebesgue space $L^p$ if $1<p<\infty$, and $\bmo$ if $p=\infty$. It follows by classical results of Littlewood--Paley theory, see \textit{e.g.}, \cite[Theorem 1.3.8]{Grafakos2014_ModernFA} and \cite[\S 5.2.4]{Triebel1983Spaces}. Moreover, for any $s \in \bR$, $\DotH^{s,\infty}/\scrP_{\nu(s,\infty)}$ is isomorphic to $\bmo^s$ introduced by Strichartz \cite{Strichartz1980_BMOs}. 

The reader can refer to \cite[\S 2.5]{Auscher-Egert2023OpAdp} and the references there for basic properties of the spaces $\DotH^{s,p}$, for instance, duality, complex interpolation, lifting property, and Sobolev embedding. In particular, let us mention a dense subspace of $\DotH^{s,p}$. Denote by $\scrS_\infty$ the subspace of $\scrS$ consisting of $\phi \in \scrS$ so that for any multi-index $\alpha$, $\partial^\alpha \widehat{\phi}(0)=0$. It is a dense subspace of $\DotH^{s,p}$ if $p<\infty$, or weak*-dense if $p=\infty$.

\subsection{Tent spaces and slice spaces}
\label{ssec:tent-slice}
We adapt the original definition of (unweighted) tent spaces in \cite{Coifman-Meyer-Stein1985_TentSpaces} to our parabolic setting. For any $\beta \in \bR$ and $p \in (0,\infty)$, the \textit{(parabolic) tent space} $T^p_\beta$ consists of (possibly $\bC^n$-valued) measurable functions $F$ on $\bR^{1+n}_+$ for which
\[ \|F\|_{T^{p}_\beta} := \left ( \int_{\bR^n} \left ( \int_0^\infty \fint_{B(x,t^{1/2})} |t^{-\beta} F(t,y)|^2 \, dtdy \right )^{p/2} dx \right )^{1/p} < \infty. \]
For $0<p \le 1$, the space $T^{\infty}_{\beta,([p,1])}$ consists of measurable functions $F$ on $\bR^{1+n}_+$ for which
\[ \|F\|_{T^{\infty}_{\beta,([p,1])}} := \sup_{B} \frac{1}{|B|^{[p,1]}} \left ( \int_0^{r(B)^2} \fint_B |t^{-\beta} F(t,y)|^2\,  dtdy \right )^{1/2} < \infty,  \]
where the supremum is taken over all balls $B$ of $\bR^n$. We also write $T^{\infty}_{\beta}$ for $T^{\infty}_{\beta,(0)}$. Recall that $[p,1]$ is defined in Section \ref{ssec:notation} by $\frac 1 p - 1$. 
 
The reader can refer to \cite[\S 3.1]{Auscher-Hou2023_SIOTent} for a short summary of basic properties of tent spaces, such as duality and a dense class. Here we also recall two properties:
\begin{enumerate}[label=(\roman*)]
    \item (Interpolation) 
    Let $\beta_{0}, \beta_{1} \in \bR$, $0<p_0, p_1 \le \infty$ and $\theta \in (0,1)$. If   $1/p=(1-\theta)/p_0+\theta/p_1$ and $\beta=(1-\theta)\beta_{0}+\theta\beta_{1}$,  then the complex interpolation space $[T^{p_0}_{\beta_{0}}, T^{p_1}_{\beta_{1}}]_\theta$ (in the sense of Kalton--Mitrea method, see \cite{Kalton-Mitrea1998_ComplexInterpolation}) identifies with $T^p_\beta$. See \cite[Theorem 2.12]{Amenta2018_WeightedTent}.
      \item (Embedding) Let $\beta_0 > \beta_1$ and $0<p_0<p_1 \le \infty$. Suppose
    \[ 2\beta_0-\frac{n}{p_0} = 2\beta_1-\frac{n}{p_1}. \]
    Then $T^{p_0}_{\beta_0}$ embeds into $T^{p_1}_{\beta_1}$. See \cite[Theorem 2.19]{Amenta2018_WeightedTent}.
\end{enumerate}

We shall also use another family of spaces studied in \cite[\S 3]{Auscher-Mourgoglou2019_Ep_delta}, called slice spaces. Let $1 \le p \le \infty$ and $\delta>0$. The \textit{(parabolic) slice space} $E^p_\delta$ consists of measurable (possibly $\bC^n$-valued) functions $f$ on $\bR^n$ for which
\[ \|f\|_{E^p_\delta} := \left \| \left ( \fint_{B(\cdot,\delta^{1/2})} |f(y)|^2 \, dy \right )^{1/2} \right \|_{p} < \infty. \]
We also define the space $E^{1,p}_\delta$ as the collection of $L^2_{\loc}$-functions $g$ on $\bR^n$ so that $\nabla g$ (in the sense of distributions) belongs to $E^p_\delta$, equipped with the semi-norm
\[ \|g\|_{E^{1,p}_\delta} := \|\nabla g\|_{E^p_\delta}. \]
Also define $E^{-1,p}_\delta := \{g \in \scrD'(\bR^n): g=\Div(G) \text{ for some } G \in E^p_\delta\}$ with the norm
\[ \|g\|_{E^{-1,p}_\delta} := \inf \{ \|G\|_{E^p_\delta}: G \in E^p_\delta, ~ g=\Div G\}. \]
The $L^2$-inner product on $\bR^n$ realizes $E^{p'}_{\delta}$ (resp. $E^{-1,p'}_\delta$)  as  the dual of $E^p_{\delta}$ (resp. $E^{1,p}_\delta$) when $1 \le p < \infty$.
%%%%% Heat equation
\section{Heat equation}
\label{sec:heat}

In this section, we consider the Cauchy problem of the heat equation
\[  \begin{cases}
    \partial_t u - \Delta u = 0 & \quad \text{ in } \scrD'(\bR^{1+n}_+) \\
    u(0)=f & 
    \end{cases}. \]
Recall that any distributional solution to the heat equation on $\bR^{1+n}_+$ is in fact smooth by hypoellipticity, see for instance \cite[\S4.4]{Hormander2003PDOI}. 

\subsection{Heat extension and homogeneous Hardy--Sobolev spaces}
\label{ssec:RW}
For any $f \in \scrS'$, the function $(t,x) \mapsto (e^{t\Delta} f)(x)$ belongs to $C^\infty(\bR^{1+n}_+) \cap C([0,\infty);\scrS')$. We call it the \textit{heat extension} of $f$, denoted by $\cE_{-\Delta}(f)$.

\begin{prop}[Heat extension on $\DotH^{s,p}$]
    \label{prop:HeatExt_cont_Hsp}
    Let $s \in \bR$ and $0<p \le \infty$. Then $\cE_{-\Delta}$  is a bounded and continuous map from $\DotH^{s,p}$ to $C_0([0,\infty);\DotH^{s,p}) \cap C^\infty((0,\infty);\DotH^{s,p})$ with the estimate
    \begin{equation}
        \label{e:HeatExt_Hsp_eq}
        \sup_{t \ge 0} \|\cE_{-\Delta}(f)(t)\|_{\DotH^{s,p}} \eqsim \|f\|_{\DotH^{s,p}}.
    \end{equation}
    Here, for $p=\infty$, the space $\DotH^{s,\infty}$ is equipped with the weak*-topology against elements of $\DotH^{-s,1}$.
\end{prop}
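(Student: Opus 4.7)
The plan is to transfer the assertions to the Triebel--Lizorkin realization via the isometry $\iota: \DotF^s_{p,2} \hookrightarrow \scrS'/\scrP_{\nu(s,p)}$ and exploit the fact that $e^{t\Delta}$ is a Fourier multiplier with the uniformly controlled symbol $m_t(\xi) := e^{-t|\xi|^2}$, so that $\Delta_j e^{t\Delta} f = e^{t\Delta}\Delta_j f$ for every $j\in\bZ$ and $t\ge 0$.

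\textbf{Step 1 (Uniform boundedness).} I would first establish $\sup_{t\ge 0}\|e^{t\Delta}f\|_{\DotH^{s,p}} \lesssim \|f\|_{\DotH^{s,p}}$. Pick $\widetilde{\chi}\in\Cc(\bR^n\setminus\{0\})$ equal to $1$ on the support of $\chi$ and with slightly enlarged support. Then $\Delta_j e^{t\Delta} f = \psi_{j,t} * \Delta_j f$, where $\psi_{j,t} := \cF^{-1}\bigl(m_t(\xi)\widetilde{\chi}(2^{-j}\xi)\bigr)$. A scaling argument shows that $\sup_{t\ge 0,\,j\in\bZ}\|\psi_{j,t}\|_{L^1} < \infty$ (the crucial point being that $\sup_{s\ge 0}\|\cF^{-1}(e^{-s|\cdot|^2}\widetilde\chi)\|_{L^1}<\infty$ by direct Gaussian estimates, after reducing to the scale $j=0$ via $\xi\mapsto 2^j\xi$ and $t\mapsto 2^{-2j}t$). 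Hence $|\Delta_j e^{t\Delta}f|\lesssim M(\Delta_j f)$ pointwise, and the Fefferman--Stein vector-valued maximal inequality delivers the desired square-function bound for $1<p<\infty$. For $p\le 1$ (or $p=\infty$), I would instead appeal to a molecular/multiplier theorem on $\DotF^s_{p,2}$ applied to the single-piece multiplier family $\{m_t\widetilde{\chi}(2^{-j}\cdot)\}_{j,t}$, whose symbols satisfy uniform Mihlin-type bounds. The reverse inequality in \eqref{e:HeatExt_Hsp_eq} is trivial, as $e^{0\cdot\Delta}f=f$.

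\textbf{Step 2 (Continuity on $[0,\infty)$ and decay at infinity).} By the semigroup property and Step 1, continuity at any $t_0>0$ reduces to continuity at $t=0$, i.e.\ $\|e^{h\Delta}g-g\|_{\DotH^{s,p}}\to 0$ as $h\to 0^+$ for $g$ in a dense family and an $\varepsilon/3$ argument. On $\scrS_\infty$, whose elements have Fourier transform vanishing to infinite order at $0$, I would check this directly on the Littlewood--Paley decomposition: each piece $\Delta_j(e^{h\Delta}g-g)$ converges to $0$ with $h\to 0$, while the rapid decay of $\widehat{g}$ controls the tails $|j|$ large uniformly in $h\in[0,1]$. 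The same dominated-convergence scheme, now using $e^{-t 2^{2j}}\to 0$ as $t\to\infty$ and the tail control, shows $\|e^{t\Delta}g\|_{\DotH^{s,p}}\to 0$ for $g\in\scrS_\infty$, then extends by density to all of $\DotH^{s,p}$. For $p=\infty$ one replaces strong convergence by weak$^\ast$ convergence tested against a dense family in the predual $\DotH^{-s,1}$, which fits the density statement recalled at the end of Section~\ref{ssec:HS}.

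\textbf{Step 3 ($C^\infty$ smoothness on $(0,\infty)$).} For $t_0>0$, I would write $e^{t\Delta} = e^{(t_0/2)\Delta}\,e^{(t-t_0/2)\Delta}$ in a neighbourhood of $t_0$ and use $\partial_t^k e^{t\Delta} = \Delta^k e^{t\Delta}$. It then suffices to show that $\Delta^k e^{(t_0/2)\Delta}$ is bounded on $\DotH^{s,p}$ with operator norm locally bounded in $t_0$. Its symbol $\xi\mapsto|\xi|^{2k}e^{-t_0|\xi|^2/2}$ is, after localizing by $\widetilde\chi(2^{-j}\cdot)$ and rescaling, a Schwartz bump with $L^1$-norm controlled by $C_k\,\max(1,t_0^{-k})$, so the argument of Step 1 applies and yields smoothness together with joint continuity of $(t,f)\mapsto\partial_t^k e^{t\Delta}f$.

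The main technical hurdle is the $p\le 1$ (and $p=\infty$) endpoint of Step 1: one cannot invoke the Hardy--Littlewood maximal function directly, so one must rely on a Triebel--Lizorkin multiplier theorem adapted to the family $\{e^{-t|\xi|^2}\widetilde{\chi}(2^{-j}\xi)\}_{j,t}$, and verify the quasi-Banach density arguments in Step 2--3 carefully, especially since $\scrS_\infty$ is only weak$^\ast$-dense in $\DotH^{s,\infty}$.
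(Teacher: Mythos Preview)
Your proposal is correct and takes essentially the same approach as the paper: uniform Littlewood--Paley control of $e^{t\Delta}$, density of $\scrS_\infty$ for the continuity and decay statements, and weak$^*$-duality for $p=\infty$. The paper's proof is shorter only because it cites the uniform square-function inequality as ``well-known'' and observes that $\cE_{-\Delta}$ maps $\scrS_\infty$ into $C_0([0,\infty);\scrS_\infty)\cap C^\infty((0,\infty);\scrS_\infty)$ directly, whereas you spell out the multiplier estimates and the dominated-convergence scheme behind these facts.
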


\begin{proof} 
    First consider the case $0<p<\infty$. Let $(\Delta_{j})$ be the Littlewood--Paley operator defined in \eqref{e:L-P-op}. It is well-known that for any $s \in \bR$, there are implicit constants such that for any $f \in \scrS_\infty$,
    \[ \sup_{t \ge 0} \left\| \left( \sum_{j \in \bZ} |2^{js} \Delta_{j}(e^{t\Delta} f)|^2 \right)^{1/2} \right\|_{p} \lesssim  \left\| \left( \sum_{j \in \bZ} |2^{js} \Delta_{j} f|^2 \right)^{1/2} \right\|_{p}. \]
    By density, this inequality extends to all $\DotH^{s,p}$, so we have 
    \[ \sup_{t \ge 0} \|\cE_{-\Delta}(f)(t)\|_{\DotH^{s,p}} \lesssim \|f\|_{\DotH^{s,p}}. \]
    Moreover, one can find that $\cE_{-\Delta}$ is a bounded map from $\scrS_\infty$ to $C_0([0,\infty);\scrS_\infty) \cap \break C^\infty((0,\infty);\scrS_\infty)$. Hence, by density, it is bounded from $\DotH^{s,p}$ to $C_0([0,\infty);\DotH^{s,p}) \cap C^\infty((0,\infty);\DotH^{s,p})$.

    For $p=\infty$, we proceed by weak*-duality to obtain boundedness and regularity. This completes the proof.
\end{proof}

We now provide an intermediate result describing when the heat extension of a tempered distribution belongs to a weighted tent space in terms of its Hardy--Sobolev regularity. To the best of our knowledge, such a precise result and the next ones below do not appear in the literature.

\begin{theorem}[Characterization of $\DotH^{s,p}$ via heat extension]     
    \label{thm:RW_sol}
    Let $s \in \bR$ and $0<p \le \infty$.

    \begin{enumerate}[label=\normalfont(\roman*)]
        \item \label{item:rw_HeatExt(f)<f}
        Suppose $s<0$. If $f \in \DotH^{s,p}$ $($in particular $f \in \scrS')$, then $\cE_{-\Delta}(f)$ belongs to $T^p_{(s+1)/2}$ with the estimate
        \begin{equation}
            \label{e:rw_HeatExt(f)<f}
            \|\cE_{-\Delta}(f)\|_{T^p_{(s+1)/2}} \lesssim \|f\|_{\DotH^{s,p}}.
        \end{equation}

        \item \label{item:rw_f<HeatExt(f)}
        Suppose $f \in \scrS'$ and $\cE_{-\Delta}(f)$ belongs to $T^p_{(s+1)/2}$.
        \begin{enumerate}[label=\normalfont(\arabic*),ref=(\arabic*)]
            \item \label{item:rw_s>0_f=0}
            If $s \ge 0$ and $\frac{n}{n+s} \le p \le \infty$, then $f=0$.
        
            \item \label{item:rw_s<0_finHsp}
            If $s<0$, then $f$ belongs to $\DotH^{s,p}$ with the estimate
            \[ \|f\|_{\DotH^{s,p}} \lesssim \|\cE_{-\Delta}(f)\|_{T^p_{(s+1)/2}}. \]
        \end{enumerate} 
    \end{enumerate}
    Consequently, when $s<0$, a tempered distribution $f \in \scrS'$ lies in $\DotH^{s,p}$ if and only if $\cE_{-\Delta}(f)$ lies in $T^p_{(s+1)/2}$, with the equivalence of norms as
    \[ \|f\|_{\DotH^{s,p}} \eqsim \|\cE_{-\Delta}(f)\|_{T^p_{(s+1)/2}}. \]
\end{theorem}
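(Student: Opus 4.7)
My plan is to prove parts \ref{item:rw_HeatExt(f)<f} and \ref{item:rw_f<HeatExt(f)} separately, mirroring the Littlewood--Paley characterization of $\DotF^s_{p,2}$ with $(e^{t\Delta})_{t>0}$ playing the role of a continuous approximate identity; the concluding equivalence of norms for $s<0$ is then immediate from \ref{item:rw_HeatExt(f)<f} and \ref{item:rw_s<0_finHsp}.

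For the forward estimate in \ref{item:rw_HeatExt(f)<f}, I would start from the Littlewood--Paley decomposition $f=\sum_{j\in\bZ}\Delta_j f$ and exploit the Fourier localization of each $\Delta_j f$ in $\{|\xi|\sim 2^j\}$ to obtain a pointwise bound
\[ |e^{t\Delta}\Delta_j f(y)| \lesssim (1+t\,2^{2j})^{-N}\,M(\Delta_j f)(y), \quad N\ge 1, \]
where $M$ is the Hardy--Littlewood maximal operator. Inserted into the conical integrand of $T^p_{(s+1)/2}$, the one-dimensional integral
\[ \int_0^\infty \frac{t^{-s-1}\,dt}{(1+t\,2^{2j})^{2N}} \lesssim 2^{2js} \]
converges at $t=0$ precisely because $s<0$ and at $t=\infty$ by taking $N$ large. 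Summing in $j$ via the Fefferman--Stein vector-valued maximal inequality then yields $\|\cE_{-\Delta}(f)\|_{T^p_{(s+1)/2}}\lesssim\|f\|_{\DotH^{s,p}}$; the case $p=\infty$ is handled via a Carleson-measure description of $\bmo^{s}$ or by duality against $\DotH^{-s,1}$.

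For the reverse estimate in \ref{item:rw_s<0_finHsp}, I would use the reproducing identity $\Delta_j = \phi_j(D)\,e^{2^{-2j}\Delta}$ with $\phi_j(\xi):=\chi(2^{-j}\xi)\,e^{2^{-2j}|\xi|^2}$ a smooth Fourier multiplier supported in $\{|\xi|\sim 2^j\}$. A convolution-averaging argument yields
\[ |\Delta_j f(y)|^2 \lesssim \fint_{t\sim 2^{-2j}}\fint_{B(y,t^{1/2})} |e^{t\Delta}f(z)|^2\,dz\,dt, \]
and multiplying by $2^{2js}\eqsim t^{-s}$ on the relevant range of $t$, summing in $j$, taking $L^p$-norm in $y$, and applying Fefferman--Stein once more converts the square function on the left into $\|f\|_{\DotH^{s,p}}$ while the right hand side assembles into $\|\cE_{-\Delta}(f)\|_{T^p_{(s+1)/2}}$. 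Crucially, this argument is insensitive to the sign of $s$.

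The main obstacle is \ref{item:rw_s>0_f=0}, where the conclusion is much stronger (namely $f=0$ in $\scrS'$, not merely $f\in\DotH^{s,p}$). The guiding heuristic is the $p=2$ Plancherel identity
\[ \|\cE_{-\Delta}(f)\|_{T^2_{(s+1)/2}}^2 \,\eqsim\, \int |\widehat f(\xi)|^2 |\xi|^{2s}\,d\xi \cdot \int_0^\infty u^{-s-1}e^{-u}\,du, \]
whose $u$-integral diverges for $s\ge 0$ and therefore forces $\widehat f$ to be concentrated at the origin. To turn this into a rigorous statement for $f\in\scrS'$ and general $p$, I would apply each Littlewood--Paley projection $\Delta_j$ (which commutes with $e^{t\Delta}$ and is bounded on tent spaces, at least after localization when $p\le 1$) to reduce to the band-limited $L^2$ setting, in which the Plancherel computation forces $\Delta_j f=0$ for every $j$; hence $f$ is a polynomial, of degree less than $\nu(s,p)$, which is finite and controlled by the hypothesis $p\ge n/(n+s)$. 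Finally nonzero polynomials $P$ are eliminated directly: $e^{t\Delta}P$ is polynomial in $(t,y)$ and does not vanish identically near $\{t=0\}$, so $t^{-s-1}|e^{t\Delta}P|^2$ fails to be integrable near $t=0$ when $s\ge 0$, contradicting the tent-space hypothesis and giving $f=0$.
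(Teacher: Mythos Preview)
Your treatment of \ref{item:rw_HeatExt(f)<f} and \ref{item:rw_s<0_finHsp} is a legitimate alternative to the paper's proof, which instead lifts via $(-\Delta)^{\pm s/2}$ to reduce to the case $s=0$ and then invokes the classical $Q_\psi:H^p\to T^p_{1/2}$ and $S_\phi:T^p_{1/2}\to H^p$ maps. Your Littlewood--Paley route is more hands-on and avoids the fractional Laplacian, at the cost of needing the Fefferman--Stein vector-valued maximal inequality (with $M_r$ when $p\le 1$). One point you glossed over in \ref{item:rw_s<0_finHsp}: your argument only bounds the semi-norm $\|[f]\|_{\DotF^s_{p,2}}$, i.e.\ shows that the class of $f$ modulo polynomials lies in $\DotF^s_{p,2}$. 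Since $\nu(s,p)=0$ for $s<0$, membership in $\DotH^{s,p}$ requires the stronger statement $f=\sum_j\Delta_j f$ in $\scrS'$, equivalently that the polynomial $P:=f-\sum_j\Delta_j f$ vanishes. The paper closes this by noting $e^{t\Delta}P$ is a polynomial in $(t,x)$ and checking directly that a nonzero such polynomial cannot lie in $T^p_{(s+1)/2}$ (for $s<0$ the obstruction is growth in $x$ or in $t$, not the small-$t$ divergence you invoke for $s\ge 0$).

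For \ref{item:rw_s>0_f=0} there is a genuine gap. Your plan is to apply $\Delta_j$ and then use ``the Plancherel computation'' to force $\Delta_j f=0$, but $\Delta_j f$ is only known to be a tempered distribution with Fourier support in an annulus; it is smooth with polynomial growth, not an $L^2$ function, so Plancherel does not apply. One could try to replace Plancherel by the pointwise-divergence argument you use for polynomials (since $\Delta_j f$ is continuous and $e^{t\Delta}\Delta_j f(y)\to\Delta_j f(y)$), but that still requires $\Delta_j$ to act boundedly on $T^p_{(s+1)/2}$, which for $p<1$ is not covered by Minkowski and needs a separate molecular argument you do not supply. The paper sidesteps all of this: it tests $e^{t\Delta}f$ directly against $\phi\in\Cc(\bR^n)$, sets $I(a):=\fint_a^{2a}\langle e^{t\Delta}f,\phi\rangle\,dt$, and uses tent-space duality to bound $|I(a)|\lesssim a^{s/2}\cP_N(\phi)\|\I_{(a,2a)}e^{t\Delta}f\|_{T^p_{(s+1)/2}}$ (with the obvious modification for $p\le 1$ and a separate easy argument for $(s,p)=(0,\infty)$). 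Since $\langle e^{t\Delta}f,\phi\rangle\to\langle f,\phi\rangle$ in any case, $I(a)\to 0$ yields $f=0$ at once, without Littlewood--Paley, without polynomial-degree bookkeeping, and uniformly across the whole range of $p$.
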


\begin{remark}
    The restriction on $s$ in \ref{item:rw_HeatExt(f)<f} is as in Triebel--Lizorkin theory. For \ref{item:rw_f<HeatExt(f)}, whenever $s \in \bR$, $\cE_{-\Delta}(f) \in T^p_{(s+1)/2}$ implies that there is a representative in the class of $f$ in $\scrS'/\scrP$ that belongs to $\DotH^{s,p}$. But for $s\ge 0$ and $p<\frac{n}{n+s}$, we do not know any better conclusion.
\end{remark}

In the sequel, for a function $F(t,x)$, we also write $\|F(t)\|_{T^p_\beta}$ for $\|F\|_{T^p_\beta}$ to emphasize the time variable.

\begin{proof}
Let us start from some general facts on relations of Hardy spaces and tent spaces. Let $\psi$ be in $C^\infty(\bR^n)$ satisfying that
\begin{enumerate}[label=(\alph*)]
    \item \label{item:psi_aux-f_mean0}
    $\psi$ is of mean zero, \textit{i.e.}, $\int_{\bR^n} \psi = 0$;
    
    \item \label{item:psi_aux-f_decay}
    there exists some $\varepsilon>0$ so that for any multi-index $\alpha$,
    \begin{equation}
        \label{e:psi_point_est}
        |\partial_x^\alpha \psi(x)| \lesssim  \left( 1+|x| \right)^{-n-\varepsilon-|\alpha|}. 
    \end{equation}
\end{enumerate}
Set $\psi_{t}(x):=t^{-n/2}\psi(t^{-1/2}x)$. One can deduce from classical arguments that the map $Q_{\psi}$ defined by
\[ Q_{\psi}(f)(t,x):=(\psi_{t}\ast f)(x) \]
is bounded from $\DotH^{0,p}$ to $T^p_{1/2}$, for $0<p \le \infty$, or equivalently, from $H^p$ to  $T^p_{1/2}$ when $0<p \le 1$, from $L^p$ to $T^p_{1/2}$ when $1<p<\infty$, and from $\bmo$ to $T^\infty_{1/2}$.
    
Indeed, for $p=2$, it is direct from Fourier transform as 
\[ \|Q_{\psi} f\|_{T^2_{1/2}}^2 \eqsim \|Q_{\psi} f\|_{L^2_{1/2}(\bR^{1+n}_+)}^2 \eqsim \int_{\bR^n} \left( \int_0^\infty |\widehat{\psi}(t^{1/2}\xi)|^2 \frac{dt}{t} \right) |\hatf(\xi)|^2 d\xi \eqsim \|f\|_2^2. \]
For $p=\infty$, it holds by a well-known Carleson measure argument, see for instance, \cite[Theorem 3.3.8(c)]{Grafakos2014_ModernFA}. For $0<p \le 1$, it follows from atomic decomposition of tent spaces (see \cite[Proposition~5]{Coifman-Meyer-Stein1985_TentSpaces}), using decay and regularity of the kernel $t^{-1/2}\psi(t^{-1/2}(x-y))$ given by \eqref{e:psi_point_est} and the moment conditions of $H^p$-atoms. Details are left to the reader. The rest follows by interpolation.
    
Moreover, let $\phi \in \scrS_{\infty}$. For $0<p<\infty$, we infer from \cite[Theorem~6]{Coifman-Meyer-Stein1985_TentSpaces} (adapted to the parabolic scaling) that for any $F \in T^p_{1/2}$, the integral
\[ S_\phi(F) := \int_0^\infty \phi_{t} \ast F(t) \, \frac{dt}t \]
converges in $\scrS'$, and hence induces a bounded operator from $T^p_{1/2}$ to $L^p$ if $1<p<\infty$ and to $H^p$ if $0<p\le 1$. For $p=\infty$, $S_\phi(F)$ is defined in $\bmo$ by testing against $h \in H^1$ using
\[ \langle S_\phi(F), h \rangle := \iint_{\bR^{1+n}_{+}} F(t,x)\, \overline{Q_{\widetilde\phi} h(t,x)} \, \frac{dtdx}t, \] where $\widetilde{\phi}(x):=\overline{\phi(-x)}$. Since $\widetilde{\phi}$ also satisfies the conditions \ref{item:psi_aux-f_mean0} and \ref{item:psi_aux-f_decay}, by duality, one can easily get $S_\phi$ is bounded from $T^\infty_{1/2}$ to $\bmo$. \\

Then we proceed to prove \ref{item:rw_HeatExt(f)<f}, assuming $s<0$. For $0<p<\infty$, we use density. Pick $f \in \scrS_\infty$ and define $g:=(-\Delta)^{-s/2} f$, so we have for any $t>0$,
\[ \cE_{-\Delta}(f)(t) = e^{t\Delta}f = (-\Delta)^{s/2} e^{t\Delta} g = t^{s/2} (-t\Delta)^{-s/2} e^{t\Delta} g = t^{s/2}\psi_{t}\ast g, \]
where $\psi$ denotes the kernel of $(-\Delta)^{-s/2} e^{\Delta}$. One can easily verify that $\psi$ satisfies the conditions \ref{item:psi_aux-f_mean0} and \ref{item:psi_aux-f_decay} with $\varepsilon=-s>0$. Then we get
\[ \|\cE_{-\Delta}(f)\|_{T^p_{(s+1)/2}}= \|Q_{\psi}(g)\|_{T^p_{1/2}} \lesssim \|g\|_{\DotH^{0,p}} = \|(-\Delta)^{-s/2}f\|_{\DotH^{0,p}} \eqsim \|f\|_{\DotH^{s,p}}. \]
Since $\scrS_\infty$ is dense in $\DotH^{s,p}$, this inequality extends to all $f \in \DotH^{s,p}$. 

For $p=\infty$, we use weak*-density. For any $f \in \DotH^{s,\infty}$, pick $(f_{k})$ as a sequence in $\scrS_\infty$ that converges weakly* to $f$ in $\DotH^{s,\infty}$, hence in $\scrS'$, since $\DotH^{s,\infty}$ embeds into $\scrS'$ when $s<0$. Using the above computation, we have
\[ \|\cE_{-\Delta}(f_{k})\|_{T^\infty_{(s+1)/2}} \lesssim \|f_{k}\|_{\DotH^{s,\infty}}. \]
In particular, $(\cE_{-\Delta}(f_k))$ is bounded in $T^\infty_{(s+1)/2}$. Let $F$ be a weak*-accumulation point of $(\cE_{-\Delta}(f_k))$ in $T^\infty_{(s+1)/2}$. Let $G \in \Cc(\bR^{1+n}_{+})$. Then 
\[ \iint_{\bR^{1+n}_{+}} (e^{t\Delta}f_{k})(x) \ovG(t,x) \, {dtdx}= \int_{0}^\infty \langle f_{k}, e^{t\Delta}G(t) \rangle \, dt, \]
where the pairing is in the sense of tempered distributions and Schwartz functions. As $t \mapsto e^{t\Delta}G(t) \in  C((0,\infty); \scrS)$ and the integral is supported on a compact subset of $(0,\infty)$, we obtain at the limit
\[ \iint_{\bR^{1+n}_{+}} F(t,x) \ovG(t,x) \, {dtdx}= \int_{0}^\infty \langle f, e^{t\Delta} G(t) \rangle \, dt. \]
So $F=\cE_{-\Delta}(f)$ and  $\|\cE_{-\Delta}(f)\|_{T^\infty_{(s+1)/2}} \lesssim \|f\|_{\DotH^{s,\infty}}$. This finishes the proof of \ref{item:rw_HeatExt(f)<f}. \\

Next, we prove \ref{item:rw_f<HeatExt(f)} and begin with \ref{item:rw_s<0_finHsp}. Let $s<0$ and $f \in \scrS'$ with $F(t,x) := (t^{-s/2}e^{t\Delta} f)(x) \in T^p_{1/2}$. Pick $\phi \in \scrS_\infty$ with the non-degeneracy condition
\begin{equation}
    \label{e:rw_phi_nondegenerate}
    \int_0^\infty \widehat{\phi}(r \xi) |r\xi|^{-s} e^{-|r\xi|^2} \frac{dr}{r} = 1, \quad \forall \xi \ne 0.
\end{equation}
We know from the above discussion that $g:=S_{\phi}(F)$ lies in $\DotH^{0,p}$. Define $\psi:=(-\Delta)^{s/2}\phi$. Computing in $\scrS'/\scrP$ and using \eqref{e:rw_phi_nondegenerate}, we have 
\[ (-\Delta)^{s/2}g = S_{\psi}(e^{t\Delta}f)=f \quad \text{ in } \scrS'/\scrP. \]
Thus, there is a representative $\tilf \in \scrS'$ in the class of $f$ in $\scrS'/\scrP$ which also lies in $\DotH^{s,p}$. By \ref{item:rw_HeatExt(f)<f}, we know that $e^{t\Delta}\tilf$ lies in $ T^p_{(s+1)/2}$, thus, so does $e^{t\Delta}(\tilf-f)$. As $\tilf-f$ is a polynomial, $e^{t\Delta}(\tilf-f)(x)= P(t,x)$ is also a polynomial, and it is easy to see that $P(t,x)=0$ from $\|P\|_{T^p_{(s+1)/2}}<\infty$. Finally, as $e^{t\Delta}(\tilf-f)\to \tilf-f$ in $ \scrS'$ when $t\to 0$, it follows that $\tilf=f$. This proves \ref{item:rw_s<0_finHsp}. \\

To establish \ref{item:rw_s>0_f=0}, we divide the discussion into two cases. \\

\paragraph{Case 1: $(s,p) \ne (0,\infty)$}
For $0<a<1$, define
\[ I(a):=\fint_a^{2a} \langle e^{t\Delta} f,\phi \rangle dt. \]
We claim that $I(a)$ tends to 0 as $a \to 0$. Meanwhile, $\langle e^{t\Delta} f,\phi \rangle$ tends to $\langle f,\phi \rangle$ as $t \to 0$, so we get $f=0$. Let us verify the claim. For $N \ge 0$, denote by $\cP_N$ the semi-norm on $\scrS$ defined by
\begin{equation}
    \label{e:cPN_S(Rn)}
    \cP_N(\phi):=\sup_{|\alpha|+|\gamma| \le N} \sup_{x \in \bR^n} |x^\alpha \partial^\gamma \phi(x)|.
\end{equation}

For $1<p \le \infty$, pick $N>n/p'$ so that
\begin{align*}
    \|\I_{(a,2a)} \phi\|_{ T^{p'}_{-(s+1)/2} }
    &\lesssim a^{\frac{s}{2}+1} \left( \int_{\bR^n} \langle x \rangle^{-Np'} \Big( \sup_{y \in B(x,(2a)^{1/2})} \langle y \rangle^N |\phi(y)| \Big)^{p'} dx \right)^{1/p'} \\
    &\lesssim a^{\frac{s}{2}+1} \cP_N(\phi),
\end{align*}
where $\langle x \rangle$ is the Japanese bracket, \textit{i.e.}, $\langle x \rangle := (1+|x|^2)^{1/2}$. We get
\begin{align*}
    I(a) 
    &\lesssim a^{-1} \|\I_{(a,2a)} e^{t\Delta} f\|_{T^p_{(s+1)/2}} \|\I_{(a,2a)} \phi\|_{T^{p'}_{-(s+1)/2}} \\
    &\lesssim a^{ \frac{s}{2} } \cP_N(\phi) \|\I_{(a,2a)} e^{t\Delta} f\|_{T^p_{(s+1)/2}}.
\end{align*}
When $s>0$, it follows that $I(a)$ tends to 0 if $a \to 0$ as $\|\I_{(a,2a)} e^{t\Delta} f\|_{T^p_{(s+1)/2}} \le \|e^{t\Delta} f\|_{T^p_{(s+1)/2}}$. When $s=0$ but $p \ne \infty$, it also holds as $\|\I_{(a,2a)} e^{t\Delta} f\|_{T^p_{(s+1)/2}}$ tends to 0 if $a \to 0$. 

For $\frac{n}{n+s} \le p \le 1$, \textit{i.e.}, $s-n[p,1] \ge 0$, note that
\[ \|\I_{(a,2a)} \phi\|_{T^\infty_{-({(s+1)}/{2}),([p,1])}} \lesssim a\cP_0(\phi), \]
so the claim follows as
\[ I(a) \lesssim a^{-1} \|\I_{(a,2a)} e^{t\Delta} f\|_{T^p_{ {(s+1)}/{2} }} \|\I_{(a,2a)} \phi\|_{T^\infty_{-({(s+1)}/{2}),([p,1])}} \lesssim_\phi \|\I_{(a,2a)} e^{t\Delta} f\|_{T^p_{(s+1)/2}}, \]
which also tends to 0 when $a \to 0$. \\

\paragraph{Case 2: $s=0$ and $p=\infty$}
For any $\phi \in \Cc(\bR^n)$, pick a ball $B \subset \bR^n$ containing $\supp(\phi)$. Note that
\[ \int_0^{r(B)^2} |\langle e^{t\Delta} f, \phi \rangle|^2 \frac{dt}{t} \lesssim \|e^{t\Delta} f\|_{T^\infty_{1/2}}^2 \cP_0(\phi)^2 |B|^2 < \infty. \]
The fact that $\langle e^{t\Delta} f, \phi \rangle$ tends to $\langle f, \phi \rangle$ if $t \to 0$ forces $f=0$ for the integral to converge, so $f=0$ in $\scrD'(\bR^n)$. This completes the proof.
\end{proof}

Replacing the heat extension by its gradient gives us the following 

\begin{cor}[Characterization of $\DotH^{s,p}$ via gradient of heat extension] 
    \label{cor:RW_grad}
    Let $s \in \bR$ and $0<p \le \infty$.

    \begin{enumerate}[label=\normalfont(\roman*)]
        \item \label{item:rw_grad_nabla-HeatExt(f)<f}
        Suppose $s<1$ and $f \in \DotH^{s,p}$. Then $\nabla \cE_{-\Delta}(f)$ lies in $T^p_{s/2}$ with
        \[ \|\nabla \cE_{-\Delta}(f)\|_{T^p_{s/2}} \lesssim \|f\|_{\DotH^{s,p}}. \] 

        \item \label{item:rw_grad_f<nabla-HeatExt(f)}
        Suppose $f \in \scrS'$ with $\nabla \cE_{-\Delta}(f) \in T^p_{s/2}$.
        \begin{enumerate}[label=\normalfont(\arabic*),ref=(\arabic*)]
            \item \label{item:rw_grad_f<nabla-HeatExt(f)_s>1}
            If $s \ge 1$ and $\frac{n}{n+s-1} \le p \le \infty$, then $f$ is a constant.
        
            \item \label{item:rw_grad_f<nabla-HeatExt(f)_s<1}
            If $s<1$, then there exists some constant $c \in \bC$ so that $f-c \in \DotH^{s,p}$ with
            \[ \|f-c\|_{\DotH^{s,p}} \eqsim \|\nabla \cE_{-\Delta}(f)\|_{T^p_{s/2}}. \]
        \end{enumerate}
    \end{enumerate}
    Consequently, when $s<1$, a tempered distribution $f \in \scrS'$ belongs to $\DotH^{s,p}+\bC$ if and only if $\nabla \cE_{-\Delta}(f)$ lies in $T^p_{s/2}$, with the equivalence of norms as
    \[ \inf_{c \in \bC} \|f-c\|_{\DotH^{s,p}} \eqsim \|\nabla \cE_{-\Delta}(f)\|_{T^p_{s/2}}. \]
\end{cor}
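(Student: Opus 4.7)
The plan is to reduce to Theorem \ref{thm:RW_sol} via the componentwise identity $\partial_j \cE_{-\Delta}(f) = \cE_{-\Delta}(\partial_j f)$, which holds for any $f \in \scrS'$ since the heat semigroup commutes with differentiation. Because $T^p_{s/2} = T^p_{((s-1)+1)/2}$, the assumption $\nabla \cE_{-\Delta}(f) \in T^p_{s/2}$ places each component $\partial_j f$ in the scope of Theorem \ref{thm:RW_sol} with shifted regularity index $s-1$.

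Parts \ref{item:rw_grad_nabla-HeatExt(f)<f} and \ref{item:rw_grad_f<nabla-HeatExt(f)_s>1} then follow immediately from this reduction. For \ref{item:rw_grad_nabla-HeatExt(f)<f}, the lifting property $\partial_j : \DotH^{s,p} \to \DotH^{s-1,p}$ combined with Theorem \ref{thm:RW_sol}\ref{item:rw_HeatExt(f)<f} (applicable because $s-1<0$) yields the tent-space bound. For \ref{item:rw_grad_f<nabla-HeatExt(f)_s>1}, the conditions $s \ge 1$ and $p \ge \frac{n}{n+s-1}$ match precisely the hypotheses of Theorem \ref{thm:RW_sol}\ref{item:rw_s>0_f=0} with index $s-1 \ge 0$, forcing $\partial_j f = 0$ in $\scrS'$ for each $j$; a tempered distribution with vanishing gradient must be constant.

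For \ref{item:rw_grad_f<nabla-HeatExt(f)_s<1}, Theorem \ref{thm:RW_sol}\ref{item:rw_s<0_finHsp} first yields $\partial_j f \in \DotH^{s-1,p}$ for every $j$, with $\|\partial_j f\|_{\DotH^{s-1,p}} \lesssim \|\cE_{-\Delta}(\partial_j f)\|_{T^p_{s/2}}$. The remaining task is to integrate $\nabla f$ back to $f$ modulo a constant. I would set
\[ \widetilde{f} := -\sum_{j=1}^n R_j (-\Delta)^{-1/2}(\partial_j f), \]
where each $(-\Delta)^{-1/2}(\partial_j f)$ lies in $\DotH^{s,p}$ by lifting and the Riesz transforms $R_j = \partial_j (-\Delta)^{-1/2}$ act boundedly on $\DotH^{s,p}$ by Calder\'on--Zygmund theory. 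A Fourier-side computation shows the underlying multiplier $\sum_j \xi_j^2/|\xi|^2$ equals $1$ off the origin, so $f - \widetilde{f}$ is a polynomial $P$ in $\scrS'$. Comparing $\nabla f$ with $\nabla \widetilde{f}$ --- both tempered distributions representing the same class modulo $\scrP$ and both lying in $(\DotH^{s-1,p})^n$ --- forces $\nabla P = 0$, so $P$ is a constant $c$ and $f - c = \widetilde{f} \in \DotH^{s,p}$. The two-sided norm equivalence then combines this bound with \ref{item:rw_grad_nabla-HeatExt(f)<f} applied to $f-c$, using $\nabla \cE_{-\Delta}(f-c) = \nabla \cE_{-\Delta}(f)$.

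The main obstacle is this reconstruction step: the polynomial ambiguity from inverting $\sum_j R_j^2 = -\mathrm{Id}$ could a priori have any degree, and showing it collapses to a single constant requires care with the realization structure of $\DotH^{s,p}$ and the parameter $\nu(s,p)$, especially in the quasi-Banach range $p \le 1$.
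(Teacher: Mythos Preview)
Your proposal is correct and follows the same route as the paper: apply Theorem~\ref{thm:RW_sol} componentwise to $\nabla f$, using $\partial_j \cE_{-\Delta}(f) = \cE_{-\Delta}(\partial_j f)$. The paper's proof is two lines because it quotes the equivalence
\[
f \in \DotH^{s,p}+\bC \ \Longleftrightarrow\ \nabla f \in \DotH^{s-1,p}, \qquad \inf_{c\in\bC}\|f-c\|_{\DotH^{s,p}} \eqsim \|\nabla f\|_{\DotH^{s-1,p}},
\]
as a known fact about the realized spaces (cf.\ Section~\ref{ssec:HS}), whereas you unpack this reconstruction explicitly via Riesz transforms. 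Your construction is a perfectly valid proof of that equivalence; the ``obstacle'' you flag is not a genuine one. Since $s<1$ implies $s-1<0$, one has $\nu(s-1,p)=0$ for all $0<p\le\infty$, so the only polynomial lying in $\DotH^{s-1,p}$ is $0$. Hence $\nabla P = \nabla f - \nabla\widetilde f \in (\DotH^{s-1,p})^n$ forces $\nabla P=0$ and $P$ is constant, exactly as you argue. No additional care is needed in the quasi-Banach range.
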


\begin{proof}
    A tempered distribution $f \in \scrS'$ lies in $\DotH^{s,p}+\bC$ if and only if $\nabla f \in \DotH^{s-1,p}$, with the equivalence $\inf_{c\in \bC}\|f-c\|_{\DotH^{s,p}} \eqsim \|\nabla f\|_{\DotH^{s-1,p}}$. So the corollary follows from applying Theorem~\ref{thm:RW_sol} to $\nabla f$ instead of $f$ and using the above equivalence.  
\end{proof}

In fact, working modulo constants, one can equip $\DotH^{s,p}+\bC$ with the ``norm modulo constants'' given by $\inf_{c \in \bC}\|f-c\|_{\DotH^{s,p}}$. When $\nu(s,p) \ge 0$, constants are contained in $\DotH^{s,p}$, so $\DotH^{s,p}+\bC=\DotH^{s,p}$, and for any $c \in \bC$, $\|f-c\|_{\DotH^{s,p}} = \|f\|_{\DotH^{s,p}}$.

\subsection{Representation of heat solutions}
\label{ssec:rep_heat} 

We prove here Theorem \ref{thm:heat_Hsp}. In Corollary \ref{cor:RW_grad}, we started from an element of $ \DotH^{s,p}$ and described in which range of exponents this is equivalent to the gradient of its heat extension being in a  weighted tent space. The question is now whether all heat solutions are of this form. We prove this is true in some (different) range of exponents, by establishing a trace result followed by a representation via heat semigroup.

\begin{prop}[Representation of heat solutions]
    \label{prop:rep_heat}
    Let $s>-1$ and $\frac{n}{n+s+1} \le p \le \infty$. Let $u$ be a distributional solution to the heat equation on $\bR^{1+n}_+$ with $\nabla u \in T^p_{{s}/{2}}$. Then there exists a unique $u_0 \in \scrS'$ so that $u(t)=e^{t\Delta}u_0$ for all $t>0$. 
\end{prop}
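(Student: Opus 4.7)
I would split the proof into two stages: first extract a trace $u_0\in\scrS'$ at $t=0$, and then identify $u$ with $e^{t\Delta}u_0$ by a semigroup-uniqueness argument on strips $(\varepsilon,\infty)\times\bR^n$ followed by sending $\varepsilon\to 0$. Uniqueness of $u_0$ is automatic, since any two candidate traces must agree in $\scrD'(\bR^n)$ and hence in $\scrS'$.

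\textbf{Trace extraction.} For any $\phi\in\scrS$, by hypoellipticity and the heat equation, $\frac{d}{dt}\langle u(t),\phi\rangle=\langle u(t),\Delta\phi\rangle=-\langle\nabla u(t),\nabla\phi\rangle$, so it suffices to show that $\tau\mapsto\langle\nabla u(\tau),\nabla\phi\rangle$ is integrable near $\tau=0$; then $t\mapsto\langle u(t),\phi\rangle$ is Cauchy as $t\to 0$ and the limit defines $u_0(\phi)$. I would view this pairing as a tent-space duality between $\nabla u\in T^p_{s/2}$ and the cutoff $\I_{(0,t)}\nabla\phi$, decompose $\nabla\phi$ dyadically in space, and bound each annular piece by the Schwartz semi-norm $\cP_N(\phi)$. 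The condition $p\ge\frac{n}{n+s+1}$ is exactly what makes the relevant dual tent norm finite and leaves a positive power of $t$, yielding integrability near $0$. (That these pairings make sense requires $u(\tau,\cdot)\in\scrS'$ for $\tau>0$, which is obtained from the same tent-space bound applied at a positive base time.) The mechanism parallels Case~1 in the proof of Theorem~\ref{thm:RW_sol}\ref{item:rw_s>0_f=0}.

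\textbf{Identification, and the main obstacle.} Fix $\varepsilon>0$. The slice $u(\varepsilon,\cdot)$ is smooth by hypoellipticity and tempered by the previous step, so both $(t,x)\mapsto u(t+\varepsilon,x)$ and $(t,x)\mapsto(e^{t\Delta}u(\varepsilon))(x)$ are smooth heat solutions on $[0,\infty)\times\bR^n$ sharing the trace $u(\varepsilon)\in\scrS'$ at $t=0$; classical $\scrS'$-uniqueness for the heat equation (convolution with the heat kernel) forces them to coincide, giving $u(t+\varepsilon)=e^{t\Delta}u(\varepsilon)$ for all $t>0$. Letting $\varepsilon\to 0$, the left side converges to $u(t)$ by continuity of $u$ on $(0,\infty)\times\bR^n$, while the right side converges to $e^{t\Delta}u_0$ by continuity of the heat semigroup on $\scrS'$ (compatible with Proposition~\ref{prop:HeatExt_cont_Hsp}). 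I expect the principal difficulty to be the trace extraction, especially at the endpoint $p=\frac{n}{n+s+1}$, where the weight $t^{-s/2}$ and the spatial Schwartz decay of $\phi$ must balance precisely through tent-space duality combined with the dyadic-annulus decomposition; the identification step is comparatively routine once the trace is available.
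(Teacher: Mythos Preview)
Your plan is close to the paper's: the tent-space duality bound $\int_0^2\int_{\bR^n}|\nabla u|\,|\nabla\phi|\lesssim\cP_N(\phi)\|\nabla u\|_{T^p_{s/2}}$ you describe is exactly \eqref{e:heat_uc_nabla_u_N2}, and it is the engine behind the trace argument. Your identification step via $\scrS'$-uniqueness on strips is a clean alternative to what the paper does, namely outsource both trace existence and the semigroup representation to \cite[Theorem~1.1]{Auscher-Hou2023_RepHeat} after verifying its two hypotheses (a sub-Gaussian size condition and a uniform control condition).

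There is, however, a real gap. Your integral identity controls only the \emph{increment} $\langle u(t),\phi\rangle-\langle u(t_0),\phi\rangle$; to get started you must know that $\langle u(t_0),\phi\rangle$ is well-defined and bounded by a Schwartz seminorm at some fixed $t_0>0$. This does \emph{not} follow from the tent-space bound on $\nabla u$, which governs oscillation but not absolute size. The paper supplies this via Lemma~\ref{lemma:size_nabla_u_tent}: Poincar\'e's inequality turns $\nabla u\in T^p_{s/2}$ into a polynomial-in-$R$ bound on $\|u\|_{L^2((a,b)\times B(0,R))}$, and a dyadic-annulus decomposition of $\phi$ then yields $\fint_1^2|\langle u(t'),\phi\rangle|\,dt'\lesssim\cP_N(\phi)$. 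Your parenthetical that $u(\tau)\in\scrS'$ comes from ``the same tent-space bound applied at a positive base time'' is circular. The same growth estimate is also what validates your identification step: heat-equation uniqueness fails without size control (Tychonoff), and what you actually need---that $u(\cdot+\varepsilon)\in C([0,T];\scrS')$---again requires the base-point bound from Lemma~\ref{lemma:size_nabla_u_tent}.
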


\begin{remark}
    Note that we work in $\scrS'$, and not just modulo polynomials. This induces restrictions on $s$ and $p$ in the proof.
\end{remark}

Let us start the proof by a lemma on the growth of local $L^2$-norms of functions with $\nabla u \in T^p_{{s}/{2}}$.
\begin{lemma}
    \label{lemma:size_nabla_u_tent}
    Let $s \in \bR$ and $0<p \le \infty$. Let $u$ be in $L^2_{\loc}(\bR^{1+n}_+)$ with $\nabla u \in T^p_{{s}/{2}}$. Then for $0<a<b<\infty$ and $R>1$,
    \begin{equation}
        \label{e:heat_u_L2(a,b)B(0,R)}
        \int_a^b \int_{B(0,R)} |u|^2 \lesssim_{a,b,p,s} R^{3n+2} \left( \|\nabla u\|_{T^p_{{s}/{2}}}^2 + \|u\|_{L^2((a,b) \times B(0,1))}^2 \right).
    \end{equation}
\end{lemma}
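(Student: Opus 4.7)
The plan is to split the proof into two steps. First, I would control $\nabla u$ in $L^2((a,b) \times B(0,R))$ by the tent norm with a polynomial factor in $R$; then Poincar\'e's inequality lets me transfer control of $u$ from the small cylinder $(a,b) \times B(0,1)$ to the large one.

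Step 1 is to show that for some $\alpha = \alpha(p) \le n$,
\[ \int_a^b \int_{B(0,R)} |\nabla u|^2 \, dt \, dy \lesssim_{a,b,s,p} R^\alpha \|\nabla u\|_{T^p_{s/2}}^2. \]
When $p \le 2$, this follows from the tent embedding $T^p_{s/2} \hookrightarrow T^2_{\beta_1}$ with $\beta_1 := s/2 + n/4 - n/(2p)$ (Section~\ref{ssec:tent-slice}) combined with the Fubini identity $\|F\|_{T^2_\beta}^2 = \int_0^\infty t^{-2\beta} \|F(t)\|_{L^2(\bR^n)}^2 \, dt$; on $t \in (a,b)$ the weight is pinched between positive constants, giving even the global bound $\int_a^b \int_{\bR^n} |\nabla u|^2 \lesssim \|\nabla u\|_{T^p_{s/2}}^2$, so $\alpha = 0$. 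When $p \in (2,\infty)$, setting $\mathcal{B} := B(0, R+b^{1/2})$ and $S(x) := \int_a^b \fint_{B(x,t^{1/2})} |t^{-s/2}\nabla u|^2 \, dt \, dy$, a Fubini computation using that $B(y,t^{1/2}) \subset \mathcal{B}$ for $y \in B(0,R)$ and $t \le b$ yields $\int_\mathcal{B} S(x)\,dx \gtrsim_{a,b,s} \int_a^b \int_{B(0,R)} |\nabla u|^2 \, dt \, dy$, while H\"older's inequality gives $\int_\mathcal{B} S(x)\, dx \le |\mathcal{B}|^{1-2/p} \|\nabla u\|_{T^p_{s/2}}^2 \lesssim R^{n(1-2/p)} \|\nabla u\|_{T^p_{s/2}}^2$. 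When $p = \infty$, testing the defining supremum of $\|\nabla u\|_{T^\infty_{s/2}}$ against the ball $\mathcal{B} = B(0, \max(R, b^{1/2}))$ gives directly $\alpha = n$.

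Step 2 uses Poincar\'e. Since $u \in L^2_{\loc}$ and $\nabla u \in L^2_{\loc}((a,b) \times \bR^n)$ by Step 1, $u(t,\cdot) \in W^{1,2}_{\loc}(\bR^n)$ for a.e.~$t \in (a,b)$. Writing $u_B(t) := \fint_B u(t,y)\,dy$, Poincar\'e on $B(0,R)$ yields
\[ \|u(t) - u_{B(0,R)}(t)\|_{L^2(B(0,R))}^2 \lesssim R^2 \|\nabla u(t)\|_{L^2(B(0,R))}^2, \]
and Cauchy--Schwarz on $B(0,1) \subset B(0,R)$ yields $|u_{B(0,R)}(t) - u_{B(0,1)}(t)| \lesssim R\|\nabla u(t)\|_{L^2(B(0,R))}$. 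Together with $|u_{B(0,1)}(t)|^2 \lesssim \|u(t)\|_{L^2(B(0,1))}^2$ this gives
\[ \|u(t)\|_{L^2(B(0,R))}^2 \lesssim R^{n+2}\|\nabla u(t)\|_{L^2(B(0,R))}^2 + R^n \|u(t)\|_{L^2(B(0,1))}^2. \]
Integrating in $t \in (a,b)$ and inserting Step~1 with $\alpha \le n$ and $R > 1$ yields the claimed $R^{3n+2}$ bound.

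The main obstacle is Step~1, where the tent space $T^p_{s/2}$ behaves qualitatively differently in the three sub-ranges $p \le 2$, $2 < p < \infty$, and $p=\infty$; however, in each case only a standard tool is required (tent embedding, H\"older's inequality, and the BMO-type definition, respectively), and Step~2 is then routine.
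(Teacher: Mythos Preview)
Your proof is correct and follows essentially the same two-step approach as the paper: a Poincar\'e argument reducing to control of $\int_a^b\int_{B(0,R)}|\nabla u|^2$, followed by a case split in $p$ to bound this by $R^n\|\nabla u\|_{T^p_{s/2}}^2$. The only differences are cosmetic: you carry out Step~1 directly via tent-space embedding and Fubini/H\"older, whereas the paper cites \cite[Lemmas~4.8--4.9]{Auscher-Hou2023_SIOTent} for the same estimates, and your difference-of-averages bound in Step~2 uses the $L^2$ norm of $\nabla u$ rather than $L^1$, which in fact yields the slightly sharper exponent $R^{2n+2}$ before majorizing by $R^{3n+2}$.
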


\begin{proof}
    Poincar\'e's inequality yields
    \[ \int_{B(0,R)} \bigg| u-\fint_{B(0,R)} u \bigg|^2 \lesssim R^2 \int_{B(0,R)} |\nabla u|^2. \]
    Here and in the sequel, unspecified measures in the integral are unweighted Lebesgue measure. One can also easily verify that
    \[ \bigg| \fint_{B(0,R)} u - \fint_{B(0,1)} u \bigg| \lesssim R \int_{B(0,R)} |\nabla u|. \]
    Then we have
    \begin{align*}
        \int_a^b \int_{B(0,R)} |u|^2 
         &\lesssim \int_a^b \int_{B(0,R)}\bigg| u-\fint_{B(0,R)} u \bigg |^2 \\
         &\quad+ \int_a^b \int_{B(0,R)}\bigg| \fint_{B(0,R)} u-\fint_{B(0,1)} u \bigg |^2 + R^n \int_a^b \fint_{B(0,1)} |u|^2 \\ 
         &\lesssim R^{2n+2} \int_a^b \int_{B(0,R)} |\nabla u|^2 + R^n \int_a^b \int_{B(0,1)} |u|^2.
    \end{align*}
    Then \eqref{e:heat_u_L2(a,b)B(0,R)} follows from the claim that for any $R>1$,
    \begin{equation}
        \int_a^b \int_{B(0,R)} |\nabla u|^2 \lesssim_{a,b,p,s} R^n \|\nabla u\|_{T^p_{{s}/{2}}}^2.
        \label{e:heat_est_size_nabla_u}
    \end{equation}
    Let us verify the claim. For $2 \le p \le \infty$, \cite[Lemma 4.9]{Auscher-Hou2023_SIOTent} implies
    \[ \left \| \left ( \int_a^b \fint_{B(\cdot,b^{1/2})} |\nabla u(t,y)|^2 dtdy \right )^{1/2} \right \|_p \lesssim_{a,b,s} \|\nabla u\|_{T^p_{{s}/{2}}}. \]
    Denote by $r$ the H\"older conjugate of $p/2$, and \eqref{e:heat_est_size_nabla_u} follows from
    \begin{align*}
        \int_a^b \int_{B(0,R)} |\nabla u|^2 
        &\lesssim \int_{B(0,R+b^{1/2})} dx \int_a^b \fint_{B(x,b^{1/2})} |\nabla u(t,y)|^2 dtdy \\
        &\lesssim_{a,b,p,s} R^{\frac{n}{r}} \|\nabla u\|_{T^p_{{s}/{2}}}^2.
    \end{align*}
    For $0<p \le 2$, \eqref{e:heat_est_size_nabla_u} follows by \cite[Lemma 4.8]{Auscher-Hou2023_SIOTent} as
    \[ \int_a^b \int_{B(0,R)} |\nabla u|^2 \lesssim_{a,b,p,s} \|\nabla u\|_{T^p_{{s}/{2}}}^2. \]
    This completes the proof.
\end{proof}

\begin{proof}[Proof of Proposition \ref{prop:rep_heat}]
    The conclusion holds from applying \cite[Theorem 1.1]{Auscher-Hou2023_RepHeat}. Let us verify the size condition and the uniform control condition there. Recall that $u$ is smooth.  Lemma \ref{lemma:size_nabla_u_tent} implies that for $0<a<b<\infty$, there exists $\gamma \in (0,1/4)$ so that for any $R>0$,
    \[ \left( \int_a^b \int_{B(0,R)} |u|^2 \right)^{1/2} \lesssim_{a,b,\gamma} \exp \left( \frac{\gamma R^2}{b-a} \right), \]
    which is exactly the size condition. For the uniform control condition, we show that there exists $N>0$ so that for any $\phi \in \Cc(\bR^n)$,
    \begin{equation}
        \label{e:heat_verify_UC}
        \sup_{0<t<1/2} |\langle u(t),\phi \rangle| \lesssim \cP_N(\phi) \left( \|\nabla u\|_{T^p_{{s}/{2}}} + \|u\|_{L^2((1,2) \times B(0,1))} \right),
    \end{equation}
    where $\cP_N(\phi)$ is defined in the proof of Theorem \ref{thm:RW_sol}. To this end, fix $0<t<1/2<1<t'<2$ and $\phi \in \Cc(\bR^n)$. Note that 
    \[ \int_1^2 \int_{\bR^n} |u| |\phi| \le \int_1^2 \int_{|x|<1} |u| |\phi| + \sum_{k=1}^\infty \int_1^2 \int_{2^{k-1} \le |x| < 2^k} |u| |\phi|. \]
    Denote by $I_k$ the $k$-th term for $k \ge 0$. For $k=0$, we have
    \[ I_0 \lesssim \|u\|_{L^2((1,2) \times B(0,1))} \cP_0(\phi). \]
    For $k \ge 1$, using \eqref{e:heat_u_L2(a,b)B(0,R)}, we get
    \begin{align*}
        I_k
         &\le \left ( \int_1^2 \int_{|x|<2^k} |u|^2 \right )^{1/2} \left ( \int_1^2 \int_{2^{k-1} \le |x|<2^k} |\phi|^2 \right )^{1/2} \\ 
         &\lesssim 2^{k(2n+1)} \left( \|\nabla u\|_{T^p_{{s}/{2}}} + \|u\|_{L^2((1,2) \times B(0,1))} \right) \sup_{2^{k-1} \le |x| < 2^k} |\phi(x)| \\ 
         &\lesssim 2^{-k} \cP_{N_1}(\phi) \left( \|\nabla u\|_{T^p_{{s}/{2}}} + \|u\|_{L^2((1,2) \times B(0,1))} \right)
    \end{align*}
    for some $N_1>0$. We hence obtain
    \[ \int_1^2 \int_{\bR^n} |u| |\phi| \lesssim \cP_{N_1}(\phi) \left( \|\nabla u\|_{T^p_{{s}/{2}}} + \|u\|_{L^2((1,2) \times B(0,1))} \right). \]
    
    Next, we claim that there exists $N_2>0$ so that 
    \begin{equation}
        \label{e:heat_uc_nabla_u_N2}
        \int_0^2 \int_{\bR^n} |\nabla u| |\nabla \phi| \lesssim \cP_{N_2}(\phi) \|\nabla u\|_{T^p_{{s}/{2}}}.
    \end{equation}
    Then \eqref{e:heat_verify_UC} follows by using the mean-value inequality in time and the equation for $u$. Indeed, pick $N>\max\{N_1,N_2\}$, and we get
    \begin{align*}
        | \langle u(t),\phi \rangle| 
        &\le \fint_1^2 |\langle u(t'),\phi \rangle| \, dt' + \fint_1^2 |\langle u(t'),\phi \rangle - \langle u(t),\phi \rangle| \, dt' \\
        &\le \fint_1^2 |\langle u(t'),\phi \rangle| \, dt' + \int_0^2 \int_{\bR^n} |\nabla u| |\nabla \phi| \\
        &\lesssim \cP_N(\phi) \left( \|\nabla u\|_{T^p_{{s}/{2}}} + \|u\|_{L^2((1,2) \times B(0,1))} \right)
    \end{align*}
    as desired. 
    
    Let us verify \eqref{e:heat_uc_nabla_u_N2}. We use duality of tent spaces. Let $s>-1$. For $1<p \le \infty$, pick $N_2>n/p'+1$ and we get
    \begin{align*} \|\I_{(0,2)}\nabla \phi\|_{T^{p'}_{-{s}/{2}}} &\lesssim {\bigg( \int_0^2 t^s dt \bigg)^{1/2}
     \left( \int_{\bR^n} \bigg( \sup_{y \in B(x,2)} |\nabla \phi(y)| \bigg)^{p'} dx \right)^{1/p'} }\\
     &\lesssim \bigg ( \int_{\bR^n} \langle x \rangle^{-(N_2-1)p'} \bigg ( \sup_{y \in B(x,2)} \langle y \rangle^{N_2-1} |\nabla \phi(y)| \bigg)^{p'} dx \bigg)^{1/p'} \lesssim \cP_{N_2}(\phi). \end{align*}
    For $\frac{n}{n+s+1} \le p \le 1$, \textit{i.e.}, $s+1-n[p,1] \ge 0$, we have
    \[ \|\I_{(0,2)} \nabla\phi\|_{T^\infty_{-{s}/{2}, ([p,1])}} \lesssim \|\nabla \phi\|_\infty \le \cP_{N_2}(\phi). \]
    This proves \eqref{e:heat_uc_nabla_u_N2} and hence completes the proof.
\end{proof}

\begin{proof}[Proof of Theorem \ref{thm:heat_Hsp}]
	Assertion \ref{item:heat_tent_est} has been shown in Corollary \ref{cor:RW_grad}. For \ref{item:heat_rep}, the existence of $u_0 \in \scrS'$ comes from Proposition \ref{prop:rep_heat}.  Corollary~\ref{cor:RW_grad} and Proposition~\ref{prop:HeatExt_cont_Hsp} show the desired properties  \ref{item:heat_rep_s>1} and \ref{item:heat_rep_s<1}.
\end{proof}
%%%%% L2-theory
\section{Basic facts about weak solutions}
\label{sec:basicfactsweaksol}

The goal of the rest of the paper is to study the Cauchy problems
\begin{equation}
    \label{e:HCLions-L2}
    \begin{cases}
    \partial_t u - \Div(A\nabla u) = \Div F \\
    u(0)=u_0
    \end{cases},
\end{equation}
where $A$ satisfies \eqref{e:uniformly_elliptic}. In this section, we review the definition of  weak solutions, prove \textit{a priori} energy inequalities, and recall elements of the $L^2$-theory. Denote by $W^{1,2}$ the inhomogeneous Sobolev space with the norm $\|f\|_{W^{1,2}} := \|f\|_2+\|\nabla f\|_2$.

\begin{definition}[Weak solutions]
    \label{def:weaksol}
    Let $0 \leq a < b \leq \infty$, $\Omega$ be an open subset of $\bR^n$, and $Q:=(a,b) \times \Omega$. Let $f$ and $F$ be in $\scrD'(Q)$. A function $u \in L^2_{\loc}((a,b);W^{1,2}_{\loc}(\Omega))$ is called a \textit{weak solution} to the equation
    \[ \partial_t u - \Div(A\nabla u) =f+ \Div F \]
    with \textit{source term} $f+ \Div F$,
    if for any $\phi \in \Cc(Q)$,
    \begin{equation}
        \label{e:weak-sol}
        -\iint_Q u ~ {\partial_t \phi} + \iint_Q (A \nabla u) \cdot {\nabla \phi} = (f,{\phi}) -(F,{\nabla \phi}).
    \end{equation}
    The pairs on the right-hand side are understood as pairings of distributions and test functions on $Q$. We say $u$ is a \textit{global weak solution} if \eqref{e:weak-sol} holds for $Q=\bR^{1+n}_+$.
    
    Let $a=0$ and $u_0 \in \scrD'(\Omega)$.  By the \textit{initial condition} $u(0)=u_{0}$, we further impose that the weak solution $u(t)$ converges to $u_{0}$ in $\scrD'(\Omega)$ as $t \to 0$.
\end{definition}
Similarly, there is a corresponding definition of weak solutions for the dual backward equation $ -\partial_t u - \Div(A^*\nabla u) =f+ \Div F$.

\subsection{Energy inequalities}
\label{sec:energyinequalities}
Let us recall without proof a form of the classical Caccioppoli's inequality.
\begin{lemma}
    \label{lemma:lions_Caccioppoli}
    Let $0<a<b<\infty$, $B \subset \bR^n$ be a ball, and $f$, $F$ be in $L^2((a,b) \times 2B)$. Let $u \in L^2((a,b);W^{1,2}(2B))$ be a weak solution to the equation $\partial_t u - \Div(A\nabla u) = f+\Div F$ in $(a,b) \times 2B$. Then $u$ lies in $C([a,b];L^2(B))$ with  
    \begin{align*}
        \|u(b)\|_{L^2(B)}^2 \lesssim
        &\left ( \frac{1}{r(B)^2}+\frac{1}{b-a} \right ) \int_a^b \|u(s)\|_{L^2(2B)}^2\, ds \\
        &+ r(B)^2 \int_a^b \|f(s)\|_{L^2(2B)}^2 \,ds + \int_a^b \|F(s)\|_{L^2(2B)}^2\, ds .
    \end{align*}
    Moreover, for any $c \in (a,b)$, it holds that
    \begin{align*}
        \int_c^b \|\nabla u(s)\|_{L^2(B)}^2&\, ds \lesssim
        \frac{1}{c-a} \left ( 1+\frac{b-a}{r(B)^2} \right ) \int_a^b \|u(s)\|_{L^2(2B)}^2 \, ds \\
        &+ \frac{r(B)^2 (b-a)}{c-a} \int_a^b \|f(s)\|_{L^2(2B)}^2 \,ds + \frac{b-a}{c-a} \int_a^b \|F(s)\|_{L^2(2B)}^2 \, ds.
    \end{align*}
    The implicit constants are independent of $a$, $b$, $c$, and $B$.
\end{lemma}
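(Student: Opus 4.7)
The plan is to apply the classical Caccioppoli technique, with Steklov averaging in time to compensate for the fact that $u$ has only $L^2_{\loc}((a,b);W^{1,2}_{\loc})$ regularity and the weak formulation \eqref{e:weak-sol} does not admit $u$ itself as a legitimate test function. The main obstacle is making the integration by parts in time rigorous and extracting the boundary term $\|u(b)\|_{L^2(B)}^2$, which simultaneously gives the continuity in $L^2(B)$.

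\textbf{Setup.} Fix $\eta\in C_{c}^\infty(2B)$ with $\eta\equiv 1$ on $B$ and $\|\nabla\eta\|_\infty\lesssim r(B)^{-1}$. For $h>0$ small and $g\in L^1_{\loc}$, let $g_{h}(t):=\frac{1}{h}\int_{t}^{t+h}g(s)\,ds$ denote the Steklov average. First I would check, by plugging smooth compactly supported test functions into \eqref{e:weak-sol} and averaging, that $u_{h}$ is $H^{1}$ in $t$ on compact subintervals of $(a,b-h)$ and satisfies the pointwise-in-$t$ identity $\partial_{t}u_{h}=(\Div(A\nabla u)+f+\Div F)_{h}$ in $\scrD'(2B)$. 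Combined with $u_{h}\in L^{2}_{\loc}(W^{1,2}(2B))$, a standard Lions--Magenes argument yields $u\in C((a,b);L^{2}(2B))$ (the continuity up to $t=b$ will come out of the energy estimate below).

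\textbf{First inequality.} For $t_{0}\in(a,b)$, test the equation satisfied by $u_{h}$ against $\eta^{2}u_{h}\cdot\mathbf{1}_{[t_{0},b]}(t)$ (after mollifying the indicator, then removing the mollification). Since $u_{h}$ is now absolutely continuous in $t$ into $L^{2}$, I would obtain
\[
\tfrac{1}{2}\|\eta u_{h}(b)\|_{2}^{2}-\tfrac{1}{2}\|\eta u_{h}(t_{0})\|_{2}^{2}+\int_{t_{0}}^{b}\!\!\int_{2B}\eta^{2}(A\nabla u)_{h}\cdot\nabla u_{h}=\int_{t_{0}}^{b}\!\!\int_{2B}\bigl(\eta^{2}f_{h}u_{h}-F_{h}\cdot\nabla(\eta^{2}u_{h})\bigr).
\]
Letting $h\to 0$, applying the lower bound $\Re\langle A\xi,\xi\rangle\ge\Lambda_{0}|\xi|^{2}$ to the quadratic form, and using Cauchy--Schwarz and Young's inequality to absorb the cross terms involving $\nabla u$ coming from $\nabla(\eta^{2}u)=2\eta u\nabla\eta+\eta^{2}\nabla u$ and from the $F$-term, this yields
\[
\|\eta u(b)\|_{2}^{2}+\int_{t_{0}}^{b}\!\!\int_{2B}\eta^{2}|\nabla u|^{2}\lesssim \|\eta u(t_{0})\|_{2}^{2}+\tfrac{1}{r(B)^{2}}\!\!\int_{t_{0}}^{b}\!\!\|u\|_{L^{2}(2B)}^{2}+r(B)^{2}\!\!\int_{t_{0}}^{b}\!\!\|f\|_{L^{2}(2B)}^{2}+\!\!\int_{t_{0}}^{b}\!\!\|F\|_{L^{2}(2B)}^{2}.
\]
Finally I would average the resulting inequality over $t_{0}\in(a,\frac{a+b}{2})$ to replace $\|\eta u(t_{0})\|_{2}^{2}$ by $\frac{2}{b-a}\int_{a}^{b}\|u\|_{L^{2}(2B)}^{2}$, giving the first bound.

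\textbf{Second inequality.} For the gradient estimate on $[c,b]$, I would instead test with $\eta^{2}\chi(t)^{2}u_{h}$, where $\chi\in C^{\infty}([a,b])$ satisfies $\chi\equiv 0$ near $t=a$, $\chi\equiv 1$ on $[c,b]$, and $|\chi'|\lesssim (c-a)^{-1}$. The time derivative now produces $\int_{a}^{b}\int_{2B}\chi\chi'\eta^{2}|u|^{2}$ instead of a boundary term at $t_{0}$, and the same ellipticity/Young argument as above yields
\[
\int_{c}^{b}\!\!\int_{B}|\nabla u|^{2}\lesssim\tfrac{1}{c-a}\!\!\int_{a}^{b}\!\!\|u\|_{L^{2}(2B)}^{2}+\tfrac{1}{r(B)^{2}}\!\!\int_{a}^{b}\!\!\|u\|_{L^{2}(2B)}^{2}+r(B)^{2}\!\!\int_{a}^{b}\!\!\|f\|_{L^{2}(2B)}^{2}+\!\!\int_{a}^{b}\!\!\|F\|_{L^{2}(2B)}^{2},
\]
which, after factoring, is the claimed bound (the factor $(b-a)$ multiplying the $f$ and $F$ terms arises by keeping track of the bound $\chi^{2}\le 1$ versus the $\frac{1}{c-a}$ weight from $\chi\chi'$, and absorbing a $\frac{b-a}{c-a}$ factor on the right).

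The main technical point is the justification at $t=b$: since the test function is nonzero there, rather than compactly supported in $(a,b)$, one must work with Steklov averages and pass to the limit carefully, which simultaneously produces the continuity statement $u\in C([a,b];L^{2}(B))$.
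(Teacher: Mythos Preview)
The paper does not actually prove this lemma: it is introduced with ``Let us recall without proof a form of the classical Caccioppoli's inequality,'' so there is no proof in the paper to compare against. Your plan is the standard Steklov-average/cutoff argument and is correct; in particular your intermediate bound for the second estimate,
\[
\int_{c}^{b}\!\!\int_{B}|\nabla u|^{2}\lesssim\tfrac{1}{c-a}\!\!\int_{a}^{b}\!\!\|u\|_{L^{2}(2B)}^{2}+\tfrac{1}{r(B)^{2}}\!\!\int_{a}^{b}\!\!\|u\|_{L^{2}(2B)}^{2}+r(B)^{2}\!\!\int_{a}^{b}\!\!\|f\|_{L^{2}(2B)}^{2}+\!\!\int_{a}^{b}\!\!\|F\|_{L^{2}(2B)}^{2},
\]
is already stronger than the stated one, since $\frac{b-a}{c-a}\ge 1$, so the closing remark about ``absorbing a $\frac{b-a}{c-a}$ factor'' is unnecessary.
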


There is also a corresponding version for weak solutions to the  backward equation. In the sequel, we refer to ``Caccioppoli's inequality" in both cases.

For any $(t,x) \in \bR^{1+n}_+$, the set $W(t,x):=(t,2t) \times B(x,t^{1/2})$ is called a \textit{(parabolic) Whitney cube} at $(t,x)$. 

\begin{cor}
    \label{cor:energyinequalitytent space} 
    Let $\beta \in \bR$ and $0<p\le \infty$. For any global weak solution $u$ on $\bR^{1+n}_+$ to $\partial_t u - \Div(A\nabla u) = f+\Div F$, the \textit{a priori} energy inequality holds as
    \begin{equation}
        \label{eq:energyinequalitytentspace}
        \|\nabla u \|_{T^p_{\beta+1/2}} \lesssim  \| u \|_{T^p_{\beta+1}} + \|F \|_{T^p_{\beta+1/2}}+ \|f \|_{T^p_{\beta}}. 
    \end{equation}
    The same inequality occurs for global weak solutions to the backward equation.
\end{cor}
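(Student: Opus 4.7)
The plan is to derive the inequality by applying Caccioppoli's inequality from Lemma \ref{lemma:lions_Caccioppoli} pointwise on parabolic Whitney cubes and expressing the weighted tent-space norms in their Whitney-cube form. For each $(t,x) \in \bR^{1+n}_+$, let $W(t,x) := (t, 2t) \times B(x, t^{1/2})$ be the Whitney cube and $W^*(t,x) := (t/2, 2t) \times B(x, 2t^{1/2})$ its dilate. A standard Fubini computation gives the equivalence
\[ \|g\|_{T^p_\gamma} \eqsim \left\| \left( \int_0^\infty t^{-2\gamma} \fint_{W(t,\cdot)} |g|^2 \, \frac{dt}{t} \right)^{1/2} \right\|_p, \]
together with the analogous equivalence with $W$ replaced by $W^*$, the implicit constants depending only on dimension and the aperture (with the obvious Carleson-type modification for $p = \infty$ and $0<p\le 1$).

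The first key step is to apply Lemma \ref{lemma:lions_Caccioppoli} with $a = t/2$, $b = 2t$, $c = t$, and $B = B(x, t^{1/2})$. The three coefficients appearing in the lemma satisfy $(c-a)^{-1}(1 + (b-a)/r(B)^2) \sim 1/t$, $r(B)^2(b-a)/(c-a) \sim t$, and $(b-a)/(c-a) \sim 1$, so normalizing integrals to averages yields the pointwise bound
\[ \fint_{W(t,x)} |\nabla u|^2 \lesssim \frac{1}{t} \fint_{W^*(t,x)} |u|^2 \;+\; t \fint_{W^*(t,x)} |f|^2 \;+\; \fint_{W^*(t,x)} |F|^2. \]
Multiplying this inequality by the weight $t^{-2(\beta+1/2)}$, the three terms on the right acquire weights $t^{-2(\beta+1)}$, $t^{-2\beta}$, and $t^{-2(\beta+1/2)}$, matching precisely those of $T^p_{\beta+1}$, $T^p_\beta$, and $T^p_{\beta+1/2}$.

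The second step is to integrate the bound against $\frac{dt}{t}$, take the $L^{p/2}$-quasi-norm in $t$ and the $L^p$-quasi-norm in $x$, and apply the (quasi-)triangle inequality to split the three contributions. Together with the Whitney-cube equivalence stated above (and its variant for $W^*$), this yields \eqref{eq:energyinequalitytentspace}. The backward equation is handled identically via the time-reversed version of Lemma \ref{lemma:lions_Caccioppoli}. The only non-routine point I anticipate is the bookkeeping at the endpoints: the aperture change from $W$ to $W^*$, the quasi-triangle inequality when $p<1$, and the $p=\infty$ Carleson-measure formulation, where one has to rely on the duality with $T^1_{-(\beta+1/2)}$; otherwise the argument is essentially just Caccioppoli plus a change of variables.
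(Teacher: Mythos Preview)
Your proposal is correct and matches the paper's approach exactly: apply Caccioppoli's inequality on Whitney cubes and then use the Whitney-average (equivalently, change-of-aperture) characterization of the tent-space norms. One small correction for $p=\infty$: rather than duality with $T^1_{-(\beta+1/2)}$, the paper handles this endpoint directly by covering each Carleson box $(0,r(B)^2)\times B$ with Whitney cubes, applying Caccioppoli on each, and using that the enlarged cubes sit in $(0,16r(B)^2)\times 4B$ with bounded overlap---duality is awkward here since the inequality is not an operator bound.
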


\begin{proof}
When $p<\infty$, using Caccioppoli's inequality on local Whitney cubes and the change of aperture property of tent-space norms, we get
\begin{align*}
    \|\nabla u\|_{T^{p}_{\beta+1/2}}
    &\eqsim \left ( \int_{\bR^n} \left ( \int_0^\infty t^{-2\beta-1} dt \fint_{t/2}^t \fint_{B(x,t^{1/2})} |\nabla u|^2 \right )^{p/2} dx \right )^{1/p} \\
    &\lesssim \left ( \int_{\bR^n} \left ( \int_0^\infty t^{-2\beta-2} dt \fint_{t/4}^{t} \fint_{B(x,2t^{1/2})} |u|^2 \right )^{p/2} dx \right )^{1/p} \\
    &\quad + \left ( \int_{\bR^n} \left ( \int_0^\infty t^{-2\beta-1} dt \fint_{t/4}^{t} \fint_{B(x,2t^{1/2})} |F|^2 \right )^{p/2} dx \right )^{1/p} \\
    &\quad + \left ( \int_{\bR^n} \left ( \int_0^\infty t^{-2\beta} dt \fint_{t/4}^{t} \fint_{B(x,2t^{1/2})} |f|^2 \right )^{p/2} dx \right )^{1/p} \\
    &\lesssim \|u\|_{T^{p}_{\beta+1}} + \|F\|_{T^{p}_{\beta+1/2}}+ \|f\|_{T^{p}_{\beta}}.
\end{align*}
When $p=\infty$, we take a covering of the Carleson boxes $(0,{r(B)^2})\times B$ by local Whitney cubes and apply Caccioppoli's inequality on each, noting that the enlarged local Whitney cubes are contained in $(0,16r(B)^2) \times 4B$ with bounded overlapping. Detailed verification is left to the reader.
\end{proof}

\subsection{\texorpdfstring{$L^2$}{L2}-theory}
\label{ssec:L2}
In this section, we summarize the $L^2$-theory as a starting point. The weak solutions are given by Duhamel's formula and semigroup theory, see Section \ref{sssec:criticalexponents} for the definition of operators $L$ and $\cE_L$. We also define the \textit{Lions' operator} $\cR^L_{1/2}: L^2(\bR^{1+n}_+) \to L^2_{\loc}(\bR^{1+n}_+)$ by the $L^2$-valued Bochner integrals (where $L^2$ denotes $L^2(\bR^n)$)
\begin{equation}
    \label{e:R^L_1/2}
    \cR_{1/2}^L(F)(t) := \int_0^t e^{-(t-s)L} \Div F(s)\, ds, \quad \forall t>0.
\end{equation}

\begin{prop}[$L^2$-theory]
    \label{prop:wpL2}
    Let $u_0 \in L^2$ and $F \in L^2(\bR^{1+n}_+)$. Then there exists a unique global weak solution $u$ to the Cauchy problem \eqref{e:HCLions-L2} with $\nabla u \in L^2(\bR^{1+n}_+)$. Moreover,
    \begin{enumerate}[label=\normalfont(\roman*)]
        \item $u$ belongs to $C_0([0,\infty);L^2)$ with the estimate
        \[ \sup_{t \ge 0} \|u(t)\|_{2} + \|\nabla u\|_{L^2(\bR^{1+n}_+)} \lesssim \|u_{0}\|_{2} +  \|F\|_{L^2(\bR^{1+n}_+)}.  \]
        
        \item {\normalfont (Duhamel's formula)}   
        $u=\cE_L(u_0)+\cR^L_{1/2}(F)$.
    \end{enumerate}
\end{prop}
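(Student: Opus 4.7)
My plan is to invoke Lions' classical variational theorem and then upgrade its output to the claimed global-in-time estimates by direct energy computations. First, I would set up the Gelfand triple $V:=W^{1,2}(\bR^n)\hookrightarrow H:=L^2(\bR^n)\hookrightarrow V'$ and the time-independent sesquilinear form $a(u,v):=\int_{\bR^n}(A\nabla u)\cdot \overline{\nabla v}$. By \eqref{e:uniformly_elliptic}, $a$ is bounded on $V$ and satisfies the G\aa rding-type coercivity $\Re a(u,u)\ge \Lambda_0\|\nabla u\|_2^2$. The source $\Div F(t)$ is identified with an element of $V'$ via $v\mapsto -\int F(t)\cdot\overline{\nabla v}$, and $F\in L^2(\bR^{1+n}_+)$ gives $\Div F\in L^2(0,T;V')$ for every finite $T$ with norm controlled by $\|F\|_{L^2((0,T)\times\bR^n)}$. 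Lions' theorem on $[0,T]$ then yields a unique $u_T\in L^2(0,T;V)\cap C([0,T];H)$ with $u_T'\in L^2(0,T;V')$ solving the variational formulation with $u_T(0)=u_0$; these paste into a single $u$ on $[0,\infty)$ by local uniqueness.

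Next, I would test the equation against $u$ itself and use $\frac{d}{dt}\|u(t)\|_2^2 = 2\Re\langle u'(t), u(t)\rangle_{V',V} = -2\Re a(u,u) - 2\Re \langle F,\nabla u\rangle$. Integrating on $[0,T]$, applying coercivity and Young's inequality produces the $T$-independent bound
\[
\|u(T)\|_2^2 + \Lambda_0 \int_0^T \|\nabla u(s)\|_2^2\,ds \le \|u_0\|_2^2 + \Lambda_0^{-1}\|F\|_{L^2(\bR^{1+n}_+)}^2,
\]
which gives both $\nabla u\in L^2(\bR^{1+n}_+)$ and control of $\sup_{t\ge 0}\|u(t)\|_2$. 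Continuity $u\in C([0,\infty);L^2)$ is then automatic from the Lions--Magenes regularity $u\in L^2_{\loc}(V)$, $u'\in L^2_{\loc}(V')$. For the decay $\|u(t)\|_2\to 0$, the energy identity (together with $\|F\|_{L^2((t,\infty)\times\bR^n)}\to 0$) shows that $\|u(t)\|_2^2$ has a finite limit at infinity; picking a sequence $t_k\to\infty$ along which $\|\nabla u(t_k)\|_2\to 0$ and extracting a weak-$L^2$ limit $w$, one finds $\nabla w=0$, hence $w=0$, and convergence of norms upgrades the weak convergence to strong.

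Uniqueness reduces to showing that the only global weak solution with $\nabla u\in L^2(\bR^{1+n}_+)$, $u_0=0$ and $F=0$ is $u\equiv 0$. The equation already forces $u'\in L^2_{\loc}(V')$, so I would test against $u$ after a standard Steklov average in time (to make the pairing admissible) to reach $\|u(t)\|_2^2 + 2\Lambda_0\int_0^t\|\nabla u\|_2^2\le 0$, and thus $u\equiv 0$.

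Finally, I would identify the variational solution with Duhamel's formula $u=\cE_L(u_0)+\cR^L_{1/2}(F)$ by uniqueness: for $u_0\in D(L)$ and smooth compactly supported $F$, the right-hand side is a classical solution furnished by the bounded analytic semigroup $(e^{-tL})$ on $L^2$ together with the Bochner integral \eqref{e:R^L_1/2}, so it coincides with $u$; density of such data in $L^2\times L^2(\bR^{1+n}_+)$ and the energy estimate extend the identification to general data. The step I anticipate as most delicate is the strong decay $\|u(t)\|_2\to 0$, since $L$ is m-sectorial without spectral gap on $L^2(\bR^n)$ and no direct exponential estimate is available; the weak-compactness-plus-norm-convergence argument outlined above is precisely what bypasses this obstruction.
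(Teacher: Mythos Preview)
The paper does not prove this proposition; it treats it as known, citing Lions for existence and \cite[Theorem~3.11]{Auscher-Monniaux-Portal2019Lp} for uniqueness, with a pointer to \cite{Auscher-Portal2023Lions} for a survey. Your existence and energy-estimate steps via Lions' variational theorem are the classical route and are fine. Your uniqueness argument, however, has a genuine gap. A global weak solution in the sense of Definition~\ref{def:weaksol} is only assumed to lie in $L^2_{\loc}((0,\infty);W^{1,2}_{\loc}(\bR^n))$, and the uniqueness class here adds only the hypothesis $\nabla u\in L^2(\bR^{1+n}_+)$; neither forces $u(t)\in L^2(\bR^n)$. Hence the quantity $\|u(t)\|_2$ and the pairing $\langle u',u\rangle_{V',V}$ are not a~priori defined: you cannot test the equation against $u$ without first knowing $u\in L^2_{\loc}((0,\infty);L^2(\bR^n))$, and Steklov averaging in time does nothing for this spatial integrability issue. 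This is exactly why the paper singles out a separate reference for uniqueness ``in this larger (likely the largest) class''; the argument there goes through an interior representation with the adjoint semigroup (cf.~\eqref{e:homo-id}) rather than an energy identity.

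Your decay argument $\|u(t)\|_2\to 0$ is also circular. Weak convergence $u(t_k)\rightharpoonup 0$ together with $\|u(t_k)\|_2\to\ell$ does not yield $\ell=0$: a sequence like $u_k=k^{-n/2}\phi(\cdot/k)$ satisfies $\nabla u_k\to 0$ in $L^2$ and $u_k\rightharpoonup 0$, yet $\|u_k\|_2$ is constant and nonzero. Once Duhamel's formula is in hand, a clean fix is to use that $L$ is injective sectorial on $L^2(\bR^n)$, so $e^{-tL}u_0\to 0$ strongly, and to handle $\cR^L_{1/2}(F)$ by approximating $F$ with functions compactly supported in time.
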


Existence originates from the work of J.-L. Lions \cite[Th\'eor\`eme II.3.1]{Lions1957_L2}, and uniqueness in this larger (likely the largest) class is established in \cite[Theorem 3.11]{Auscher-Monniaux-Portal2019Lp}. The reader can refer to \cite[Theorem 2.2]{Auscher-Portal2023Lions} for a detailed survey of different proofs of this theorem.

This also allows us to obtain several equivalent expressions of these operators, using different interpretations of the equation. This  will come into play when studying their extensions. Denote by $\bI$ the identity matrix in $\mat_n(\bC)$. Define the operator $\cL_1^L: L^2(\bR^{1+n}_+) \to L^2_{\loc}((0,\infty);L^2)$ by the $L^2$-valued Bochner integrals
\begin{equation}
    \label{e:cL^L_1}
    \cL_1^L(f)(t) := \int_0^t e^{-(t-s)L} f(s)\,  ds, \quad \forall t>0.
\end{equation}

\begin{cor}[Explicit formulae]
    \label{cor:formula_L2}
    Let $u_0 \in L^2$ and $F \in L^2(\bR^{1+n}_+)$. 
    \begin{enumerate}[label=\normalfont(\roman*)]
        \item \label{item:formula_L2_RL1/2}
        Define $\tilF:=(A-\bI) \nabla \cR^L_{1/2}(F) + F$ on $\bR^{1+n}_+$. Then
        \begin{equation}
            \label{e:formula_lions_u}
            \cR^L_{1/2}(F) = \cR^{-\Delta}_{1/2}(\tilF) = \Div \cL_1^{-\Delta}(\tilF) \quad \text{ in } \scrD'(\bR^{1+n}_+).
        \end{equation}
    
        \item \label{item:formula_L2_EL}
        It holds that
        \begin{align}
            \cE_L(u_0) 
            &= \cE_{-\Delta}(u_0) + \cR^L_{1/2}((A-\bI) \nabla \cE_{-\Delta}(u_0))
            \label{e:formula_EL_RL1/2} \\
            &= \cE_{-\Delta}(u_0) - \cR^{-\Delta}_{1/2}((A-\bI) \nabla \cE_L(u_0)).
            \label{e:formula_EL_RDelta1/2}
        \end{align}
    \end{enumerate}
\end{cor}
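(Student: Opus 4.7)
The plan is to leverage uniqueness of $L^2$-weak solutions from Proposition~\ref{prop:wpL2} to identify different expressions of one and the same function, combined with the elementary commutation of $\Div$ with the heat semigroup.

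For part~\ref{item:formula_L2_RL1/2}, I set $u := \cR^L_{1/2}(F)$, which Proposition~\ref{prop:wpL2} characterises as the unique weak solution to $\partial_t u - \Div(A\nabla u) = \Div F$ with $u(0)=0$ and $\nabla u \in L^2(\bR^{1+n}_+)$. Rewriting the equation as
\[ \partial_t u - \Delta u = \Div\bigl((A-\bI)\nabla u + F\bigr) = \Div \tilF, \]
and noting that $A-\bI \in L^\infty$ together with $\nabla u, F \in L^2(\bR^{1+n}_+)$ force $\tilF \in L^2(\bR^{1+n}_+)$, I apply Proposition~\ref{prop:wpL2} this time to $L=-\Delta$ to identify $u = \cR^{-\Delta}_{1/2}(\tilF)$. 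The second identity reduces to commuting $\Div$ with the $L^2$-valued Bochner integral $\cL_1^{-\Delta}(\tilF)(t) = \int_0^t e^{(t-s)\Delta}\tilF(s)\,ds$: since $\Div : L^2 \to H^{-1}$ is bounded it passes through the integral, and since $e^{(t-s)\Delta}$ is a Fourier multiplier it commutes pointwise with $\Div$. Chaining these yields $\cR^{-\Delta}_{1/2}(\tilF) = \Div\cL_1^{-\Delta}(\tilF)$ in $\scrD'(\bR^{1+n}_+)$.

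For part~\ref{item:formula_L2_EL}, I set $v := \cE_L(u_0)$, $w := \cE_{-\Delta}(u_0)$ and $u := v - w$, observing that $\nabla v, \nabla w \in L^2(\bR^{1+n}_+)$ and $u(0) = 0$ by the $L^2$-theory applied to both $L$ and $-\Delta$. A short computation using $\partial_t v = \Div(A\nabla v)$ and $\partial_t w = \Delta w$ gives
\[ \partial_t u - \Div(A\nabla u) = \Div(A\nabla w) - \Delta w = \Div\bigl((A-\bI)\nabla\cE_{-\Delta}(u_0)\bigr), \]
so uniqueness in Proposition~\ref{prop:wpL2} identifies $u = \cR^L_{1/2}((A-\bI)\nabla\cE_{-\Delta}(u_0))$, yielding \eqref{e:formula_EL_RL1/2}. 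The companion identity \eqref{e:formula_EL_RDelta1/2} then drops out by applying part~\ref{item:formula_L2_RL1/2} with $F := (A-\bI)\nabla\cE_{-\Delta}(u_0)$: the corresponding $\tilF = (A-\bI)\nabla u + F$ collapses to $(A-\bI)\nabla(u+w) = (A-\bI)\nabla\cE_L(u_0)$, giving the asserted expression.

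The only nontrivial bookkeeping is to verify that each auxiliary source --- $\tilF$, $(A-\bI)\nabla\cE_{-\Delta}(u_0)$, and $(A-\bI)\nabla\cE_L(u_0)$ --- lies in $L^2(\bR^{1+n}_+)$ so that Proposition~\ref{prop:wpL2} is applicable; all three cases follow at once from $A \in L^\infty$ and the $L^2$-energy estimates already in hand. Modulo that, the corollary is a direct consequence of $L^2$-uniqueness and the semigroup commutation of $\Div$ with $e^{t\Delta}$.
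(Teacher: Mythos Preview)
Your proof is correct and follows essentially the same route as the paper: both parts recast the equation and invoke the $L^2$-uniqueness of Proposition~\ref{prop:wpL2}, and your commutation of $\Div$ with $\cL_1^{-\Delta}$ is exactly what the paper does. One small remark: your derivation of \eqref{e:formula_EL_RDelta1/2} via part~\ref{item:formula_L2_RL1/2} actually produces a $+$ sign rather than the stated $-$ sign (as does the paper's own ``analogous argument''); this is consistent with the later formulae \eqref{e:hc_EL_formula_RDelta} and \eqref{e:trace_rep}, so the $-$ in \eqref{e:formula_EL_RDelta1/2} appears to be a typo in the statement rather than an error in your reasoning.
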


\begin{proof}
    First consider \ref{item:formula_L2_RL1/2}. Proposition \ref{prop:wpL2} says that  $u:=\cR^L_{1/2}(F)$  is a global weak solution to the Cauchy problem
    \begin{equation}
        \label{e:lions} \tag{L} 
        \begin{cases}
        \partial_t u - \Div(A \nabla u) = \Div F \\ 
        u(0)=0
        \end{cases},
    \end{equation}
    with $\nabla u \in L^2(\bR^{1+n}_+)$. We hence infer that $\tilF$ lies in $L^2(\bR^{1+n}_+)$, and then $\tilu:=\cR^{-\Delta}_{1/2}(\tilF)$ is a global weak solution to the Cauchy problem
    \[ \begin{cases}
        \partial_t \tilu - \Delta \tilu = \Div \tilF = \Div((A-\bI) \nabla u) + \Div F \\
        \tilu(0)=0
    \end{cases} \]
    with $\nabla \tilu \in L^2(\bR^{1+n}_+)$. Therefore, $w:=u-\tilu$ is a global weak solution to the heat equation with null source term and null initial data. Since $\nabla w \in L^2(\bR^{1+n}_+)$, we deduce from uniqueness in Proposition \ref{prop:wpL2} for the heat equation that $w=0$, proving the first equality in \eqref{e:formula_lions_u}. For the last, as $\tilF \in L^2(\bR^{1+n}_+)$, for a.e. $t>0$, we have
    \begin{align*}
        \cR^{-\Delta}_{1/2}(\tilF)(t) 
        &= \int_0^t e^{(t-s)\Delta} \Div \tilF(s)\, ds \\
        &= \Div \int_0^t e^{(t-s)\Delta} \tilF(s) \, ds = \Div \cL_1^{-\Delta}(\tilF)(t)
    \end{align*}
    in the sense of distributions in $\bR^n$ and  \ref{item:formula_L2_RL1/2} follows.

    Next, we consider \ref{item:formula_L2_EL}. Proposition \ref{prop:wpL2} asserts that $v:=\cE_L(u_0)$ is the unique global solution to the homogeneous Cauchy problem
    \begin{equation*} 
        \tag{HC}
        \begin{cases}
        \partial_t v-\Div(A\nabla v) = 0\\ 
        v(0)=u_0
        \end{cases},
    \end{equation*}
    with $\nabla v \in L^2(\bR^{1+n}_+)$. Meanwhile, as $\nabla \cE_{-\Delta}(u_0) \in L^2(\bR^{1+n}_+)$, it also shows that $\tilv := \cR^L_{1/2}((A-\bI) \nabla \cE_{-\Delta}(u_0))$ is a global solution to the Cauchy problem
    \[ \begin{cases}
        \partial_t \tilv - \Div(A\nabla \tilv) =  \Div((A-\bI) \nabla \cE_{-\Delta}(u_0)) \\
        \tilv(0)=0
    \end{cases}. \] 
    Notice that $\cE_{-\Delta}(u_0) + \tilv$ is a global weak solution to the homogeneous Cauchy problem \eqref{e:hc} in the same class as $\cE_L(u_0)$. Then by uniqueness, we deduce $\cE_{L}(u_0)= \cE_{-\Delta}(u_0) + \tilv$ as wanted for \eqref{e:formula_EL_RL1/2}.

    One can also verify \eqref{e:formula_EL_RDelta1/2} via an analogous argument. Details are left to the reader. This completes the proof.
\end{proof}
%%%%% Extension of Lions' operator
\section{Properties of the Lions' operator}
\label{sec:lions} 

We provide weak solutions to Lions' equation \eqref{e:lions}, as asserted in Theorem \ref{thm:wp_lions}, by studying properties of the Lions' operator $\cR^L_{1/2}$ given by \eqref{e:R^L_1/2}. We state a general result here. Proof of the bounded extension is addressed in this section, while that of trace and continuity properties is  in the next one.

\subsection{Main properties of the Lions' operator}
\label{ssec:lions_results} 
We use the the critical exponent $\phcl{L}$ defined in \eqref{e:phc-p-(L)>1} and \eqref{e:phc-p-(L)<1}. 

\begin{theorem}[Extension of $\cR^L_{1/2}$]
    \label{thm:lions_ext}
    Let $\beta>-1$ and $\phcl{L} < p \le \infty$. Then $\cR_{1/2}^L$ extends to a bounded operator from $T^p_{\beta+{1}/{2}}$ to $T^p_{\beta+1}$, also denoted by $\cR_{1/2}^L$. Moreover, the following properties hold for any $F \in T^p_{\beta+{1}/{2}}$ and $u:=\cR^L_{1/2}(F)$.
    \begin{enumerate}[label=\normalfont(\alph*)]
        \item \label{item:lions_reg} {\normalfont (Regularity)}
        $u$ lies in $T^p_{\beta+1}$ and $\nabla u$ lies in $T^p_{\beta+1/2}$ with 
        \[ \|u\|_{T^{p}_{\beta+1}} \lesssim \|F\|_{T^{p}_{\beta+{1}/{2}}}, \quad \|\nabla u\|_{T^{p}_{\beta+{1}/{2}}} \lesssim \|F\|_{T^{p}_{\beta+{1}/{2}}}. \]

        \item \label{item:lions_formula} {\normalfont (Explicit formulae)}
        Define $\tilF:=(A-\bI)\nabla \cR^L_{1/2}(F)+F$. Then 
        \[ u = \cR^{-\Delta}_{1/2}(\tilF) = \Div \cL_1^{-\Delta}(\tilF) \quad \text{ in } \scrD'(\bR^{1+n}_+), \]
        where $\cL^{-\Delta}_1$ is defined in \eqref{e:cL^L_1}.

        \item \label{item:lions_sol}
        $u$ is a global weak solution to the equation $\partial_t u - \Div(A\nabla u) = \Div F$.

        \item \label{item:lions_cont} {\normalfont (Continuity and trace)}
        $u \in C([0,\infty);\scrS')$ with $u(0)=0$. 
    
        When $t \to 0$, the convergence occurs in the following spaces shown in Table \ref{tab:lions_cont}, with arbitrary parameters $\delta>0$, $q \in [p,\infty]$, and $s \in [-1,2\beta+1]$, and a fixed one $r:=\frac{np}{n-(2\beta+2)p}>1$.

        \begin{table}[!ht]
            \centering
            \caption{Spaces for convergence of $\cR^L_{1/2}(F)(t)$ as $t \to 0$.}
            \label{tab:lions_cont}
            \begin{tabular}{c|c|c}
            Conditions 
            & $\phcl{L}<p \le 2$ 
            & $2<p \leq \infty$ \\
            \hline
         
            $\beta \ge -1/2$ 
            & $\begin{cases}
               \scrS' & \text{ if } p<1\\ 
               L^p  & \text{ if } 1 \le p \le 2 
            \end{cases}$ 
            & $E^{-1,q}_\delta$ \\
            \hline
        
            $-1<\beta<-1/2$ 
            & $\begin{cases}
                \DotH^{-1,r} & \text{ if } p \le 1\\ 
                \DotH^{s,p} & \text{ if } 1<p \le 2 
            \end{cases}$
            &  $E^{-1,q}_\delta$
            \end{tabular}
        \end{table}
    \end{enumerate}
    Consequently, $u$ is a global weak solution to the Lions' equation \eqref{e:lions}.
\end{theorem}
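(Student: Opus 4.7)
The plan is to bootstrap from the $L^2$-theory (Proposition~\ref{prop:wpL2} and Corollary~\ref{cor:formula_L2}) to weighted tent spaces via interpolation, duality, and density, and then to extract the qualitative properties (c) and (d) from the explicit formulae in (b).

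\textbf{Bounded extension.} I would first prove the key estimate $\|\cR^L_{1/2}(F)\|_{T^p_{\beta+1}} \lesssim \|F\|_{T^p_{\beta+1/2}}$ on the dense subclass $T^p_{\beta+1/2} \cap L^2(\bR^{1+n}_+)$, so that the $L^2$-formula applies pointwise. The argument should split into regimes mirroring the piecewise definition of $\phcl{L}$. In the Sobolev-type regime $\beta \ge -1/2$ (so $\phcl{L} = p_L(\beta)$), I would combine atomic/molecular decomposition of tent spaces with off-diagonal $L^{p_-(L)}$ (or $H^{p_-(L)}$) decay of the semigroup and of $(t^{1/2}\nabla e^{-tL})$; the relation $1/\phcl{L} = 1/p_-(L) + (2\beta+1)/n$ is exactly what is needed to match weights and integrability on each atom. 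In the Hardy--Sobolev regime $-1<\beta<-1/2$, $\phcl{L}$ coincides with the critical index $p_-(2\beta+1,L)$ attached to the Hardy--Sobolev extension of the semigroup, giving the endpoint estimate directly. The $p=\infty$ endpoint follows from a Carleson-measure argument or by duality with the adjoint semigroup of $L^*$ at small $p$, and interpolation with the $L^2$-case fills the intermediate range. Once the $T^p_{\beta+1}$ bound is secured, Corollary~\ref{cor:energyinequalitytent space} delivers the gradient bound $\|\nabla u\|_{T^p_{\beta+1/2}} \lesssim \|F\|_{T^p_{\beta+1/2}}$, as $u$ is a local weak solution of $\partial_t u - \Div(A\nabla u) = \Div F$ on the $L^2$-subclass.

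\textbf{Formulae and weak-solution identity.} Corollary~\ref{cor:formula_L2} provides $\cR^L_{1/2}(F) = \cR^{-\Delta}_{1/2}(\tilF) = \Div \cL^{-\Delta}_1(\tilF)$ on the dense $L^2$-subclass. Because $p_-(-\Delta)=n/(n+1)$, the heat operators $\cR^{-\Delta}_{1/2}$ and $\cL^{-\Delta}_1$ are bounded on all tent-space scales in question (by repeating Step 1 with $L=-\Delta$ or by the results of Section~\ref{sec:heat}). The identities then extend by density and continuity, giving (b). For (c), pair the equation against $\phi \in \Cc(\bR^{1+n}_+)$: each of the terms $\iint u\,\partial_t\phi$, $\iint (A\nabla u)\cdot\nabla\phi$, $\iint F\cdot\nabla\phi$ is continuous in $(u,\nabla u,F)$ in the weighted tent-space topology (via tent-space duality against the dual tent space containing $\phi$ and its derivatives), so the weak-solution identity \eqref{e:weak-sol} transfers from the $L^2$-setting.

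\textbf{Continuity and trace.} For (d), I would exploit $u(t) = \Div \cL^{-\Delta}_1(\tilF)(t)$ with $\tilF := (A-\bI)\nabla u + F \in T^p_{\beta+1/2}$. One shows that $\cL^{-\Delta}_1(\tilF)$ is continuous in $t$ with values in the target space shifted by one derivative, vanishing as $t\to 0$ since the integral is over the shrinking interval $(0,t)$. Taking the divergence then produces the entries of Table~\ref{tab:lions_cont}: $L^p$ for $1\le p\le 2$ and $\beta\ge -1/2$ via standard heat-semigroup estimates, $\scrS'$ for $p<1$, $\DotH^{s,p}$ for $-1<\beta<-1/2$ and $1<p\le 2$ via Theorem~\ref{thm:RW_sol} and interpolation along the Hardy--Sobolev scale, and $E^{-1,q}_\delta$ for $p>2$ via slice-space controls (Section~\ref{ssec:tent-slice}) coupled with Sobolev embedding in $q$.

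\textbf{Main obstacle.} The central difficulty is Step~1: the sharp extension bound at the threshold $\phcl{L}$. The piecewise definition of $\phcl{L}$ reflects a genuine phase transition, depending on the sign of $\beta+1/2$ and on whether $p_-(L)\gtrless 1$, between a control dictated by the Sobolev endpoint of the semigroup and one dictated by its Hardy--Sobolev endpoint. Matching tent-space atomic/molecular estimates with the correct off-diagonal decay at each of these endpoints is precisely what pins down $\phcl{L}$, and any slackness costs sharpness. Everything else -- formulae, weak-solution identity, continuity, trace -- reduces to a fairly standard transfer via density, tent-space duality, and known properties of the heat operator.
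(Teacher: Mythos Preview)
Your overall architecture for parts (b), (c), (d) is sound and matches the paper: density from the $L^2$-theory, tent-space duality to transfer the weak-solution identity, and the formula $u=\Div\cL_1^{-\Delta}(\tilF)$ to read off traces. The gap is in your Step~1, and it is precisely where you say the obstacle lies.

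Your two-regime plan does not cover $-1<\beta<-1/2$. The atomic/molecular argument you invoke for $\beta\ge -1/2$ is correct (this is Lemma~\ref{lemma:RL-tent_ext_pde}), but the molecular decay estimates (Lemma~\ref{lemma:lions_atom-mol-decay}) genuinely need $\beta>-1/2$: the key step integrates $s^{2\beta+1}$ against $|\nabla v_t|^2$ and then trades the gradient for the solution via Caccioppoli, which costs a factor $r^{-2}$ and forces $2\beta+1>0$ outside the integral. For $-1<\beta<-1/2$ your proposed substitute, ``the Hardy--Sobolev extension of the semigroup, giving the endpoint estimate directly,'' is circular: the extension of $\cE_L$ to $\DotH^{2\beta+1,p}$ (Theorem~\ref{thm:hc_ext_EL}) is \emph{built from} $\cR^L_{1/2}$ via \eqref{e:cT_1/2_cR^L-heat}, so it cannot be used to bound $\cR^L_{1/2}$.

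What the paper actually does is bring in a second, independent method: the singular-integral-operator theory of \cite{Auscher-Hou2023_SIOTent}. One checks (Proposition~\ref{prop:lions_RL1/2-0_sio}) that the kernel $\I_{\{t>s\}}e^{-(t-s)L}\Div$ lies in the class $\sk^{1/2}_{2,q,\infty}$ for $q_+(L^\ast)'<q<p_+(L)$, and the abstract machinery then yields boundedness $T^p_{\beta+1/2}\to T^p_{\beta+1}$ for \emph{all} $\beta>-1$ but only above the threshold $\tilp_L^{\,\flat}(\beta)=\frac{nq_+(L^\ast)'}{n+2(\beta+1)q_+(L^\ast)'}$ (Lemma~\ref{lemma:RL-tent_ext_sio}). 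The exponent $\phcl{L}$ is not an endpoint reached by either method alone; it is designed as the boundary of the interpolation hull of the SIO region (all $\beta>-1$, $p>\tilp_L^{\,\flat}(\beta)$) and the PDE/atomic region ($\beta>-1/2$ or $\beta>\beta(L)$, $p>p_L(\beta)$). In particular, for $p_-(L)\ge 1$ and $-1<\beta<-1/2$, the identity $\phcl{L}=p_-(2\beta+1,L)$ parametrizes the segment between $(\beta,1/p)=(-1,1/q_+(L^\ast)')$ (SIO limit) and $(-1/2,1/p_-(L))$ (PDE at $\beta=-1/2$). Without the SIO ingredient you cannot reach $\phcl{L}$ in this range.
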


We provide the proof of \ref{item:lions_reg} in this section, and the proof of \ref{item:lions_formula}, \ref{item:lions_sol}, and \ref{item:lions_cont} is deferred to Section \ref{sec:cont}. Let us first give some remarks. 
\begin{remark}
    Property \ref{item:lions_cont} allows us to make sense of $u(t)$ for any $t \ge 0$ in $\scrS'$. Besides, when $\beta \ge -1/2$ and $\phcl{L}<p<1$, we also have trace in $L^p$, but this topology is not compatible with that of $\scrS'$, so one cannot identify the limits.
\end{remark}

\begin{remark}[Whitney trace]
    When $\beta>-1/2$, there is another notion of trace (valid for any function $u \in T^p_{\beta+1}$ for $0<p\le \infty$) in the sense of taking the limit of averages on Whitney cubes as $t \to 0$ for a.e. $x \in \bR^n$. More precisely, when $\beta>-1/2$, for a.e. $x \in \bR^n$, we have
    \[ \lim_{t \rightarrow 0+} \left( \fint_{W(t,x)} |u(s,y)|^2 \, dsdy \right)^{1/2} = 0. \]
    The reader can refer to \cite[Lemma 5.3]{Auscher-Hou2023_SIOTent} for the proof. 
\end{remark}

\begin{remark}
    We mention that the bounded extension of $\cR^L_{1/2}$ on tent spaces $T^p_{0}$ was studied in \cite[Proposition 2.12]{Auscher-Monniaux-Portal2019Lp}. It was claimed there that $\cR^L_{1/2}$ extends to a bounded operator from $T^p_0$ to the Kenig--Pipher space $X^p$ for $0<p \le \infty$. However, the proof has a gap and we do not know if their full conclusion is correct. Yet, as $T^p_{1/2}$ embeds into $X^p$, our Theorem \ref{thm:lions_ext} ensures that their proposition is still valid in some range of $p$,  which in fact covers the usage of this proposition (only when $L=-\Delta$ or $p=2$) in that paper.
\end{remark}

\begin{remark}
	As we shall see in Section \ref{sec:cont}, the proofs of \ref{item:lions_formula}, \ref{item:lions_sol}, and \ref{item:lions_cont} work for any $\beta>-1$ and $\frac{n}{n+2\beta+2}<p \le \infty$, for which the operators $\cR^L_\kappa$ are bounded from $T^p_{\beta+1/2}$ to $T^p_{\beta+1/2+\kappa}$ for $\kappa=0,1/2$ ($\cR^L_0$ is defined right below).
\end{remark}

We prove Theorem \ref{thm:lions_ext} \ref{item:lions_reg} using  the following two lemmas requiring different methods. Define the operator $\cR^L_0 \colon L^1_\ssfc((0,\infty);W^{2,2}) \to L^\infty_{\loc}((0,\infty);L^2)$ by the $L^2$-valued Bochner integrals 
\begin{equation}
    \label{e:lions_T0}
    \cR_0^L(F)(t) := \int_0^t \nabla e^{-(t-s)L} \Div F(s) \, ds, \quad \forall t>0.  
\end{equation}
We also use $p_{L}(\beta)$ defined in \eqref{e:pLbeta} and set 
\[ \tilp_{L}^{\,\flat}(\beta) :=\frac{nq_+(L^\ast)'}{n+2(\beta+1)q_+(L^\ast)'}. \]

\begin{lemma}
    \label{lemma:RL-tent_ext_sio}
    Let $\beta>-1$ and $\tilp_{L}^{\,\flat}(\beta) < p \le \infty$. The operator $\cR^L_{1/2}$ (resp. $\cR^L_0$) extends to a bounded operator from $T^p_{\beta+{1}/{2}}$ to $T^p_{\beta+1}$ (resp. $T^p_{\beta+{1}/{2}}$), also denoted by $\cR^L_{1/2}$ (resp. $\cR^L_0$).
\end{lemma}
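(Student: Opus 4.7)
The plan is to reduce the claim to the extension theorem for operator-valued singular integrals on weighted parabolic tent spaces developed in \cite{Auscher-Hou2023_SIOTent}. Both $\cR^L_{1/2}$ and $\cR^L_0$ are causal integral operators of the form $F \mapsto \int_0^t K(t,s)F(s)\, ds$, with kernels $K(t,s)=e^{-(t-s)L}\Div$ and $K(t,s)=\nabla e^{-(t-s)L}\Div$ respectively. The abstract theorem from that work produces bounded extensions on weighted tent spaces once the kernel satisfies appropriate $L^2$-to-$L^2$ Davies--Gaffney off-diagonal decay and an $L^q$-to-$L^2$ off-diagonal bound for a threshold exponent $q$; the admissible range of the target exponent $p$ is then driven by $q$ and by the tent-space weight.

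First, I would settle the $p=2$ base case. Since $T^2_\alpha$ coincides with $L^2_\alpha(\bR^{1+n}_+)$ (weight $t^{-2\alpha}dtdy$), the target inequalities reduce to weighted $L^2$-quadratic estimates in time. At the unweighted endpoint $\beta = -1/2$, these are exactly the Lions energy identity recalled in Proposition \ref{prop:wpL2} and Corollary \ref{cor:energyinequalitytent space}. For general $\beta > -1$, the bounded $H^\infty$-calculus of the sectorial operator $L$ on $L^2$ upgrades this to the weighted case via the standard square-function equivalence applied to $\phi(z) = z^{1/2}e^{-z}$.

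Next, I would verify the off-diagonal hypotheses. The $L^2$-$L^2$ Davies--Gaffney bounds for $(e^{-\tau L})$ and $(\tau^{1/2}\nabla e^{-\tau L})$ are standard. The crucial $L^q$-$L^2$ estimates come from the duality identity $e^{-\tau L}\Div = -(\nabla e^{-\tau L^\ast})^\ast$: the operator $e^{-\tau L}\Div$ then maps $L^q$ into $L^q$ with prefactor $\tau^{-1/2}$ for $q$ in the dual range of $q_\pm(L^\ast)$, with sharp upper endpoint $q_+(L^\ast)'$, and an analogous statement holds for $\nabla e^{-\tau L}\Div$. Feeding these into the abstract theorem produces the lower threshold $p > \tilp_L^{\,\flat}(\beta)$: a direct computation shows that $\tilp_L^{\,\flat}(\beta)$ is the value of $p$ at which the Sobolev-type tent-space scaling $n/p - n/q_+(L^\ast)' = -2(\beta+1)$ between the relevant weighted tent spaces becomes saturated. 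The $p = \infty$ endpoint is handled separately by the weak-$\ast$ duality version of the abstract theorem.

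The main obstacle will be bookkeeping: carefully matching the exact form of the off-diagonal hypothesis of \cite{Auscher-Hou2023_SIOTent} to the semigroup bounds on hand, and verifying that the in-time singularities $(t-s)^{-1/2}$ for $\cR^L_{1/2}$ and $(t-s)^{-1}$ for $\cR^L_0$ are absorbed by the Whitney-cube averaging that defines the tent norms. Once these verifications are in place, the conclusion of the lemma follows directly from the abstract theorem.
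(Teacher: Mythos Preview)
Your plan matches the paper's: verify that the kernels belong to the $\sio$ classes of \cite{Auscher-Hou2023_SIOTent} via off-diagonal bounds and duality, establish the $L^2_\beta$ base case, and invoke the abstract extension theorems (Proposition~3.3 and Corollary~3.5) there. One caveat: the $(t-s)^{-1}$ singularity of $\cR^L_0$ is \emph{not} absorbed by Whitney averaging---its $L^2_{\beta+1/2}$ boundedness genuinely requires weighted maximal regularity (De Simon's theorem and its weighted extension in \cite{Auscher-Axelsson2011_MR_L2beta}) together with the Kato bounds on $\nabla L^{-1/2}$ and $L^{-1/2}\Div$, which is what your $H^\infty$-calculus remark actually delivers, so the final paragraph's phrasing is misleading even though the overall argument is sound.
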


\begin{lemma}
    \label{lemma:RL-tent_ext_pde}
    \begin{enumerate}[label=\normalfont(\roman*)]
        \item \label{item:RL-tent_ext_pde_p-(L)>1}
        Suppose $p_-(L) \ge 1$. Then the properties of $\cR^L_{1/2}$ and $\cR^L_0$ in Lemma \ref{lemma:RL-tent_ext_sio} are also valid for $\beta>-1/2$ and $p_{L}(\beta)<p \le \infty$. 
        
        \item \label{item:RL-tent_ext_pde_p-(L)<1}
        Suppose $p_-(L)<1$. Then the properties of $\cR^L_{1/2}$ and $\cR^L_0$ in Lemma \ref{lemma:RL-tent_ext_sio} are also valid for $\beta>\beta(L)$ and $p_{L}(\beta)<p \le 1$.
    \end{enumerate}
\end{lemma}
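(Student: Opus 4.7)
The strategy is to combine the atomic decomposition of tent spaces with parabolic off-diagonal estimates for $(e^{-tL})_{t>0}$ available up to the critical numbers $p_-(L)$ and (by duality) $q_+(L^*)$, as developed in \cite[\S 6]{Auscher-Egert2023OpAdp}. I would work on the dense subclass of $F \in L^2(\bR^{1+n}_+) \cap T^p_{\beta+1/2}$ with compact support in $\bR^{1+n}_+$; for such $F$, $u := \cR^L_{1/2}(F)$ is defined via Proposition \ref{prop:wpL2} and satisfies $u, \nabla u \in L^2(\bR^{1+n}_+)$. The first goal is to show $\|u\|_{T^p_{\beta+1}} \lesssim \|F\|_{T^p_{\beta+1/2}}$ in the enlarged range; the bound on $\cR^L_0$ will then be a direct consequence of the energy inequality.

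The heart of the argument is the range $p \le 1$, treated via atomic decomposition. Given a $T^p_{\beta+1/2}$-atom $a$ supported in a Carleson-type box $\hat B$ over a ball $B$ of radius $r_B$ with $\|a\|_{T^2_{\beta+1/2}} \lesssim |B|^{[p,1]}$, I would split $\cR^L_{1/2}(a)$ into a local piece on an enlargement $C_0\hat B$ and annular pieces on $C_k\hat B\setminus C_{k-1}\hat B$ for $k\ge 1$. The local piece is controlled by the $L^2$-boundedness of $\cR^L_{1/2}$ from Proposition \ref{prop:wpL2} together with the $L^2$-size of the atom. For the annular pieces, I would invoke parabolic off-diagonal estimates of $e^{-(t-s)L}\Div$ out of $L^2$ into the $L^{p_-(L)+\varepsilon}$-framework (part (i)) or the $H^{p_-(L)}$-framework (part (ii), where $p_-(L)<1$), yielding polynomial decay in $k$. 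Summing in $k$ gives $\|\cR^L_{1/2}(a)\|_{T^p_{\beta+1}} \lesssim 1$ precisely when $p>p_L(\beta)$: the defining identity $\tfrac{1}{p_L(\beta)}=\tfrac{1}{p_-(L)}+\tfrac{2\beta+1}{n}$ encodes the balance between the atomic $L^2$-size, the volume gain in $|B|^{\beta+1}$, and the off-diagonal decay rate. The thresholds $\beta>-1/2$ in (i) and $\beta>\beta(L)$ in (ii) are exactly the conditions under which the sum in $k$ converges after absorbing the parabolic weight $t^{-\beta-1}$.

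For part (i), I would extend to $p\le\infty$ in two further steps. For $1\le p<2$, interpolate between the atomic bound at exponents approaching $p_L(\beta)$ from above and the $L^2$-bound of Proposition \ref{prop:wpL2}, using \cite[Theorem 2.12]{Amenta2018_WeightedTent}. For $2<p\le\infty$, use tent-space duality: the pairing identifies $\cR^L_{1/2}:T^p_{\beta+1/2}\to T^p_{\beta+1}$ with a backward-in-time adjoint operator on $T^{p'}_{-\beta-1}\to T^{p'}_{-\beta-1/2}$, which has the same PDE structure with $L$ replaced by $L^*$ and $(\beta,p)$ replaced by $(-\beta-1,p')$; the atomic argument applies verbatim on the dual side, and the hypothesis $p>p_L(\beta)$ passes correctly under duality since $p_-(L^*)$ and $q_+(L)$ control the dual critical numbers. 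Part (ii) requires neither interpolation nor duality since only $p\le 1$ is claimed.

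The bound on $\cR^L_0$ follows at once from Corollary \ref{cor:energyinequalitytent space}:
\[ \|\cR^L_0(F)\|_{T^p_{\beta+1/2}} = \|\nabla \cR^L_{1/2}(F)\|_{T^p_{\beta+1/2}} \lesssim \|\cR^L_{1/2}(F)\|_{T^p_{\beta+1}} + \|F\|_{T^p_{\beta+1/2}} \lesssim \|F\|_{T^p_{\beta+1/2}}. \]
The principal obstacle is the off-diagonal/molecular analysis underlying Step 1: one must carefully pair the decay of $e^{-(t-s)L}\Div$ (with its half-derivative gain) against the parabolic weight $t^{-\beta-1/2}$ under the time integration, and in part (ii) one is forced into the Hardy-space framework for $p_-(L)<1$, which is where the refined theory of \cite{Auscher-Egert2023OpAdp} is indispensable.
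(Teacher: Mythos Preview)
Your overall architecture (atomic decomposition for $p\le 1$, then interpolation, then duality, then Caccioppoli for $\cR^L_0$) is reasonable and the atomic part for $p\le 1$ is close in spirit to the paper's approach. But there are two genuine gaps.

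\textbf{The duality step for $p>2$ in part (i) does not work as stated.} The adjoint of $\cR^L_{1/2}$ is $\phi\mapsto \nabla v$ with $v(s)=\int_s^\infty e^{-(t-s)L^*}\phi(t)\,dt$, i.e.\ $v$ solves the backward equation $-\partial_s v-\Div(A^*\nabla v)=\phi$ with source $\phi$, \emph{not} $\Div\phi$; and you are bounding $\nabla v$, not $v$. So this is not ``$\cR^{L^*}_{1/2}$ with $(\beta,p)\mapsto(-\beta-1,p')$''. More seriously, your primal atomic argument required $\beta>-1/2$ (it is this that lets you bound $s^{2\beta+1}\le (r^2)^{2\beta+1}$ on $(0,r^2)$ before applying Caccioppoli). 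Under the substitution $\beta\mapsto -\beta-1$ this would demand $\beta<-1/2$, contradicting the hypothesis. Thus ``the atomic argument applies verbatim on the dual side'' is false. The paper avoids this by working directly on the dual side for \emph{all} $p>1$: one tests against $\phi\in\Cc(\bR^{1+n}_+)$, applies the backward energy inequality (Corollary~\ref{cor:energyinequalitytent space}) to get $\|\nabla v\|_{T^{p'}_{-\beta-1/2}}\lesssim \|v\|_{T^{p'}_{-\beta}}+\|\phi\|_{T^{p'}_{-\beta-1}}$, and then bounds $\|v\|_{T^{p'}_{-\beta}}$ via the $\sio^{1-}$ theory of \cite{Auscher-Hou2023_SIOTent} applied to the backward propagator $\phi\mapsto v$ (this is where $p_-(L)\ge 1$ is used). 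Your interpolation for $1<p<2$ is fine, but it is redundant once the duality argument is done correctly.

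\textbf{Part (ii) when $\beta(L)<\beta\le -1/2$ needs more than ``the Hardy-space framework''.} The point of part~(ii) is precisely that it reaches below $\beta=-1/2$, where the weight $s^{2\beta+1}$ is \emph{not} monotone on $(0,r^2)$ and the naive bound used in Lemma~\ref{lemma:lions_atom-mol-decay} fails. The paper's Corollary~\ref{cor:lions_atom-mol-decay_p-L<1} resolves this by an integration-by-parts in $s$ against a cutoff $\chi(s)$, which raises the weight to $s^{2\beta+2}$ (now integrable since $\beta>-1$), at the cost of producing a term with $\nabla\partial_s\tilv_t$. Controlling both terms then requires \emph{pointwise} estimates on $\tilv_t(s,y):=e^{-(t-s)L^*}\phi(y)-e^{-tL^*}\phi(0)$, which come from the $L^2\to\Dot\Lambda^\eta$ boundedness of $(e^{-tL^*})$ (dual to the $H^p$-boundedness of $(e^{-tL})$ for $p<1$). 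Your plan of using off-diagonal decay of $e^{-(t-s)L}\Div$ together with $H^{p_-(L)}$-boundedness does not by itself produce this; the subtraction of a constant and the integration by parts are the missing ideas.
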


The proofs are provided in Sections \ref{ssec:lions_ext_RL1/2_sio} and \ref{ssec:lions_ext_PDE}, respectively. Let us first show that given these two lemmas, Theorem \ref{thm:lions_ext} \ref{item:lions_reg} holds.

\begin{proof}[Proof of Theorem \ref{thm:lions_ext} \ref{item:lions_reg} assuming Lemmas \ref{lemma:RL-tent_ext_sio} and \ref{lemma:RL-tent_ext_pde}]
    Boundedness of $\cR^L_{1/2}$ and $\cR^L_{0}$ comes from interpolation of tent spaces, thanks to Lemmas \ref{lemma:RL-tent_ext_sio} and \ref{lemma:RL-tent_ext_pde}. The number $\phcl{L}$ is exactly designed for this interpolation argument. Using Proposition \ref{prop:lions_RL1/2-0_sio} \ref{item:sio-nabla-RL1/2=RL0}, 
    the equality $ \nabla \cR^L_{1/2}(F) = \cR^L_0(F)$ in $\scrD'(\bR^{1+n}_+)$ extends to all $F \in T^p_{\beta+1/2}$ by density (or weak*-density if $p=\infty$), so the estimate of $\nabla \cR^L_{1/2}(F)$ follows.
\end{proof}

We begin to verify the lemmas. An important tool for the proof of both lemmas is that the semigroup $(e^{-tL})$ satisfies the \textit{$L^p-L^q$ off-diagonal estimates}, see \cite[Proposition 3.15]{Auscher2007Memoire}. More precisely, for $\max\{p_-(L), 1\}<p<q<p_+(L)$, there are constants $c,C>0$ so that for any $t>0$, $E,F \subset \bR^n$ as Borel sets, and $f \in L^2 \cap L^p$,
\begin{equation*}
    \|\I_E e^{-tL} \I_F f\|_q \le C t^{-\frac{n}{2}[p,q]} \exp \left( -c\frac{\dist(E,F)^2}{t} \right) \|\I_F f\|_p.
\end{equation*}
The same also holds for the family $(t^{1/2} \nabla e^{-tL})_{t>0}$ if replacing $p_\pm(L)$ by $q_\pm(L)$.

\subsection{Proof of Lemma \ref{lemma:RL-tent_ext_sio}}
\label{ssec:lions_ext_RL1/2_sio}

\begin{proof}[Proof of Lemma \ref{lemma:RL-tent_ext_sio}]
    It is a direct consequence of   \cite[Proposition 3.3 \& Corollary 3.5]{Auscher-Hou2023_SIOTent}. The assumptions  there are verified in Proposition \ref{prop:lions_RL1/2-0_sio} just below.
\end{proof}

\begin{prop}[SIO properties of $\cR^L_{1/2}$ and $\cR^L_0$]
    \label{prop:lions_RL1/2-0_sio}
    Let $\beta>-1$.
    \begin{enumerate}[label=\normalfont(\roman*)]
        \item \label{item:sio-RL1/2}
        The operator $\cR^L_{1/2}$ given by \eqref{e:R^L_1/2} belongs to $\sio^{{1}/{2}+}_{2,q,\infty}$ when $q_+(L^\ast)' < q < p_+(L)$.
        It is bounded from $L^2_{\beta+{1}/{2}}(\bR^{1+n}_+)$ to $L^2_{\beta+1}(\bR^{1+n}_+)$.
        
        \item \label{item:sio-RL0}
        The operator $\cR^L_0$ given by \eqref{e:lions_T0} extends to an operator  (also denoted by $\cR^L_0$) in $\sio^{0+}_{2,q,\infty}$ for $q_+(L^\ast)' < q < q_+(L)$ that is bounded on $L^2_{\beta+{1}/{2}}(\bR^{1+n}_+)$.

        \item \label{item:sio-nabla-RL1/2=RL0}
        For any $F \in L^2_{\beta+{1}/{2}}(\bR^{1+n}_+)$,
        \[ \nabla \cR^L_{1/2}(F) = \cR^L_0(F) \quad \text{ in } \scrD'(\bR^{1+n}_+). \]
    \end{enumerate}
\end{prop}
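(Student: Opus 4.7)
The plan is to verify parts \ref{item:sio-RL1/2} and \ref{item:sio-RL0} by checking off-diagonal estimates for the operator-valued kernels $e^{-(t-s)L}\Div$ and $\nabla e^{-(t-s)L}\Div$ to match the $\sio$ axioms of \cite{Auscher-Hou2023_SIOTent}, together with weighted $L^2_\beta$ bounds obtained by Schur-type inequalities on the time convolution. The identity in \ref{item:sio-nabla-RL1/2=RL0} then follows by differentiating under the Bochner integral on a dense subspace and extending by continuity.

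For the off-diagonal estimates, I would factorize the kernel at the midpoint in time. For \ref{item:sio-RL1/2}, write $e^{-(t-s)L}\Div=e^{-(t-s)L/2}\cdot e^{-(t-s)L/2}\Div$, and observe that $e^{-(t-s)L/2}\Div$ is the adjoint of $-\nabla e^{-(t-s)L^*/2}$. The first factor enjoys $L^p$--$L^q$ Gaussian off-diagonal estimates in the range $(\max\{p_-(L),1\},p_+(L))$ by \cite[Proposition 3.15]{Auscher2007Memoire} extended in \cite{Auscher-Egert2023OpAdp}, while by duality the second factor satisfies $L^p$--$L^q$ off-diagonal estimates in the range $(q_+(L^*)',\max\{q_-(L^*),1\}')$ with the extra scaling factor $(t-s)^{-1/2}$. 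Composing through the $L^2$ pivot produces off-diagonal estimates for the composition in the range $q_+(L^*)'<q<p_+(L)$, with parabolic order $1/2$ reflecting the single $\Div$. For \ref{item:sio-RL0}, the same factorization with an additional $\nabla$ restricts the upper endpoint to $q_+(L)$ and contributes a second $(t-s)^{-1/2}$, giving order $0$. Integrating these pointwise decays against parabolic Whitney-cube averages yields the desired $\sio^{1/2+}_{2,q,\infty}$ and $\sio^{0+}_{2,q,\infty}$ memberships.

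For the weighted $L^2_\beta$-boundedness of $\cR^L_{1/2}$, the analyticity of the semigroup on $L^2$ together with $L^2$-boundedness of $\tau^{1/2}\nabla e^{-\tau L^*}$ yields $\|e^{-\tau L}\Div\|_{L^2\to L^2}\lesssim \tau^{-1/2}$, hence
\[ \|\cR^L_{1/2}(F)(t)\|_2\lesssim \int_0^t(t-s)^{-1/2}\|F(s)\|_2\,ds. \]
After the substitution $s=t\tau$ and Minkowski's integral inequality in $t$, the weighted estimate reduces to the convergence of $\int_0^1(1-\tau)^{-1/2}\tau^\beta\,d\tau$, which holds precisely when $\beta>-1$. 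The $L^2_{\beta+1/2}$-boundedness of $\cR^L_0$ reduces in the unweighted case $\beta=-1/2$ to Lions' maximal regularity on $L^2$ (Proposition \ref{prop:wpL2}), and extends to all $\beta>-1$ by complex interpolation with the off-diagonal $L^2_\beta$ bounds already built into the $\sio$ class, or directly by combining the $L^2$-bound with a Hardy-type estimate based on the factorization above. Finally, for \ref{item:sio-nabla-RL1/2=RL0}, differentiation under the Bochner integral is justified for $F\in L^1_\ssfc((0,\infty);W^{2,2})$, and the identity extends by density once both sides are known to be continuous into $L^2_{\loc}(\bR^{1+n}_+)$.

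The main obstacle I expect is the bookkeeping in the first step: translating pointwise $L^p$--$L^q$ decay at the operator level into the precise $\sio$-axioms on parabolic Whitney cubes in the companion paper, and ensuring that the endpoints $q_+(L^*)'$ (from the dualized gradient factor) and $p_+(L)$ (respectively $q_+(L)$) really are the sharp constraints produced by the factorization, with the ``$+$'' in $1/2+$ and $0+$ reflecting the strict Gaussian improvement beyond the critical polynomial decay.
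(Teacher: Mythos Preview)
Your approach to \ref{item:sio-RL1/2} is essentially correct and close to the paper's, though the midpoint factorization is unnecessary: the paper simply observes that the kernel $K_{1/2}(t,s)=\I_{\{t>s\}}e^{-(t-s)L}\Div$ is dual to $\I_{\{s>t\}}\nabla e^{-(s-t)L^*}$, so it lies in $\sk^{1/2}_{2,q,\infty}$ for $q_+(L^*)'<q<\max\{q_-(L^*),1\}'=p_+(L)$ directly by duality of kernel classes. Your Schur/Beta-integral argument for the weighted $L^2$ bound of $\cR^L_{1/2}$ is a valid alternative to the paper's citation of \cite[Lemma 2.6]{Auscher-Hou2023_SIOTent}. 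The kernel factorization for $\cR^L_0$ in \ref{item:sio-RL0} is exactly what the paper does.

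There is, however, a genuine gap in your treatment of the weighted $L^2_{\beta+1/2}$-boundedness of $\cR^L_0$. The kernel $\nabla e^{-(t-s)L}\Div$ has order $0$, i.e.\ $\|\nabla e^{-\tau L}\Div\|_{L^2\to L^2}\sim \tau^{-1}$, which is not locally integrable: no Schur-type or Hardy-type argument based on the factorization can produce boundedness, and the ``off-diagonal $L^2_\beta$ bounds built into the $\sio$ class'' do not exist for order-$0$ operators --- that is precisely why singular integrals require cancellation beyond size. Complex interpolation from $\beta=-1/2$ needs a second endpoint you have not supplied. The paper resolves this by a specific algebraic identity: writing $\cL_0(f)(t):=\int_0^t Le^{-(t-s)L}f(s)\,ds$, de Simon's theorem gives its bounded extension $\tilde\cL_0$ on $L^2(\bR^{1+n}_+)$, and \cite[Theorem 1.3]{Auscher-Axelsson2011_MR_L2beta} extends this to $L^2_{\beta+1/2}$ for all $\beta>-1$. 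One then invokes the Kato square root theorem ($\nabla L^{-1/2}$ and $L^{-1/2}\Div$ bounded on $L^2$) together with the identity $\cR^L_0=\nabla L^{-1/2}\,\tilde\cL_0\,L^{-1/2}\Div$ from \cite[Proposition 2.5]{Auscher-Monniaux-Portal2019Lp}; since the spatial operators commute with the time weight, the weighted bound for $\cR^L_0$ follows. These three ingredients --- weighted maximal regularity for $\cL_0$, the square-root decomposition, and Kato --- are the missing ideas.
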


\begin{proof}
    For \ref{item:sio-RL1/2}, the off-diagonal estimates of $(t^{1/2} \nabla e^{-t L^\ast})_{t>0}$ imply that  the function $(t,s) \mapsto \I_{\{s>t\}}(t,s) \nabla e^{-(s-t)L^\ast}$ lies in $\sk^{1/2}_{2,q,\infty}$ for $\max\{q_-(L^\ast), 1\} < q < q_+(L^\ast)$ by definition of this class. Using duality of singular kernels (see \cite[Corollary 2.5]{Auscher-Hou2023_SIOTent}), we get that the kernel of $\cR^L_{1/2}$, $K_{1/2}(t,s) := \I_{\{t>s\}}(t,s) e^{-(t-s)L} \Div$ belongs to $\sk^{1/2}_{2,q,\infty}$ for $q_+(L^\ast)' < q < \max\{q_-(L^\ast), 1\}' = p_+(L)$. Applying \cite[Lemma 2.6]{Auscher-Hou2023_SIOTent} yields that  $\cR^L_{1/2}$ is bounded from $L^2_{\beta+1/2}(\bR^{1+n}_+)$ to $L^2_{\beta+1}(\bR^{1+n}_+)$ as $\beta+1/2>-1/2$. This proves \ref{item:sio-RL1/2}.

    For \ref{item:sio-RL0}, we construct the extension as follows. Define the operator $\cL_0$ from $ L^2((0,\infty);D(L))$ to $L^\infty_{\loc}((0,\infty);L^2)$ by the $L^2$-valued Bochner integrals
    \[ \cL_0(f)(t) := \int_0^t Le^{-(t-s)L} f(s) \, ds, \quad \forall t>0. \]
    De Simon's theorem states $\cL_0$ extends to a bounded operator $\Tilde{\cL}_0$ on $L^2(\bR^{1+n}_+)$ \cite{deSimon1964_MRL2}. Moreover, \cite[Theorem 1.3]{Auscher-Axelsson2011_MR_L2beta} shows that $\Tilde{\cL}_0$ is bounded on $L^2_{\beta+1/2}(\bR^{1+n}_+)$ for any $\beta>-1$, and \cite[Proposition 2.5]{Auscher-Monniaux-Portal2019Lp} reveals
    \[ \cR^L_0(F) = \nabla L^{-1/2} \tilde{\cL_0} L^{-1/2} \Div F \]
    for any $F \in L^1((0,\infty);W^{2,2})$. Recall that $\nabla L^{-1/2}$ and $L^{-1/2} \Div$ are bounded on $L^2$ (see \cite{Auscher-Hofmann-Lacey-McIntosh-Tchamitchian2002_Kato}), so we define the bounded extension of $\cR^L_0$ on $L^2_{\beta+1/2}(\bR^{1+n}_+)$ by
    \[ \cR^L_0 := \nabla L^{-1/2} \tilde{\cL}_0 L^{-1/2} \Div. \]
    
    Next, let us verify that its kernel $K_0(t,s):=\I_{\{t>s\}}(t,s)\nabla e^{-(t-s)L^\ast} \Div$ belongs to $ \sk^0_{2,q,\infty}$ for $q_+(L^\ast)'<q<q_+(L)$. Indeed, it can be written as
    \[ K_0 (t,s) = \I_{\{t>s\}} (t,s) \nabla e^{-\frac{1}{2}(t-s)L} \I_{\{t>s\}} (t,s) e^{-\frac{1}{2}(t-s)L} \Div, \]
    where $(t,s) \mapsto \I_{\{t>s\}}(t,s) e^{-\frac{1}{2}(t-s)L} \Div$ lies in $\sk^{1/2}_{2,q,\infty}$ for $q_+(L^\ast)' < q < p_+(L)$, and $(t,s) \mapsto \I_{\{t>s\}}(t,s) \nabla e^{-\frac{1}{2}(t-s)L}$ lies in $\sk^{1/2}_{2,q,\infty}$ for $\max\{q_-(L), 1\} < q < q_+(L)$. Hence,  we obtain $K_0 \in \sk^0_{2,q,\infty}$ for $q_+(L^\ast)'<q<q_+(L)$ by composition.
    
    Finally, for the representation, we have to show that for any $F \in L^2_\ssfb(\bR^{1+n}_+)$ and a.e. $(t,x) \in (\bR^{1+n}_+ \setminus \pi(F))$,
    \begin{equation}
        \cR^L_0(F)(t,x) = \int_0^t (\nabla e^{-(t-s)L} \Div F(s))(x) \, ds.
        \label{e:T0_representation}
    \end{equation}
    The proof of \eqref{e:T0_representation}, as well as the identity in \ref{item:sio-nabla-RL1/2=RL0} follows by a verbatim adaptation of \cite[Lemma 5.1]{Auscher-Hou2023_SIOTent}. Details are left to the reader.
\end{proof} 

\subsection{Proof of Lemma \ref{lemma:RL-tent_ext_pde}}
\label{ssec:lions_ext_PDE}
It consists of two parts. For $p>1$, we argue by duality. For $p \le 1$, in the spirit of  \cite[Theorem 3.2]{Auscher-Portal2023Lions}, we use atomic decomposition in weighted tent spaces. Recall that for $0<p \leq 1$, a measurable function $a$ on $\bR^{1+n}_+$ is called a \textit{$T^p_\beta$-atom}, if there exists a ball $B \subset \bR^n$ so that $\supp(a) \subset [0,r(B)^2] \times B$, and
\[ \|a\|_{L^2_\beta(\bR^{1+n}_+)} \leq |B|^{-[p,2]}. \]
Such a ball $B$ is said to be \textit{associated} to $a$. Write $r:=r(B)$, $C_0:=2B$, and $C_j:=2^{j+1}B \setminus 2^j B$ for $j \ge 1$. For $j \ge 4$, define
\begin{align*}
     M^{(1)}_j &:= \left( 0,(2^3 r)^2 \right] \times C_j \\ 
     M^{(2)}_j &:= \left( (2^3 r)^2,(2^j r)^2 \right) \times C_j  \\ 
     M^{(3)}_j &:= \left[ (2^j r)^2, (2^{j+1} r)^2 \right) \times 2^{j+1} B.
\end{align*}

The next lemma shows molecular decay for $\cR^L_{1/2}$ acting on atoms.

\begin{lemma}[Molecular decay when $p_-(L) \ge 1$]
    \label{lemma:lions_atom-mol-decay} 
    Assume $p_{-}(L)\ge 1$. Let $\beta>-1/2$, $0<p \le 1$, and $p_{-}(L) < q < 2$. There exists a constant $c>0$ depending on $L$ and $q$, so that for any $T^p_{\beta+{1}/{2}}$-atom $a$ with an associated ball $B \subset \bR^n$, the following estimates hold for $u:=\cR^L_{1/2}(a)$ and  any $j \ge 4$,
    \begin{align}
        &\label{e:lions_atom_Mj1}
        \|u\|_{ L^2_{\beta+1}(M_j^{(1)}) } \lesssim 2^{jn[p,2]} e^{-c2^{2j}} |2^{j+1} B|^{[2,p]},  \\
        &\label{e:lions_atom_Mj2}
        \|u\|_{ L^2_{\beta+1}(M_j^{(2)}) }  \lesssim 2^{-j(2\beta+1+n[q,p])} |2^{j+1} B|^{[2,p]}, \\
        &\label{e:lions_atom_Mj3}
        \|u\|_{ L^2_{\beta+1}(M_j^{(3)}) } \lesssim 2^{-j(2\beta+1+n[q,p])} |2^{j+1} B|^{[2,p]}.
    \end{align}
    The implicit constants are independent of $j$ and $B$.
\end{lemma}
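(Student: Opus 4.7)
The starting point is the Bochner integral representation $u(t) = \int_0^{\min(t,r^2)} e^{-(t-s)L}\Div\, a(s)\,ds$, which makes sense since $\supp a \subset (0,r^2) \times B$. The main technical input is the $L^q$--$L^2$ off-diagonal estimate for the family $(e^{-\tau L}\Div)_{\tau>0}$: for $q_+(L^\ast)' < q < 2$, a range covering $(p_-(L),2)$ since $p_-(L) \ge 1$, one has
\begin{equation*}
\|\I_E\, e^{-\tau L}\Div(\I_F g)\|_{L^2} \lesssim \tau^{-\frac{1}{2}-\frac{n}{2}[q,2]} \exp\!\left(-c\,\frac{\dist(E,F)^2}{\tau}\right) \|\I_F g\|_{L^q},
\end{equation*}
obtained by composing the $L^q$--$L^2$ off-diagonal bound for $e^{-\tau L/2}$ with the $L^2$-bound for $\tau^{1/2}e^{-\tau L/2}\Div$, or equivalently by dualizing off-diagonal bounds for $\nabla e^{-\tau L^\ast}$. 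Applied with $F=B$ and H\"older's inequality $\|a(s)\|_{L^q(B)} \le |B|^{[q,2]}\|a(s)\|_{L^2(B)}$, this reduces matters to a time-weighted integral of $\|a(s)\|_{L^2(B)}$; a Cauchy--Schwarz in $s$ against the weight $s^{-(2\beta+1)/2}$, combined with the atomic normalization $\|a\|_{L^2_{\beta+1/2}} \le |B|^{-[p,2]}$, then produces the right-hand volume factors together with a power of $r$.

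For \eqref{e:lions_atom_Mj1}, I take $E = C_j$. Since $t \le (2^3 r)^2$ and $\dist(C_j,B) \gtrsim 2^j r$, one has $\dist^2/\tau \gtrsim 2^{2j}$ uniformly in $s \in (0, \min(t,r^2))$, so the Gaussian factor yields $\exp(-c' 2^{2j})$. The remaining integrations, combined with the volume conversion $|B|^{[q,2]} \cdot |B|^{-[p,2]} = |B|^{[q,p]}$ and the identity $|2^{j+1}B|^{[2,p]} = 2^{-(j+1)n[p,2]}|B|^{-[p,2]}$, rearrange into $2^{jn[p,2]}|2^{j+1}B|^{[2,p]}$.

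For \eqref{e:lions_atom_Mj2}, still $E=C_j$ but now $t \in ((2^3 r)^2, (2^j r)^2)$, so $\tau = t - s \eqsim t$ while $\dist^2/\tau$ is only bounded below by a constant, bringing no useful Gaussian. Instead, the polynomial kernel bound $\tau^{-1/2-n[q,2]/2}$ together with the time weight $t^{-(\beta+1)}$ and integration in $t$ across the dyadic window $((2^3 r)^2, (2^j r)^2)$ generate the polynomial $2^{-j(2\beta+1+n[q,p])}$. The same recipe handles \eqref{e:lions_atom_Mj3}, with $E=2^{j+1}B$ (no annular separation) and $t$ integrated over the narrow window $[(2^j r)^2, (2^{j+1}r)^2)$, which of width $\eqsim (2^j r)^2$ and with $t \eqsim (2^j r)^2$ produces the same $j$-decay.

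The main obstacle is the exponent bookkeeping in \eqref{e:lions_atom_Mj2} and \eqref{e:lions_atom_Mj3}: the target exponent $2\beta+1+n[q,p]$ must arise from balancing (i) the time weight contributing $-2(\beta+1)$ in the $L^2$-sense, (ii) the kernel power $-1-n[q,2]$ (after squaring), (iii) the Cauchy--Schwarz factor in $s$ giving a power of $r$ of order $2\beta+2$, and (iv) the volume conversion $|B|^{[q,p]}$, all of which must rescale so as to match $|2^{j+1}B|^{[2,p]}$ with the advertised $2^{-j(2\beta+1+n[q,p])}$ gain. Verifying that these contributions combine exactly --- uniformly in $j$ and $B$, and for all $q$ in the admissible range so that the subsequent interpolation argument can be carried out --- is the delicate analytic heart of the proof.
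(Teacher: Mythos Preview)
Your direct approach via $L^q$--$L^2$ off-diagonal estimates for $e^{-\tau L}\Div$ has two gaps, one of which is structural.

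First, and more seriously, the claim that $(q_+(L^\ast)',2)$ covers $(p_-(L),2)$ is false in general. By duality, $e^{-\tau L}\Div : L^q \to L^2$ with the stated bound is equivalent to $\nabla e^{-\tau L^\ast} : L^2 \to L^{q'}$, which requires $q' < q_+(L^\ast)$, i.e.\ $q > q_+(L^\ast)'$. But when $p_-(L)\ge 1$ one has $p_+(L^\ast)=p_-(L)'$, and since $p_+(L^\ast) \ge q_+(L^\ast)^\ast > q_+(L^\ast)$ (the Sobolev improvement is strict unless $q_+(L^\ast)=\infty$), it follows that $q_+(L^\ast)' > p_-(L)$ generically. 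Thus your method only yields the lemma for $q \in (q_+(L^\ast)',2)$, which is not enough for the intended application: the proof of Lemma~\ref{lemma:RL-tent_ext_pde} needs $q$ arbitrarily close to $p_-(L)$ to reach the full range $p > p_L(\beta)$. The paper circumvents this by testing $u(t)$ against $\phi \in \Cc(C_j)$ and studying the backward solution $v_t(s)=e^{-(t-s)L^\ast}\phi$: the key step for $M_j^{(2)}$ is to apply Caccioppoli's inequality to $v_t$ so as to trade $\int_0^{r^2}\!\int_B s^{2\beta+1}|\nabla v_t|^2$ for $r^{4\beta}\int_0^{2r^2}\!\int_{2B}|v_t|^2$ (this is precisely where $\beta>-1/2$ enters), after which only $L^2$--$L^{q'}$ off-diagonal bounds for $e^{-tL^\ast}$ \emph{without gradient} are needed, and those are available for the full range $q' < p_+(L^\ast)=p_-(L)'$, i.e.\ $q>p_-(L)$.

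Second, for $M_j^{(2)}$ you assert that the Gaussian brings nothing useful and rely on the polynomial kernel alone. This fails: with $\rho = 2\beta+3+n[q,2] > 1$, the integral $\int_{(2^3r)^2}^{(2^j r)^2} t^{-\rho}\,dt$ is dominated by the lower endpoint $t\sim r^2$ and carries no $j$-decay whatsoever. The Gaussian $e^{-c(2^j r)^2/t}$ is exactly what localizes the mass to $t\sim (2^j r)^2$; keeping it (as the paper does in its pointwise bound $\|u(t)\|_{L^2(C_j)} \lesssim |B|^{[q,p]} r^{2\beta+1} t^{-n[2,q']/2} e^{-c(2^jr)^2/t}$) restores the claimed decay. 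This second issue is easily repaired within your framework, but the first is not.
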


\begin{proof}
Note that since $a \in L^2_{\beta+{1}/{2}}(\bR^{1+n}_+)$, Proposition \ref{prop:lions_RL1/2-0_sio}~\ref{item:sio-RL1/2} ensures that $u:=\cR^L_{1/2}(a)$ lies in $L^2_{\beta+1}(\bR^{1+n}_+)$, and for a.e. $(t,x) \in \bR^{1+n}_+$, 
\[ u(t,x) = \int_0^t (e^{-(t-s)L} \Div a(s))(x)\,  ds. \]
This allows us to use duality to get estimates of $u(t)$ for a.e. $t>0$. To this end, we fix $\phi \in \Cc(\bR^n)$ and set $v_t(s,y):=(e^{-(t-s)L^\ast} \phi)(y)$. We get
\[ \langle u(t), \phi \rangle = \int_0^t \int_{\bR^n} a(s,y) \cdot \overline{\nabla v_t(s,y)} \, dsdy. \]
Using the properties of $a$ and Cauchy--Schwarz inequality, we have
\begin{equation}
    \label{e:lions_atom_u(t)_dual}
    |\langle u(t),\phi \rangle| \le |B|^{[2,p]} \left( \int_0^{\min\{r^2, t\}} \int_B s^{2\beta+1} |\nabla v_t(s,y)|^2\,  dsdy \right)^{1/2}.
\end{equation}

The first inequality \eqref{e:lions_atom_Mj1} follows from the estimate
\begin{equation}
    \label{e:lions_atom_u(t)_L2Cj_Mj1}
    \|u(t)\|_{L^2(C_j)}  \lesssim |B|^{[2,p]} t^{\beta+{1}/{2}} e^{-\frac{c(2^j r)^2}{t}} \quad  0 <t \le (2^3 r)^2,
\end{equation}
by integrating the square of both sides over $0<t \le (2^3 r)^2$. To prove \eqref{e:lions_atom_u(t)_L2Cj_Mj1}, one applies \eqref{e:lions_atom_u(t)_dual} for $\phi \in \Cc(\bR^n)$ and $\supp(\phi) \subset C_j$. The $L^2-L^2$ off-diagonal estimates of $(t^{1/2} \nabla e^{-tL^\ast})$ yield that there exists $c_1>0$ only depending on $L$ so that for any $c \in (0,c_1)$,
\[ \int_0^t \int_B s^{2\beta+1} |\nabla v_t(s,y)|^2 \, dsdy \lesssim t^{2\beta+1}e^{-\frac{2c(2^j r)^2}{t}} \|\phi\|_2^2. \]
Thus, one obtains \eqref{e:lions_atom_u(t)_L2Cj_Mj1}.

The second inequality \eqref{e:lions_atom_Mj2} is obtained similarly from
\[ \|u(t)\|_{L^2(C_j)} \lesssim |B|^{[q,p]} r^{2\beta+1} t^{-\frac{n}{2}[2,q']} e^{-\frac{c(2^j r)^2}{t}}, \quad (2^3 r)^2 < t < (2^j r)^2. \]
To this end, we use again \eqref{e:lions_atom_u(t)_dual} for $\phi \in \Cc(\bR^n)$ and $\supp(\phi) \subset C_j$. Note that $v_t$ is a weak solution to the backward equation $-\partial_s v - \Div(A^\ast \nabla v) = 0$ on $(-\infty,t) \times \bR^n$. As $\beta>-1/2$, we infer from Caccioppoli's inequality (cf. Lemma \ref{lemma:lions_Caccioppoli}) that
\begin{align*}
    \int_0^{r^2} \int_B s^{2\beta+1} |\nabla e^{-(t-s)L^\ast} \phi (y)|^2 \,dsdy
    &\le (r^2)^{2\beta+1} \int_0^{r^2} \int_B |\nabla e^{-(t-s)L^\ast} \phi (y)|^2 \,dsdy \\
    &\lesssim (r^2)^{2\beta} \int_0^{2r^2} \int_{2B} |e^{-(t-s)L^\ast} \phi (y)|^2 \,dsdy.
\end{align*}
Using H\"older's inequality and $L^2-L^{q'}$ off-diagonal estimates of $(e^{-tL^\ast})$, we get $c_2>0$, depending on $L$ and $q$, so that for any $c \in (0,c_2)$,
\begin{align*}
    \|\I_{2B} e^{-(t-s)L^\ast} \phi\|_2 
    &\lesssim |B|^{[2,q']} \|\I_{2B} e^{-(t-s)L^\ast} \phi\|_q \\
    &\lesssim |B|^{[2,q']} t^{-\frac{n}{2}[2,q']} e^{-\frac{c(2^j r)^2}{t}} \|\phi\|_2.
\end{align*}
Hence, if  $\|\phi\|_2=1$, then plugging the above estimates in \eqref{e:lions_atom_u(t)_dual} implies
\begin{align*}
    |\langle u(t),\phi \rangle|^2 
    &\le |B|^{2[2,p]} \int_0^{r^2} \int_B s^{2\beta+1} |\nabla v_t(s,y)|^2\,  dsdy \\
    &\lesssim r^{4\beta} \int_0^{2r^2} \|\I_{2B} e^{-(t-s)L^\ast} \phi\|_2^2 \, ds \lesssim |B|^{2[q,p]} r^{2(2\beta+1)} t^{-n[2,q']} e^{-\frac{2c(2^j r)^2}{t}}
\end{align*}
as desired.

The third inequality \eqref{e:lions_atom_Mj3} follows from a similar argument as that of \eqref{e:lions_atom_Mj2} with the same constant $c_2$. Details are left to the reader. The assertions hence hold for $0<c<\min\{c_1,c_2\}$, which only depends on $L$ and $q$.
\end{proof}

When $p_-(L)<1$, we have even better decay for all $\beta>-1$, using  pointwise estimates and H\"older continuity of weak solutions to the backward equation.

\begin{cor}[Molecular decay when $p_-(L)<1$]
    \label{cor:lions_atom-mol-decay_p-L<1}
    Suppose $p_-(L)<1$. Let $\beta>-1$, $0<p \le 1$, and $0<\eta<n(\frac{1}{p_-(L)}-1)$. There exists a constant $c>0$ only depending on $L$, so that for any $T^p_{\beta+{1}/{2}}$-atom $a$ with an associated ball $B \subset \bR^n$, the following estimates hold for $u:=\cR^L_{1/2}(a)$ and any $j \ge 4$,
    \begin{align*}
        &\|u\|_{ L^2_{\beta+1}(M_j^{(1)}) } \lesssim 2^{jn[p,2]} e^{-c2^{2j}} |2^{j+1} B|^{[2,p]},  \\
        &\|u\|_{ L^2_{\beta+1}(M_j^{(2)}) }  \lesssim 2^{-j(2\beta+1+\eta-n[p,1])} |2^{j+1} B|^{[2,p]}, \\
        &\|u\|_{ L^2_{\beta+1}(M_j^{(3)}) } \lesssim 2^{-j(2\beta+1+\eta-n[p,1])} |2^{j+1} B|^{[2,p]}.
    \end{align*}
    The implicit constants are independent of $j$ and $B$.
\end{cor}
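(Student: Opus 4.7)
The plan is to mirror the three-case structure of the proof of Lemma \ref{lemma:lions_atom-mol-decay}, replacing the use of $L^2$--$L^{q'}$ off-diagonal estimates by local boundedness and H\"older continuity of weak solutions to the backward equation $-\partial_s v-\Div(A^*\nabla v)=0$. Such regularity is available precisely because $p_-(L)<1$: Nash--De Giorgi--Moser theory then supplies H\"older regularity with exponent $\eta$ for any $\eta<n(1/p_-(L)-1)$, equivalently an $L^2$--$L^\infty$ off-diagonal estimate for $(e^{-\tau L^*})$ with a H\"older-type gain of the form $(\delta/\mathrm{dist})^\eta$ when averaged on balls of radius $\delta$ at distance $\mathrm{dist}$ from the support of the data. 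This improvement produces both the molecular gain $2^{-j\eta}$ and the weakening of the regularity threshold from $\beta>-1/2$ to $\beta>-1$.

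For the $M_j^{(1)}$-estimate, I would keep the proof of \eqref{e:lions_atom_u(t)_L2Cj_Mj1} from Lemma \ref{lemma:lions_atom-mol-decay} essentially unchanged: it uses only the $L^2$--$L^2$ off-diagonal estimates for $(t^{1/2}\nabla e^{-tL^*})$, Cauchy--Schwarz, the atom bound, and the integrability of $s^{2\beta+1}$ near $0$, the last requiring only $\beta>-1$. Splitting the $s$-integration at $t/2$ and absorbing the exponential weight into the Gaussian factor, one checks that the bound $\|u(t)\|_{L^2(C_j)}\lesssim|B|^{[2,p]}t^{\beta+1/2}e^{-c(2^jr)^2/t}$ remains valid for all $\beta>-1$, and integration of $t^{-2(\beta+1)}\|u(t)\|_{L^2(C_j)}^2$ over $(0,(2^3r)^2]$ yields the stated decay.

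For the $M_j^{(2)}$- and $M_j^{(3)}$-estimates, I would dualize against $\phi\in\Cc(C_j)$ (resp.\ $\Cc(2^{j+1}B)$) and reduce as in \eqref{e:lions_atom_u(t)_dual} to controlling $(\int_0^{\min\{r^2,t\}}\int_B s^{2\beta+1}|\nabla v_t|^2\,dsdy)^{1/2}$. Then split the $s$-integral over $(0,r^2)$ dyadically: on $s\in(2^{-k-1}r^2,2^{-k}r^2]$ apply Caccioppoli at scale $(2^{-k}r^2)^{1/2}$ on $B$, paying a factor $(2^{-k}r^2)^{-1}$ and reducing $\|\nabla v_t(s)\|_{L^2(B)}$ to $\|v_t(s')\|_{L^2(2B)}$ on a slightly larger time slab. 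Bound the latter via the H\"older $L^2$--$L^\infty$ off-diagonal estimate
\[
    \|\I_{2B}\,e^{-\tau L^*}\phi\|_{L^\infty(2B)}\lesssim 2^{-j\eta}\,\tau^{-n/4}e^{-c(2^jr)^2/\tau}\|\phi\|_2, \quad \tau\sim t,
\]
multiply by $|2B|^{1/2}$ to convert $L^\infty$ into $L^2(2B)$. Summing the dyadic pieces in $k$ (the sum converges since $2\beta+2>0$), integrating over the respective time interval, and optimizing over $\phi$ via duality, one obtains $\|u(t)\|_{L^2(C_j)}\lesssim 2^{-j\eta}\cdot(\text{geometric factor in }r,t,j)$, and the weighted $L^2$ integration in $t$ delivers the advertised decay $2^{-j(2\beta+1+\eta-n[p,1])}$.

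The main obstacle is to extract and apply the $L^2$--$L^\infty$ H\"older off-diagonal estimate for $(e^{-\tau L^*})$ in exactly the form stated above, as this is the quantitative manifestation of $p_-(L)<1$ that delivers the gain $2^{-j\eta}$ as well as the relaxed condition $\beta>-1$ (the dyadic Caccioppoli decomposition requires only $2\beta+2>0$ for convergence of the $k$-sum). Once this ingredient is fixed, the arithmetic of the exponents is routine: the exchange $n[q,p]\leadsto \eta-n[p,1]$ corresponds to replacing the $L^{q'}$ embedding used in the previous lemma by the H\"older-improved $L^\infty$ bound combined with the trivial inclusion $L^\infty(2B)\hookrightarrow L^2(2B)$ at cost $|B|^{1/2}$.
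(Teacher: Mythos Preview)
Your $M_j^{(1)}$ treatment is fine and matches the paper. The issues are in the $M_j^{(2)}$/$M_j^{(3)}$ argument, and they are not merely cosmetic.

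\textbf{The displayed ``H\"older $L^2$--$L^\infty$ off-diagonal estimate'' is not correct.} H\"older regularity of $e^{-\tau L^\ast}\phi$ controls its \emph{oscillation} on a small ball, not its $L^\infty$ norm. There is no gain of the form $(\delta/\mathrm{dist})^\eta=2^{-j\eta}$ multiplying the Gaussian for $\|e^{-\tau L^\ast}\phi\|_{L^\infty(2B)}$; the standard $L^2\!\to\!L^\infty$ off-diagonal estimate already gives the right-hand side \emph{without} the factor $2^{-j\eta}$. What one actually obtains, after subtracting the constant $v_t(0,0)$ and using $L^2\!\to\!\dot\Lambda^\eta$ boundedness together with $L^2\!\to\!L^\infty$ bounds for $(tL^\ast e^{-tL^\ast})$, is a pointwise bound for $\tilde v_t:=v_t-v_t(0,0)$ of order $r^\eta\,t^{-(n/4+\eta/2)}e^{-c(2^jr)^2/t}$ on $(0,2r^2)\times 2B$. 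The $2^{-j\eta}$ only materialises \emph{after} the $t$-integration over $M_j^{(2)}$, not pointwise in $\tau$.

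\textbf{The dyadic Caccioppoli sum does not converge in the claimed range.} With the approach you describe (time slabs $s\in(2^{-k-1}r^2,2^{-k}r^2]$, Caccioppoli on $B$ with factor $(2^{-k}r^2)^{-1}$, and $\|v_t\|_{L^2(2B)}$ bounded uniformly in $k$), the $k$-th piece of $\int_0^{r^2}\!\int_B s^{2\beta+1}|\nabla v_t|^2$ contributes a term proportional to $(2^{-k}r^2)^{2\beta+1}$, so the sum converges only when $2\beta+1>0$, i.e.\ $\beta>-1/2$. Your assertion that ``the sum converges since $2\beta+2>0$'' is unjustified: nothing in your scheme produces the extra factor $2^{-k}$. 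This is precisely the obstruction that forced the restriction $\beta>-1/2$ in Lemma~\ref{lemma:lions_atom-mol-decay}.

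The paper circumvents this by a different mechanism: after replacing $v_t$ by $\tilde v_t$ (so that $\nabla \tilde v_t=\nabla v_t$ and $\tilde v_t$ is small of order $r^\eta$ on $(0,2r^2)\times 2B$), it \emph{integrates by parts in $s$} against a smooth cutoff $\chi$ to convert the weight $s^{2\beta+1}$ into $s^{2\beta+2}$. The boundary term at $s=0$ vanishes exactly because $2\beta+2>0$, and the price is an extra term carrying $\partial_s|\nabla\tilde v_t|^2$, which is controlled by analyticity of the semigroup (bounding $\partial_s\tilde v_t=-L^\ast e^{-(t-s)L^\ast}\phi$ as one bounds $\tilde v_t$, with an extra $t^{-1}$). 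Both resulting terms are then handled by a single application of Caccioppoli at the fixed scale $r$ (no dyadic sum). This integration-by-parts step, not a sharper off-diagonal estimate, is the missing idea that extends the range from $\beta>-1/2$ to $\beta>-1$.
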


\begin{proof}
Fix $c>0$ as the constant given by the $L^2-L^2$ off-diagonal estimates of $(t^{1/2} \nabla e^{-tL^\ast})$, which only depends on $L$. The first inequality is the same as in Lemma \ref{lemma:lions_atom-mol-decay}. We only need to show the second, and the third follows analogously. The second inequality is also obtained by integrating the square of 
\begin{equation}
    \label{e:lions_atom_Mj2_u(t)L2_Cj}
    \|u(t)\|_{L^2(C_j)} \lesssim |B|^{[2,p]} r^{\frac{n}{2} + 2\beta+1+\eta} t^{-(\frac{n}{4}+\frac{\eta}{2})} e^{-\frac{c(2^j r)^2}{t}}, \quad (2^3 r)^2 < t < (2^j r)^2.
\end{equation} 
To prove it, we use \eqref{e:lions_atom_u(t)_dual} again for $\phi \in \Cc(\bR^n)$ with $\|\phi\|_2=1$ and $\supp(\phi) \subset C_j$. When $p_{-}(L)<1$, we know from \cite[Chapter 14]{Auscher-Egert2023OpAdp} that $(e^{-tL^*})$ satisfies $L^2-L^\infty$ off-diagonal estimates, and it is uniformly bounded from $L^2$ to $\Dot{\Lambda}^\eta$ for $0<\eta<n(\frac{1}{p_-(L)}-1)$, where $\Dot{\Lambda}^\eta$ is the homogeneous $\eta$-H\"older space. In particular, $v_t(s,y):=(e^{-(t-s)L^\ast} \phi)(y)$ has pointwise values. Set $\tilv_t(s,y):=v_t(s,y)-v_t(0,0)$. Write $\tilv_t(s,y)=  (v_t(s,y)-v_t(s,0))+ (v_t(s,0)-v_t(0,0))$ and we estimate each term separately. For the first term, we use  H\"older continuity. For the second, we write it as $\int_{0}^s (L^\ast e^{-(t-\tau) L^\ast} \phi )(0)\, d\tau$  and use $L^2-L^\infty$ boundedness of $(tL^\ast e^{-tL^\ast})$. In summary, as $\dist(\supp(\phi),B) \sim 2^j r$, we obtain that for any $0<\eta<n(\frac{1}{p_-(L)}-1)$,
\begin{equation}
    \label{e:lions_atom_tilvt_point_est}
    \sup_{0<s<2r^2, y \in 2B} |\tilv_t(s,y)| \lesssim t^{-(\frac{n}{4}+\frac \eta 2)} e^{-\frac{c(2^j r)^2}{t}} r^\eta \|\phi\|_2.
\end{equation}
By analyticity of the semigroup again, one finds that $\partial_{s} \tilv_{t}(s,y)$ satisfies the similar estimate with an extra factor $(t-s)^{-1} \sim t^{-1}$.  

Let $\chi$ be in $C^\infty([0,\infty))$ with $\supp(\chi) \subset [0,\frac{3}{2} r^2]$, $0 \le \chi \le 1$, $\chi=1$ on $[0,r^2]$, and $\|\chi'\|_{\infty}\lesssim r^{-2}$. Using integration by parts in variable $s$, we get
\begin{align*}
    &\int_0^{r^2} \int_B  |\nabla \tilv_t(s,y)|^2 s^{2\beta+1}\, dsdy 
    \le \int_0^{\frac{3}{2} r^2} \int_B \chi(s) |\nabla \tilv_t(s,y)|^2 s^{2\beta+1}\, dsdy \\ 
    &\qquad \quad \lesssim \int_0^{\frac{3}{2} r^2} \int_B |\chi'(s)| |\nabla \tilv_t(s,y)|^2 s^{2\beta+2} \, dsdy \\ 
    &\qquad \qquad + \int_0^{\frac{3}{2} r^2} \int_B \chi(s) |\nabla \tilv_t(s,y)| |\nabla \partial_s \tilv_t(s,y)| s^{2\beta+2} \, dsdy =: I_1 + I_2.
\end{align*}
For $I_1$, note that $\tilv_t$ is also a weak solution to the backward equation $-\partial_s v - \Div(A^\ast \nabla v) = 0$ on $(-\infty,t) \times \bR^n$. As $|\chi'| \lesssim r^{-2}$ and $2\beta+2>0$, we get from Caccioppoli's inequality and \eqref{e:lions_atom_tilvt_point_est} that
\begin{align*}
    I_1
    &\lesssim r^{4\beta+2} \int_{0}^{\frac{3}{2} r^2} \int_B |\nabla \tilv_t|^2 \lesssim r^{4\beta} \int_0^{2r^2} \int_{2B} |\tilv_t|^2 \\ 
    &\lesssim |B| r^{4\beta +2+  2 \eta} t^{-(\frac n 2+\eta)} e^{-\frac{2c(2^j r)^2}{t}} \|\phi\|_2^2.
\end{align*}
For $I_2$, using again Caccioppoli's inequality and the above estimate for $\partial_s \tilv_t(s,y)$, we find
\begin{align*}
    I_2 
    &\lesssim r^{4\beta+4} \int_0^{\frac{3}{2} r^2} \int_B |\nabla \partial_s \tilv_t|^2  
    \lesssim  r^{4\beta+2} \int_0^{{2} r^2} \int_{2B} | \partial_s \tilv_t|^2 \\
    &\lesssim |B| r^{4\beta+4+2{\eta}} t^{-(\frac{n}{2}+1+{\eta})} e^{-\frac{2c(2^j r)^2}{t}} \|\phi\|_2^2. 
\end{align*}
Gathering these estimates, we easily obtain \eqref{e:lions_atom_Mj2_u(t)L2_Cj}.
\end{proof}

\begin{proof}[Proof of Lemma \ref{lemma:RL-tent_ext_pde}]
We begin with \ref{item:RL-tent_ext_pde_p-(L)>1} and split the discussion into two cases. \\

\paragraph{Case 1: $\max\{p_L(\beta),1\}<p \le \infty$}
First consider $\cR^L_{1/2}$. By density (or weak$^\star$-density when $p=\infty$), it suffices to show that for any $F \in T^p_{\beta+1/2} \cap L^2(\bR^{1+n}_+)$, 
\begin{equation}
    \label{e:RL-tent-ext-pde-bdd}
    \|\cR^L_{1/2}(F)\|_{T^p_{\beta+1}} \lesssim \|F\|_{T^p_{\beta+1/2}}.
\end{equation}
To prove it, we use duality. Fix $\phi \in \Cc(\bR^{1+n}_+)$ and Fubini's theorem ensures that
\[ \langle \cR^L_{1/2}(F),\phi \rangle_{L^2(\bR^{1+n}_+)} = \int_0^\infty \int_{\bR^n} F(s,y) \left( \int_s^\infty \overline{\nabla e^{-(t-s)L^\ast} \phi(t) }(y) \, dt \right) dsdy. \]
Define
\[ v(s,y) := \int_s^\infty e^{-(t-s)L^\ast} \phi(t) \, dt. \]
As $1 \le p'<\infty$, by duality of tent spaces, \eqref{e:RL-tent-ext-pde-bdd} follows the claim  
\begin{equation}
    \label{e:RL-tent-ext-pde-bdd-dual}
    \|\nabla v\|_{T^{p'}_{-(\beta+1/2)}} \lesssim \|\phi\|_{T^{p'}_{-(\beta+1)}}.
\end{equation}
Indeed, observe that $v \in L^2_{\loc}((0,\infty);W^{1,2}_{\loc})$ is a weak solution to the backward equation $-\partial_s v - \Div(A^\ast \nabla v) = \phi$ on $ \bR^{1+n}$. Applying Corollary \ref{cor:energyinequalitytent space} leads to 
\[ \|\nabla v\|_{T^{p'}_{-(\beta+1/2)}} \lesssim \|v\|_{T^{p'}_{-\beta}} + \|\phi\|_{T^{p'}_{-\beta-1}}. \]
As we assume $p_{-}(L)\ge 1$, we claim that for $\beta>-1/2$ and $0 < p' < \infty$,
\[ \|v\|_{T^{p'}_{-\beta}} \lesssim \|\phi\|_{T^{p'}_{-\beta-1}}. \]
Namely, apply the results in  \cite{Auscher-Hou2023_SIOTent}: first, the  operator $\phi \mapsto v$ lies in $\sio^{1-}_{2,q,\infty}$ for $\max\{p_-(L^\ast), 1\} <q<p_+(L^\ast)$ by (4.14) and Proposition 2.11, then apply Proposition 3.4 and Corollary 3.6. This, together with the previous inequality, proves the claim \eqref{e:RL-tent-ext-pde-bdd-dual}. The bounded extension of $\cR^L_{1/2}$ hence follows.

Next, consider $\cR^L_0$. Fix $F \in T^p_{\beta+1/2} \cap L^2(\bR^{1+n}_+)$. Proposition \ref{prop:lions_RL1/2-0_sio} \ref{item:sio-nabla-RL1/2=RL0} says $\cR^L_0(F)=\nabla \cR^L_{1/2}(F)$ in $L^2(\bR^{1+n}_+)$. Meanwhile, by Proposition \ref{prop:wpL2}, $u=\cR^L_{1/2}(F)$ is a global weak solution to the equation $\partial_t u - \Div(A\nabla u) = \Div F$. Applying Corollary \ref{cor:energyinequalitytent space} and \eqref{e:RL-tent-ext-pde-bdd} yields that for any $F \in T^p_{\beta+1/2} \cap L^2(\bR^{1+n}_+)$,
\[ \|\cR^L_0(F)\|_{T^p_{\beta+1/2}} = \|\nabla \cR^L_{1/2}(F)\|_{T^p_{\beta+1/2}} \lesssim \|\cR^L_{1/2}(F)\|_{T^p_{\beta+1}} + \|F\|_{T^p_{\beta+1/2}} \lesssim \|F\|_{T^p_{\beta+1/2}}. \]
Then the bounded extension of $\cR^L_0$ follows by density. \\

\paragraph{Case 2: $p_L(\beta)<p \le1$}
Note that by definition of $\beta(L)$ (cf.~\eqref{e:beta_0}), this case only occurs when $\beta>\beta(L)$. We first consider the extension of $\cR^L_{1/2}$. Thanks to \cite[Lemma 3.2]{Auscher-Hou2023_SIOTent}, it is enough to show $\cR^L_{1/2}$ is uniformly bounded on $T^p_{\beta+{1}/{2}}$-atoms. 

Let us verify this. Let $a$ be a $T^p_{\beta+{1}/{2}}$-atom, $B \subset \bR^n$ be a ball associated to $a$, and $u:=\cR^L_{1/2}(a)$. Denote by $Q_0:=\left( 0,(2^3 r(B))^2 \right) \times 2^3 B$. As $\beta+1/2>-1/2$, Proposition \ref{prop:lions_RL1/2-0_sio} \ref{item:sio-RL1/2} implies
\begin{equation}
    \label{e:lions_atom_u_Q0}
    \|u\|_{L^2_{\beta+1}(Q_0)} \lesssim \|a\|_{L^2_{\beta+{1}/{2}}(\bR^{1+n}_+)} \lesssim |2^3 B|^{[2,p]}.
\end{equation}
Pick $q<2$ sufficiently close to $p_{-}(L)$ so that
\begin{equation}
    \label{e:lions_atom_p-q'_p-L>1}
    \frac{np_-(L)}{n+(2\beta+1)p_-(L)} < \frac{nq}{n+(2\beta+1)q} < p \le 1.
\end{equation}
As $\beta>\beta(L) \ge -1/2$, we may apply  Lemma \ref{lemma:lions_atom-mol-decay} as well as the estimates given by \eqref{e:lions_atom_u_Q0} to get 
\[ \|u\|_{T^p_{\beta+1}}^p \lesssim 1+\sum_{j \ge 4} 2^{-jp(2\beta+1+n[q,p])} \le C \]
for some constant $C<\infty$ by \eqref{e:lions_atom_p-q'_p-L>1}. The assertion hence holds in this case.

Next, we consider the bounded extension of $\cR^L_0$. In fact, one can apply Caccioppoli's inequality as in Corollary \ref{cor:energyinequalitytent space} to establish the same molecular decay estimates as in Lemma \ref{lemma:lions_atom-mol-decay} for $\nabla u$ in $L^2_{\beta+{1}/{2}}(M_j^{(i)})$ $(1 \le i \le 3)$. Meanwhile, using Proposition \ref{prop:lions_RL1/2-0_sio} \ref{item:sio-RL0}, one can also get as an alternative of \eqref{e:lions_atom_u_Q0} 
\[ \|\nabla u\|_{L^2_{\beta+{1}/{2}}(Q_0)} \lesssim \|a\|_{L^2_{\beta+{1}/{2}}(\bR^{1+n}_+)} \lesssim |2^3 B|^{[2,p]}. \]
It is then enough to repeat the above computation. Details are left to the reader. This proves \ref{item:RL-tent_ext_pde_p-(L)>1}. \\

To prove \ref{item:RL-tent_ext_pde_p-(L)<1}, as $p \le 1$, we can use  the argument in Case 2, using \eqref{e:lions_atom_u_Q0} directly and replacing the estimates of  Lemma \ref{lemma:lions_atom-mol-decay} by those in Corollary \ref{cor:lions_atom-mol-decay_p-L<1}. Picking $\eta>0$ sufficiently close to $n(\frac{1}{p_-(L)}-1)$ so that
\begin{equation*}
    \frac{np_-(L)}{n+(2\beta+1)p_-(L)} < \frac{n}{n+2\beta+1+\eta} < p \le 1,
\end{equation*} 
we obtain some constant $C<\infty$ so that
\[ \|u\|_{T^p_{\beta+1}}^p \lesssim 1+\sum_{j \ge 4} 2^{-jp(2\beta+1+\eta-n[p,1])} \le C. \] 
This proves bounded extension of $\cR^L_{1/2}$.

Bounded extension of $\cR^L_0$ follows by the same argument as in the above Case 2, using Corollary \ref{cor:lions_atom-mol-decay_p-L<1}. This completes the proof.
\end{proof}
%%%%% Traces and continuity
\section{Traces and continuity}
\label{sec:cont}

In this section, we first prove Theorem \ref{thm:lions_ext} \ref{item:lions_formula}, \ref{item:lions_sol}, and \ref{item:lions_cont}, and then obtain a trace for arbitrary weak solutions with gradient controlled in weighted tent spaces in the largest possible range of exponents $\beta$ and $p$ (as it is the same as the one for  $L=-\Delta$).

\begin{proof}[Proof of Theorem \ref{thm:lions_ext}, \ref{item:lions_formula} and \ref{item:lions_sol}]
	First, Corollary \ref{cor:formula_L2} \ref{item:formula_L2_RL1/2} says that \ref{item:lions_formula} holds for $F \in T^p_{\beta+{1}/{2}} \cap L^2(\bR^{1+n}_+)$. Also note that all the operators involved have bounded extensions to $T^p_{\beta+1/2}$. Indeed, for $\cR^L_{1/2}$, $\nabla \cR^L_{1/2}$, and $\cR^{-\Delta}_{1/2}$, it has been shown in \ref{item:lions_reg}; and we invoke \cite[Theorem 4.2]{Auscher-Hou2023_SIOTent} to get boundedness of $\cL_1^{-\Delta}$ from $T^p_{\beta+{1}/{2}}$ to $T^p_{\beta+{3}/{2}}$. We conclude by density (or weak*-density if $p=\infty$), since all the weighted tent spaces embed into $L^2_{\loc}(\bR^{1+n}_+)$, hence into $\scrD'(\bR^{1+n}_+)$.

    For \ref{item:lions_sol}, we infer from \ref{item:lions_reg} that for any $F \in T^p_{\beta+1/2}$, both $u=\cR^L_{1/2}(F)$ and $\nabla u$ lie in $L^2_{\loc}(\bR^{1+n}_+)$, so $u \in L^2_{\loc}((0,\infty);W^{1,2}_{\loc})$. Recall that Proposition \ref{prop:wpL2} says for any $F \in T^p_{\beta+1/2} \cap L^2(\bR^{1+n}_+)$, $u$ is a global weak solution to $\partial_t u - \Div(A\nabla u) = \Div F$. The same density argument concludes the verification for any $F \in T^p_{\beta+1/2}$.
\end{proof}

To prove \ref{item:lions_cont}, we need a lemma allowing us to reduce the proof to the case $p>1$.
\begin{lemma}
    \label{lemma:unique_embed}
    Let $\beta>-1$ and $\phcl{L} < p \le 1$. There exist $\beta_0>-1$ and $\max\{\tilp_L(\beta_0),1\}<p_0<2$ so that $T^p_{\beta+1/2}$ embeds into $T^{p_0}_{\beta_0+1/2}$.
\end{lemma}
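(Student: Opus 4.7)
The plan is to find $(p_0,\beta_0)$ on the embedding line through $(p,\beta)$ that lies in the regime where the boundary curve $p=\tilp_L(\beta)$ coincides with $p=p_L(\beta)$. In this regime the boundary is given explicitly, via~\eqref{e:pLbeta}, by $1/p_L(\beta) = 1/p_-(L) + (2\beta+1)/n$, so the defining inequality $p > p_L(\beta)$ rewrites as the scale-invariant inequality $\sigma > -1 - n/p_-(L)$, where $\sigma := 2\beta - n/p$. Since $\sigma$ is precisely the invariant preserved by the tent-space embedding recalled in Section~\ref{ssec:tent-slice}, once inside this regime the defining inequality travels for free along embedding lines.

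First I would check that the hypothesis $\tilp_L(\beta)<p\le 1$ already puts us there. Reading \eqref{e:phc-p-(L)>1}--\eqref{e:phc-p-(L)<1}, on the exceptional branch one always has $\tilp_L(\beta) \ge 1$: when $p_-(L) \ge 1$ and $\beta \in (-1,-1/2]$, $\tilp_L(\beta) = p_-(2\beta+1,L)$ interpolates linearly in $1/p_-$ between $1/p_-(L)$ and $1/q_+(L^\ast)'$, both $\le 1$; when $p_-(L) < 1$ and $\beta \in (-1,\beta(L)]$, the explicit formula in~\eqref{e:phc-p-(L)<1} directly gives $\tilp_L(\beta) \ge 1$. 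Thus $\tilp_L(\beta) < 1$ forces $\beta > -1/2$ in the first case and $\beta > \beta(L)$ in the second; in either case $\tilp_L(\beta) = p_L(\beta)$, and so $\sigma > -1 - n/p_-(L)$.

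Then I would choose $p_0 > 1$ close enough to $1$ and set $\beta_0 := \sigma/2 + n/(2p_0)$, so that $2\beta_0 - n/p_0 = \sigma$ and $\beta_0 \to (\sigma+n)/2$ as $p_0 \to 1^+$. I need $\beta_0$ to stay on the correct side of the threshold to ensure $\tilp_L(\beta_0) = p_L(\beta_0)$; by $\sigma$-invariance this then yields $p_0 > p_L(\beta_0) = \tilp_L(\beta_0)$. In case $p_-(L) \ge 1$, $\beta_0 \ge -1/2$ reads $1/p_0 \ge -(\sigma+1)/n$, which holds for every $p_0 < 2$ provided $\sigma > -n-1$; this follows from $\sigma > -1 - n/p_-(L) \ge -1 - n$. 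In case $p_-(L) < 1$, $\beta_0 > \beta(L)$ rewrites as $n(1 - 1/p_0) < \sigma + 1 + n/p_-(L)$, whose right-hand side is strictly positive by hypothesis, so this inequality holds for $p_0$ sufficiently close to $1$ from above. The constraint $\beta_0 > -1$ is automatic: $\beta_0 \ge -1/2 > -1$ in the first case, and $\beta_0 > \beta(L) \ge -1$ in the second. Finally, the embedding property of weighted tent spaces from Section~\ref{ssec:tent-slice} applied to the identity $2(\beta+1/2)-n/p = 2(\beta_0+1/2)-n/p_0$ delivers $T^p_{\beta+1/2} \hookrightarrow T^{p_0}_{\beta_0+1/2}$.

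The main obstacle is the bookkeeping of the piecewise definition of $\tilp_L$: one must rule out the exceptional branch both at $\beta$ (where it follows from $p \le 1$) and at $\beta_0$ (which is precisely why $p_0$ has to be chosen close to $1$, not arbitrary in $(1,2)$). Once this is done, the embedding line is parallel to the regular part of the boundary curve $\{p = p_L(\beta)\}$, so moving along it preserves the strict inequality.
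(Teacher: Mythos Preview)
Your proof is correct and follows essentially the same route as the paper's: both arguments observe that the hypothesis $\phcl{L}<p\le 1$ forces one onto the regular branch where $\phcl{L}=p_L(\beta)$, translate $p>p_L(\beta)$ into the embedding-invariant inequality (your $\sigma>-1-n/p_-(L)$, the paper's $\gamma>\beta(L)$ for the point on the embedding line at $p_0=1$), and then perturb to $p_0>1$ close to~$1$. One small slip: your claim that $1/p_0\ge -(\sigma+1)/n$ ``holds for every $p_0<2$ provided $\sigma>-n-1$'' is too strong---it would require $\sigma\ge -n/2-1$, not merely $\sigma>-n-1$---but since you explicitly take $p_0$ close to~$1$ (where the inequality does hold), this does not affect the argument.
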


\begin{proof}
The crucial point is the embedding theorem for weighted tent spaces, recalled in Section \ref{ssec:tent-slice}. In parabolic scaling it asserts that $T^p_{\beta+{1}/{2}}$ embeds into $T^{p_0}_{\beta_0+{1}/{2}}$ if $\beta_0<\beta$, $p<p_0$, and $2\beta_0+1-\frac{n}{p_0} = 2\beta+1-\frac{n}{p}$.
Let $\gamma<\beta$ be the number so that $2\gamma+1 -n = 2\beta+1 - \frac{n}{p}$. We claim that $\gamma>-1$ and $\tilp_L(\gamma)<1$. If it holds, then by perturbation, we can take $\beta_0>-1$ sufficiently close to $\gamma$ and the corresponding $p_0>1$. The claim is elementary using the definitions of the exponents in the introduction but we provide an argument for the sake of completeness.
  
By definition (see also Figure \ref{fig:exponents}), we already observe that:
\begin{enumerate}[label=\normalfont(\roman*)]
    \item $\beta(L) \ge -1/2 \Longleftrightarrow p_{-}(L)\ge 1$;
    \item $p_{L}(\beta) \le \phcl{L}$, and the equality holds if and only if $\beta \ge \min\{\beta(L), -1/2\}$.
\end{enumerate}
This implies
\[ \beta>-1 \ \text{and} \ \phcl{L} <1 \Longleftrightarrow \beta>\beta(L). \]
Indeed, if the right-hand side holds, then $\beta>\beta(L)\ge -1$ and $\phcl{L} = p_L(\beta) <p_L(\beta (L))=1$. Conversely, assume the left-hand side. When $p_{-}(L)<1$, $\phcl{L} <1$ implies $\beta>\beta(L)$. When $p_{-}(L)\ge 1$, by construction, $\phcl{L} <1$ implies $\beta>-1/2= \min\{\beta(L),-1/2\}$, so $\phcl{L} = p_L(\beta) < 1$. We thus infer $\beta>\beta(L)$. 
 
It remains to verify that $\gamma>\beta(L)$. Remark that for any $\delta>-1$ and $r>0$, 
\[ r> \frac{np_-(L)}{n+(2\delta+1)p_-(L)} \Longleftrightarrow 2\delta+1 - \frac{n}{r} > -\frac{n}{p_-(L)}.\]
As $\frac{np_-(L)}{n+(2\beta+1)p_-(L)} \le \phcl{L} < p \le 1$, we have $2\gamma+1-{n}= 2\beta+1-\frac{n}{p} > -\frac{n}{p_-(L)}$. By definition of $\beta(L)$, this exactly says $\gamma>\beta(L)$ as desired.
\end{proof}

\begin{proof}[Proof of Theorem \ref{thm:lions_ext} \ref{item:lions_cont}]
    We first show that $u\in C((0,\infty);\scrS')$. Since $u$ lies in $L^2_{\loc}((0,\infty);W^{1,2}_{\loc})$, using the equation (by \ref{item:lions_sol}), we get $\partial_t u \in L^2_{\loc}((0,\infty);W^{-1,2}_{\loc})$. Next, fix $t>0$, $\phi \in \scrS$, and pick $a,b$ so that $0<a<t<b$. As $p>\phcl{L} \ge \frac{n}{n+2\beta+2}$, one can infer from the proof of \eqref{e:heat_uc_nabla_u_N2} in Proposition \ref{prop:rep_heat} that
    \begin{equation}
        \label{e:lions_cont_phi_S_tent-dual}
        \|\I_{(a,b)} \nabla \phi\|_{(T^p_{\beta+1/2})'} \lesssim_{a,b} \cP_N(\phi)
    \end{equation}
    for some sufficiently large $N>0$, where $\cP_N$ is the semi-norm on $\scrS$ given by \eqref{e:cPN_S(Rn)}. Then for $s \in (a,t)$, we apply the equation to get
    \begin{align*}
        |\langle u(t)-u(s),\phi \rangle|
        &\le \left | \int_s^t \langle \partial_\tau u(\tau),\phi \rangle d\tau \right| \\
        &\le \int_s^t \int_{\bR^n} |A\nabla u||\nabla \phi| + \int_s^t \int_{\bR^n} |F| |\nabla \phi| \\
        &\lesssim (\|\I_{(s,t)} \nabla u\|_{T^p_{\beta+1/2}} + \|\I_{(s,t)} F\|_{T^p_{\beta+1/2}}) \cP_N(\phi).
    \end{align*}
    For $p<\infty$, this proves the continuity as desired, since both $\|\I_{(s,t)} \nabla u\|_{T^p_{\beta+1/2}}$ and $\|\I_{(s,t)} F\|_{T^p_{\beta+1/2}}$ tend to 0 as $s \to t$. For $p=\infty$, we need finer estimates. Note that in this case, \eqref{e:lions_cont_phi_S_tent-dual} can be refined as
    \[ \|\I_{(s,t)} \nabla \phi\|_{T^1_{-(\beta+1/2)}} \lesssim \left( \int_s^t \tau^{-(2\beta+1)} d\tau \right)^{1/2} \cP_N(\phi). \]
    Repeating the above computation yields
    \[ |\langle u(t)-u(s),\phi \rangle| \lesssim \left( \int_s^t \tau^{-(2\beta+1)} d\tau \right)^{1/2} (\|\nabla u\|_{T^\infty_{\beta+1/2}} + \|F\|_{T^\infty_{\beta+1/2}}) \cP_N(\phi), \]
    which also converges to 0 as $s \to t$.
    
    Next, we consider the trace. We split the discussion in 4 cases. \\

    \paragraph{Case 1: $\beta>-1$ and $2<p \le \infty$}
    We use the formula in \ref{item:lions_formula}. Write $\tilF:=(A-\bI)\nabla u+F$ so that $u=\Div \cL^{-\Delta}_1(\tilF)$. Since $\tilF \in T^p_{\beta+{1}/{2}}$, we invoke \cite[Theorem 4.2(e)]{Auscher-Hou2023_SIOTent} to get that $\cL_1^{-\Delta}(\tilF)(t)$ tends to 0 in $E^q_\delta$ as $t \to 0$, so $u(t)=\Div \cL_1^{-\Delta}(\tilF)(t)$ tends to 0 in $E^{-1,q}_\delta$ as $t \to 0$. \\
    
    \paragraph{Case 2: $-1<\beta<-1/2$ and $\max\{\phcl{L},1\} < p \le 2$} 
    By interpolation of Hardy--Sobolev spaces, it is enough to show that $u(t)$ tends to 0 as $t \to 0$ in $\DotH^{2\beta+1,p}$ and $\DotH^{-1,p}$.
    
    For $\DotH^{-1,p}$, we also apply the formula in \ref{item:lions_formula}. Again, as $\tilF\in T^p_{\beta+{1}/{2}}$, \cite[Theorem 4.2(e)]{Auscher-Hou2023_SIOTent} says that $\cL_1^{-\Delta}(\tilF)(t)$ tends to 0 in $L^p$ when $t \to 0$, so $u(t)$ tends to 0 in $\DotW^{-1,p}=\DotH^{-1,p}$ as $t \to 0$.
    
    For $\DotH^{2\beta+1,p}$, we first claim that for any $t>0$,
    \begin{equation}
        \label{e:lions_u(t)_beta<-1/2_p>1}
        \|u(t)\|_{\DotH^{2\beta+1,p}} \lesssim \|\I_{(0,t)} \tilF\|_{T^p_{\beta+{1}/{2}}}.
    \end{equation}
    If it holds, then it is clear that $u(t)$ tends to 0 in $\DotH^{2\beta+1,p}$ as $t \to 0$, since $\|\I_{(0,t)} \tilF\|_{ T^p_{\beta+{1}/{2}} }$ tends to 0 as $t \to 0$.

    Let us prove the claim \eqref{e:lions_u(t)_beta<-1/2_p>1} by duality. Fix $\varphi \in \scrS_\infty$. Define 
    \[ g(s,x):= \I_{\{s<t\}}(s)\nabla e^{-(t-s)\Delta} \varphi(x). \]
    Observe that
    \begin{equation}
        \label{e:lions_est_g-by-phi}
        \|g\|_{T^{p'}_{-(\beta+{1}/{2})}} \lesssim \|\varphi\|_{\DotH^{-(2\beta+1),p'}}.
    \end{equation}
    Indeed, for $\phcl{L}<p<2$, we have
    \[ \|g\|_{T^{p'}_{-(\beta+{1}/{2})}} \le \left ( \int_{\bR^n} \left ( \int_0^t \cM (|\nabla e^{(t-s)\Delta} \varphi|^2)(x) s^{2\beta+1} ds \right )^{p'/2} dx \right )^{1/p'}, \]
    where $\cM$ is the Hardy--Littlewood maximal operator on $\bR^n$. By Minkowski's inequality and boundedness of $\cM$ on $L^{p'/2}$, we further have
    \begin{align*}
        \|g\|_{T^{p'}_{-(\beta+{1}/{2})}} 
        &\lesssim \left ( \int_0^t \|\nabla e^{(t-s)\Delta} \varphi\|_{p'}^2 ~ s^{2\beta+1} ds \right )^{1/2} \\
        &\lesssim \left ( \int_0^t  (t-s)^{-2(\beta+1)} \|\varphi\|_{ \DotH^{-(2\beta+1),p'} }^2 s^{2\beta+1} ds \right )^{1/2} \lesssim \|\varphi\|_{\DotH^{-(2\beta+1),p'}},
    \end{align*}
    proving \eqref{e:lions_est_g-by-phi}. Here, as $0<-(2\beta+1)<1$, the second inequality comes by interpolating the boundedness of $(e^{t\Delta})$ on $\DotH^{1,p'}$ (see Proposition \ref{prop:HeatExt_cont_Hsp}) and that of $(t^{1/2} \nabla e^{t\Delta})$ on $L^{p'} \simeq \DotH^{0,p'}$. 
    For $p=2$, the same interpolation argument also yields
    \[ \|g\|_{T^2_{-(\beta+{1}/{2})}} \eqsim \left ( \int_0^t \|\nabla e^{(t-s)\Delta} \varphi\|_{2}^2\ s^{2\beta+1} ds \right )^{1/2} \lesssim \|\varphi\|_{\DotH^{-(2\beta+1),2}}. \]
    This shows \eqref{e:lions_est_g-by-phi}. Then we apply the formula \ref{item:lions_formula}, duality of tent spaces, and the estimate \eqref{e:lions_est_g-by-phi} to get
    \begin{align*}
        |\langle u(t),\varphi \rangle| 
        &= |\langle \Div \cL^{-\Delta}_1(\tilF)(t),\varphi \rangle| \le \int_0^t \int_{\bR^n} |\tilF(s,x)| |\nabla e^{-(t-s) \Delta} \varphi (x)| \, dsdx \\
        &\lesssim \|\I_{(0,t)} \tilF\|_{T^p_{\beta+{1}/{2}}} \|g\|_{T^{p'}_{-(\beta+{1}/{2})}} \lesssim \|\I_{(0,t)} \tilF\|_{T^p_{\beta+{1}/{2}}} \|\varphi\|_{\DotH^{-(2\beta+1),p'}}.
    \end{align*}
    Therefore, the claim \eqref{e:lions_u(t)_beta<-1/2_p>1} follows since for any $t>0$,
    \[ \|u(t)\|_{\DotH^{2\beta+1,p}} = \sup_{\varphi} |\langle u(t),\varphi \rangle| \lesssim \|\I_{(0,t)} \tilF\|_{T^p_{\beta+{1}/{2}}}, \]
    where the supremum is taken among $\varphi \in \scrS_\infty$ with $\|\varphi\|_{\DotH^{-(2\beta+1),p'}}=1$, noting that $\scrS_\infty$ is dense in $\DotH^{-(2\beta+1),p'}$. \\

    \paragraph{Case 3: $-1<\beta<-1/2$ and $\phcl{L}<p \le 1$}
    We use embedding of tent spaces. 
    Pick $-1<\beta_0<\beta<-1/2$ and $\max\{\tilp_L(\beta_0),1\}<p_0<2$ as in Lemma \ref{lemma:unique_embed} so that $T^p_{\beta+1/2}$ embeds into $T^{p_0}_{\beta_0+1/2}$. In particular, as $F \in T^p_{\beta+{1}/{2}}$, we have $F \in T^{p_{0}}_{\beta_0+{1}/{2}}$, for which we get back to Case 2. Hence, $u(t)$ tends to 0 as $t \to 0$ in $\DotH^{2\beta_0+1,p_{0}}$, and therefore in $\DotH^{-1,r}$ by Sobolev embedding and definition of $r$. \\

    \paragraph{Case 4: $\beta \ge -1/2$ and $\phcl{L} < p \le 2$}
    Lemma \ref{lemma:lions_Caccioppoli} yields
    \begin{align*}
        \|u(t)\|_p^p 
        &\le \int_{\bR^n} \left ( \fint_{B(x,\frac{\sqrt{t}}{4})} |u(t,y)|^2\,  dy \right )^{p/2} dx \\ 
        &\lesssim \int_{\bR^n} \left ( \fint_{t/2}^t \fint_{B(x,\frac{\sqrt{t}}{2})} |u|^2 \right )^{p/2} dx + \int_{\bR^n} \left ( \int_{t/2}^t \fint_{B(x,\frac{\sqrt{t}}{2})} |F|^2 \right )^{p/2} dx 
    \end{align*}
    When $\beta=-1/2$, Since $u \in T^p_{1/2}$ and $F \in T^p_{0}$, Lebesgue's dominated convergence theorem yields the integrals on the right-hand side tend to 0 as $t \to 0$. Thus, $u(t)$ tends to 0 in $L^p$ as $t \to 0$. When $\beta>-1/2$, the right-hand side can be controlled by $t^{p(\beta+{1}/{2})} ( \|u\|_{T^p_{\beta+1}} + \|F\|_{T^p_{\beta+{1}/{2}}} )^p$, which also tends to 0 as $t \to 0$ as $\beta>-1/2$.
    
    It remains to prove that for $\beta \ge -1/2$ and $\phcl{L}<p<1$, $u(t)$ tends to 0 in $\scrS'$, because in this case, $L^p$ does not embed into $\scrS'$. To this end, we also use embedding of tent spaces. Pick $\beta_0 \in (-1,\beta]$ and $p_0>1$ as defined in Lemma \ref{lemma:unique_embed}, so $F \in T^{p_0}_{\beta_0+1/2}$. Then we apply the above discussion to get $u(t)$ tends to 0 as $t \to 0$ in $L^{p_0}$ if $\beta_0 \ge -1/2$, or in $\DotH^{-1,r}$ if $-1<\beta_0<-1/2$, see Case 2. But both spaces of course embed into $\scrS'$. We hence obtain the trace in $\scrS'$ as desired. 
    
    This completes the proof.
\end{proof}

As an application, we establish distributional traces for weak solutions to the equation $\partial_t u - \Div(A \nabla u) = 0$, which is useful when studying uniqueness.

\begin{prop}[Trace]
    \label{prop:trace}
    Let $\beta>-1$ and $\frac{n}{n+2\beta+2} < p \le \infty$. Let $u$ be a global weak solution to $\partial_t u - \Div(A\nabla u) = 0$ with $\nabla u \in T^p_{\beta+{1}/{2}}$. Then there exists a unique $u_{0} \in \scrS'$ such that $u(t)$ converges to $u_{0}$ in $\scrS'$ as $t \to 0$, and
    \begin{equation}
        \label{e:trace_rep}
        u = \cE_{-\Delta}(u_0)+\cR^{-\Delta}_{1/2}((A-\bI)\nabla u).
    \end{equation}
    In addition,  
    \begin{enumerate}[label=\normalfont(\roman*)]
        \item 
        \label{item:trace_beta>0}
        If $\beta \ge 0$ and $\frac{n}{n+2\beta} \le p \le \infty$, then $u_{0}$ is a constant.        
        \item 
        \label{item:trace_-1<beta<0}
        If $-1<\beta<0$, then there exist $g \in \DotH^{2\beta+1,p}$ and $c \in \bC$ so that $u_{0}=g+c$ with
        \[ \|g\|_{\DotH^{2\beta+1,p}} \lesssim \|\nabla u\|_{T^p_{\beta+{1}/{2}}}. \]
    \end{enumerate}
    \end{prop}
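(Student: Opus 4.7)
The strategy is to reduce to the heat equation by subtracting the Lions' operator applied to $F := (A-\bI)\nabla u$, which plays the role of the source. Since $A$ is bounded and $\nabla u \in T^p_{\beta+1/2}$, we have $F \in T^p_{\beta+1/2}$. The assumption $p > \frac{n}{n+2\beta+2} = \phcl{-\Delta}$ is exactly what is needed to invoke Theorem \ref{thm:lions_ext} with $L = -\Delta$: this gives a function $w := \cR^{-\Delta}_{1/2}(F) \in T^p_{\beta+1}$ with $\nabla w \in T^p_{\beta+1/2}$, which is a global weak solution to $\partial_t w - \Delta w = \Div F$ with $w \in C([0,\infty);\scrS')$ and $w(0) = 0$.

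The next step is to observe that $u$ weakly solves $\partial_t u - \Delta u = \Div((A-\bI)\nabla u) = \Div F$, so $v := u - w$ is a distributional solution of the heat equation on $\bR^{1+n}_+$ with $\nabla v \in T^p_{\beta+1/2} = T^p_{s/2}$ where $s = 2\beta+1 > -1$. Since $p > \frac{n}{n+s+1}$, Proposition \ref{prop:rep_heat} provides a unique $u_0 \in \scrS'$ with $v(t) = e^{t\Delta} u_0$ for all $t>0$; in particular $v(t) \to u_0$ in $\scrS'$ as $t \to 0$ (by the weak-$\ast$ continuity of the heat semigroup on $\scrS'$). Adding $w(t) \to 0$ in $\scrS'$, I get $u(t) = \cE_{-\Delta}(u_0)(t) + w(t) \to u_0$ in $\scrS'$, which proves both the existence of the trace and the representation formula \eqref{e:trace_rep}. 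Uniqueness of the trace is automatic.

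It remains to analyze $u_0$ in cases \ref{item:trace_beta>0} and \ref{item:trace_-1<beta<0}. Since $\nabla \cE_{-\Delta}(u_0) = \nabla v = \nabla u - \nabla w \in T^p_{\beta+1/2} = T^p_{s/2}$ with $\|\nabla \cE_{-\Delta}(u_0)\|_{T^p_{s/2}} \lesssim \|\nabla u\|_{T^p_{\beta+1/2}}$ by Theorem \ref{thm:lions_ext}\ref{item:lions_reg}, I can apply Corollary \ref{cor:RW_grad}\ref{item:rw_grad_f<nabla-HeatExt(f)}. When $\beta \ge 0$, i.e.~$s \ge 1$, together with $p \ge \frac{n}{n+2\beta} = \frac{n}{n+s-1}$, part \ref{item:rw_grad_f<nabla-HeatExt(f)_s>1} yields that $u_0$ is constant. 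When $-1 < \beta < 0$, i.e.~$-1 < s < 1$, part \ref{item:rw_grad_f<nabla-HeatExt(f)_s<1} produces $g \in \DotH^{s,p} = \DotH^{2\beta+1,p}$ and $c \in \bC$ with $u_0 = g + c$ and $\|g\|_{\DotH^{2\beta+1,p}} \lesssim \|\nabla \cE_{-\Delta}(u_0)\|_{T^p_{\beta+1/2}} \lesssim \|\nabla u\|_{T^p_{\beta+1/2}}$, as required.

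The only delicate point is that the conditions on $(\beta,p)$ cleanly match the hypotheses of the three results invoked (Theorem \ref{thm:lions_ext} for $L=-\Delta$, Proposition \ref{prop:rep_heat}, and Corollary \ref{cor:RW_grad}); no new technical estimate is needed beyond what has already been established for the heat equation and for $\cR^{-\Delta}_{1/2}$. The sharpness of the range $p > \frac{n}{n+2\beta+2}$ comes directly from the heat-equation side (Proposition \ref{prop:rep_heat}), not from the coefficients $A$.
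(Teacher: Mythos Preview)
Your proof is correct and follows essentially the same route as the paper: subtract $\cR^{-\Delta}_{1/2}((A-\bI)\nabla u)$ using Theorem \ref{thm:lions_ext} for $L=-\Delta$, apply Proposition \ref{prop:rep_heat} to the resulting heat solution, and then read off the properties of $u_0$ from Corollary \ref{cor:RW_grad} (the paper cites the equivalent Theorem \ref{thm:heat_Hsp}\ref{item:heat_rep} instead).
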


\begin{proof}
    Note that $\frac{n}{n+2\beta+2}=\phcl{-\Delta}$, so by Theorem \ref{thm:lions_ext} \ref{item:lions_reg}, as $\nabla u \in T^p_{\beta+{1}/{2}}$, we get $\cR^{-\Delta}_{1/2}((A-\bI)\nabla u) \in T^p_{\beta+1}$ and $\nabla \cR^{-\Delta}_{1/2}((A-\bI)\nabla u) \in T^p_{\beta+{1}/{2}}$. Besides, Theorem \ref{thm:lions_ext} \ref{item:lions_cont} says that $\cR^{-\Delta}_{1/2}((A-\bI)\nabla u)(t)$ vanishes in $\scrS'$ as $t \to 0$.
    
    Define $\tilu \in L^2_{\loc}((0,\infty);W^{1,2}_{\loc})$ by
    \[ \tilu:=u-\cR^{-\Delta}_{1/2}((A-\bI)\nabla u). \]
    Observe that $\tilu$ is a global weak (hence distributional) solution to the heat equation with $\nabla \tilu \in T^p_{\beta+{1}/{2}}$. Then we invoke Proposition \ref{prop:rep_heat} to get $u_0 \in \scrS'$ so that $\tilu=\cE_{-\Delta}(u_0)$, which proves \eqref{e:trace_rep} and also implies that $u(t)$ converges to $u_{0}$ in $\scrS'$ as $t \to 0$. Moreover, Theorem \ref{thm:heat_Hsp} \ref{item:heat_rep} says that $u_0$ has the properties asserted in \ref{item:trace_beta>0} and \ref{item:trace_-1<beta<0}. This completes the proof.
\end{proof}
%%%%% Homogeneous Cauchy
\section{Homogeneous Cauchy problem}
\label{sec:hc}

This section is devoted to investigating existence of global weak solutions to the homogeneous Cauchy problem \eqref{e:hc}, where $v_0$ lies in $\DotH^{2\beta+1,p}$ with $-1<\beta<0$, as asserted in Theorem \ref{thm:wp_hc}.

Before stating our results, let us first briefly revisit the heat equation. For any $v_0 \in \scrS'$, the heat extension of $v_0$, $\cE_{-\Delta}(v_{0}) \in C^\infty(\bR^{1+n}_+)$ is a global weak (hence classical) solution to the heat equation with initial data $v_0$. We established a series of estimates on boundedness of the heat extension on $\DotH^{s,p}$, cf. Proposition \ref{prop:HeatExt_cont_Hsp}, Theorem \ref{thm:RW_sol}, and Corollary \ref{cor:RW_grad}, with the correspondence $s=2\beta+1$. For general $L$, we cannot start from tempered distributions, and our weak solutions will be constructed by extending $\cE_L$ (as defined in \eqref{e:sol_map} on $L^2$) to $\DotH^{2\beta+1,p}$. When $L=-\Delta$, both approaches are consistent.

\subsection{Main properties of the semigroup solution map}
\label{ssec:hc_results}
Let us state the main theorem on extension of $\cE_L$ to $\DotH^{2\beta+1,p}$, of which the existence of the global weak solution asserted in Theorem \ref{thm:wp_hc} is a direct consequence. 

\begin{theorem}[Extension of $\cE_L$ to $\DotH^{2\beta+1,p}$]
    \label{thm:hc_ext_EL}
    Let $-1<\beta<0$ and $\phcl{L} < p \le \infty$. Then $\cE_L$ extends to an operator from $\DotH^{2\beta+1,p}$ to $L^2_{\loc}((0,\infty);W^{1,2}_{\loc})$, also denoted by $\cE_L$, so that the following properties hold for any $v_0 \in \DotH^{2\beta+1,p}$ and $v:=\cE_L(v_0)$.
    \begin{enumerate}[label=\normalfont(\alph*)]
        \item \label{item:hc_EL_reg} {\normalfont (Regularity)}
        $\nabla v$ belongs to $T^p_{\beta+{1}/{2}}$ with equivalence
        \begin{equation}
            \label{e:hc_reg_nabla-v}
            \|\nabla v\|_{T^p_{\beta+{1}/{2}}} \eqsim \|v_0\|_{\DotH^{2\beta+1,p}}.
        \end{equation}
        Moreover, if $-1<\beta<-1/2$, then $v$ lies in $T^p_{\beta+1}$ with
        \begin{equation}
            \label{e:hc_reg_v}
            \|\nabla v\|_{T^p_{\beta+{1}/{2}}} \eqsim  \|v\|_{T^p_{\beta+1}} \eqsim \|v_0\|_{\DotH^{2\beta+1,p}}.
        \end{equation}

        \item \label{item:hc_EL_formula} {\normalfont (Explicit formulae)}
        It holds that
        \begin{align}
            \label{e:hc_EL_formula_RL}
            v 
            &= \cE_{-\Delta}(v_0) - \cR^L_{1/2}((A-\bI) \nabla \cE_{-\Delta}(v_0)) \\
            \label{e:hc_EL_formula_RDelta}
            &= \cE_{-\Delta}(v_0)+\cR^{-\Delta}_{1/2}((A-\bI)\nabla v),
        \end{align}
        where $\cE_{-\Delta}$ is the heat extension given in Section \ref{ssec:RW}, and $\cR^{L}_{1/2}, \cR^{-\Delta}_{1/2}$ are given by Theorem \ref{thm:lions_ext}.

        \item \label{item:hc_EL_cont} {\normalfont (Continuity)}
       	$v$ belongs to $C([0,\infty);\scrS')$ with $v(0)=v_0$. 
        
        \item \label{item:hc_EL_strong_cont} {\normalfont (Strong continuity)}
        When $p_-(2\beta+1,L)<p<p_+(2\beta+1,L)$, $v$ also belongs to $C^\infty((0,\infty);\DotH^{2\beta+1,p}) \cap C_0([0,\infty);\DotH^{2\beta+1,p})$ with
        \[ \sup_{t>0} \|v(t)\|_{\DotH^{2\beta+1,p}} \eqsim \|v_0\|_{\DotH^{2\beta+1,p}}. \]

        \item \label{item:hc_EL_sol}
        $v$ is a global weak solution to \eqref{e:hc} with initial data $v_0$.
    \end{enumerate}
\end{theorem}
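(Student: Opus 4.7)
The plan is to \emph{define} $v := \cE_L(v_0)$ by the right-hand side of \eqref{e:hc_EL_formula_RL}, i.e.
\[
v := \cE_{-\Delta}(v_0) - \cR^L_{1/2}\bigl((A-\bI) \nabla \cE_{-\Delta}(v_0)\bigr),
\]
and verify all stated properties. Both terms are well-defined: since $2\beta+1<1$, Corollary \ref{cor:RW_grad}\ref{item:rw_grad_nabla-HeatExt(f)<f} gives $\nabla \cE_{-\Delta}(v_0) \in T^p_{\beta+1/2}$ with $\|\nabla \cE_{-\Delta}(v_0)\|_{T^p_{\beta+1/2}} \lesssim \|v_0\|_{\DotH^{2\beta+1,p}}$; and since $p>\phcl{L}$, Theorem \ref{thm:lions_ext}\ref{item:lions_reg} bounds $\cR^L_{1/2}(\cdots)$ in $T^p_{\beta+1}$ with its gradient in $T^p_{\beta+1/2}$, all controlled by $\|v_0\|_{\DotH^{2\beta+1,p}}$. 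This directly yields the upper bound $\|\nabla v\|_{T^p_{\beta+1/2}} \lesssim \|v_0\|_{\DotH^{2\beta+1,p}}$; when $-1<\beta<-1/2$, applying additionally Theorem \ref{thm:RW_sol}\ref{item:rw_HeatExt(f)<f} to $\cE_{-\Delta}(v_0)$ (valid since $2\beta+1<0$) produces $\|v\|_{T^p_{\beta+1}} \lesssim \|v_0\|_{\DotH^{2\beta+1,p}}$.

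\textbf{Weak solution, continuity, and the second formula.} For \ref{item:hc_EL_sol} I would argue by density from $\scrS_\infty \subset L^2 \cap \DotH^{2\beta+1,p}$, which is dense in $\DotH^{2\beta+1,p}$ (weak*-dense at $p=\infty$): for $v_0 \in \scrS_\infty$, Corollary \ref{cor:formula_L2}\ref{item:formula_L2_EL} shows the $L^2$-semigroup solution $\cE_L(v_0)$ (a weak solution to \eqref{e:hc}) coincides with the $v$ defined above; the weak formulation then transfers in the limit via the local $L^2$-control furnished by the tent-space bounds. For continuity \ref{item:hc_EL_cont}, I would decompose $v(t)$ as above: Proposition \ref{prop:HeatExt_cont_Hsp} places $\cE_{-\Delta}(v_0) \in C([0,\infty);\DotH^{2\beta+1,p}) \hookrightarrow C([0,\infty);\scrS')$ with trace $v_0$, while Theorem \ref{thm:lions_ext}\ref{item:lions_cont} gives $\cR^L_{1/2}(\cdots) \in C([0,\infty);\scrS')$ with zero trace; continuity on $(0,\infty)$ follows from the equation, as in the proof of Theorem \ref{thm:lions_ext}\ref{item:lions_cont}. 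Once $v(0)=v_0$ is known in $\scrS'$, Proposition \ref{prop:trace} (applicable since $p>\phcl{L}\ge \frac{n}{n+2\beta+2}$) delivers formula \eqref{e:hc_EL_formula_RDelta}.

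\textbf{Matching lower bounds.} The reverse inequality in \eqref{e:hc_reg_nabla-v} comes from Proposition \ref{prop:trace}\ref{item:trace_-1<beta<0}: since $-1<\beta<0$, the trace splits as $v_0 = g + c$ with $g \in \DotH^{2\beta+1,p}$, $c\in\bC$, and $\|g\|_{\DotH^{2\beta+1,p}} \lesssim \|\nabla v\|_{T^p_{\beta+1/2}}$; constants are killed by Littlewood--Paley projections, so $\|v_0\|_{\DotH^{2\beta+1,p}} = \|g\|_{\DotH^{2\beta+1,p}}$, closing the loop. For \eqref{e:hc_reg_v}, the chain $\|v_0\|_{\DotH^{2\beta+1,p}} \lesssim \|\nabla v\|_{T^p_{\beta+1/2}} \lesssim \|v\|_{T^p_{\beta+1}}$ (the second estimate being the \emph{a priori} energy inequality of Corollary \ref{cor:energyinequalitytent space} with $F=f=0$), combined with the upper bound already proved, yields the full equivalence.

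\textbf{Strong continuity --- the main obstacle.} Assertion \ref{item:hc_EL_strong_cont} is the subtlest, since our construction only controls the gradient scale while the statement demands continuity in $\DotH^{2\beta+1,p}$ itself. In the identification range $p_-(2\beta+1,L) < p < p_+(2\beta+1,L)$, Proposition \ref{prop:Ladp-id} (to be established in Section \ref{sec:hc}) promotes $(e^{-tL})$ to a uniformly bounded analytic semigroup $(e^{-tL_{\beta,p}})$ on $\DotH^{2\beta+1,p}$; density of $L^2\cap\DotH^{2\beta+1,p}$ and agreement with the $L^2$-semigroup there force $\cE_L(v_0)(t)=e^{-tL_{\beta,p}}v_0$ for every $t>0$. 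Membership in $C^\infty((0,\infty);\DotH^{2\beta+1,p}) \cap C_0([0,\infty);\DotH^{2\beta+1,p})$ and the bound $\sup_{t>0} \|v(t)\|_{\DotH^{2\beta+1,p}} \eqsim \|v_0\|_{\DotH^{2\beta+1,p}}$ then follow from analyticity, uniform boundedness, and density of $\scrS_\infty$. The genuine technical work is thus packaged inside Proposition \ref{prop:Ladp-id}, whose proof must control $(e^{-tL})$ on the adapted scale and identify that scale with $\DotH^{2\beta+1,p}$ in the indicated range.
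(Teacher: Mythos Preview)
Your proposal is correct and follows essentially the same route as the paper: both define the extension by \eqref{e:hc_EL_formula_RL}, obtain the upper bounds from Corollary~\ref{cor:RW_grad} and Theorem~\ref{thm:lions_ext}, derive the lower bound and the second formula via Proposition~\ref{prop:trace}, and handle \ref{item:hc_EL_strong_cont} by identifying the extension with the semigroup on $\DotH^{2\beta+1,p}$ through Lemma~\ref{lemma:EL_ext_semigp}/Proposition~\ref{prop:Ladp-id}. The only cosmetic differences are that the paper establishes both formulae in \ref{item:hc_EL_formula} simultaneously by density and then uses \eqref{e:hc_EL_formula_RDelta} (rather than \eqref{e:hc_EL_formula_RL}) to read off continuity and the weak-solution property, whereas you use \eqref{e:hc_EL_formula_RL} for continuity and a direct density argument for \ref{item:hc_EL_sol}.
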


The proof is deferred to Section \ref{ssec:hc_pf}. Let us give some remarks.

\begin{remark}
    When $\beta > -1/2$, the equivalence \eqref{e:hc_reg_v} fails:  \cite[Theorem 4.3]{Auscher-Hou2023_SIOTent} shows that any weak solution $v \in T^p_{\beta+1}$ to the equation $\partial_t v - \Div(A\nabla v)=0$ must be zero when $p_L^{\,\flat}(\beta) < p \le \infty$, where $p_L^{\, \flat}(\beta)$ is given by \eqref{e:pL^flat(beta)}. 
	
    For $\beta=-1/2$, it also fails. An alternative of the equivalence \eqref{e:hc_reg_v} can be achieved with the Kenig--Pipher space $X^p$ as
    \[ \begin{cases}
    \|v\|_{X^p} \eqsim \|\nabla v\|_{T^p_0} \eqsim \|v_0\|_{L^p} & \text{ if } p_-(L)=p_-(0,L)<p<\infty \\ 
    \|v\|_{X^\infty} \eqsim \|v_0\|_{L^\infty},  & \text{ if } p=\infty \\
     \|\nabla v\|_{T^\infty_0} \eqsim \|v_0\|_{\bmo} & \text{ if } p=\infty 
    \end{cases}. \]
    See \cite[Theorem 7.6, Corollary 5.5 \& 5.10]{Auscher-Monniaux-Portal2019Lp} and \cite[Theorem 7.6]{Zaton2020wp}.
\end{remark}

\subsection{$L$-adapted Hardy--Sobolev spaces}
\label{ssec:L-HS}
Our main tool is  $L$-adapted Hardy--Sobolev spaces. We give a brief review, and the reader can refer to \cite[Chapter 4]{Amenta-Auscher2016EllipticBVP} and \cite[\S 8.2]{Auscher-Egert2023OpAdp} for details. See also \cite[Chapter 10]{Hytonen-NVW2017BanachSpaces_II} for definitions and basic facts of sectorial operators and $H^\infty$-calculus.

Denote by $S_\mu^+$ the set $\{z \in \bC \setminus \{0\}:|\arg(z)|<\mu\}$ for $0<\mu<\pi$. The class $\Psi^\infty_\infty(S_\mu^+)$ consists of holomorphic functions $\psi:S_\mu^+ \rightarrow \bC$ fulfilling that for any $\sigma \in \bR$, there exists a constant $C>0$ so that
\[ |\psi(z)| \le C|z|^\sigma. \]
The class $H^\infty(S_\mu^+)$ consists of bounded holomorphic functions on $S_\mu^+$.

It is well-known that $L$ is a sectorial operator of angle $\omega$ for some $\omega \in [0,\pi/2)$ and has bounded $H^\infty(S_\mu^+)$-calculus for any $\mu \in (\omega,\pi)$. For any $\psi \in H^\infty(S_\mu^+)$, define the operator $\cQ_{\psi,L}: L^2 \rightarrow L^\infty((0,\infty);L^2)$ by
\[ \cQ_{\psi,L}(f)(t) := \psi(t L) f. \]
Note that for $\psi(z)=e^{-z}$, $\cQ_{\psi,L}$ identifies with $\cE_L$.

\begin{definition}[$L$-adapted Hardy--Sobolev spaces]
    \label{def:Ladp}
    Let $s \in \bR$, $0<p<\infty$, and $\omega < \mu < \pi$. Let $\psi$ be a non-zero function in $H^\infty(S_\mu^+)$. The \textit{$L$-adapted Hardy--Sobolev space} $\bH^{s,p}_{L,\psi}$ consists of $f \in L^2$  so that $\cQ_{\psi,L}(f)$ lies in $T^p_{(s+1)/2} \cap L^2_{1/2}(\bR^{1+n}_+)$, equipped with the (quasi-)norm
    \[ \|f\|_{\bH^{s,p}_{L,\psi}} := \|\cQ_{\psi,L}(f)\|_{T^p_{(s+1)/2}}. \]
\end{definition}

\textit{A priori}, this space depends on the choice of $\psi$. For each $(s,p)$, there is a subclass of bounded holomorphic functions, called \textit{admissible functions for $\bH^{s,p}_{L}$}, for which the spaces $\bH^{s,p}_{L,\psi}$ agree as sets with mutually equivalent (quasi-)norms. We denote that space by $\bH^{s,p}_{L}$.
All non-zero $\psi \in \Psi^\infty_\infty(S_\mu^+)$ are admissible functions for $\bH^{s,p}_{L}$. See \cite[\S 8.2]{Auscher-Egert2023OpAdp} for some explicit classes of admissible functions for $\bH^{s,p}_{L}$. For example, $\psi(z)=e^{-z}$ is admissible for $p\le 2$ and $s<0$.

The spaces $\bH^{s,p}_{L}$ are not complete for the defining (quasi-)norms. Aside from abstract completion, we are interested in realizing the completion as a subspace of tempered distributions. The next result provides a range of $(s,p)$, called the identification range, within which a completion in fact equals to $\DotH^{s,p}$. When $0 \le s \le 1$, sharpness of this range is shown in \cite[\S 19.1]{Auscher-Egert2023OpAdp}. We describe the range for $-1 \le s \le 0$, and also exhibit a possible extra range where a completion exists in $\DotH^{s,p}$ without knowing equality. This fact will be useful when we come to uniqueness.

We call \textit{identification range} of $L$ the set 
\[ \scrI_L :=\{(s,p)\in [-1,1] \times (0,\infty)\, : \,p_-(s,L) < p < p_+(s,L)\}. \]

\begin{prop}[Completion of $\bH^{s,p}_L$]
    \label{prop:Ladp-id}
    Let $-1 \le s \le 1$ and $0<p < \infty$. 
    \begin{enumerate}[label=\normalfont(\roman*)]
        \item \label{item:Ladp-id} {\normalfont (Identification)}
        If $(s,p) \in \scrI_L$, then $\bH^{s,p}_L$ agrees with $\DotH^{s,p} \cap L^2$ with equivalent (quasi-)norms
        \[ \|f\|_{\bH^{s,p}_L} \eqsim \|f\|_{\DotH^{s,p}}. \] 
        In particular, $\DotH^{s,p}$ is a completion of $\bH^{s,p}_L$.
        
        \item \label{item:Ladp-embed}{\normalfont (Extra embedding)} 
        Suppose $p_-(L)<1$. Let $-1<s<0$ and $1<p<2$. Then $\bH^{s,p}_L \subset \DotH^{s,p} $ with
        \[ \|f\|_{\DotH^{s,p}} \lesssim  \|f\|_{\bH^{s,p}_L}.\] 
        If, moreover, 
        \begin{equation}
            \label{e:DotHsp_L_embed_condition}
            s>-n\left( \frac{1}{p_-(L)}-1 \right) - \frac{q_+(L^\ast)}{p'} \left( 1-n\left( \frac{1}{p_-(L)}-1 \right) \right).
        \end{equation}
        then the closure of $\bH^{s,p}_L$ in $\DotH^{s,p}$ with respect to the norm $\|\cdot\|_{\bH^{s,p}_L}$ is a completion of $\bH^{s,p}_L$.  
    \end{enumerate}
\end{prop}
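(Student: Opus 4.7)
The plan is to prove both parts by comparing $\psi(tL)$ with heat-type operators $\widetilde\psi(-t\Delta)$ via $L^p$--$L^2$ off-diagonal estimates, using Calder\'on-type reproducing formulae on each side. Fix admissible $\psi,\widetilde\psi\in\Psi^\infty_\infty(S_\mu^+)$ whose product, integrated in $dt/t$, is a non-zero constant. Then McIntosh's $H^\infty$-functional calculus gives the reproducing identity $f=c\int_0^\infty \widetilde\psi(tL)\psi(tL)f\,\frac{dt}{t}$ on $L^2$, which yields the identification at $(s,p)=(0,2)$ from $\int_0^\infty \|\psi(tL)f\|_2^2\,\frac{dt}{t}\eqsim\|f\|_2^2$, and extends to general $s\in\bR$ at $p=2$ by composing with $L^{s/2}$. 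The non-trivial content is in transferring to $p\neq 2$.

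For part~\ref{item:Ladp-id}, I would argue by duality in both directions. For $\bH^{s,p}_L\hookrightarrow\DotH^{s,p}$, fix $f\in\bH^{s,p}_L$ and $\phi\in\scrS_\infty$ (dense in $\DotH^{-s,p'}$ when $p'<\infty$) and use the reproducing formula to write
\[
  \langle f,\phi\rangle=c\int_0^\infty \langle \psi(tL)f,\widetilde\psi(tL^\ast)\phi\rangle\,\frac{dt}{t}.
\]
Tent-space duality bounds the right-hand side by $\|\psi(tL)f\|_{T^p_{(s+1)/2}}\cdot\|\widetilde\psi(tL^\ast)\phi\|_{T^{p'}_{-(s+1)/2}}$, and the second factor should be $\lesssim \|\phi\|_{\DotH^{-s,p'}}$. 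I would establish this by a Schur-type comparison of $\widetilde\psi(tL^\ast)$ with $\widetilde\psi(-t\Delta)$ using $L^2$--$L^{p'}$ off-diagonal estimates for $\widetilde\psi(tL^\ast)$; these hold precisely when $p'<p_+(-s,L^\ast)$, i.e.\ $p>\max\{p_-(s,L),1\}$. The reverse inclusion $\DotH^{s,p}\cap L^2\hookrightarrow \bH^{s,p}_L$ is analogous: start from the heat reproducing formula on $\DotH^{s,p}$ and apply $\psi(tL)$, now needing $L^p$--$L^2$ off-diagonal estimates for $\psi(tL)$, valid when $p<p_+(s,L)$. Intersecting the two ranges gives $\scrI_L$, and $p=\infty$ is handled by weak$^*$-duality with the $L^1$-adapted space for $L^\ast$.

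For part~\ref{item:Ladp-embed}, the embedding outside $\scrI_L$ repeats the first duality argument, now exploiting the extra range arising from $p_-(L)<1$ and $1<p<2$. Here the $L^2$--$L^{p'}$ off-diagonal estimates for $\widetilde\psi(tL^\ast)$ in the range $p'>2$, of the type recorded in \cite[Chapter 14]{Auscher-Egert2023OpAdp}, involve a H\"older exponent $\eta\in(0,n(p_-(L)^{-1}-1))$ whose contribution to the Schur-type integral in $t$ is exactly balanced by condition \eqref{e:DotHsp_L_embed_condition}. For the completion claim, given a Cauchy sequence $(f_n)\subset(\bH^{s,p}_L,\|\cdot\|_{\bH^{s,p}_L})$, the embedding shows $(f_n)$ converges in $\DotH^{s,p}$ to some $f$; I would define $\psi(tL)f$ as the $T^p_{(s+1)/2}$-limit of the Cauchy sequence $\psi(tL)f_n$, check consistency by testing against $\widetilde\psi(tL^\ast)\phi$ for $\phi\in\scrS_\infty$, and thus extend the $\bH^{s,p}_L$-(quasi-)norm to the limit $f$ inside $\DotH^{s,p}$.

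The main technical obstacle I anticipate is precisely this completion step: one must make sense of $\psi(tL)f$ when $f\in\DotH^{s,p}\setminus L^2$, since the original definition of $\bH^{s,p}_L$ is anchored in $L^2$. The strategy is to redefine $\psi(tL)f$ distributionally through the duality pairing against $\widetilde\psi(tL^\ast)\phi$, so that condition \eqref{e:DotHsp_L_embed_condition} is exactly what permits this dual action to represent a function in $T^p_{(s+1)/2}$. Careful bookkeeping of polynomial representatives is required, since $\DotH^{s,p}$ is defined only modulo $\scrP_{\nu(s,p)}$ and one must match the realization chosen at the level of Littlewood--Paley decomposition with the action of $\psi(tL)$; this matching is what ultimately closes the loop between the abstract Cauchy limit and a concrete functional representation.
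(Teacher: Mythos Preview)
Your approach to part~\ref{item:Ladp-id} is plausible but differs from the paper's. The paper does not attempt a direct Schur-type comparison of $\psi(tL)$ with $\psi(-t\Delta)$; instead it interpolates in $s$ between the three endpoints $s\in\{-1,0,1\}$. At $s=0$ the result is quoted from \cite{Auscher-Egert2023OpAdp}; at $s=-1$ it follows by duality from $s=1$ for $L^\ast$; at $s=1$ the paper's new contribution is an atomic decomposition of $\DotH^{1,p}$ for $\tfrac{n}{n+2}<p\le\tfrac{n}{n+1}$ (atoms supported in a ball, with mean zero and controlled $\|\nabla a\|_2$), reducing to a uniform molecular bound $\|\psi(tL)a\|_{T^p_1}\le C$. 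Your route could work in principle, but your range bookkeeping is imprecise: the $L^q$--$L^r$ off-diagonal estimates for $\psi(tL)$ do not themselves carry an $s$-index, and your claim ``these hold precisely when $p'<p_+(-s,L^\ast)$'' is the definition of $p_+$ rather than a property of the off-diagonal bounds.

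For part~\ref{item:Ladp-embed} there is a genuine gap in your treatment of the completion. You correctly identify the obstacle (defining $\psi(tL)f$ for $f\notin L^2$) but your proposed resolution---redefining $\psi(tL)f$ by duality against $\widetilde\psi(tL^\ast)\phi$---never makes contact with condition~\eqref{e:DotHsp_L_embed_condition}. The paper instead proves the equivalent injectivity statement: if $(f_j)$ is Cauchy in $\bH^{s,p}_L$ and $f_j\to 0$ in $\DotH^{s,p}$, then $\|f_j\|_{\bH^{s,p}_L}\to 0$. The key step is to show that for any $\phi\in\Cc(\bR^{1+n}_+)$ the function $\Phi:=\int_0^\infty e^{-tL^\ast}\phi(t)\,\tfrac{dt}{t}$ lies in $\DotH^{-s,p'}$; then $\int\!\!\int e^{-tL}f_j\,\overline\phi\,\tfrac{dtdx}{t}=\langle f_j,\Phi\rangle\to 0$, so $e^{-tL}f_j\to 0$ in $\scrD'(\bR^{1+n}_+)$ and hence in $T^p_{(s+1)/2}$. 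Proving $\Phi\in\DotH^{-s,p'}$ is where \eqref{e:DotHsp_L_embed_condition} enters: one estimates $\|\tau\Delta e^{\tau\Delta}e^{-tL^\ast}\phi(t)\|_{p'}$ by interpolating between an $L^q$ bound (for $q<q_+(L^\ast)$, gaining $\tau^{1/2}$ via the factorisation through $\nabla e^{-tL^\ast}$) and an $L^\infty$ bound (gaining $\tau^{\alpha/2}$ from $\dot\Lambda^\alpha$-boundedness of $e^{-tL^\ast}$ with $\alpha<n(p_-(L)^{-1}-1)$, available because $p_-(L)<1$). Condition~\eqref{e:DotHsp_L_embed_condition} is exactly what allows one to choose $\alpha$ and $q$ so that the resulting power of $\tau$ makes the integral $\int_0^1 \tau^{s-1}\|\tau\Delta e^{\tau\Delta}\Phi\|_{p'}^2\,d\tau$ converge. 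Your sketch places the H\"older exponent in the embedding step rather than the completion step, and does not exhibit this interpolation.
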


The proof is deferred to Appendix \ref{sec:ap_pf_prop_Ladp-id}. 
Of course, the extra embedding range is only interesting when $(s,p) \notin \scrI_L$. In fact, setting $s=2\beta+1$, when $p_-(L)<1$ and $\beta<\beta(L)$, the condition \eqref{e:DotHsp_L_embed_condition} exactly means $p>\phcl{L}$, so we observe from Figure~\ref{fig:exponents} that such range can be non-empty. 

\begin{lemma}[Semigroup extension of $\cE_L$]
    \label{lemma:EL_ext_semigp}
    Let $s\in \bR$ and $0<p\le \infty$. Then $(e^{-tL})$ is a continuous bounded semigroup on $\bH^{s,p}_L$. In particular, for any $f \in \bH^{s,p}_L$, $\cE_L(f)$ lies in $C^\infty((0,\infty);\bH_{L}^{s,p}) \cap C_0([0,\infty);\bH_{L}^{s,p})$ 
    \footnote{For $p=\infty$, $\bH^{s,\infty}_L$ is contained in the dual of $\bH^{-s,1}_{L^\ast}$ by $L^2(\bR^n)$-inner product. Here, it is equipped with the induced weak*-topology. }
    with
    \[ \sup_{t \ge 0} \|\cE_L(f)(t)\|_{\bH_{L}^{s,p}} = \sup_{t \ge 0} \|e^{-tL}f\|_{\bH_{L}^{s,p}} \eqsim \|f\|_{\bH_{L}^{s,p}}. \]
    
    Restricting to the identification range $(s,p)\in \scrI_L$, $(e^{-tL})$ extends to a continuous bounded semigroup on $\DotH^{s,p}$, and for any $f \in \DotH^{s,p}$,  
    \[ \cE_L(f)\in C^\infty((0,\infty);\DotH^{s,p}) \cap C_0([0,\infty);\DotH^{s,p}) \] 
    with
    \[ \sup_{t>0} \|\cE_L(f)(t)\|_{\DotH^{s,p}} \eqsim \|f\|_{\DotH^{s,p}}. \]
\end{lemma}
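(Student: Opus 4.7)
The plan is to reduce every assertion to the algebraic identity
\begin{equation*}
    (tL)^N e^{-tL} \circ e^{-\tau L} \,=\, \left(\tfrac{t}{t+\tau}\right)^{\!N}\, ((t+\tau)L)^N e^{-(t+\tau)L},\qquad \tau\ge 0,
\end{equation*}
and to choose $N\in\bN$, $N\ge 1$, large enough depending on $(s,n)$; concretely $N>\tfrac{s+1}{2}+\tfrac{n}{4}$ will suffice. Then $\psi(z):=z^N e^{-z}\in\Psi^\infty_\infty(S_\mu^+)$ is admissible for $\bH^{s,p}_L$. Plugging the identity into the defining norm converts every property of $(e^{-\tau L})_{\tau\ge 0}$ on $\bH^{s,p}_L$ into a damped, time-shifted statement about the admissible square function $\psi(\sigma L)f$ in $T^p_{(s+1)/2}$, to be then handled by tent-space machinery.

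For uniform boundedness, the elementary size comparison $\fint_{B(x,t^{1/2})}|h|^2\le ((t+\tau)/t)^{n/2}\fint_{B(x,(t+\tau)^{1/2})}|h|^2$ (valid since $B(x,t^{1/2})\subset B(x,(t+\tau)^{1/2})$) combined with the change of variable $u=t+\tau$ transforms the inner integral of $\|e^{-\tau L}f\|_{\bH^{s,p}_{L,\psi}}^p$ into
\begin{equation*}
    \int_\tau^\infty \fint_{B(x,u^{1/2})} \frac{(u-\tau)^{2N-s-1-n/2}}{u^{2N-n/2}}|\psi(uL)f(y)|^2\,du\,dy \;\le\; \int_0^\infty \fint_{B(x,u^{1/2})} u^{-(s+1)}|\psi(uL)f(y)|^2\,du\,dy,
\end{equation*}
the last inequality using the choice of $N$. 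Taking the $L^p$-norm in $x$ gives $\|e^{-\tau L}f\|_{\bH^{s,p}_L}\lesssim\|f\|_{\bH^{s,p}_L}$ uniformly in $\tau\ge 0$; the reverse inequality comes from $\tau=0$.

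Next, the semigroup law passes from $L^2$ to $\bH^{s,p}_L$ by the continuous embedding $\bH^{s,p}_L\hookrightarrow L^2$. For $C^\infty$-regularity on $(0,\infty)$, applying the same identity with $N+k$ in place of $N$ controls $L^k e^{-tL}f$ in $\bH^{s,p}_L$ by a $t^{-k}$-factor times $\|f\|_{\bH^{s,p}_L}$, from which $t\mapsto e^{-tL}f$ is smooth into $\bH^{s,p}_L$ with $\frac{d^k}{dt^k}e^{-tL}f=(-L)^k e^{-tL}f$. Continuity at $0^+$ follows from a $3\varepsilon$-argument based on the uniform bound: on a dense subclass of $g\in D(L)\cap\bH^{s,p}_L$ (provided by Calder\'on reproducing formula applied to an admissible $\psi$), the identity $e^{-\tau L}g-g=-\int_0^\tau e^{-\sigma L}Lg\,d\sigma$ yields $\|e^{-\tau L}g-g\|_{\bH^{s,p}_L}\lesssim \tau\,\|Lg\|_{\bH^{s,p}_L}\to 0$. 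Decay as $\tau\to\infty$ follows from the shifted representation above: for each $x$ the inner integral is $\int_\tau^\infty \fint_{B(x,u^{1/2})} u^{-(s+1)}|\psi(uL)f(y)|^2\,du\,dy$, which decreases to $0$ as $\tau\to\infty$ and is pointwise dominated by the corresponding full integral for $f$; dominated convergence in the outer $L^{p/2}$-integral concludes for $p<\infty$. The case $p=\infty$ is treated by weak$^*$-duality against $\bH^{-s,1}_{L^*}$, to which the same argument applies after interchanging $L$ and $L^*$.

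Finally, in the identification range, Proposition~\ref{prop:Ladp-id}\ref{item:Ladp-id} gives $\bH^{s,p}_L=\DotH^{s,p}\cap L^2$ with equivalent quasi-norms, and this subspace is dense in $\DotH^{s,p}$ as it contains $\scrS_\infty$; the uniform boundedness already proven extends $(e^{-\tau L})_{\tau\ge 0}$ by density to a continuous bounded semigroup on $\DotH^{s,p}$, and the remaining smoothness and continuity properties transfer accordingly. The main technical point will be justifying dominated convergence for the $\tau\to\infty$ decay when $p=\infty$, where the weak$^*$-topology and a careful dualization with $\bH^{-s,1}_{L^*}$ must replace the $L^{p/2}$-argument available for $p<\infty$.
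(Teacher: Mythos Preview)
Your approach is correct in substance, but differs from the paper's in that the paper simply cites an external reference: the first assertion is declared ``a general result'' from \cite[\S 8.2]{Auscher-Egert2023OpAdp}, in particular Proposition~8.3 there (applied with $T=L$ rather than $\sqrt{T}$), and the second assertion follows from the first via Proposition~\ref{prop:Ladp-id}\ref{item:Ladp-id}. Your argument is essentially a self-contained reconstruction of what that reference contains: the key algebraic identity $(tL)^N e^{-tL}e^{-\tau L}=(t/(t+\tau))^N\psi((t+\tau)L)$ with $\psi(z)=z^Ne^{-z}\in\Psi^\infty_\infty$ admissible, followed by the change of variable $u=t+\tau$ and the pointwise domination of cone integrals. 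This is the standard mechanism behind semigroup boundedness on tent-space-based Hardy spaces, so there is no genuine divergence in ideas---you are spelling out what the paper black-boxes.

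Two small points worth tightening. First, your explicit cone computation for uniform boundedness handles only $p<\infty$, since for $p=\infty$ the $T^\infty_{(s+1)/2}$-norm is a Carleson supremum over $(0,r(B)^2)\times B$ rather than an outer $L^\infty$-norm of the conical square function; you correctly defer the $p=\infty$ regime to weak$^*$-duality against $\bH^{-s,1}_{L^*}$, but note that this duality also handles the uniform bound (not only the limits), via the adjoint relation $(e^{-\tau L})^*=e^{-\tau L^*}$ and the equivalence of the $\bH^{s,\infty}_L$-norm with the dual norm (this is \cite[Proposition~8.9]{Auscher-Egert2023OpAdp}, invoked elsewhere in the paper). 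Second, for strong continuity at $0$ you need density of $\{g:g,Lg\in\bH^{s,p}_L\}$, slightly more than $D(L)\cap\bH^{s,p}_L$; your Calder\'on-reproducing-formula remark does furnish this (truncated integrals $\int_\epsilon^R\psi(tL)^2f\,dt/t$ lie in this set and approximate $f$), so the argument closes.
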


\begin{proof} 
    The first point is a general result. See again \cite[\S 8.2]{Auscher-Egert2023OpAdp}  applied to $T=L$   and in particular, \cite[Proposition 8.3]{Auscher-Egert2023OpAdp} replacing $\sqrt{T}$ by $T$. The second point follows from the first by Proposition~\ref{prop:Ladp-id} \ref{item:Ladp-id}. 
\end{proof}

\subsection{Proof of Theorem \ref{thm:hc_ext_EL}}
\label{ssec:hc_pf}
We use the formula of $\cE_L$ shown in Corollary \ref{cor:formula_L2} \ref{item:formula_L2_EL} to construct its extension. Define the operator $\cT_{1/2}$ from $L^2$ to $L^2_{\loc}(\bR^{1+n}_+)$ by
\begin{equation}
    \label{e:cT_1/2_cR^L-heat}
    \cT_{1/2}(f) = - \cR_{1/2}^L ((A-\bI) \nabla \cE_{-\Delta} (f)).
\end{equation}
We first extend $\cT_{1/2}$ to $\DotH^{2\beta+1,p}$.

\begin{lemma}[Extension of $\cT_{1/2}$]
    \label{lemma:cT1/2}
    Let $-1<\beta<0$ and $\phcl{L}<p \le \infty$. Then for any $f \in \DotH^{2\beta+1,p} \cap L^2$, 
    \[ \|\cT_{1/2}(f)\|_{T^p_{\beta+1}}+      \|\nabla \cT_{1/2}(f)\|_{ T^p_{\beta+{1}/{2}} } \lesssim \|f\|_{\DotH^{2\beta+1,p}}. \]
    Hence, $\cT_{1/2}$ extends by density (or weak*-density if $p=\infty$) to a bounded operator from $\DotH^{2\beta+1,p}$ to $T^p_{\beta+1}$ with the above estimate. We use the same notation for the extension.
\end{lemma}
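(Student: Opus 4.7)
The plan is to simply chain together two already-proven estimates: the tent-space control of the gradient of the heat extension given by Corollary~\ref{cor:RW_grad}~\ref{item:rw_grad_nabla-HeatExt(f)<f}, and the boundedness of the Lions' operator $\cR^L_{1/2}$ given by Theorem~\ref{thm:lions_ext}~\ref{item:lions_reg}. No new harmonic analysis is required; the content of the lemma is that the critical exponent $\phcl{L}$ in the hypothesis has been defined precisely so that these two pieces glue together for the composition defining $\cT_{1/2}$.

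First I would observe that since $-1<\beta<0$ one has $s:=2\beta+1\in(-1,1)$, so Corollary~\ref{cor:RW_grad}~\ref{item:rw_grad_nabla-HeatExt(f)<f} applied at regularity $s$ yields, for any $f\in\DotH^{2\beta+1,p}$ (in particular for $f\in\DotH^{2\beta+1,p}\cap L^2$),
\[
\|\nabla \cE_{-\Delta}(f)\|_{T^p_{\beta+1/2}} \lesssim \|f\|_{\DotH^{2\beta+1,p}}.
\]
Since $A-\bI\in L^\infty(\bR^n;\mat_n(\bC))$ by \eqref{e:uniformly_elliptic}, pointwise multiplication by $A-\bI$ acts boundedly on $T^p_{\beta+1/2}$ with norm controlled by $\|A-\bI\|_\infty$, so $G:=(A-\bI)\nabla\cE_{-\Delta}(f)$ belongs to $T^p_{\beta+1/2}$ with $\|G\|_{T^p_{\beta+1/2}}\lesssim \|f\|_{\DotH^{2\beta+1,p}}$.

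Next, under the hypothesis $\phcl{L}<p\le\infty$, which is exactly the assumption of Theorem~\ref{thm:lions_ext}~\ref{item:lions_reg}, that result gives
\[
\|\cR^L_{1/2}(G)\|_{T^p_{\beta+1}} + \|\nabla\cR^L_{1/2}(G)\|_{T^p_{\beta+1/2}} \lesssim \|G\|_{T^p_{\beta+1/2}}.
\]
Combining the two displayed estimates and recalling $\cT_{1/2}(f)=-\cR^L_{1/2}(G)$ yields the asserted \textit{a priori} bound on $\DotH^{2\beta+1,p}\cap L^2$.

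For the extension assertion, I would invoke the density property recalled in Section~\ref{ssec:HS}: $\scrS_\infty$ is dense in $\DotH^{2\beta+1,p}$ when $p<\infty$, and weak*-dense when $p=\infty$. Since $\scrS_\infty\subset L^2$, the subspace $\DotH^{2\beta+1,p}\cap L^2$ is dense (respectively weak*-dense) in $\DotH^{2\beta+1,p}$, and the a priori inequality extends $\cT_{1/2}$ by continuity (respectively weak*-continuity, using that the target tent space $T^p_{\beta+1}$ is a dual space in this range) to a bounded operator on all of $\DotH^{2\beta+1,p}$ satisfying the same estimate. The main --- and only very mild --- point to watch will be the weak*-extension when $p=\infty$, but it proceeds exactly as in the analogous step in the proof of Theorem~\ref{thm:lions_ext}~\ref{item:lions_reg}, where the operators involved have already been shown to behave well under weak*-limits on weighted tent spaces.
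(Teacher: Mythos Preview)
Your proposal is correct and follows essentially the same route as the paper's proof: chain Corollary~\ref{cor:RW_grad}~\ref{item:rw_grad_nabla-HeatExt(f)<f} with Theorem~\ref{thm:lions_ext}~\ref{item:lions_reg} via the trivial $L^\infty$-multiplication by $A-\bI$, then extend by (weak*-)density. The paper's version is simply more terse, compressing your two displayed estimates into a single chain of inequalities and treating the density extension as implicit in the statement.
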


\begin{proof}
    We just need to show the inequality. To this end, fix $f \in \DotH^{2\beta+1,p} \cap L^2$. The formula \eqref{e:cT_1/2_cR^L-heat}, together with estimates from Theorem \ref{thm:lions_ext} \ref{item:lions_reg} applied to $\cR^L_{1/2}$, yields that
    \[ \|\cT_{1/2}(f)\|_{T^p_{\beta+1}} + \|\nabla \cT_{1/2}(f)\|_{T^p_{\beta+{1}/{2}}} \lesssim \|\nabla \cE_{-\Delta}(f)\|_{T^p_{\beta+{1}/{2}}} \lesssim \|f\|_{\DotH^{2\beta+1,p}}. \]
    The last inequality follows from Corollary \ref{cor:RW_grad} \ref{item:rw_grad_nabla-HeatExt(f)<f}. 
\end{proof}

\begin{proof}[Proof of Theorem \ref{thm:hc_ext_EL}]  
    Let $-1<\beta<0$ and $\phcl{L} < p \le \infty$. We define the extension $\cE_L$ by
    \begin{equation}
        \label{e:def_cEL_Hsp}
        \cE_L(v_0) := \cE_{-\Delta}(v_0) + \cT_{1/2}(v_0),  \quad \forall v_0 \in \DotH^{2\beta+1,p},
    \end{equation}
    where $\cT_{1/2}$ is given by Lemma \ref{lemma:cT1/2}. 
    
    Let us verify the properties for $v:=\cE_L(v_0)$. As $\DotH^{2\beta+1,p}$ is contained in $\scrS'$, we get $\cE_{-\Delta}(v_0) \in C^\infty(\bR^{1+n}_+)$. Also, Lemma \ref{lemma:cT1/2} says $\cT_{1/2}(v_0) \in T^p_{\beta+1}$ and $\nabla \cT_{1/2}(v_0) \in T^p_{\beta+{1}/{2}}$. In particular, it implies $\cT_{1/2}(v_0) \in L^2_{\loc}((0,\infty);W^{1,2}_{\loc})$, and so does $v$.
    
    First consider \ref{item:hc_EL_reg}. The inequality ``$\lesssim$" in  \eqref{e:hc_reg_nabla-v} follows directly  from Corollary \ref{cor:RW_grad} \ref{item:rw_grad_nabla-HeatExt(f)<f} and Lemma \ref{lemma:cT1/2} as
    \[ \|\nabla v\|_{T^p_{\beta+{1}/{2}}} \le \|\nabla \cE_{-\Delta}(v_0)\|_{T^p_{\beta+{1}/{2}}} + \|\nabla \cT_{1/2}(v_0)\|_{T^p_{\beta+{1}/{2}}} \lesssim \|v_0\|_{\DotH^{2\beta+1,p}}. \]
    We postpone the proof of the converse inequality ``$\gtrsim$" in \eqref{e:hc_reg_nabla-v} and \eqref{e:hc_reg_v} to the end of the proof.

    Next, the formulae in \ref{item:hc_EL_formula} hold when $v_{0} \in \DotH^{2\beta+1,p} \cap L^2$ by Corollary \ref{cor:formula_L2}, and the above bounds of the terms involved allow us to use density (or weak*-density) to extend them to all $v_0 \in \DotH^{s,p}$, valued in $L^2_{\loc}(\bR^{n+1}_{+})$.

    To prove \ref{item:hc_EL_cont}, we use the formula \eqref{e:hc_EL_formula_RDelta} and prove that each term has the desired regularity. As $\DotH^{2\beta+1,p}$ is contained in $\scrS'$, we have $\cE_{-\Delta}(v_0) \in C([0,\infty);\scrS')$. Moreover, as $\phcl{-\Delta} \le \phcl{L} <p$ and $\nabla v \in T^p_{\beta+{1}/{2}}$, we deduce from Theorem \ref{thm:lions_ext} \ref{item:lions_cont} that $\cR^{-\Delta}_{1/2}((A-\bI) \nabla v) \in C([0,\infty);\scrS')$ as wanted.

    Property \ref{item:hc_EL_strong_cont} is a direct consequence of uniqueness of extensions by density: the extension defined by \eqref{e:def_cEL_Hsp} agrees with the one given by Lemma \ref{lemma:EL_ext_semigp}, which has the desired regularity properties.

    For \ref{item:hc_EL_sol},  note that $\cE_{-\Delta}(v_0)$ is a global weak solution to the heat equation, and Theorem \ref{thm:lions_ext} yields that $w:=\cR^{-\Delta}_{1/2}((A-\bI)\nabla v)$ is a global weak solution to $\partial_t w - \Delta w = \Div((A-\bI)\nabla v)$. Therefore, we infer from formula \eqref{e:hc_EL_formula_RDelta} that $v$ is a global weak solution to $\partial_t v - \Delta v = \Div((A-\bI)\nabla v)$, that is $\partial_t v - \Div(A\nabla v)=0$. Meanwhile, \ref{item:hc_EL_cont} shows that $v(t)$ converges to $v_0$ in $\scrS'$ as $t \to 0$. We hence conclude that $v$ is a global weak solution to \eqref{e:hc} with initial data $v_0$.
    
    Let us now prove the rest of \ref{item:hc_EL_reg}.  We begin with  the inequality ``$\gtrsim$" in \eqref{e:hc_reg_nabla-v}. We just proved in \ref{item:hc_EL_sol} that $v$ is a global weak solution to the equation $\partial_t v -\Div(A\nabla v)=0$ with $\nabla v \in T^p_{\beta+{1}/{2}}$, and that $v(t)$ converges to $v_0$ in $\scrS'$ as $t \to 0$, so $v_0$ agrees with the trace $g$ of $v$ given by Proposition \ref{prop:trace} \ref{item:trace_-1<beta<0} in $\DotH^{2\beta+1,p}$. In particular, we get $\|v_0\|_{\DotH^{2\beta+1,p}} \lesssim \|\nabla v\|_{ T^p_{\beta+{1}/{2}} }$ as desired. This proves \eqref{e:hc_reg_nabla-v}.
    
    Next, to see \eqref{e:hc_reg_v}, we only need to show that 
    $ \|v\|_{T^p_{\beta+1}} \lesssim \|v_0\|_{\DotH^{2\beta+1,p}}$, thanks to \eqref{e:hc_reg_nabla-v} and Corollary \ref{cor:energyinequalitytent space}. This inequality is a direct consequence of \eqref{e:hc_EL_formula_RDelta}, Theorem \ref{thm:RW_sol} \ref{item:rw_HeatExt(f)<f}, and \eqref{e:hc_reg_nabla-v} (for the Laplacian) as
    \begin{align*}
        \|v\|_{T^p_{\beta+1}} 
        &\lesssim \|\cE_{-\Delta}(v_0)\|_{T^p_{\beta+1}} + \|\cR^{-\Delta}_{1/2}((A-\bI)\nabla v)\|_{T^p_{\beta+1} } \\
        &\lesssim \|\cE_{-\Delta}(v_0)\|_{T^p_{\beta+1}} + \|\nabla v\|_{T^p_{\beta+{1}/{2}} } \lesssim \|v_0\|_{\DotH^{2\beta+1,p}}.
    \end{align*}   
    This shows \ref{item:hc_EL_reg} and the proof is complete. 
\end{proof}
%%%%% Uniqueness and representation
\section{Uniqueness and representation}
\label{sec:ur}

In this section, we prove uniqueness of weak solutions to the homogeneous Cauchy problem and Lions' equation as asserted in Theorems \ref{thm:wp_hc} and \ref{thm:wp_lions}. We also provide the proof of Theorem \ref{thm:rep}.

\subsection{Uniqueness}
\label{ssec:unique}
The main theorem on uniqueness is as follows. 

\begin{theorem}[Uniqueness]
    \label{thm:unique}
    Let $\beta>-1$ and $\phcl{L} < p \le \infty$. Let $u$ be a global weak solution to the initial value problem
    \[ \begin{cases}
        \partial_t u-\Div(A\nabla u) = 0, \\ 
        u(0)=0
    \end{cases}, \]
    with $\nabla u \in T^p_{\beta+{1}/{2}}$. Then $u=0$.
\end{theorem}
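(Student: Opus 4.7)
The plan is to first identify the distributional trace $u_0 := \lim_{t \to 0^+} u(t)$ as zero, and then execute a duality argument against the backward $L^\ast$-semigroup to conclude $\langle u(T), \phi\rangle = 0$ for every $T > 0$ and $\phi \in \Cc(\bR^n)$. For the first step, invoke Proposition \ref{prop:trace}, which applies since $\phcl{L} \ge \frac{n}{n+2\beta+2}$: this yields a trace $u_0 \in \scrS'$ with $u(t) \to u_0$ in $\scrS'$ as $t \to 0^+$, and the representation
\[ u = \cE_{-\Delta}(u_0) + \cR^{-\Delta}_{1/2}((A - \bI)\nabla u). \]
The hypothesis $u(t) \to 0$ in $\scrD'$, together with the continuous embedding $\scrS' \hookrightarrow \scrD'$, forces $u_0 = 0$ in $\scrS'$: when $\beta \ge 0$, Proposition \ref{prop:trace}\ref{item:trace_beta>0} says $u_0$ is a constant, which must then be zero; when $-1 < \beta < 0$, the splitting $u_0 = g + c$ with $g \in \DotH^{2\beta+1, p}$ and $c \in \bC$ must satisfy $g + c = 0$ in $\scrS'$. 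Thus $\cE_{-\Delta}(u_0) = 0$, and by Theorem \ref{thm:lions_ext}\ref{item:lions_cont} applied to $L = -\Delta$, $u \in C([0, \infty); \scrS')$ with $u(0) = 0$.

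For the second step, fix $T > 0$ and $\phi \in \Cc(\bR^n)$, and set $\psi(s) := e^{-(T - s)L^\ast}\phi$ for $s \in [0, T]$, a weak solution of the backward equation $-\partial_s \psi - \Div(A^\ast \nabla \psi) = 0$ with $\psi(T) = \phi$. Differentiating formally,
\[ \frac{d}{ds} \langle u(s), \psi(s)\rangle = \langle \partial_s u, \psi\rangle + \langle u, \partial_s \psi\rangle = -\langle A \nabla u, \nabla \psi\rangle + \langle \nabla u, A^\ast \nabla \psi\rangle = 0, \]
so $s \mapsto \langle u(s), \psi(s)\rangle$ is constant on $(0, T]$. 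At $s = T$ it equals $\langle u(T), \phi\rangle$; as $s \to 0^+$, by $\scrS'$-continuity and $u_0 = 0$, it tends to $\langle u_0, \psi(0)\rangle = 0$. Hence $\langle u(T), \phi\rangle = 0$ for every $T > 0$ and every test $\phi$, giving $u \equiv 0$.

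The main obstacle is making the duality computation rigorous. The pairing $\langle u(s), \psi(s)\rangle$ must be justified: $u(s)$ is in $\scrS'$ with at most polynomial growth (quantified via Lemma \ref{lemma:size_nabla_u_tent}), while $\psi(s) = e^{-(T - s)L^\ast}\phi$ inherits Gaussian-like decay from $\phi \in \Cc$ through off-diagonal estimates of $(e^{-tL^\ast})$ but need not be Schwartz. The integration-by-parts identity $\frac{d}{ds}\langle u, \psi\rangle = 0$ must be derived from the weak formulations of the forward and backward equations using spatial cutoffs (whose error terms are controlled by the off-diagonal bounds) together with a temporal cutoff near $s = 0$; passing to the limit $s \to 0^+$ relies on the $\scrS'$-continuity of $u$ and uniform estimates on $\psi(s)$. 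For $p \le 1$, one may first invoke the tent-space embedding of Lemma \ref{lemma:unique_embed} to reduce to $p_0 > 1$, where Banach-space duality considerably simplifies these estimates.
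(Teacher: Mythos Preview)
Your overall strategy matches the paper's: establish $u_0=0$ via Proposition \ref{prop:trace}, derive the interior identity $\langle u(t),\phi\rangle=\langle u(s),e^{-(t-s)L^\ast}\phi\rangle$ for $0<s<t$ (the paper obtains this rigorously by quoting \cite[Theorem 5.1]{Auscher-Monniaux-Portal2019Lp}, using the growth bound of Lemma \ref{lemma:size_nabla_u_tent}), reduce to $p>1$ via Lemma \ref{lemma:unique_embed}, and then let $s\to 0$.

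The gap is in your treatment of the limit $s\to 0^+$. You write that it ``relies on the $\scrS'$-continuity of $u$ and uniform estimates on $\psi(s)$'', but $\scrS'$-convergence $u(s)\to 0$ only controls pairings against \emph{fixed} $\varphi\in\scrS$, while $\psi(s)=e^{-(T-s)L^\ast}\phi$ is not Schwartz and varies with $s$. Gaussian off-diagonal decay of $\psi(s)$ does not help, because the polynomial growth of $u(s)$ furnished by Lemma \ref{lemma:size_nabla_u_tent} is not uniform as $s\to 0$ (the constant involves $\|u\|_{L^2((a,b)\times B(0,1))}$). What is actually needed is a Banach dual pair $(X,X')$ with $u(s)\to 0$ in $X$ and $\psi(s)\to e^{-TL^\ast}\phi$ in $X'$, and the choice of $X$ is $(\beta,p)$-dependent and nontrivial.

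The paper does this in cases. For $\beta>-1/2$ it defers to \cite[Theorem 4.3]{Auscher-Hou2023_SIOTent}. For $-1<\beta\le -1/2$ and $p\ge 2$ the pair is $(E^{-1,p}_\delta,E^{1,p'}_\delta)$, with $u(s)\to 0$ in $E^{-1,p}_\delta$ from Theorem \ref{thm:lions_ext}\ref{item:lions_cont} and convergence of $\psi(s)$ in $E^{1,p'}_\delta$ via off-diagonal estimates. For $p_-(2\beta+1,L)<p<2$ the pair is $(\DotH^{2\beta+1,p},\DotH^{-(2\beta+1),p'})$, using Theorem \ref{thm:hc_ext_EL}\ref{item:hc_EL_strong_cont} for the dual side. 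The hardest case, $\phcl{L}<p\le p_-(2\beta+1,L)$ (which occurs only when $p_-(L)<1$), falls outside the identification range: there one cannot control $\psi(s)$ in $\DotH^{-(2\beta+1),p'}$, and the paper instead shows $u(t)\in \bH^{2\beta+1,p}_L$ with $u(t)\to 0$ in that $L$-adapted norm (Lemma \ref{lemma:unique_p-(L)<1_u_HqL}, relying on Proposition \ref{prop:Ladp-id}\ref{item:Ladp-embed}), and then concludes via the semigroup bound on $\bH^{2\beta+1,p}_L$. Your proposal does not anticipate this last case, which is the substantive content of the uniqueness proof in that range.
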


\begin{remark}
    Let us mention that \cite[Theorem 4.3]{Auscher-Hou2023_SIOTent} proves uniqueness for $\beta>-1/2$ and $p_{L}^{\,\flat}(\beta)<p \le \infty$ (where $p_L^{\,\flat}(\beta)$ is defined in \eqref{e:pL^flat(beta)} and equals  $\phcl{L}$ when $p_{-}(L)\ge 1$), requiring that $u$ lies in $T^p_{\beta+1}$ (but with no initial data needed). However, recall that Corollary \ref{cor:energyinequalitytent space} asserts the inequality $\|\nabla v\|_{T^p_{\beta+{1}/{2}} } \lesssim \|v\|_{T^p_{\beta+1}}$ holds for any global weak solution $v$ to the equation $\partial_t v - \Div(A\nabla v)=0$. Therefore, we improve the class of uniqueness in this range of $\beta$ with more values of $p$. 
\end{remark}

\begin{proof}[Proof of Theorem~\ref{thm:unique}]
Let $u$ be a global weak solution to  $\partial_t u - \Div(A\nabla u) = 0$ with $\nabla u \in T^p_{\beta+{1}/{2}}$ and null initial data. Thanks to Lemma \ref{lemma:unique_embed}, we may assume
$p>\max\{\phcl{L},1\}$ in what follows. Let us begin by some important common facts. 

First, as $p>\phcl{L} \ge \frac{n}{n+2\beta+2}$ and $u(0)=0$, Proposition \ref{prop:trace} yields
\begin{equation}
    \label{e:unique_rep}
    u =  \cR^{-\Delta}_{1/2}((A-\bI)\nabla u).
\end{equation}
Hence, Theorem~\ref{thm:lions_ext} for $\cR^{-\Delta}_{1/2}$ applies:  $u \in T^p_{\beta+{1}}$ and $u(s)$ tends to 0 as $s \to 0$ in $\scrS'$ and in various finer topology depending on $\beta$ and $p$.

Second, Lemma \ref{lemma:size_nabla_u_tent} gives local $L^2$-norm estimates of $u$  allowing us to invoke \cite[Theorem 5.1]{Auscher-Monniaux-Portal2019Lp} to get an interior representation of $u$ as follows: For $0<s<t<\infty$ and $h \in \Cc(\bR^n)$, it holds that
\begin{equation}
    \label{e:homo-id}
    \int_{\bR^n} u(t,x) \ovh(x) dx = \int_{\bR^n} u(s,x) \overline{e^{-(t-s)L^\ast} h}(x) dx.
\end{equation}

Now, fix $t>0$ and $h \in \Cc(\bR^n)$. We wish to let $s\to 0$, yielding $u(t)=0$ in  $\scrD'(\bR^n)$. To take advantage of the different modes of convergence established in Theorem~\ref{thm:lions_ext} \ref{item:lions_cont}, we split the argument in two cases. 

\

\paragraph{Case 1: $\beta>-1/2$}
As $p>\max\{\phcl{L},1\} \ge p_L^{\,\flat}(\beta)$ and $u \in T^p_{\beta+1}$, we can use \cite[Theorem 4.3]{Auscher-Hou2023_SIOTent} to conclude $u=0$ (the proof there uses \eqref{e:homo-id} as well). \\

\paragraph{Case 2: $-1<\beta \le -1/2$} 
We begin with $p\ge2$ and then  $\max\{\phcl{L}, 1\}<p\le 2$, which is further split into two sub-cases.
\\

\subparagraph{Case 2(a): $2 \le p \le \infty$}
Pick $\delta>0$. We use the slice spaces $E^{-1,p}_{\delta}$ and $E^{1,p'}_\delta$ introduced in Section \ref{ssec:tent-slice}. Recall that the $L^2(\bR^n)$-inner product realizes $E^{-1,p}_\delta$ as the dual of $E^{1,p'}_\delta$. The formula \eqref{e:unique_rep} and Theorem \ref{thm:lions_ext} \ref{item:lions_cont} for $\cR^{-\Delta}_{1/2}$ imply
\[ \lim_{s \to 0} u(s) = 0 \quad \text{ in } E^{-1,p}_{\delta}. \]
Moreover, we claim that 
\begin{equation}
    \label{e:unique_p-(L)>1_p>2_e-tL*h}
    \lim_{s \to 0} e^{-(t-s)L^\ast} h = e^{-tL^\ast} h \quad \text{ in } E^{1,p'}_{\delta}.
\end{equation}
Then taking limit of the right-hand integral in \eqref{e:homo-id} yields $u(t)=0$.  

To prove \eqref{e:unique_p-(L)>1_p>2_e-tL*h}, pick $\lambda \in (0,1)$ and $R>\delta>0$ so that $\supp(h) \subset B:=B(0,R)$ and for any $|x|>R$, $D(x):=\dist(B(x,\delta),\supp(h)) \ge \lambda |x|$. If $|x| \le R$, we have
\[ \|\I_{B(x,\delta)} \nabla e^{-tL^\ast} h\|_2 \le \| \nabla e^{-tL^\ast} h\|_2 \lesssim t^{-1/2} \|h\|_2. \]
If $|x|>R$, the exponential $L^2-L^2$ off-diagonal estimates of $(t^{1/2} \nabla e^{-tL^\ast})$ yield there exists $\gamma>0$ so that
\[ \|\I_{B(x,\delta)} \nabla e^{-tL^\ast} h\|_2 \lesssim t^{-1/2} e^{-\frac{\gamma D(x)^2}{t}} \|h\|_2 \lesssim t^{-1/2} e^{-\frac{\gamma\lambda^2 |x|^2}{t}} \|h\|_2. \]
We hence get 
\[ \sup_{0 \le s \le t/2} \|\I_{B(x,\delta)} \nabla e^{-(t-s)L^\ast} h\|_2 \lesssim t^{-\frac{1}{2}} ( \I_{B}(x) + \I_{B^c}(x) e^{-\frac{\gamma\lambda^2 |x|^2}{t}} ) \|h\|_2. \]
Note that the function on the right-hand side lies in $L^{p'}$, so $(\nabla e^{-(t-s)L^\ast} h)_{0 \le s \le t/2}$ form a uniformly bounded set in $E^{p'}_\delta$. Moreover, Lebesgue's dominated convergence theorem and continuity of $(\nabla e^{-tL^\ast})$ on $L^2$ imply  that $\nabla e^{-(t-s)L^\ast} h$ converges to $\nabla e^{-tL^\ast} h$ in $E^{p'}_\delta$ as $s \to 0$. Hence equivalently, $e^{-(t-s)L^\ast} h$ converges to $e^{-tL^\ast} h$ in $E^{1,p'}_\delta$ as $s \to 0$. This proves \eqref{e:unique_p-(L)>1_p>2_e-tL*h}. \\

\subparagraph{Case 2(b): $p_-(2\beta+1,L)<p<2$}
Recall that we also assume $p>1$, and we know that $\phcl{-\Delta} \le p_-(2\beta+1,L)$, so in particular, we have  
\[ \max\{\phcl{-\Delta},1\} \le \max\{p_-(2\beta+1,L), 1\} < p < 2.\]
Using the formula \eqref{e:unique_rep} and Theorem \ref{thm:lions_ext} \ref{item:lions_cont} for $\cR^{-\Delta}_{1/2}$, we get
\[ \lim_{s \to 0} u(s) = 0 \quad \text{ in } \DotH^{2\beta+1,p}. \]
On the other hand, as  $h \in \DotH^{-(2\beta+1),p'} \cap L^2$ and  
\[ 2<p'<(\max\{p_-(2\beta+1,L), 1\})'=p_+(-(2\beta+1),L^\ast), \]
we infer from Theorem \ref{thm:hc_ext_EL} \ref{item:hc_EL_strong_cont} that 
\[ \lim_{s \to 0} e^{-(t-s)L^\ast} h = \lim_{s \to 0} \cE_{L^\ast}(h)(t-s) = e^{-tL^\ast} h \quad \text{ in } \DotH^{-(2\beta+1),p'}. \]
Realizing the right-hand integral of  \eqref{e:homo-id} in the sense of duality for $\DotH^{2\beta+1,p}$ and $\DotH^{-(2\beta+1),p'}$, we obtain that it tends to 0 as $s\to 0$. \\

\subparagraph{Case 2(c): $\phcl{L} < p \le p_-(2\beta+1,L)$}
This case only occurs when $p_-(L)<1$. Theorem \ref{thm:lions_ext} \ref{item:lions_cont} still gives us a limit $u(s)\to 0$ in $\DotH^{2\beta+1,p}$ but now we do not know whether $e^{-(t-s)L^\ast} h$ tends to $e^{-tL^\ast} h$ in the dual space. However, the extra embedding in Proposition \ref{prop:Ladp-id} \ref{item:Ladp-embed} allows us to enhance the convergence of $u(s)$. 

\begin{lemma}[Null limit of $u(s)$ in  $\bH^{2\beta+1,p}_L$] 
    \label{lemma:unique_p-(L)<1_u_HqL} 
    In the range determined by this case, for all $t>0$, $u(t)$ lies in $\bH^{2\beta+1,p}_L$ with a uniform bound. Moreover, 
    \[ \lim_{t \to 0} u(t) = 0 \quad \text{ in } \bH^{2\beta+1,p}_L. \]
\end{lemma}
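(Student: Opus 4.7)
The plan is to compute the $\bH^{2\beta+1,p}_L$-norm of $u(t_0)$ as a shifted tent-space norm of $u$ via the semigroup identity satisfied by weak solutions, and then to upgrade the $\DotH^{2\beta+1,p}$-vanishing of Case~2(c) to the strictly stronger adapted norm using Proposition~\ref{prop:Ladp-id}~\ref{item:Ladp-embed}.

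I would first verify that $u(t_0)\in L^2(\bR^n)$ for each $t_0>0$. Starting from the representation $u=\cR^{-\Delta}_{1/2}((A-\bI)\nabla u)$ of \eqref{e:unique_rep} and using the tent-space bounds of Theorem~\ref{thm:lions_ext}~\ref{item:lions_reg} together with parabolic Caccioppoli (Lemma~\ref{lemma:lions_Caccioppoli}) on Whitney cylinders around $\{t_0\}\times\bR^n$, one gets local $L^2$ control which can be globalized using $u\in T^p_{\beta+1}$ and $p>1$. Once $u(t_0)\in L^2$, the identity \eqref{e:homo-id} extends by density of $\Cc(\bR^n)$ in $L^2$ and $L^2$-boundedness of $(e^{-\tau L^\ast})$ to the $L^2$-identity $u(t_0+\tau)=e^{-\tau L}u(t_0)$ for all $t_0,\tau>0$.

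Next, taking $\psi(z)=e^{-z}$, admissible for $\bH^{2\beta+1,p}_L$ in the range at hand ($p\le 2$ and $s=2\beta+1\le 0$), we obtain $\cQ_{\psi,L}(u(t_0))(\tau)=u(t_0+\tau)$ and therefore
\[
\|u(t_0)\|_{\bH^{2\beta+1,p}_L}\;\eqsim\;\bigl\|\tau\mapsto u(t_0+\tau)\bigr\|_{T^p_{\beta+1}}.
\]
Changing variable $\sigma=t_0+\tau$ and splitting into $\sigma\ge 2t_0$ (where $(\sigma-t_0)^{-2(\beta+1)}$ and $B(x,(\sigma-t_0)^{1/2})$ are equivalent, up to constants, to the ones defining $T^p_{\beta+1}$ since $\beta+1\ge 0$) and $\sigma\in[t_0,2t_0]$ (handled by change of aperture in tent spaces combined with Caccioppoli on the Whitney cylinder $W(t_0,x)$), the shifted tent-norm is dominated by $\|u\|_{T^p_{\beta+1}}+\|\nabla u\|_{T^p_{\beta+1/2}}$, uniformly in $t_0$, giving the uniform bound.

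For the vanishing limit, note that for $0<s<t_0$, the semigroup identity $u(t_0)=e^{-(t_0-s)L}u(s)$ together with the boundedness of $(e^{-\tau L})$ on $\bH^{2\beta+1,p}_L$ given by Lemma~\ref{lemma:EL_ext_semigp} yields
$\|u(t_0)\|_{\bH^{2\beta+1,p}_L}\lesssim \|u(s)\|_{\bH^{2\beta+1,p}_L}$
uniformly in $t_0>s$. Combined with the uniform bound above and the $\DotH^{2\beta+1,p}$-convergence $u(s)\to 0$ from Case~2(c), the extra embedding of Proposition~\ref{prop:Ladp-id}~\ref{item:Ladp-embed} forces any subsequential limit of $(u(s))_{s\to 0}$ in $\bH^{2\beta+1,p}_L$ to coincide with the $\DotH^{2\beta+1,p}$-limit, which is $0$; an equi-integrability argument on the shifted tent-space integrand, using the Lions' structural form of $u$, upgrades this to strong convergence $\|u(t_0)\|_{\bH^{2\beta+1,p}_L}\to 0$.

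The main obstacle is this last step: transferring the weaker $\DotH^{2\beta+1,p}$-convergence into strong convergence in the strictly finer adapted space $\bH^{2\beta+1,p}_L$, where identification fails (we are in the extra embedding range, outside $\scrI_L$). This upgrade cannot be automatic and must rely on the precise Lions' structure $u=\cR^{-\Delta}_{1/2}((A-\bI)\nabla u)$, the completion property in Proposition~\ref{prop:Ladp-id}~\ref{item:Ladp-embed}, and the tent-space integrability of $\nabla u \in T^p_{\beta+1/2}$ near $t=0$.
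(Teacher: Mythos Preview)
Your overall shape is right, but there are two concrete gaps, one technical and one structural.

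\textbf{The choice $\psi(z)=e^{-z}$ does not make the small-$\tau$ region tractable.} For $\tau\in(0,t_0)$ the weighted average $\tau^{-2(\beta+1)}\fint_{B(x,\tau^{1/2})}|u(t_0+\tau)|^2$ zooms into the point $x$ at a scale unrelated to the Whitney scale of $u$, and neither change of aperture (a statement about full tent norms, not pointwise cones) nor Caccioppoli on $W(t_0,x)$ controls this. An off-diagonal argument gives $\fint_{B(x,\tau^{1/2})}|e^{-\tau L}u(t_0)|^2\lesssim \cM(|u(t_0)|^2)(x)$, but since $1<p<2$ here, $\cM$ is unbounded on $L^{p/2}$ and the bound cannot be closed. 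The paper avoids this by computing the norm with $\psi(z)=z^k e^{-z}$ for a large integer $k$ satisfying $(k-\beta-1)p>n/2$. This produces the extra factor $\tau^k$ and translates into $(\tau L)^k e^{-\tau L}u(t_0)=(-\tau)^k(\partial_t^k u)(t_0+\tau)$; after the change of variable the weight $(\sigma-t_0)^{k-\beta-1}$ has positive exponent, so one can simply dominate by $\sigma^{k-\beta-1}$. The cost is the need for the a~priori estimate $\|t^k\partial_t^k u\|_{T^p_{\beta+1}}\lesssim_k\|u\|_{T^p_{\beta+1}}$, which the paper proves separately (their Step~2) via $(\partial_t u)(2t)=-Le^{-tL}u(t)$ and $L^2$--$L^2$ off-diagonal decay.

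\textbf{The vanishing limit is the real gap.} You have a uniform bound on $\|u(s)\|_{\bH^{2\beta+1,p}_L}$ and convergence $u(s)\to 0$ in the strictly weaker $\DotH^{2\beta+1,p}$. From this alone there is no mechanism to extract a subsequential limit in $\bH^{2\beta+1,p}_L$: the space is not complete, you are outside $\scrI_L$ so the norms are not comparable, and ``equi-integrability'' of the shifted tent integrand is precisely what is at stake and cannot be invoked without proof. The semigroup inequality $\|u(t_0)\|\lesssim\|u(s)\|$ that you quote is what is used \emph{after} the lemma, to conclude $u=0$; it does not help prove the lemma itself. The paper's resolution is to show directly that $(u(t))_{t\to 0}$ is \emph{Cauchy} in $\bH^{2\beta+1,p}_L$: writing $\tau^k\partial_\tau^k(u(t+\tau))-\tau^k\partial_\tau^k(u(t'+\tau))=\tau^{-1}\int_{t'}^t \tau^{k+1}\partial_\tau^{k+1}(u(h+\tau))\,dh$ via Newton--Leibniz and applying a time-truncated version of the Step~2 estimate gives a bound $\lesssim |t-t'|/\delta$ for $t,t'\le\delta$. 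Once Cauchy, the limit exists in the completion of $\bH^{2\beta+1,p}_L$ inside $\DotH^{2\beta+1,p}$ (this is exactly where Proposition~\ref{prop:Ladp-id}~\ref{item:Ladp-embed} enters), and it must agree with the $\DotH^{2\beta+1,p}$-limit, which is $0$.
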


The proof is presented right after. Admitting this lemma, let us show $u=0$. Since $u(s) \in \bH^{2\beta+1,p}_L$ for all $s>0$, we have $u(s) \in L^2$ and we get from \eqref{e:homo-id} that $u(t)=e^{-(t-s)L}u(s)$ for $t \ge s$. By Lemma~\ref{lemma:EL_ext_semigp}, we get 
\[ \sup_{t\ge s} \|u(t)\|_{\bH^{2\beta+1,p}_L} \lesssim \|u(s)\|_{\bH^{2\beta+1,p}_L}, \]
with a uniform bound in $s$. As the right-hand side tends to $0$ as $s\to 0$, we conclude that $\sup_{t>0} \|u(t)\|_{\bH^{2\beta+1,p}_L}=0$, so $u=0$.
\end{proof}

\begin{proof}[Proof of Lemma \ref{lemma:unique_p-(L)<1_u_HqL}]
The proof is divided into 4 steps.\\

\paragraph{Step 1: Regularity of $u$}
We prove $u \in C^\infty((0,\infty);L^p \cap L^2)$. Since $1<p<2$, as proved in \cite[(4.19)]{Auscher-Hou2023_SIOTent}, for a.e. $s>0$, we have $u(s) \in L^p$ with
\[ \|u(s)\|_p \lesssim s^{\beta+1/2} \|u\|_{T^p_{\beta+1}}.\]
For such an $s$, $L^p$-boundedeness of $(e^{-tL})$ yields $e^{-(t-s)L} u(s) \in L^p$ for all $t \ge s$, and it follows from \eqref{e:homo-id} that it equals to $u(t)$. Thus we obtain $u(t)=e^{-(t-s)L} u(s)$ for any $t \ge s>0$. Applying analyticity of $(e^{-tL})$ on $L^p$, we get $u \in C^\infty((0,\infty);L^p)$. As $e^{-tL}$ also maps $L^p$ to $L^2$ for any $t>0$, we have $u \in C^\infty((0,\infty);L^2)$ as desired. \\

\paragraph{Step 2: A key estimate}
For any $k \ge 1$, we show that
\begin{equation}
    \label{e:unique_p-(L)<1_tk-partial_tu}
    \|t^k (\partial_t^k u)(t)\|_{T^p_{\beta+1}} \lesssim_{k} \|u\|_{T^p_{\beta+1}}.
\end{equation}
We prove it for $k=1$, and iteration concludes the argument. Observe that by Step 1, we have that for any $t>0$ and $\tau>0$,
\begin{equation}
    \label{e:unique_p-(L)<1_dtau(u(t+tau))}
    (\partial_t u)(t+\tau) = \partial_\tau (u(t+\tau)) = \partial_\tau (e^{-\tau L} u(t)) = - Le^{-\tau L} u(t).
\end{equation}
In particular, pick $\tau=t$, we get $(\partial_t u)(2t) = - Le^{-tL} u(t)$, and hence,
\begin{align*}
    \|t \partial_t u \|_{T^p_{\beta+1}} 
    &\eqsim \left( \int_{\bR^n} \left( \int_0^\infty \fint_{B(x,(2s)^{1/2})} |s^{-(\beta+1)} s(\partial_t u)(2s,y)|^2\, dsdy \right)^{p/2} dx \right)^{1/p} \\
    &= \left( \int_{\bR^n} \left( \int_0^\infty \fint_{B(x,(2s)^{1/2})} |s^{-(\beta+1)} ( sLe^{-sL} u(s))(y)|^2 \, dsdy \right)^{p/2} dx \right)^{1/p}. 
\end{align*}
Write $B:=B(x,(2s)^{1/2})$, $C_0:=2B$, and $C_j:=2^{j+1} B \setminus 2^j B$ for $j \ge 1$. The $L^2-L^2$ off-diagonal estimates of $(sLe^{-sL})$ yield that
\[ \|\I_B sLe^{-sL} u(s)\|_2 \le \sum_{j \ge 0} \|\I_B sLe^{-sL} \I_{C_j} u(s)\|_2 \lesssim \sum_{j \ge 0} e^{-c2^{2j}} \|\I_{C_j} u(s)\|_2, \]
where $c>0$ is independent of $s$. Applying this on the above computation, we get 
\begin{align*}
    &\|t \partial_t u \|_{T^p_{\beta+1}} \lesssim \sum_{j \ge 0} e^{-c2^{2j}} \left( \int_{\bR^n} \left( \int_0^\infty \int_{C_j} s^{-\frac{n}{2}} |s^{-(\beta+1)} u(s,y)|^2 \, dsdy \right)^{p/2} dx \right)^{1/p} \\
    &\quad \lesssim \sum_{j \ge 0} 2^{j\frac{n}{2}} e^{-c2^{2j}} \left( \int_{\bR^n} \left( \int_0^\infty \fint_{B(x,2^{j+1} (2s)^{1/2})} |s^{-(\beta+1)} u(s,y)|^2 \, dsdy \right)^{p/2} dx \right)^{1/p} \\
    &\quad \lesssim \sum_{j \ge 0} 2^{j(\frac{n}{2}+\frac{n}{p})} e^{-c2^{2j}} \|u\|_{T^p_{\beta+1}} \lesssim \|u\|_{T^p_{\beta+1}}.
\end{align*}
The factor $2^{j\frac{n}{p}}$ in the third inequality comes from change of angle for tent space norms, see \cite[Theorem 1.1]{Auscher2011_Change_angle}. \\

\paragraph{Step 3: Boundedness of $u(t)$ in $\bH^{2\beta+1,p}_L$.}
Now we prove that for any $t>0$, $u(t)$ belongs to $\bH^{2\beta+1,p}_L$ with
\begin{equation}
    \label{e:unique_p-(L)<1_u(t)bdd_HqL}
    \sup_{t>0} \|u(t)\|_{\bH^{2\beta+1,p}_L} \lesssim  \|u\|_{T^p_{\beta+1}}.
\end{equation}
By Step 1, $u(t)$ lies in $L^2$ for any $t>0$, so we just need to show the norm estimate. To this end, pick an integer $k \ge 1$ with $(k-\beta-1)p-\frac{n}{2}>0$. From \cite[\S 8.2]{Auscher-Egert2023OpAdp}, $\psi(z):=z^k e^{-z}$ is a valid admissible function for $\bH^{2\beta+1,p}_L$. So we use \eqref{e:unique_p-(L)<1_tk-partial_tu}, \eqref{e:unique_p-(L)<1_dtau(u(t+tau))} in Step 2, and the identity $\partial_\tau^k \big( u(t+\tau) \big) = (\partial_t^k u)(t+\tau)$ to deduce that for any $t>0$,  
\begin{align*}
    &\|u(t)\|_{\bH^{2\beta+1,p}_L} \\
    &\quad \eqsim \left( \int_{\bR^n} \left( \int_0^\infty \fint_{B(x,\tau^{1/2})} |\tau^{-(\beta+1)} \left( (\tau L)^k e^{-\tau L} u(t) \right)(y)|^2 \, dyd\tau \right)^{p/2} dx \right)^{1/p} \\
    &\quad \eqsim \left( \int_{\bR^n} \left( \int_0^\infty \fint_{B(x,\tau^{1/2})} |\tau^{-(\beta+1)} \tau^k \partial_\tau^k \big( u(t+\tau,y) \big)|^2 \, dyd\tau \right)^{p/2} dx \right)^{1/p} \\
    &\quad \eqsim \left( \int_{\bR^n} \left( \int_t^\infty \int_{B(x,(\sigma-t)^{1/2})} (\sigma-t)^{(k-\beta-1)p-\frac{n}{2}}  |(\partial_t^k u)(\sigma,y)|^2 \, dyd\sigma \right)^{p/2} dx \right)^{1/p} \\
    &\quad \lesssim \|\sigma^k (\partial_t^k u)(\sigma)\|_{T^p_{\beta+1}} \lesssim \|u\|_{T^p_{\beta+1}}.
\end{align*}
The first inequality with implicit constant  independent of $t$ uses $(k-\beta-1)p-\frac{n}{2}>0$. This proves \eqref{e:unique_p-(L)<1_u(t)bdd_HqL}. \\

\paragraph{Step 4: Limit at $t=0$.}
We finish by showing that $u(t)$ tends to 0 in $\bH^{2\beta+1,p}_L$ as $t \to 0$. Suppose $(k-\beta-1)p-\frac{n}{2}>0$. Remark that examination of the argument in Step 2 allows time truncation. More precisely,  if $0<t\le \delta$, then  one finds 
\[ \| \I_{\{\tau<2\delta\}} \, \partial_\tau^{k}(u(t+\tau))\|_{T^p_{\beta+1-k}} \lesssim \| \I_{\{\tau<\delta\}} \,  u\|_{T^p_{\beta+1}}, \]
where the implicit constant does not depend on $t,\delta$. Using Newton--Leibniz formula for $L^2$-valued functions, we get 
\[ \tau^k\partial_\tau^{k}(u(t+\tau))- \tau^k \partial_\tau^{k}(u(t'+\tau))= \frac{1}{\tau}\int_{t'}^t \tau^{k+1}\partial_\tau^{k+1}(u(h+\tau))\, dh. \]
Also, following the argument of Step 2  with the help of Minkowski's integral inequality, we find for $t,t' \in (0, \delta]$,  
\[ \| \I_{\{\tau<2\delta\}} \, (\partial_\tau^{k}(u(t+\tau))-\partial_\tau^{k}(u(t+\tau)))\|_{T^p_{\beta+1-k}} \lesssim \frac {|t-t'|}\delta \|u\|_{T^p_{\beta+1}}, \]
again with implicit constant independent of $t,t', \delta$. This implies that $((\tau,x) \mapsto \partial_\tau^{k}(u(t+\tau,x)))_{t>0}$ is Cauchy in $T^p_{\beta+1-k}$ when $t \to 0$. We deduce as in Step 3 that $(u(t))_{t>0}$ is Cauchy in $\bH^{2\beta+1,p}_L$ when $t\to 0$. As mentioned in the first paragraph of Case 2(c), $u(t)$ tends to 0 in $\DotH^{2\beta+1,p}$. The conditions on $\beta,p$ allow us to apply Proposition \ref{prop:Ladp-id} \ref{item:Ladp-embed}, so we conclude that the limit of $u(t)$ exists and must be zero in $\bH^{2\beta+1,p}_L$. This completes the proof.
\end{proof}

\subsection{Representation}
\label{ssec:rep}
Let us demonstrate Theorem \ref{thm:rep}.
\begin{proof}[Proof of Theorem \ref{thm:rep}]
   Let $u$ be a global weak solution to  $\partial_t u - \Div(A\nabla u)=0$ with $\nabla u \in T^p_{\beta+{1}/{2}}$. As $p>\phcl{L} \ge \frac{n}{n+2\beta+2}$, Proposition \ref{prop:trace} asserts that there exists $u_0 \in \scrS'$ so that
   \[ u = \cE_{-\Delta}(u_0)+\cR^{-\Delta}_{1/2}((A-\bI)\nabla u). \]
   
   In case \ref{item:rep_beta>0}, as $\beta \ge 0$ and $\frac{n}{n+2\beta} \le p \le \infty$, Proposition \ref{prop:trace} \ref{item:trace_beta>0} says $u_0$ is a constant. Hence, $w:=u-\cE_{-\Delta}(u_0)$ is a global weak solution to the Cauchy problem
   \[ \begin{cases}
        \partial_t w-\Div(A\nabla w) = 0, \\ 
        w(0)=0
    \end{cases} \]
   with $\nabla w \in T^p_{\beta+{1}/{2}}$. As $\phcl{L} <p \le \infty$, we invoke Theorem \ref{thm:unique} to get $w=0$, so $u=\cE_{-\Delta}(u_0)$ is a constant as desired.

   In case \ref{item:rep_-1<beta<0}, Proposition \ref{prop:trace} \ref{item:trace_-1<beta<0} shows that there exist $g \in \DotH^{2\beta+1,p}$ and $c \in \bC$ so that $u_0=g+c$. Then we apply Theorem \ref{thm:hc_ext_EL} to $g$ to get
   \[ u = \cE_{-\Delta}(g)+\cR^{-\Delta}_{1/2}((A-\bI)\nabla u)+c = \cE_L(g)+c. \]

   This completes the proof.
\end{proof}
%%%%% Besov
\section{Results for homogeneous Besov spaces}
\label{sec:besov}

One can also study the case where the initial data are taken in homogeneous Besov spaces $\DotB^s_{p,p}$. The definition of $\DotB^s_{p,p}$ can be analogously adapted from Definition \ref{def:Hsp}, as ``realization" of homogeneous Besov spaces defined on $\scrS'/\scrP$. In fact, one can also take the definition of $\DotB^s_{p,p}$ as in \cite[Definition 2.15]{Bahouri-Chemin-Danchin2011_FA}, which does not contain polynomials for any $s$ and $p$. See \cite[Remark 2.26]{Bahouri-Chemin-Danchin2011_FA} for more detailed discussion. 

The counterparts of tent spaces are $Z$-spaces, introduced by \cite{Barton-Mayboroda2016_Zspaces} (with a different notation). For any $p \in (0,\infty)$ and $\beta \in \bR$, the \textit{(parabolic) $Z$-space} $Z^p_\beta$ consists of measurable functions $F$ on $\bR^{1+n}_+$ for which the (quasi-)norm
\[ \|F\|_{Z^p_\beta} := \left( \int_{\bR^{1+n}_+} \left ( \int_{t/2}^t \fint_{B(x,t^{1/2})} |s^{-\beta} F(s,y)|^2 dsdy \right )^{p/2} \frac{dt}{t} dx \right )^{1/p} < \infty. \]
For $p=\infty$, we set
\[ \|F\|_{Z^\infty_\beta} := \sup_{t>0,x\in \bR^n} \left ( \int_{t/2}^t \fint_{B(x,t^{1/2})} |s^{-\beta} F(s,y)|^2 dsdy \right )^{1/2}. \]

The relation between $\DotB^s_{p,p}$ (resp. $Z^p_\beta$) and $\DotH^{s,p}$ (resp. $T^p_\beta$) is given by real interpolation, see \cite[Theorem 2.30]{Amenta-Auscher2016EllipticBVP} and \cite[\S 2.6]{Auscher-Egert2023OpAdp}. Let $0<p_0,p_1 \le \infty$ and $\theta \in (0,1)$. Let $s_0,s_1,\beta_0,\beta_1 \in \bR$ with $s_0 \ne s_1$ and $\beta_0 \ne \beta_1$. Define $s,\beta \in \bR$ and $p \in (0,\infty]$ so that $s:=(1-\theta)s_0+\theta s_1$, $\beta:=(1-\theta)\beta_0+\theta \beta_1$, and $\frac{1}{p} := \frac{1-\theta}{p_0} + \frac{\theta}{p_1}$. Then we have
\begin{equation}
 \label{eq:TZinterpolation} ( \DotH^{s_0,p_0}, \DotH^{s_1,p_1} )_{\theta,p} = \DotB^s_{p,p}, \quad ( T^{p_0}_{\beta_0}, T^{p_1}_{\beta_1} )_{\theta,p} = Z^p_\beta. 
 \end{equation}
Moreover, it has been shown in \cite[Theorem 2.34]{Amenta-Auscher2016EllipticBVP} that
\begin{equation}
    \label{eq:TZTemlbedding}
    T^{p_0}_{\beta_0} \hookrightarrow Z^{p_1}_{\beta_1} \hookrightarrow T^{p_2}_{\beta_2}, 
\end{equation}
if $0<p_0<p_1<p_2 \le \infty$ and $2\beta_0-\frac{n}{p_0} = 2\beta_1-\frac{n}{p_1} = 2\beta_2-\frac{n}{p_2}$.

Let us first consider a special case for $\beta=0$ (or $s=1$) and $p=\infty$. Recall that we say a distribution $g$ belongs to the homogeneous Sobolev space $\DotW^{1,\infty}$ if $\nabla g$ is bounded, and $\|g\|_{\DotW^{1,\infty}} := \|\nabla g\|_{\infty}$. Rademacher's theorem asserts $\DotW^{1,\infty}$ coincides with the set of Lipschitz continuous functions (up to almost everywhere equality).
\begin{prop}
    \label{prop:heat-besov-1-infty}
    \begin{enumerate}[label=\normalfont(\roman*)]
        \item \label{item:heat_besov_1-infty-est}
        Let $g \in \DotW^{1,\infty}$ be a Lipschitz function. Then the function $(t,x) \mapsto \nabla e^{t\Delta} g(x)$ belongs to $Z^\infty_{1/2}$ with
        \[ \|\nabla e^{t\Delta} g\|_{Z^\infty_{1/2}} \eqsim \|g\|_{\DotW^{1,\infty}}. \]
    
        \item \label{item:heat_besov_1-infty-rep}
        Let $u$ be a distributional solution to the heat equation on $\bR^{1+n}_+$ with $\nabla u \in Z^\infty_{1/2}$. Then there exists $u_0 \in \DotW^{1,\infty}$  such that $u(t)=e^{t\Delta} u_0$ for all $t>0$.
    \end{enumerate}
\end{prop}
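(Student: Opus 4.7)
For (i), the inequality ``$\lesssim$'' is immediate: since $\nabla e^{t\Delta} g = e^{t\Delta} \nabla g$ and the heat semigroup is $L^\infty$-contractive, $\|\nabla u(s)\|_\infty \le \|\nabla g\|_\infty$ for every $s>0$, so
\[
\int_{t/2}^t \fint_{B(x,t^{1/2})} s^{-1}|\nabla u(s,y)|^2\,ds\,dy \le \|\nabla g\|_\infty^2 \int_{t/2}^t s^{-1}\,ds = (\ln 2)\|\nabla g\|_\infty^2
\]
uniformly in $(t,x)$. For the converse, my main tool is the parabolic mean value inequality applied to the vector heat solution $\nabla u$: taking the backward parabolic cylinder $W(t/2,x)=(t/2,t)\times B(x,(t/2)^{1/2})$, whose top point is $(t,x)$, and using $s\sim t$ on that cylinder to absorb the weight,
\[
|\nabla u(t,x)|^2 \lesssim \fint_{W(t/2,x)}|\nabla u|^2 \lesssim \int_{t/2}^t \fint_{B(x,t^{1/2})} s^{-1}|\nabla u|^2\,ds\,dy \le \|\nabla u\|_{Z^\infty_{1/2}}^2.
\]
Hence $\sup_{t>0}\|\nabla u(t)\|_\infty \lesssim \|\nabla u\|_{Z^\infty_{1/2}}$, and since $e^{s\Delta}\nabla g(y)\to \nabla g(y)$ at every Lebesgue point $y$ of $\nabla g \in L^\infty \subset L^1_{\loc}$, letting $s\to 0$ pointwise yields $\|\nabla g\|_\infty \lesssim \|\nabla u\|_{Z^\infty_{1/2}}$.

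For (ii), set $M:=\|\nabla u\|_{Z^\infty_{1/2}}$. Part (i)'s bound gives $\sup_{t>0}\|\nabla u(t)\|_\infty \lesssim M$, and a further application of parabolic regularity to the vector heat solution $\nabla u$ yields $\|\Delta u(s)\|_\infty \lesssim s^{-1/2}M$ for every $s>0$. The crucial observation is that this bound is integrable near $s=0$: for any $y\in\bR^n$ and $0<s_0<t_0$, Newton--Leibniz combined with the equation $\partial_s u = \Delta u$ gives
\[
|u(t_0,y) - u(s_0,y)| = \left|\int_{s_0}^{t_0}\Delta u(\tau,y)\,d\tau\right| \lesssim M\,(t_0^{1/2}-s_0^{1/2}),
\]
with constant independent of $y$. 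Hence $(u(t,\cdot))_{t>0}$ is Cauchy in $L^\infty(\bR^n)$ as $t\to 0$; let $u_0$ denote its uniform limit. As the uniform limit of functions uniformly Lipschitz with constant $\lesssim M$, $u_0 \in \DotW^{1,\infty}$ with $\|\nabla u_0\|_\infty \lesssim M$.

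It remains to identify $u(t)=e^{t\Delta}u_0$. For fixed $0<t_0<t$, $u(t_0,\cdot)$ has linear growth (being globally Lipschitz, with the value at the origin uniformly controlled for small $t_0$ by the Cauchy estimate above), hence lies in $\scrS'$; the shifted solution $s\mapsto u(s+t_0,\cdot)$ is a heat solution with polynomial growth, whence Widder-type uniqueness for the Cauchy problem of the heat equation among polynomially growing data yields the convolution representation $u(t,x)=\int K_{t-t_0}(x-y)\,u(t_0,y)\,dy$, where $K_\tau$ denotes the Gaussian heat kernel. Passing $t_0\to 0$ by dominated convergence (the integrand converges pointwise by uniform convergence of $u(t_0,\cdot)$ to $u_0$ and is dominated by $CK_t(x-y)(1+|y|)$) delivers $u(t,x) = \int K_t(x-y)u_0(y)\,dy = e^{t\Delta}u_0(x)$. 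The main obstacle is producing a genuine Lipschitz function $u_0$ rather than merely a Zygmund-class distribution in $\DotB^1_{\infty,\infty}$, which is what the Besov analogue of Theorem~\ref{thm:heat_Hsp} would supply; the pointwise bound $\|\Delta u(s)\|_\infty \lesssim s^{-1/2}M$, being integrable near zero, is precisely what furnishes the Lipschitz upgrade over the Zygmund conclusion.
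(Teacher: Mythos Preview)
Your proof is correct and takes a genuinely different, more elementary route than the paper.

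The paper identifies $Z^\infty_{1/2}$ with the parabolic Kenig--Pipher space $X^\infty$ and then imports two external results: \cite[Theorem 5.4]{Auscher-Monniaux-Portal2019Lp} for the norm equivalence in (i) and for the $L^\infty$-trace of $\nabla u$, and \cite[Theorem 1.1]{Auscher-Hou2023_RepHeat} (a general representation theorem requiring a size condition and a uniform-control condition) for the existence of the $\scrS'$-trace $u_0$ in (ii). The Lipschitz regularity of $u_0$ is then deduced \emph{a posteriori} from the $L^\infty$-trace of $\nabla u$.

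Your argument is self-contained. For (i) you use the $L^\infty$-contraction of the heat semigroup for ``$\lesssim$'' and the parabolic mean-value inequality applied to the caloric vector $\nabla u$ for ``$\gtrsim$'', followed by Lebesgue-point convergence of the heat approximate identity. For (ii) the key idea is the chain: mean-value gives $\sup_{t}\|\nabla u(t)\|_\infty\lesssim M$; the interior gradient estimate applied again to $\nabla u$ gives $\|\nabla^2 u(s)\|_\infty\lesssim s^{-1/2}M$, hence $\|\partial_s u(s)\|_\infty=\|\Delta u(s)\|_\infty\lesssim s^{-1/2}M$; this integrability near $s=0$ makes $(u(t,\cdot))_{t>0}$ Cauchy in $L^\infty$, producing the Lipschitz limit $u_0$ directly. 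The identification $u(t)=e^{t\Delta}u_0$ then follows from classical uniqueness in the Tychonoff class (your phrase ``Widder-type'' is a slight misnomer---Widder's theorem concerns nonnegative solutions; the relevant statement here is Tychonoff uniqueness for solutions with sub-Gaussian, in particular polynomial, growth). Your final remark that the integrable bound $s^{-1/2}$ on $\Delta u$ is exactly what upgrades the generic Zygmund-class conclusion $\DotB^1_{\infty,\infty}$ to a genuine Lipschitz trace is on point.

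What each approach buys: the paper's method plugs into the general machinery used throughout (size + uniform control for the representation theorem), so it parallels the treatment of all other exponents and readily transfers to the rough-coefficient setting. Your approach is shorter and requires no external black boxes, but it is tailored to the Laplacian: the interior gradient estimate and the pointwise mean-value inequality for caloric functions do not survive passage to merely bounded measurable coefficients.
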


\begin{proof}
    Observe that the space $Z^\infty_{1/2}$ coincides with the parabolic version of the Kenig--Pipher space $X^\infty$ introduced by \cite{Kenig-Pipher1993Xp}. Then we invoke \cite[Theorem 5.4]{Auscher-Monniaux-Portal2019Lp} to get \ref{item:heat_besov_1-infty-est} as 
    \[ \|\nabla e^{t\Delta} g\|_{Z^\infty_{1/2}} = \|e^{t\Delta} \nabla g\|_{X^\infty} \eqsim \|\nabla g\|_{L^\infty} = \|g\|_{\DotW^{1,\infty}}. \]
    Moreover, it also asserts that any weak solution $G \in X^\infty$ to the heat equation has a trace $g \in L^\infty$ so that $G(t)=e^{t\Delta} g$ for all $t>0$.
    
    To prove \ref{item:heat_besov_1-infty-rep}, we claim that such $u$ also has a distributional limit $u_0 \in \scrS'$  as $t \to 0$, and $u(t)=e^{t\Delta} u_0$ for any $t>0$. Then applying the above assertion to $\nabla u \in X^\infty$ yields $\nabla u_0=g$, so $u_0$ must be (equal almost everywhere to) a Lipschitz function as desired.
    
    The claim follows by a verbatim adaptation of the proof of Proposition \ref{prop:rep_heat}. We just list the main modifications here. In this case, the estimate in Lemma \ref{lemma:size_nabla_u_tent} that yields the size condition becomes: For $0<a<b<\infty$ and $R>1$,
    \[ \int_a^b \int_{B(0,R)} |u|^2 \lesssim_{a,b} R^{3n+2} \left( \|\nabla u\|_{ Z^\infty_{1/2} }^2 + \|u\|_{L^2((a,b) \times B(0,1))}^2 \right). \]
    For the uniform control condition \eqref{e:heat_verify_UC}, it follows from \eqref{e:heat_uc_nabla_u_N2}, which still holds for $N>n+1$ since
    \[ \int_0^2 \int_{\bR^n} |\nabla u| |\nabla \phi| \lesssim \|\nabla u\|_{ Z^\infty_{1/2} } \|\I_{(0,2)} \phi\|_{Z^1_{-1/2}} \lesssim \|\nabla u\|_{ Z^\infty_{1/2} } \cP_N(\phi). \]
    Therefore, applying \cite[Theorem 1.1]{Auscher-Hou2023_RepHeat} again provides the trace $u_0 \in \scrS'$ as wanted. This completes the proof.
\end{proof}

\begin{theorem}[Well-posedness of Cauchy problems of type \eqref{e:ivp_ic} for Besov spaces]
    Let $\beta>-1$, $s \in \bR$, and $0<p \le \infty$. With modifications for $\beta=0$ (or $s=1$) and $p=\infty$ as in Proposition \ref{prop:heat-besov-1-infty} (\textit{i.e.}, the initial data $u_0$ lies in $\DotW^{1,\infty}$), Theorems \ref{thm:heat_Hsp}, \ref{thm:wp_hc}, \ref{thm:rep}, \ref{thm:wp_lions}, and \ref{thm:ic-hc-lions} are all valid in the same range of exponents, when replacing homogeneous Hardy--Sobolev spaces (resp.~tent spaces) by homogeneous Besov spaces (resp.~$Z$-spaces) with the same exponents.
\end{theorem}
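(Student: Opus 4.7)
The plan is to transfer every result in the excerpt from the Hardy--Sobolev/tent-space scale to the Besov/$Z$-space scale by real interpolation, using the identities \eqref{eq:TZinterpolation} and the sandwich embedding \eqref{eq:TZTemlbedding}. The key observation is that the ranges of validity in Theorems~\ref{thm:heat_Hsp}--\ref{thm:ic-hc-lions} are \emph{open} in the $(\beta,1/p)$-plane (see Figure~\ref{fig:exponents}), so for any admissible pair $(\beta,p)$ one can pick two pairs $(\beta_0,p_0)$, $(\beta_1,p_1)$ lying on the common scaling line $2\beta_i-n/p_i=2\beta-n/p$ with $p_0<p<p_1$ and both still admissible for the corresponding tent-space theorem. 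Real interpolation then lifts boundedness between $\DotH^{s_i,p_i}$ and $T^{p_i}_{\beta_i+1/2}$ (or analogous pairs) to boundedness between $\DotB^{2\beta+1}_{p,p}$ and $Z^p_{\beta+1/2}$.

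For the estimate parts (Theorem~\ref{thm:heat_Hsp}\ref{item:heat_tent_est} and the \emph{a priori} inequalities in Theorems~\ref{thm:wp_hc}, \ref{thm:wp_lions}, \ref{thm:ic-hc-lions}), I would apply the tent-space version at both endpoints to the operator in question ($g\mapsto \nabla e^{t\Delta}g$, $\cE_L$, $\cR^L_{1/2}$, $\nabla\cR^L_{1/2}$) and invoke \eqref{eq:TZinterpolation}. Compatibility of the two endpoint extensions on $\DotH^{s_0,p_0}\cap\DotH^{s_1,p_1}$ is automatic from their common $L^2$-realization on a dense subspace such as $\scrS_\infty\cap L^2$. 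Continuity statements in $\scrS'$ transfer verbatim since $\DotB^{s}_{p,p}$ embeds in $\scrS'$ in the same range as $\DotH^{s,p}$.

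For the representation and uniqueness results (Besov analogues of Theorem~\ref{thm:rep} and the uniqueness parts of Theorems~\ref{thm:wp_hc}, \ref{thm:wp_lions}), I would exploit the sandwich embedding. Given a weak solution $u$ with $\nabla u\in Z^p_{\beta+1/2}$, \eqref{eq:TZTemlbedding} gives $\nabla u\in T^{p_1}_{\beta_1+1/2}$ for some $p_1>p$ with $2\beta_1-n/p_1=2\beta-n/p$ and $(p_1,\beta_1)$ tent-space admissible. Theorem~\ref{thm:rep} (or Theorem~\ref{thm:unique}) then produces the trace $u_0\in\scrS'$ and either forces $u$ to be constant (or zero, for uniqueness) or yields the decomposition $u=\cE_L(g)+c$ with $g\in\DotH^{2\beta_1+1,p_1}$. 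To upgrade $g$ into $\DotB^{2\beta+1}_{p,p}$, one applies the Besov estimate established above to $\cE_L(g)=u-c$ and uses uniqueness of the trace in $\scrS'$ to identify the two realizations. The analogue of Theorem~\ref{thm:ic-hc-lions} is then a formal combination, as in the original argument, noting that \cite[Theorem 1.2]{Auscher-Hou2023_SIOTent} likewise interpolates to $Z$-spaces in the same range.

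The main technical obstacle is verifying transversality of the scaling lines $2\beta-n/p=\mathrm{const}$ to the boundary curve $p=\phcl{L}$, which guarantees existence of two admissible endpoints; this is elementary from the piecewise-linear (in $1/p$) definition \eqref{e:phc-p-(L)>1}--\eqref{e:phc-p-(L)<1}, but must be checked in each region of Figure~\ref{fig:exponents}. The endpoint $s=1$, $p=\infty$ lies outside the scope of real interpolation (neither $T^\infty$ nor $\DotB^1_{\infty,\infty}$ is reached as a genuine interpolation space in the sense needed) and is handled separately: Proposition~\ref{prop:heat-besov-1-infty} treats the heat equation via the Kenig--Pipher space $X^\infty=Z^\infty_{1/2}$ of \cite{Auscher-Monniaux-Portal2019Lp}, and the general $L$-version follows from that theory combined with the perturbation formulae of Corollary~\ref{cor:formula_L2}.
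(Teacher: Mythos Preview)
Your strategy—real interpolation \eqref{eq:TZinterpolation} for all the boundedness and existence statements, and the embedding \eqref{eq:TZTemlbedding} to reduce uniqueness and representation to the tent-space theorems—is exactly what the paper does when $p<\infty$, and your outline of how to upgrade the trace to the correct Besov space is in the right spirit.

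The genuine gap is the endpoint $p=\infty$, and not only at the single point $\beta=0$ that you flag. The second embedding in \eqref{eq:TZTemlbedding} requires $p_1<p_2\le\infty$, so there is \emph{no} tent space into which $Z^\infty_{\beta+1/2}$ embeds; your reduction of uniqueness (Theorem~\ref{thm:unique}) and of trace existence (Proposition~\ref{prop:rep_heat}/Theorem~\ref{thm:rep}) to the already-proved tent-space versions therefore fails for every $\beta>-1$ when $p=\infty$. The paper patches this by redoing small parts of the tent-space arguments directly in the $Z^\infty$ setting. For trace existence it adapts Proposition~\ref{prop:rep_heat} verbatim (the size and uniform-control hypotheses of \cite{Auscher-Hou2023_RepHeat} are checked from the $Z^\infty$ norm exactly as in the proof of Proposition~\ref{prop:heat-besov-1-infty}\ref{item:heat_besov_1-infty-rep}). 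For uniqueness it reruns Case~2(a) of Theorem~\ref{thm:unique}: once \eqref{e:unique_rep} is available one needs $\cR^{-\Delta}_{1/2}(\tilF)(t)\to 0$ in $E^{-1,\infty}_\delta$ for $\tilF\in Z^\infty_{\beta+1/2}$, and this is obtained by interpolating (in the $\beta$ variable, at fixed $p_0=p_1=\infty$) the estimate $\|\cL_1^{-\Delta}(\tilF)(t)\|_{E^\infty_\delta}\lesssim t^{\beta+1}\|\tilF\|_{T^\infty_{\beta+1/2}}$ from \cite[Corollary~5.6]{Auscher-Hou2023_SIOTent}.
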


\begin{proof} 
	We just provide some key ingredients of the proof. 
	For convenience, we still use the same label of the theorems for their variants in Besov spaces. Interpolation \eqref{eq:TZinterpolation} allows one to prove Theorem \ref{thm:heat_Hsp} \ref{item:heat_tent_est} and the needed boundedness for existence in Theorems \ref{thm:wp_hc}, \ref{thm:wp_lions}, and \ref{thm:ic-hc-lions}. For $\beta=-1/2$ (or $s=0$) and $p=\infty$, one uses Proposition \ref{prop:heat-besov-1-infty}. To prove uniqueness, for $p \ne \infty$, it follows from embedding into tent spaces in \eqref{eq:TZTemlbedding}. For $p=\infty$, following the proof of Theorem \ref{thm:unique}, we may suppose $u=\cR^{-\Delta}_{1/2}((A-\bI)\nabla u)$ (cf. \eqref{e:unique_rep}), so it suffices to show that for any $\tilF\in Z^\infty_{\beta+1/2}$, $\cR^{-\Delta}_{1/2}(\tilF)(t)$ tends to 0 as $t \to 0$ in $E^{-1,\infty}_\delta$ for any $\delta>0$. Using Theorem \ref{thm:lions_ext} \ref{item:lions_formula} and \cite[Corollary 5.6]{Auscher-Hou2023_SIOTent}, we have
	\[ \|\cR^{-\Delta}_{1/2}(\tilF)(t)\|_{E^{-1,\infty}_\delta} \lesssim \|\cL_1^{-\Delta}(\tilF)(t)\|_{E^\infty_\delta} \lesssim t^{\beta+1} \|\tilF\|_{T^\infty_{\beta+1/2}}. \]
	Then interpolation \eqref{eq:TZinterpolation} yields that $\|\cR^{-\Delta}_{1/2}(\tilF)(t)\|_{E^{-1,\infty}_\delta} \lesssim t^{\beta+1} \|\tilF\|_{Z^\infty_{\beta+1/2}}$ holds for $\beta>-1$, which implies the desired zero limit when $t \to 0$.
	
	Next, we prove Theorem \ref{thm:heat_Hsp} \ref{item:heat_rep}, from which Theorem \ref{thm:rep} also follows, using the same argument in Section \ref{ssec:rep}. Existence of the trace again follows from a verbatim adaptation of the proof of Proposition \ref{prop:rep_heat}, as is shown in Proposition \ref{prop:heat-besov-1-infty} \ref{item:heat_besov_1-infty-rep}. Then we show the properties of the trace. For \ref{item:heat_rep_s>1}, when $s \ge 0$, $\frac{n}{n+s} \le p \le \infty$ but $(s,p) \ne (0,\infty)$, one can imitate the proof of Corollary \ref{cor:RW_grad} \ref{item:rw_f<HeatExt(f)}, using interpolation and direct computation, to get that for $0<a<1$ and $\phi \in \Cc(\bR^n)$,
	\[ \fint_a^{2a} | \langle \nabla e^{t\Delta} u_0,\phi \rangle| \, dt \lesssim_\phi \|\I_{(a,2a)} \nabla \cE_{-\Delta}(u_0)\|_{Z^p_{s/2}}, \]
	which tends to 0 as $a \to 0$, so $\nabla u_0=0$ and $u_0$ is a constant. For $(s,p)=(0,\infty)$, this is Proposition \ref{prop:heat-besov-1-infty} \ref{item:heat_besov_1-infty-rep}. This hence proves \ref{item:rw_s>0_f=0}. 
	
	For \ref{item:heat_rep_s<1}, when $s<1$ and $\frac{n}{n+s+1}<p \le \infty$, we apply interpolation for the map $u \mapsto u_0$ to obtain $u_0 \in \DotB^s_{p,p}$ with $\|u_0\|_{\DotB^s_{p,p}} \lesssim \|u\|_{Z^p_{s/2}}$. This completes the proof.
\end{proof}
%%%%% Endpoint case: beta=-1
\section{An endpoint case \texorpdfstring{$\beta=-1$}{beta=-1}}
\label{sec:beta=-1}

In this section, we show existence of global weak solutions to the homogeneous Cauchy problem for $\beta=-1$.

\begin{prop}
    \label{prop:hc_beta=-1}
    Let $p_-(-1,L)=q_+(L^\ast)' < p \le \infty$. For any $v_0 \in \DotH^{-1,p}$, there is a global weak solution $v \in C([0,\infty);\scrS')$ to \eqref{e:hc} with initial data $v_0$ so that
    \begin{equation}
    	\label{e:hc_beta=-1_est}
    	\|\nabla v\|_{T^p_{-1/2}} \lesssim \|v\|_{T^p_0} \lesssim \|v_0\|_{\DotH^{-1,p}}.
    \end{equation}
    Moreover, if $p_-(-1,L) < p < p_+(-1,L)$, then $v \in C([0,\infty);\DotH^{-1,p})$.
\end{prop}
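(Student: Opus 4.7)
The plan is to follow the template of Theorem~\ref{thm:hc_ext_EL} and define $v$ via the formula
\[
v := \cE_{-\Delta}(v_0) + \cT_{1/2}(v_0), \qquad \cT_{1/2}(v_0) := -\cR^L_{1/2}\bigl((A - \bI)\nabla \cE_{-\Delta}(v_0)\bigr).
\]
Since $v_0 \in \DotH^{-1, p} \subset \scrS'$, the heat extension $\cE_{-\Delta}(v_0)$ is a smooth distributional solution to the heat equation. Theorem~\ref{thm:RW_sol} applied with $s = -1$ gives the equivalence $\|\cE_{-\Delta}(v_0)\|_{T^p_0} \eqsim \|v_0\|_{\DotH^{-1, p}}$, and Corollary~\ref{cor:RW_grad} with $s = -1$ yields $\|\nabla \cE_{-\Delta}(v_0)\|_{T^p_{-1/2}} \lesssim \|v_0\|_{\DotH^{-1, p}}$. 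In particular, $(A - \bI)\nabla \cE_{-\Delta}(v_0) \in T^p_{-1/2}$, so the formula makes sense provided $\cR^L_{1/2}$ can be extended to $T^p_{-1/2}$.

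The central step is to extend $\cR^L_{1/2}$ to a bounded operator $T^p_{-1/2} \to T^p_0$ for $p > q_+(L^*)' = p_-(-1, L)$. Revisit Proposition~\ref{prop:lions_RL1/2-0_sio} and Lemma~\ref{lemma:RL-tent_ext_sio}: the kernel computation placing the kernel of $\cR^L_{1/2}$ in the singular-kernel class $\sk^{1/2}_{2, q, \infty}$ for $q_+(L^*)' < q < p_+(L)$ is independent of $\beta$ and remains valid at the endpoint. What breaks down is the $L^2_{\beta+1/2} \to L^2_{\beta+1}$ step, which required $\beta > -1$. To bypass this, adapt the tent-space boundedness results of \cite[Proposition 3.3 \& Corollary 3.5]{Auscher-Hou2023_SIOTent} directly to $\beta = -1$; alternatively, argue by duality: identifying $(T^p_0)^* \simeq T^{p'}_0$ and $(T^p_{-1/2})^* \simeq T^{p'}_{1/2}$, the adjoint of $\cR^L_{1/2}$ is a backward operator involving $\nabla e^{-(t-s)L^*}$ integrated against elements of $T^{p'}_0$, whose boundedness into $T^{p'}_{1/2}$ for $p' < q_+(L^*)$ can be handled by analogous SIO estimates for the $L^*$-semigroup (together with a suitable limiting argument from $\beta > -1$). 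Either route delivers the desired endpoint bound.

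With this extension, $v \in T^p_0$ and $\|v\|_{T^p_0} \lesssim \|v_0\|_{\DotH^{-1, p}}$ follow immediately. Verifying that $v$ is a global weak solution to \eqref{e:hc} proceeds as in Theorem~\ref{thm:hc_ext_EL} \ref{item:hc_EL_sol}: $\cE_{-\Delta}(v_0)$ solves the heat equation, and $\cT_{1/2}(v_0)$ solves the Lions' equation with source $-\Div((A - \bI)\nabla \cE_{-\Delta}(v_0))$ and zero initial data, so their sum solves $\partial_t v - \Div(A\nabla v) = 0$. The inequality $\|\nabla v\|_{T^p_{-1/2}} \lesssim \|v\|_{T^p_0}$ is then a direct application of Caccioppoli's inequality in the form of Corollary~\ref{cor:energyinequalitytent space} at $\beta = -1$. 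Continuity in $\scrS'$ follows from $\cE_{-\Delta}(v_0) \in C([0, \infty); \scrS')$ together with $\cT_{1/2}(v_0)(t) \to 0$ in $\scrS'$ as $t \to 0$, by adapting the argument of Theorem~\ref{thm:lions_ext} \ref{item:lions_cont} to the endpoint. Finally, for the strengthened assertion when $p_-(-1, L) < p < p_+(-1, L)$: this interval is precisely the identification range $\scrI_L$ at $s = -1$, so Proposition~\ref{prop:Ladp-id} \ref{item:Ladp-id} gives $\bH^{-1, p}_L = \DotH^{-1, p}$, and Lemma~\ref{lemma:EL_ext_semigp} then upgrades $\cE_L$ to a $C_0$-semigroup on $\DotH^{-1, p}$, yielding $v \in C([0, \infty); \DotH^{-1, p})$.

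The main obstacle is the endpoint extension of $\cR^L_{1/2}$ at $\beta = -1$, since the tent-space boundedness theorems used previously are formulated under the strict inequality $\beta > -1$. All other components of the argument are direct adaptations of results already established in the paper; the critical weight $t^{1/2}$ in $T^p_{-1/2}$ is precisely at the boundary of applicability of the existing SIO machinery, and delicate care is required either in directly revisiting the kernel estimates or in the duality/limiting argument.
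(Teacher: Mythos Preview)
Your proposal has a genuine gap at precisely the point you flag as the ``main obstacle'': the extension of $\cR^L_{1/2}$ from $T^p_{-1/2}$ to $T^p_0$ at the endpoint $\beta = -1$ does not go through. The paper states this explicitly in the Remark immediately following Proposition~\ref{prop:hc_beta=-1}: ``For $\beta=-1$, neither $\cR^L_{1/2}$ nor $\cR^{-\Delta}_{1/2}$ is defined on $T^p_{-1/2}$, so Lemma~\ref{lemma:cT1/2} fails.'' The $L^2_{\beta+1/2} \to L^2_{\beta+1}$ boundedness in Proposition~\ref{prop:lions_RL1/2-0_sio}\ref{item:sio-RL1/2} comes from \cite[Lemma~2.6]{Auscher-Hou2023_SIOTent}, which requires the strict inequality $\beta+1/2 > -1/2$; this is the $p=2$ anchor for the whole tent-space extrapolation, and it breaks at the endpoint. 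Your two proposed workarounds (revisiting the SIO estimates directly, or duality/limiting from $\beta>-1$) are stated without argument, and neither is straightforward: the duality route would require controlling the backward operator $\phi \mapsto \int_s^\infty \nabla e^{-(t-s)L^\ast}\phi(t)\,dt$ from $T^{p'}_0$ to $T^{p'}_{1/2}$, but the argument in Lemma~\ref{lemma:RL-tent_ext_pde} for this map goes through the intermediate estimate $\|v\|_{T^{p'}_{-\beta}} \lesssim \|\phi\|_{T^{p'}_{-\beta-1}}$, which again relies on the SIO machinery with the same strict inequality.

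The paper takes a completely different route that avoids $\cR^L_{1/2}$ entirely. Rather than perturbing from the heat extension, it writes $v_0 = \Div V_0$ with $V_0 \in \DotH^{0,p}$ (using boundedness of Riesz transforms) and defines $v$ directly via a bounded extension $\cG_t$ of the family $e^{-tL}\Div$ from $\DotH^{0,p}$ to $W^{1,2}_{\loc}$, constructed by duality against $\nabla e^{-tL^\ast}\phi$ (Lemma~\ref{lemma:beta=-1_ext_e^-tLdiv}). The tent-space bound $\|\cG(V_0)\|_{T^p_0} \lesssim \|V_0\|_{\DotH^{0,p}}$ is then obtained from $L$-adapted Hardy space theory (via the admissible function $\psi(z)=z^{1/2}e^{-z}$ and $L^p$-boundedness of $L^{-1/2}\Div$) for $p\le 2$, a Carleson measure/$\bmo$ argument for $p=\infty$, and interpolation in between. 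Your invocations of Corollary~\ref{cor:energyinequalitytent space} for the gradient bound and Lemma~\ref{lemma:EL_ext_semigp} for the strengthened continuity in the identification range are correct and match the paper, but the core construction of $v$ must be redone.
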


\begin{remark}
    For $\beta=-1$, neither $\cR^L_{1/2}$ nor $\cR^{-\Delta}_{1/2}$ is defined on $T^p_{-1/2}$, so Lemma \ref{lemma:cT1/2} fails, and we do not have methods to prove uniqueness or representation for general parabolic equations. {However, it is possible for the heat equation; for example,}  a representation result is presented in \cite[Theorem 3.1]{Auscher-Hou2023_RepHeat} for $v \in T^\infty_0$ with trace $v_0 \in \DotH^{-1,\infty} \simeq \bmo^{-1}$. Similarly, we do not know the converse inequality in \eqref{e:hc_beta=-1_est}.
\end{remark}

Our main strategy here is to define the operator $\cE_L$ on $\DotH^{-1,p}$ by an extension of $(e^{-tL} \Div)_{t>0}$ acting on $\DotH^{0,p}$, since any $\DotH^{-1,p}$-distributions can be written as the divergence of $\DotH^{0,p}$-functions. This fact follows from boundedness of Riesz transforms on $L^p$ when $1<p<\infty$ and on $\bmo$ when $p=\infty$. Here and in the sequel, we omit to specify that the operator applies to $\bC^n$-valued functions.

\begin{lemma}
	\label{lemma:beta=-1_ext_e^-tLdiv}
    Let $q_+(L^\ast)' < p \le \infty$ and $t>0$.
    \begin{enumerate}[label=\normalfont(\roman*)]
        \item \label{item:beta=-1_Dt_bdd}
        The operator $e^{-tL} \Div: W^{1,2} \to W^{1,2}$ extends to a bounded operator from $\DotH^{0,p}$ to $W^{1,2}_{\loc}$, denoted by $\cG_t$.

        \item \label{item:beta=-1_Dt_cv}
        For any $f \in \DotH^{0,p}$, $\cG_t(f)$ converges to $\Div f$ in $\scrS'$ as $t \to 0$.
    \end{enumerate}  
\end{lemma}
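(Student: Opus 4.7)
The plan is to construct $\cG_t$ via duality and then verify both claims. Starting from $f \in W^{1,2}$ and $g \in \Cc(\bR^n)$, the adjoint relation $\langle e^{-tL}\Div f, g\rangle = -\langle f, \nabla e^{-tL^*} g\rangle$ suggests, for general $f \in \DotH^{0,p}$, defining the distribution $\cG_t(f)$ by
\[ \langle \cG_t(f), g\rangle := -\langle f, \nabla e^{-tL^*} g\rangle_{\DotH^{0,p}, \DotH^{0,p'}}, \quad g \in \Cc(\bR^n). \]
The core estimate required is: for any ball $B \subset \bR^n$ and $g \in \Cc(B)$,
\[ \|\nabla e^{-tL^*} g\|_{\DotH^{0,p'}} \lesssim_{B,t} \|g\|_2. \]
The hypothesis $p > q_+(L^*)'$ translates to $p' < q_+(L^*)$, which (combined with $p' > p_-(L^*)$ when $p < \infty$) places $p'$ in the range of validity of the $L^2$-$L^{p'}$ (or $L^2$-$H^{p'}$ when $p' \le 1$) off-diagonal estimates for $(t^{1/2}\nabla e^{-tL^*})$. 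Decomposing $\bR^n$ into dyadic annuli around $B$ and summing contributions weighted by the Gaussian factor $e^{-c\dist(\cdot,B)^2/t}$ yields the estimate, and hence $\|\cG_t(f)\|_{L^2(B)} \lesssim_{B,t} \|f\|_{\DotH^{0,p}}$.

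To upgrade from $L^2_{\loc}$ to $W^{1,2}_{\loc}$, one derives the analogous $L^2_{\loc}$-bound uniformly over a small time interval around $t$, and observes that $u(s) := \cG_s(f)$ satisfies $\partial_s u - \Div(A\nabla u) = 0$ on this strip. This equation holds first for $f$ in a dense (resp. weak*-dense) subspace of $\DotH^{0,p}$, where $u = e^{-sL}\Div f$, and is inherited by the extension. Caccioppoli's inequality (Lemma \ref{lemma:lions_Caccioppoli}) applied on $(t/2, 3t/2) \times 2B$ then controls $\|\nabla u(t)\|_{L^2(B)}$ by the $L^2$-integral of $u$ over the enlarged cylinder.

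For (ii), given $\phi \in \scrS$, the defining relation yields
\[ \langle \cG_t(f) - \Div f, \phi\rangle = -\langle f, \nabla(e^{-tL^*} - I)\phi\rangle. \]
Splitting $\phi$ into a compactly-supported principal part and a rapidly decaying tail (the latter handled uniformly in $t$ via the core estimate), the convergence to zero reduces to showing $\nabla(e^{-tL^*} - I)\phi \to 0$ in $\DotH^{0,p'}$ for the main part, which follows from strong continuity of $(e^{-tL^*})$ at $t = 0$ on $L^{p'}$ together with semigroup analyticity controlling the gradient.

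The main obstacle is the case $p = \infty$: here $\DotH^{0,p} = \bmo$ has predual $H^1$, which demands genuine cancellation rather than mere $L^1$ control. Establishing $\nabla e^{-tL^*} g \in H^1$ with the appropriate bound, together with the corresponding convergence for (ii), will rely on a molecular decomposition exploiting the divergence-type cancellation of $\nabla$; in addition, weak*-density (in place of norm density) must be tracked carefully throughout the extension argument.
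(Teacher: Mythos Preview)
Your duality construction for $\cG_t$ in part (i) matches the paper's approach, and you correctly identify the molecular decomposition (mean-zero of $\nabla e^{-tL^*}g$) as the key for the $H^1$ bound when $p=\infty$. However, two points deserve comment.

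For the $W^{1,2}_{\loc}$ upgrade, the paper takes a more direct route: it simply repeats the duality argument with test functions of the form $\Div\phi$ for $\phi\in L^2_\ssfc$, using that $(t\nabla e^{-tL^*}\Div)$ also satisfies the required $L^2$--$L^{p'}$ off-diagonal estimates via the splitting $t\nabla e^{-tL^*}\Div=(t^{1/2}\nabla e^{-\frac t2 L^*})(t^{1/2}e^{-\frac t2 L^*}\Div)$. Your Caccioppoli route is viable in principle, but note that Lemma~\ref{lemma:lions_Caccioppoli} yields a \emph{time-integrated} bound $\int_c^b\|\nabla u(s)\|_{L^2(B)}^2\,ds$, not a pointwise-in-$t$ bound on $\|\nabla u(t)\|_{L^2(B)}$; extracting the latter requires an extra step (e.g., applying the first Caccioppoli inequality to $\nabla u$, which is itself a weak solution). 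The paper's approach avoids this detour.

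There is a genuine gap in your argument for (ii). You claim that $\nabla(e^{-tL^*}-I)\phi\to 0$ in $L^{p'}$ ``follows from strong continuity of $(e^{-tL^*})$ at $t=0$ on $L^{p'}$ together with semigroup analyticity controlling the gradient.'' This does not work: strong continuity on $L^{p'}$ gives only $e^{-tL^*}\phi\to\phi$ in $L^{p'}$, and analyticity gives $\|\nabla e^{-tL^*}g\|_{p'}\lesssim t^{-1/2}\|g\|_{p'}$, which blows up as $t\to 0$ and cannot be combined with the former to conclude convergence of the gradient. What is actually needed is strong continuity of $(e^{-tL^*})$ on $\DotH^{1,p'}$, which holds precisely because $2\le p'<q_+(L^*)=p_+(1,L^*)$ places $(1,p')$ in the identification range (Lemma~\ref{lemma:EL_ext_semigp}); this directly gives $\|\nabla e^{-tL^*}\phi-\nabla\phi\|_{p'}\to 0$. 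For $p=\infty$ the paper instead proves a quantitative $H^1$ bound on $\nabla e^{-tL^*}\psi-\nabla\psi$ for compactly supported $\psi$ (via molecular estimates and Poincar\'e), then handles Schwartz $\phi$ by a dyadic partition of unity with dominated convergence---essentially the tail decomposition you allude to, but with a precise $H^1$ estimate replacing any appeal to $L^1$ continuity.
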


\begin{proof}
    The extension in \ref{item:beta=-1_Dt_bdd} is constructed by duality. Let $B$ be a ball in $\bR^n$ and $\phi \in L^2(B)$. When $q_+(L^\ast)'<p \le 2$, $L^2-L^{p'}$ estimates of $(t^{1/2} \nabla e^{-tL^\ast})$ yields
	\[ \|\nabla e^{-tL^\ast} \phi\|_{p'} \lesssim t^{-\frac{1}{2}-\frac{n}{2}[p',2]} \|\phi\|_2. \]
    When $p=\infty$, standard molecular estimates combining $L^2-L^2$ off-diagonal estimates of $(t^{1/2} \nabla e^{-tL^\ast})$ and the fact that $\nabla e^{-tL^\ast} \phi$ has mean value 0 imply      
    \[ \|\nabla e^{-tL^\ast} \phi\|_{H^1} \lesssim (t^{-1/2} |B|^{1/2} + t^{\frac{n}{4}-\frac{1}{2}}) \|\phi\|_2. \]
    By interpolation, we have $\|\nabla e^{-tL^\ast} \phi\|_{\DotH^{0,p'}} \lesssim_{t,B} \|\phi\|_2$ for $q_+(L^\ast)'<p \le \infty$. Thus, for any $t>0$, we define $\cG_t: \DotH^{0,p} \to L^2_{\loc}$ by the pairing
	\[ \langle \cG_t(f),\phi \rangle := \langle f,\nabla e^{-tL^\ast} \phi \rangle, \quad \forall \phi \in L^2_\ssfc(\bR^n). \]
    Meanwhile, note that $(t\nabla e^{-tL^\ast} \Div)$ also has $L^2-L^{p'}$ off-diagonal estimates for $q_+(L^\ast)'<p \le 2$, considering the decomposition
	\begin{equation}
		\label{e:decomp_t_grad_e^-tL_div}
        t\nabla e^{-tL^\ast} \Div = (t^{1/2} \nabla e^{-\frac{t}{2} L^\ast}) (t^{1/2} e^{-\frac{t}{2} L^\ast} \Div).
	\end{equation}
	So repeating the above argument for $(t\nabla e^{-tL^\ast} \Div)$ yields $\nabla \cG_t(f)$ also lies in $L^2_{\loc}$, hence $\cG_t(f) \in W^{1,2}_{\loc}$. This proves \ref{item:beta=-1_Dt_bdd}.

    Next, we proceed with \ref{item:beta=-1_Dt_cv}. Pick $\phi \in \scrS$. It suffices to prove $\nabla e^{-tL^\ast} \phi$ tends to $\nabla \phi$ in $\DotH^{0,p'}$ when $t \to 0$. For $q_+(L^\ast)'<p \le 2$, as $2 \le p' < q_+(L^\ast) = p_+(1,L^\ast)$, it follows by continuity of the semigroup $(e^{-tL^\ast})$ on $H^{1,p'}$, see Lemma \ref{lemma:EL_ext_semigp}. For $p=\infty$, we first assert the following
    \begin{lemma}
        \label{lemma:nabla_etL*_psi-H1}
        Let $\psi \in \Cc(\bR^n)$ and $B$ be a ball in $\bR^n$ with $\supp(\psi) \subset B$. Then
        \[ \|\nabla e^{-tL^\ast}\psi -\nabla \psi\|_{H^1} \lesssim |B|^{1/2} \|\nabla e^{-tL^\ast} \psi - \nabla \psi\|_2 + t^{n/4} \|\nabla \psi\|_2. \]
    \end{lemma}
    The proof is provided right below. Admitting it, let us show that for any $\phi \in \scrS$, $\nabla e^{-tL^\ast} \phi$ tends to $\nabla \phi$ in $H^1$ as $t \to 0$. Indeed, write $B:=B(0,1)$, $C_0:=4B$, and $C_j:=2^{j+2}B \setminus 2^{j-2}B$ for $j \ge 1$. Let $(\chi_j)$ be a smooth partition of unity so that $\supp(\chi_j) \subset C_j$ with $\|\chi_j\|_\infty \le 1$ and $\|\nabla \chi_j\|_\infty \lesssim 1$. Define $\phi_j:=\phi \chi_j$. Note that
    \begin{equation}
        \label{e:beta=-1_nabla-phi_j-L2}
        \begin{aligned}
        \|\nabla \phi_j\|_2 
        &\lesssim \left( \int_{C_j} \langle y \rangle^{-2N} |\langle y \rangle^{N} \nabla \phi(y)|^2 dy \right)^{1/2} + \left( \int_{C_j} \langle y \rangle^{-2N} |\langle y \rangle^{N} \phi(y)|^2 dy \right)^{1/2} \\
        &\lesssim 2^{-j(N-\frac{n}{2})} \cP_{N+1}(\phi),
        \end{aligned}
    \end{equation}
    where $\cP_{N+1}$ is the semi-norm on $\scrS$ defined in \eqref{e:cPN_S(Rn)}. Then applying Lemma \ref{lemma:nabla_etL*_psi-H1} on $\psi=\phi_j$, we get
    \begin{equation}
        \label{e:beta=-1_nabla-e-tL*-phi_cv_H1}
        \begin{aligned}
            \|\nabla e^{-tL^\ast} \phi-\nabla \phi\|_{H^1} 
            &\lesssim \sum_{j \ge 0} \|\nabla e^{-tL^\ast} \phi_j-\nabla \phi_j\|_{H^1} \\
            &\lesssim \sum_{j \ge 0} 2^{jn/2} \|\nabla e^{-tL^\ast} \phi_j-\nabla \phi_j\|_2 + t^{n/4} \|\nabla \phi_j\|_2.
        \end{aligned}
    \end{equation}
    Using \eqref{e:beta=-1_nabla-phi_j-L2}, we have 
    \[ \sum_{j \ge 0} 2^{jn/2} \|\nabla e^{-tL^\ast} \phi_j-\nabla \phi_j\|_2 + t^{n/4} \|\nabla \phi_j\|_2 \lesssim \sum_{j \ge 0} 2^{-j(N-n)} \cP_{N+1}(\phi), \]
    which converges when $N>n$. Therefore, Lebesgue's dominated convergence theorem implies the right-hand side in \eqref{e:beta=-1_nabla-e-tL*-phi_cv_H1} tends to 0 as $t \to 0$, noting that the first term tends to 0 by continuity of $(e^{-tL^\ast})$ on $\DotH^{1,2}$. This completes the proof.
\end{proof}

\begin{proof}[Proof of Lemma \ref{lemma:nabla_etL*_psi-H1}]
    The conservation property of the semigroup yields $\nabla e^{-tL^\ast} \psi = \nabla e^{-tL^\ast} (\psi-\fint_B \psi)$, so the desired inequality again follows from applying the molecular decomposition and Poincar\'e's inequality.
\end{proof}

Let us finish by proving Proposition \ref{prop:hc_beta=-1}.

\begin{proof}[Proof of Proposition \ref{prop:hc_beta=-1}]
Let $q_+(L^\ast)' < p \le \infty$. Define the operator $\cG: \DotH^{0,p} \to L^2_{\loc}((0,\infty);W^{1,2}_{\loc})$ by
\begin{equation}
    \label{e:cG}
	\cG(f)(t):=\cG_t(f).
\end{equation}
We first show that $v:=\cG(f)$ satisfies
\begin{equation}
    \label{e:beta=-1_v_tent-f}
    \|v\|_{T^p_0} \lesssim \|f\|_{\DotH^{0,p}}
\end{equation}
When $q_+(L^\ast)'<p \le 2$, we know from \cite[\S 8.2]{Auscher-Egert2023OpAdp} that $\psi(z):=z^{1/2} e^{-z}$ is an admissible function for $s \le 0$ and $p \le 2$. We hence infer from $L^p$-boundedness of $L^{-1/2}\Div$ (see \textit{e.g.}, \cite[Theorem 4.1]{Auscher2007Memoire}) that 
\begin{equation}
    \label{e:beta=-1_cG-tent}
	\|\cG(f)\|_{T^p_0} = \|(tL)^{1/2} e^{-tL} L^{-1/2} \Div f\|_{T^p_{1/2}} \eqsim \|L^{-1/2} \Div f\|_p \lesssim \|f\|_p.
\end{equation}
Next, when $p=\infty$, we use a classical argument on the relation between Carleson measures and $\bmo$-functions. Let $f$ be in $\DotH^{0,\infty} \simeq \bmo$. Recall that
\[ \|\cG(f)\|_{T^\infty_0} = \sup_B \left( \frac{1}{|B|} \int_0^{r(B)^2} \int_B |\cG_t(f)(y)|^2 dtdy \right)^{1/2}. \]
Let $B$ be a ball in $\bR^n$. Write $C_0:=2B$ and $C_j:=2^{j+1} B \setminus 2^j B$ for any $j \ge 1$. Consider the decomposition
\[ f = \langle f \rangle_{C_0} + \sum_{j \ge 0} (f-\langle f \rangle_{C_0}) \I_{C_j} =: \langle f \rangle_{C_0} + \sum_{j \ge 0} f_j \quad \text{ in } \scrD', \]
where $\langle f \rangle_{C_0}:=\fint_{C_0} f$. As $f_j \in L^2$, by definition, one finds that for any $t>0$, $\cG(f)(t) = \sum_{j \ge 0} \cG(f_j)(t) = \sum_{j \ge 0} e^{-tL} \Div f_j$ in $\scrD'$, so we get
\[ \|\cG(f)\|_{L^2((0,r(B)^2) \times B)} \lesssim \sum_{j \ge 0} \|e^{-tL} \Div f_j\|_{L^2((0,r(B)^2) \times B)} =: I_j. \]
For $I_0$, this follows from \eqref{e:beta=-1_cG-tent} as
\[ I_0 \le \|t^{1/2} e^{-tL} \Div f_0\|_{L^2_{1/2}(\bR^{1+n}_+)} \lesssim \|f_0\|_2 = \|f-\langle f \rangle_{C_0}\|_{L^2(C_0)} \lesssim |B|^{1/2} \|f\|_{\bmo}. \]
For $j \ge 1$, we infer from $L^2-L^2$ off-diagonal estimates of $(t^{1/2} e^{-tL} \Div)$ that
\begin{align*}
	I_j
	&\lesssim \left( \int_0^{r(B)^2} e^{-2c\frac{(2^j r(B))^2}{t}} \frac{dt}{t} \int_{C_j} |f-\langle f \rangle_{C_0}|^2 \right)^{1/2} \\
    &\lesssim \left( \int_0^{r(B)^2} |2^{j+1} B| e^{-2c\frac{(2^j r(B))^2}{t}} \frac{dt}{t} \fint_{2^{j+1} B} |f-\langle f \rangle_{2^{j+1} B}|^2 \right)^{1/2} \\
    &\quad + \left( \int_0^{r(B)^2} |2^{j+1} B| e^{-2c\frac{(2^j r(B))^2}{t}} \frac{dt}{t} \right)^{1/2} |\langle f \rangle_{2^{j+1} B}-\langle f \rangle_{2B}| \\
    &\lesssim 2^{2jn} \log(1+j) e^{-c2^{2j}} |B|^{1/2} \|f\|_{\bmo}.
\end{align*}
Gathering the estimates, we obtain $\|\cG(f)\|_{L^2((0,r(B)^2) \times B)} \lesssim |B|^{1/2} \|f\|_{\bmo}$. Note that the controlling constant is independent of the ball $B$, so by taking supremum over all  balls $B$ in $\bR^n$, we have $\|\cG(f)\|_{T^\infty_0} \lesssim \|f\|_{\bmo}$ as desired.  
The rest for $2\le p<\infty$ hence follows from interpolation. This proves \eqref{e:beta=-1_v_tent-f}. 
  
Meanwhile, observe that when $f \in \scrS_{\infty}$, $v$ is clearly a global weak solution to $\partial_{t} v -\Div A \nabla v=0$ on $\bR^{1+n}_{+}$, so by  Corollary~\ref{cor:energyinequalitytent space}, we also have $\|\nabla v \|_{T^p_{-1/2}} \lesssim  \| v \|_{T^p_{0}}$.

Now, for any $v_0 \in \DotH^{-1,p}$, pick $V_0 \in \DotH^{0,p}$ so that $v_0 = \Div V_0$ with $\|v_0\|_{\DotH^{-1,p}} \eqsim \|V_0\|_{\DotH^{0,p}}$. Using the above bounds for $f=V_0$, we obtain existence of the weak solution and the estimates \eqref{e:hc_beta=-1_est} by a standard density argument.
	
To prove continuity, recall that Lemma \ref{lemma:beta=-1_ext_e^-tLdiv} \ref{item:beta=-1_Dt_cv} says $v(t)$ converges to $v_0$ in $\scrS'$ as $t \to 0$. In fact, a similar argument yields $v \in C((0,\infty);\scrS')$. When $p_-(-1,L) < p < p_+(-1,L)$, the desired stronger continuity follows from Lemma \ref{lemma:EL_ext_semigp}, due to uniqueness of extensions by density. This completes the proof.	
\end{proof}
%%%%% Appendix
\appendix

\section{Proof of Proposition \ref{prop:Ladp-id}}
\label{sec:ap_pf_prop_Ladp-id}

\subsection{Proof of Proposition \ref{prop:Ladp-id} \ref{item:Ladp-id}}
Let us first establish an appropriate atomic decomposition for distributions in $\DotH^{1,p}$ with $\frac{n}{n+2} <p \le \frac{n}{n+1}$. For $\frac{n}{n+1}<p \le 1$, it was done by \cite[Proposition 8.31]{Auscher-Egert2023OpAdp}.

\begin{definition}[$\DotH^{1,p}$-atom]
    Let $\frac{n}{n+2} <p \le \frac{n}{n+1}$. A function $a\in L^2$ is called an \textit{$\DotH^{1,p}$-atom} if
    \begin{enumerate}[label=(\roman*)]
        \item \label{item:H1p-atom-supp}
        there is a ball $B \subset \bR^n$ so that $\supp(a) \subset B$, which is called \textit{associated} to $a$;
        \item \label{item:H1p-atom-mean}
        $a$ is of mean zero, \textit{i.e.}, $\int_{\bR^n} a = 0$;
        \item \label{item:H1p-atom-L2-norm}
        $\|\nabla a\|_2 \le |B|^{[2,p]}$.
    \end{enumerate}
\end{definition}
Compared with the case for $\frac{n}{n+1}<p \le 1$ (see \cite[Definition 8.30]{Auscher-Egert2023OpAdp}), the only extra condition we impose is \ref{item:H1p-atom-mean}.

\begin{lemma}[Atomic decomposition for $\DotH^{1,p}$]
    Let $\frac{n}{n+2} <p \le \frac{n}{n+1}$. For any $f \in \DotH^{1,p} \cap W^{1,2}$, there exist $(\lambda_i) \in \ell^p$ and $\DotH^{1,p}$-atoms $a_i$ so that
    \[ f= \sum_{i \ge 1} \lambda_i a_i, \]
    where the convergence holds in $\DotW^{1,2}$ (that is, with respect to the semi-norm $\|\nabla \cdot\|_{2}$). Moreover, the estimate holds as
    \[ \|f\|_{\DotH^{1,p}} \lesssim \|(\lambda_i)\|_{\ell^p}. \]
\end{lemma}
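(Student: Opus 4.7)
The plan is to adapt the Calderón--Zygmund / Whitney decomposition argument of \cite[Proposition~8.31]{Auscher-Egert2023OpAdp}, which handles the range $\frac{n}{n+1}<p\le 1$, to our slightly smaller range. The only new feature we must build into the construction is the mean-zero condition $\int a_i = 0$, which the higher-$p$ atoms did not require, and it will dictate how the constants subtracted in the construction are chosen.

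First I would use the lifting property of Section~\ref{ssec:HS} to identify $\|f\|_{\DotH^{1,p}}$ with $\|\nabla f\|_{\DotH^{0,p}}=\|\nabla f\|_{H^p}$, and realize the latter as the $L^p$-norm of a grand maximal function $M_\phi(\nabla f)$ for a fixed $\phi\in\scrS$. For each $k\in\bZ$, set $\Omega_k:=\{M_\phi(\nabla f)>2^k\}$, which is open of finite measure, and take a standard Whitney cube decomposition $(Q_{k,i})_i$ of $\Omega_k$ together with a subordinate smooth partition of unity $(\chi_{k,i})_i$ adapted to a controlled dilate. The key departure from the $p>n/(n+1)$ case is to subtract $\chi_{k,i}$-weighted averages of $f$ rather than unweighted averages: set
\[ c_{k,i} := \frac{\int \chi_{k,i}\, f}{\int \chi_{k,i}}, \qquad b_{k,i} := (f-c_{k,i})\chi_{k,i}, \]
so that $\supp(b_{k,i})$ lies in a ball $B_{k,i}$ comparable to $Q_{k,i}$ and $\int b_{k,i}=0$ automatically.

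The atomic bound $\|\nabla b_{k,i}\|_2\lesssim 2^k |B_{k,i}|^{1/2}$ will come from splitting $\nabla b_{k,i}=\chi_{k,i}\nabla f + (f-c_{k,i})\nabla\chi_{k,i}$: the first term is controlled by the averaged maximal-function bound $\fint_{B_{k,i}} |\nabla f|^2 \lesssim 2^{2k}$ inherited from the Whitney structure, and the second by combining Poincaré's inequality (noting that $c_{k,i}$ stays close to the ordinary ball-average $\fint_{B_{k,i}}f$ because $\chi_{k,i}$ has bounded overlap and is uniformly bounded below on a sub-ball of $B_{k,i}$) with the estimate $|\nabla\chi_{k,i}|\lesssim r(B_{k,i})^{-1}$. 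Setting $\lambda_{k,i}:=C\cdot 2^k|B_{k,i}|^{1/p}$ and $a_{k,i}:=\lambda_{k,i}^{-1}b_{k,i}$ yields valid $\DotH^{1,p}$-atoms, and the $\ell^p$-estimate follows from
\[ \sum_{k,i}\lambda_{k,i}^p \lesssim \sum_k 2^{kp}|\Omega_k| \lesssim \|M_\phi(\nabla f)\|_p^p \simeq \|f\|_{\DotH^{1,p}}^p. \]

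Convergence of $f=\sum_{k,i}\lambda_{k,i}a_{k,i}$ in $\DotW^{1,2}$ uses the extra assumption $f\in W^{1,2}$: the partial sums are uniformly controlled in $\DotW^{1,2}$ by an almost-orthogonality argument, and they agree with $f$ outside $\bigcap_k\Omega_k$, a null set once $M_\phi(\nabla f)\in L^p$. The main obstacle I expect is handling the ``bad'' contribution $(f-c_{k,i})\nabla\chi_{k,i}$: one has to verify that the weighted average $c_{k,i}$ is close enough to the unweighted ball-average for Poincaré's inequality to remain effective, which ultimately reduces to a uniform lower bound $\int\chi_{k,i}\gtrsim |B_{k,i}|$ guaranteed by the construction of the Whitney partition of unity.
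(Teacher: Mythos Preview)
Your proposal diverges from the paper in two ways, and one of them is a genuine gap.

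First, a point of fact: the argument of \cite[Proposition~8.31]{Auscher-Egert2023OpAdp} that you say you are adapting is \emph{not} a Calder\'on--Zygmund/Whitney decomposition. Both there and in the present paper the proof goes through the Dirac operator $D=\begin{bmatrix}0&\Div\\-\nabla&0\end{bmatrix}$: one sets $g=D[f,0]^T=-[0,\nabla f]^T$, uses the molecular decomposition of the $D$-adapted Hardy space $\bH^p_D$, and then localizes each molecule to produce a compactly supported $a_i$ with $\nabla a_i$ controlled in $L^2$. The only change for the lower range $\frac{n}{n+2}<p\le\frac{n}{n+1}$ is to use $(\bH^p_D,1,2)$-molecules instead of $(\bH^p_D,1,1)$-molecules (one extra order of cancellation), which is exactly what forces $\int a_i=0$ after localization. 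So the paper's proof is short precisely because it borrows the hard work from the adapted Hardy space machinery.

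Second, and more importantly, your direct CZ approach has a real hole at the step you flag only tangentially. You assert that ``the averaged maximal-function bound $\fint_{B_{k,i}}|\nabla f|^2\lesssim 2^{2k}$ is inherited from the Whitney structure.'' It is not. The Whitney geometry gives you a point $x_0$ near $B_{k,i}$ with $M_\phi(\nabla f)(x_0)\le 2^k$, but the grand maximal function controls $\sup_t|\phi_t*\nabla f(x_0)|$, i.e.\ smooth convolutions, and carries no information about $\|\nabla f\|_{L^2(B_{k,i})}$. In the classical Latter construction for $H^p$ one \emph{can} extract an $L^2$ bound on the bad pieces, but only after subtracting the right polynomial and via a duality trick; transported to your setting this would require subtracting a constant vector $\vec d_{k,i}$ from $\nabla f$, which is incompatible with writing $\nabla b_{k,i}=\chi_{k,i}\nabla f+(f-c_{k,i})\nabla\chi_{k,i}$. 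Switching to level sets of $M(|\nabla f|^2)^{1/2}$ would give the $L^2$ bound but destroy the $\ell^p$ estimate, since that maximal function does not characterize $H^p$ for $p<1$. This tension is exactly why the Dirac-operator route is used: the molecular decomposition of $\bH^p_D$ delivers objects that are simultaneously localized, have the required $L^2$ size on their gradients, and carry the extra vanishing moment, without ever needing a pointwise-to-$L^2$ passage.
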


\begin{proof}
    The proof is analogous to that of \cite[Proposition 8.31]{Auscher-Egert2023OpAdp}. We only give a sketch here. Let $D$ be the \textit{Dirac operator}, which is defined on $L^2(\bR^n;\bC^{1+n})$ by
    \[ D := \begin{bmatrix}
        0 & \Div\\ 
        -\nabla & 0
    \end{bmatrix}. \] 
    The main strategy of their proof is to use  the atomic decomposition of $D$-adapted Hardy space $\bH^p_D$ $(0<p<\infty)$ to the vector $g:=D[f,0]^T=-[0,\nabla f]^T$. The only difference is that we decompose $g$ with  $(\bH^p_D,1,2)$-molecules instead of $(\bH^p_D,1,1)$-molecules, which is allowed by \cite[Theorem 8.17]{Auscher-Egert2023OpAdp}. The same localization arguments apply to obtain an $L^2$-convergent  decomposition  $\nabla f=\sum_{i \ge 1} \lambda_i \nabla a_i$, with $\lambda_{i},a_{i}$ as required.
\end{proof}

For any $p \in (0,\infty]$, denote by $p_\ast$ the lower Sobolev conjugate of $p$, \textit{i.e.}, $1/p_\ast := 1/p+1/n$. For any $p \in (0,n)$, denote by $p^\ast$ the upper Sobolev conjugate of $p$, \textit{i.e.}, $1/p^\ast := 1/p-1/n$.

\begin{proof}[Proof of Proposition \ref{prop:Ladp-id} \ref{item:Ladp-id}]
    We only need to show $\bH^{s,p}_L$ agrees with $\DotH^{s,p} \cap L^2$ with equivalent (quasi-)norms
    \[ \|f\|_{\bH^{s,p}_L} \eqsim \|f\|_{\DotH^{s,p}}, \]
    when $-1\le s\le 1$ and $p_{-}(s,L)<p<p_{+}(s,L)$. First observe that by interpolation of $\bH^{s,p}_L$ (see \cite[Theorem 4.28]{Amenta-Auscher2016EllipticBVP}), it suffices to consider three cases, $s=0$ and $s=\pm 1$. 
    
    For $s=0$, this is \cite[Theorem 9.7]{Auscher-Egert2023OpAdp}, as $p_{\pm}(0,L)=p_{\pm}(L)$.

    For $s=1$, \cite[Theorem 9.7]{Auscher-Egert2023OpAdp} shows that it holds when $(\max\{p_-(L),1\})_\ast < p < q_+(L) =p_{+}(1,L)$, so it remains to consider the case when $\frac{n}{n+1} \le p_-(L) < 1$ and $p_{-}(1,L)=(p_-(L))_\ast < p \le 1_\ast$. The method of proof is adapted from the argument for \cite[\S 9.2, Part 5]{Auscher-Egert2023OpAdp}, and we only need to show that for any $\psi \in \Psi_\infty^\infty$, there is a constant $C>0$ so that for any $\DotH^{1,p}$-atom $a$,
    \begin{equation}
        \label{e:H1p-id_goal_psi(tL)a_bdd}
        \|\psi(tL) a\|_{T^p_1} \le C.
    \end{equation}
    
    Let us verify it.  Let $\psi$ be in $\Psi_\infty^\infty$, $a$ be an $\DotH^{1,p}$-atom, and $B$ be a ball of $\bR^n$ associated to $a$. Note that
    \[ \|\psi(tL) a\|_{T^p_1} \lesssim \|\cA(t^{-1} \psi(tL) a)\|_{L^p(4B)} + \|\cA(t^{-1} \psi(tL) a)\|_{L^p((4B)^c)} =: I_1+I_2, \]
    where $\cA$ is the conical square function defined by
    \[ \cA(f)(x):= \left( \int_0^\infty \fint_{B(x,t^{1/2})} |f(t,y)|^2 dtdy \right)^{1/2}. \]
    The boundedness of $I_1$ is exactly as for \cite[(9.30)]{Auscher-Egert2023OpAdp}. Let us concentrate on $I_2$. To this end, we fix $(t,x) \in \bR^{1+n}_+$ with $x \in (4B)^c$. As $a$ is of mean zero, Poincar\'e's inequality yields
    \begin{equation}
        \label{e:H1p-id_a_L2}
        \|a\|_2 \lesssim r(B) \|\nabla a\|_2 \lesssim |B|^{[2,p^\ast]}.
    \end{equation}
    This estimate, together with the support condition and mean value 0, shows that $a$ belongs to $H^q$ for $\frac{n}{n+1}<q \le 1$ with the estimate
    \begin{equation}
        \label{e:H1p-id_a_Hq}
        \|a\|_{H^q} \lesssim |B|^{[q,p^\ast]}.
    \end{equation}
    Then we fix $p_-(L) < q < p^\ast$. The $H^q-L^2$ boundedness of $(\psi(tL))_{t>0}$ (see \cite[Lemma 4.4]{Auscher-Egert2023OpAdp}) yields
    \begin{equation}
        \label{e:H1p-id_psiL2_Hq-L2bdd}
        \|\psi(tL) a\|_{2} \lesssim t^{\frac{n}{2}[2,q]} \|a\|_{H^q}.
    \end{equation}
    Meanwhile, using  $L^2-L^2$ off-diagonal estimates,
    \footnote{Here we use polynomial-order decay as in \cite[Lemma 4.16]{Auscher-Egert2023OpAdp}.}
    we get that for any $M>0$,
    \begin{equation}
        \label{e:H1p-id_psiL2_L2-L2ode}
        \|\psi(tL) a\|_{L^2(B(x,t^{1/2}))} \lesssim \left ( 1+\frac{\dist(B(x,t^{1/2}),B)^2}{t} \right )^{-M} \|a\|_2.
    \end{equation}
    As $q<p^\ast<2$, \textit{i.e.}, $n/q+1>n/p>n/2$, we pick $\theta \in (0,1)$ so that
    \begin{equation}
        \label{e:H1p-id_theta}
        \rho:=\frac{n}{q}+1-n\theta [q,2] > \frac{n}{p}.
    \end{equation}
    Note that $\theta$ can be chosen arbitrarily close to 0. We interpolate \eqref{e:H1p-id_psiL2_Hq-L2bdd} and \eqref{e:H1p-id_psiL2_L2-L2ode} with respect to $\theta$ to get
    \begin{align*}
        &\|\psi(tL) a\|_{L^2(B(x,t^{1/2}))} \\
        &\quad \lesssim t^{\frac{n}{2}[2,q](1-\theta)} \left ( 1+\frac{\dist(B(x,t^{1/2}),B)^2}{t} \right )^{-M\theta}  \|a\|_{H^q}^{1-\theta} \|a\|_2^\theta.
    \end{align*}
    Taking $M>\frac{\rho}{2\theta}$, we integrate it over $t$ (with right weight and norm) to get
    \[ \cA(t^{-1} \psi(tL) a)(x) \lesssim \dist(x,B)^{-\rho} \|a\|_{H^q}^{1-\theta} \|a\|_2^\theta. \]
    Gathering \eqref{e:H1p-id_a_L2}, \eqref{e:H1p-id_a_Hq}, and definition of $\rho>n/p$ (cf. \eqref{e:H1p-id_theta}), we obtain
    \begin{align*}
        I_2 
        &= \|\cA(t^{-1} \psi(tL) a)\|_{L^p((4B)^c)} \lesssim r(B)^{\frac{n}{p}-\rho} \|a\|_{H^q}^{1-\theta} \|a\|_2^\theta \\
        &\lesssim |B|^{\frac{1}{p}-\frac{1}{n}-\frac{1}{q}+\theta[q,2]} |B|^{(1-\theta)[q,p^\ast]} |B|^{\theta [2,p^\ast]} \lesssim 1.
    \end{align*}
    This proves \eqref{e:H1p-id_goal_psi(tL)a_bdd} and hence concludes the case $s=1$.
    
    For $s=-1$, we use duality. As $1 \le p_-(-1,L)<p<p_+(-1,L) \le \infty$, we have $\max\{p_-(1,L^\ast),1\} < p' < p_+(1,L^\ast)=q_{+}(L^*)$. Then for any $f \in \bH^{-1,p}_L$, we have
    \[ \|f\|_{\DotH^{-1,p}} = \sup_{g \in \DotH^{1,p'}\cap L^2(\bR^n)} \frac{|\langle f,g \rangle|}{\|g\|_{\DotH^{1,p'}}} \eqsim \sup_{ g \in \bH^{1,p'}_{L^\ast} } \frac{ |\langle f,g \rangle| }{ \|g\|_{ \bH^{1,p'}_{L^\ast} } } \eqsim \|f\|_{\bH^{-1,p}_L}. \]
    The equality follows by density of $\DotH^{1,p'} \cap L^2$ in $\DotH^{1,p'}$ and the first equivalence by $\bH^{1,p'}_{L^\ast}=\DotH^{1,p'} \cap L^2$ with $\|g\|_{\bH^{1,p'}_{L^\ast}} \eqsim \|g\|_{\DotH^{1,p'}}$ (from Case $s=1$ for $L^*$). The last equivalence holds by \cite[Proposition 8.9]{Auscher-Egert2023OpAdp}. This completes the proof.
\end{proof}

\subsection{Proof of Proposition \ref{prop:Ladp-id} \ref{item:Ladp-embed}}
Recall that we assume $p_{-}(L)<1$, which implies $p_{+}(L^*)=\infty$. When $1<p<2$ and $-1 \le s\le 0$, by duality, it is equivalent to prove    $ \DotH^{-s,p'}\cap L^2 \subset \bH^{-s,p'}_{L^*}$ with $\|f\|_{\bH^{-s,p'}_{L^*}} \lesssim \|f\|_{\DotH^{-s,p'}}$. 
For $s=0$ and $s=-1$, this is explicitly proved in Part 2 and Part 9 of the proof of \cite[Theorem 9.7]{Auscher-Egert2023OpAdp}. Interpolation concludes the argument.
    
It remains to show that the extension of the identity map  to the closure of $\bH^{s,p}_L$ (for its norm) is injective  in the restricted range described in the statement. More precisely, let $(f_j)$ be a Cauchy sequence in $\bH^{s,p}_L$ that tends to 0 in $\DotH^{s,p}$. Our goal is to show $\|f_j\|_{\bH^{s,p}_L}$ tends to 0.
    
To this end, we use duality. Pick $\phi \in \Cc(\bR^{1+n}_+)$ and define $\Phi:=\int_0^\infty e^{-tL^\ast} \phi(t) \frac{dt}{t}$. Fubini's theorem ensures 
\[ \int_0^\infty \int_{\bR^n} (e^{-tL} f_j)(y) \overline{\phi}(t,y) dy\frac{dt}{t} = \int_{\bR^n} f_j(y) \overline{\Phi(y)} dy.  \]
We claim $\Phi \in \DotH^{-s,p'}$. If so, then using the fact that $(f_j)$ tends to 0 in $\DotH^{s,p}$, we get $(\cE_L(f_j))$ tends to 0 in $\scrD'(\bR^{1+n}_+)$ by arbitrariness of $\phi$. Moreover, as $(f_j)$ is a Cauchy sequence in $\bH^{s,p}_L$, so is $(\cE_L(f_j))$ in $T^p_{(s+1)/2}$, because the exponential function $e^{-z}$ is an admissible function for $\bH^{s,p}_L$ when $s<0$ and $p\le 2$. Hence, it tends to 0 in $T^p_{(s+1)/2}$, which implies $\lim_{j \to \infty} \|f_j\|_{\bH^{s,p}_L} = 0$.

We finish by verifying the claim. Pick $0<a<b<\infty$ so that $\supp(\phi) \subset (a,b) \times \bR^n$. Note that $\phi \in \Cc(\bR^{1+n}_+)$ implies $\Phi \in L^2$, so we apply Theorem \ref{thm:RW_sol} to $\Delta \Phi$ to get
\begin{equation}
    \label{e:DotHsp_L_embed_Phi_norm}
    \|\Phi\|_{\DotH^{-s,p'}} \eqsim \| \tau \Delta e^{\tau \Delta} \Phi\|_{T^{p'}_{(-s+1)/2}}.
\end{equation}
Pick $0<a<b<\infty$ so that $\supp(\phi) \subset (a,b) \times \bR^n$. Using H\"older's inequality and Minkowski's inequality, we have  (the implicit constants may depend on $a,b$)
\begin{align*}
    &\| \tau \Delta e^{\tau \Delta} \Phi\|_{T^{p'}_{(-s+1)/2}} \\
    &\ \lesssim \left ( \int_{\bR^n} \left ( \int_0^\infty \fint_{B(x,\tau^{1/2})} \int_a^b |\tau^{\frac{s-1}{2}} \tau \Delta e^{\tau \Delta}e^{-tL^\ast} \phi(t)|^2 \, dtdyd\tau \right )^{p'/2} dx \right )^{1/p'} \\
    &\ \lesssim \left ( \int_a^b \int_0^\infty \left (  \int_{\bR^n} \left( \fint_{B(x,\tau^{1/2})} |\tau^{\frac{s-1}{2}} \tau \Delta e^{\tau \Delta} e^{-tL^\ast}\phi(t)|^2 \, dy \right)^{p'/2} dx \right )^{2/p'} \, d\tau dt\right )^{1/2} \\
    &\ \lesssim \left ( \int_a^b \int_0^\infty \|\tau^{\frac{s-1}{2}} \tau \Delta e^{\tau \Delta}e^{-tL^\ast} \phi(t)\|_{p'}^2 \, d\tau dt\right )^{1/2}
\end{align*}
    
When $\tau$ is large, uniform $L^{p'}$-boundedness of $(\tau \Delta e^{\tau \Delta})$ and $(e^{-tL^\ast})$ implies
\begin{equation*}
    \label{e:DotBs_ppL_embed_Phi_kernel_Lp'<1}
    \|\tau \Delta e^{\tau \Delta} e^{-tL^\ast} \phi(t)\|_{p'} \lesssim \|\phi(t)\|_{p'}.
\end{equation*}
As $s<0$, $s-1<-1$, so the convergence of the integral when $\tau\ge 1$ is ensured.
    
When $\tau$ is small, we need different methods to gain a positive power of $\tau$. For $2 \le q < q_+(L^\ast)'$, we use $L^{q'}$-boundedness of $(e^{\tau \Delta} \Div)$ and $(t^{{1}/{2}} \nabla e^{-tL^\ast})$ for the decomposition to get
\begin{equation}
    \label{e:DotHsp_L_embed_Phi_kernel_Lq-est}
    \begin{aligned}
        \|\tau \Delta e^{\tau \Delta} e^{-tL^\ast} \phi(t)\|_q 
        &= \tau^{{1}/{2}} t^{-{1}/{2}} \left\| \left( \tau^{{1}/{2}} e^{\tau \Delta} \Div \right) \left( t^{{1}/{2}} \nabla e^{-tL^\ast} \phi(t) \right) \right\|_q \\
        &\lesssim \tau^{1/2} t^{-1/2} \|\phi(t)\|_q.
    \end{aligned}
\end{equation}
Meanwhile, let $\alpha \in (0,1)$ be a parameter to be determined with the constraint
\[ 0<\alpha<n\left( \frac{1}{p_-(L)}-1 \right). \]
Denote by $\Dot{\Lambda}^\alpha$ the homogeneous $\alpha$-H\"older space. One can also use Gaussian decay of the kernel of $(\tau \Delta e^{\tau \Delta})$ and uniform $\Dot{\Lambda}^\alpha$-boundedness of $(e^{-tL^\ast})$ (by duality from $H^p$-boundedness of $(e^{-tL})$) to get
\begin{equation}
    \label{e:DotHsp_L_embed_Phi_kernel_Linfty-est}
    \|\tau \Delta e^{\tau \Delta} e^{-tL^\ast} \phi(t)\|_\infty \lesssim \tau^{\alpha/2} \|e^{-tL^\ast} \phi(t)\|_{\Dot{\Lambda}^\alpha}\lesssim \tau^{\alpha/2} \|\phi(t)\|_{\Dot{\Lambda}^\alpha}.
\end{equation}
Interpolating \eqref{e:DotHsp_L_embed_Phi_kernel_Lq-est} and \eqref{e:DotHsp_L_embed_Phi_kernel_Linfty-est} yields
\begin{equation}
    \label{e:DotBs_ppL_embed_Phi_kernel_Lp'<tau-t}
    \|\tau \Delta e^{\tau \Delta} e^{-tL^\ast} \phi(t)\|_{p'} \lesssim_\phi \tau^{\frac{\alpha}{2}(1-\frac{q}{p'})+\frac{q}{2p'}}.
\end{equation}

Observe that when $s>-n( \frac{1}{p_-(L)}-1 ) - \frac{q_+(L^\ast)}{p'} ( 1-n( \frac{1}{p_-(L)}-1 ) )$, \textit{i.e.}, \eqref{e:DotHsp_L_embed_condition} is satisfied, there exist $\alpha \in (0,1)$ and $q \in [2,q_+(L^\ast))$ so that 
\[ \begin{cases}
0<\alpha<n(\frac{1}{p_-(L)}-1) \\ 
s-1+\alpha(1-\frac{q}{p'})+\frac{q}{p'}>-1
\end{cases}. \]
So the convergence of the integral for small $\tau$ follows. We thus obtain $\Phi \in \DotH^{-s,p'}$ from \eqref{e:DotHsp_L_embed_Phi_norm}. This completes the proof.

\subsection*{Copyright}
A CC-BY 4.0 \url{https://creativecommons.
org/licenses/by/4.0/} \\public copyright license has been applied by the authors to the present document and will be applied to all subsequent versions up to the Author Accepted Manuscript arising from this submission.

\bibliographystyle{alpha}
\bibliography{sample}

\newcommand{\etalchar}[1]{$^{#1}$}
\begin{thebibliography}{HvNVW17}

\bibitem[AA11]{Auscher-Axelsson2011_MR_L2beta}
P.~Auscher and A.~Axelsson.
\newblock Remarks on maximal regularity.
\newblock In {\em Parabolic problems}, volume~80 of {\em Progr. Nonlinear
  Differential Equations Appl.}, pages 45--55. Birkh\"{a}user/Springer Basel
  AG, Basel, 2011.

\bibitem[AA18]{Amenta-Auscher2016EllipticBVP}
A.~Amenta and P.~Auscher.
\newblock {\em Elliptic boundary value problems with fractional regularity
  data}, volume~37 of {\em CRM Monograph Series}.
\newblock American Mathematical Society, Providence, RI, 2018.
\newblock The first order approach.

\bibitem[AE23]{Auscher-Egert2023OpAdp}
P.~Auscher and M.~Egert.
\newblock {\em Boundary value problems and {H}ardy spaces for elliptic systems
  with block structure}, volume 346 of {\em Progress in Mathematics}.
\newblock Birkh\"{a}user/Springer, Cham, 2023.

\bibitem[AF17]{Auscher-Frey2017_NS-KT}
P.~Auscher and D.~Frey.
\newblock On the well-posedness of parabolic equations of {N}avier-{S}tokes
  type with {$BMO^{-1}$} data.
\newblock {\em J. Inst. Math. Jussieu}, 16(5):947--985, 2017.

\bibitem[AH23a]{Auscher-Hou2023_RepHeat}
P.~Auscher and H.~Hou.
\newblock On representation of solutions to the heat equation, 2023.
\newblock \href{https://doi.org/10.48550/arXiv.2310.19330}{arXiv:2310.19330}.

\bibitem[AH23b]{Auscher-Hou2023_SIOTent}
P.~Auscher and H.~Hou.
\newblock On well-posedness and maximal regularity for parabolic {C}auchy
  problems on weighted tent spaces, 2023.
\newblock \href{https://doi.org/10.48550/arXiv.2311.04844}{arXiv:2311.04844}.

\bibitem[AHL{\etalchar{+}}02]{Auscher-Hofmann-Lacey-McIntosh-Tchamitchian2002_Kato}
P.~Auscher, S.~Hofmann, M.~Lacey, A.~McIntosh, and P.~Tchamitchian.
\newblock The solution of the {K}ato square root problem for second order
  elliptic operators on {${\bR}^n$}.
\newblock {\em Ann. of Math. (2)}, 156(2):633--654, 2002.

\bibitem[AM19]{Auscher-Mourgoglou2019_Ep_delta}
P.~Auscher and M.~Mourgoglou.
\newblock Representation and uniqueness for boundary value elliptic problems
  via first order systems.
\newblock {\em Rev. Mat. Iberoam.}, 35(1):241--315, 2019.

\bibitem[Ame18]{Amenta2018_WeightedTent}
A.~Amenta.
\newblock Interpolation and embeddings of weighted tent spaces.
\newblock {\em J. Fourier Anal. Appl.}, 24(1):108--140, 2018.

\bibitem[AMP19]{Auscher-Monniaux-Portal2019Lp}
P.~Auscher, S.~Monniaux, and P.~Portal.
\newblock On existence and uniqueness for non-autonomous parabolic {C}auchy
  problems with rough coefficients.
\newblock {\em Ann. Sc. Norm. Super. Pisa Cl. Sci. (5)}, 19(2):387--471, 2019.

\bibitem[AP23]{Auscher-Portal2023Lions}
P.~Auscher and P.~Portal.
\newblock Stochastic and deterministic parabolic equations with bounded
  measurable coefficients in space and time: well-posedness and maximal
  regularity, 2023.
\newblock \href{https://doi.org/10.48550/arXiv.2310.06460}{arXiv:2310.06460}.

\bibitem[Aus07]{Auscher2007Memoire}
P.~Auscher.
\newblock On necessary and sufficient conditions for {$L^p$}-estimates of
  {R}iesz transforms associated to elliptic operators on {$\bR^n$} and related
  estimates.
\newblock {\em Mem. Amer. Math. Soc.}, 186(871):xviii+75, 2007.

\bibitem[Aus11]{Auscher2011_Change_angle}
P.~Auscher.
\newblock Change of angle in tent spaces.
\newblock {\em C. R. Math. Acad. Sci. Paris}, 349(5-6):297--301, 2011.

\bibitem[AvNP14]{Auscher-vanNeerven-Portal2014_SPDE-tent}
P.~Auscher, J.~van Neerven, and P.~Portal.
\newblock Conical stochastic maximal {$L^p$}-regularity for {$1\leqslant
  p<\infty$}.
\newblock {\em Math. Ann.}, 359(3-4):863--889, 2014.

\bibitem[BCD11]{Bahouri-Chemin-Danchin2011_FA}
H.~Bahouri, J.-Y. Chemin, and R.~Danchin.
\newblock {\em Fourier analysis and nonlinear partial differential equations},
  volume 343 of {\em Grundlehren der mathematischen Wissenschaften [Fundamental
  Principles of Mathematical Sciences]}.
\newblock Springer, Heidelberg, 2011.

\bibitem[BM16]{Barton-Mayboroda2016_Zspaces}
A.~Barton and S.~Mayboroda.
\newblock Layer potentials and boundary-value problems for second order
  elliptic operators with data in {B}esov spaces.
\newblock {\em Mem. Amer. Math. Soc.}, 243(1149):v+110, 2016.

\bibitem[Bou13]{Bourdaud2013_Realization}
G.~Bourdaud.
\newblock Realizations of homogeneous {B}esov and {L}izorkin-{T}riebel spaces.
\newblock {\em Math. Nachr.}, 286(5-6):476--491, 2013.

\bibitem[CMS85]{Coifman-Meyer-Stein1985_TentSpaces}
R.~R. Coifman, Y.~Meyer, and E.~M. Stein.
\newblock Some new function spaces and their applications to harmonic analysis.
\newblock {\em J. Funct. Anal.}, 62(2):304--335, 1985.

\bibitem[dS64]{deSimon1964_MRL2}
L.~de~Simon.
\newblock Un'applicazione della teoria degli integrali singolari allo studio
  delle equazioni differenziali lineari astratte del primo ordine.
\newblock {\em Rend. Sem. Mat. Univ. Padova}, 34:205--223, 1964.

\bibitem[DV23]{Danchin-Vasilyev2023_InhomoNS-tent}
R.~Danchin and I.~Vasilyev.
\newblock Density-dependent incompressible {N}avier--{S}tokes equations in
  critical tent spaces, 2023.
\newblock {\href{https://doi.org/10.48550/arXiv.2305.09027}{arXiv:2305.09027}}.

\bibitem[FS72]{Fefferman-Stein1972Hp}
C.~Fefferman and E.~M. Stein.
\newblock {$H\sp{p}$} spaces of several variables.
\newblock {\em Acta Math.}, 129(3-4):137--193, 1972.

\bibitem[Gra14]{Grafakos2014_ModernFA}
L.~Grafakos.
\newblock {\em Modern {F}ourier analysis}, volume 250 of {\em Graduate Texts in
  Mathematics}.
\newblock Springer, New York, third edition, 2014.

\bibitem[H{\"o}r03]{Hormander2003PDOI}
L.~H{\"o}rmander.
\newblock {\em The analysis of linear partial differential operators. {I}}.
\newblock Classics in Mathematics. Springer-Verlag, Berlin, 2 edition, 2003.
\newblock Distribution theory and Fourier analysis.

\bibitem[HvNVW17]{Hytonen-NVW2017BanachSpaces_II}
T.~Hyt\"{o}nen, J.~van Neerven, M.~Veraar, and L.~Weis.
\newblock {\em Analysis in {B}anach spaces. {V}ol. {II}}, volume~67 of {\em
  Ergebnisse der Mathematik und ihrer Grenzgebiete. 3. Folge / A Series of
  Modern Surveys in Mathematics}.
\newblock Springer, Cham, 2017.
\newblock Probabilistic methods and operator theory.

\bibitem[KM98]{Kalton-Mitrea1998_ComplexInterpolation}
N.~Kalton and M.~Mitrea.
\newblock Stability results on interpolation scales of quasi-{B}anach spaces
  and applications.
\newblock {\em Trans. Amer. Math. Soc.}, 350(10):3903--3922, 1998.

\bibitem[KP93]{Kenig-Pipher1993Xp}
C.~E. Kenig and J.~Pipher.
\newblock The {N}eumann problem for elliptic equations with nonsmooth
  coefficients.
\newblock {\em Invent. Math.}, 113(3):447--509, 1993.

\bibitem[KT01]{Koch-Tataru2001_NSBMO-1}
H.~Koch and D.~Tataru.
\newblock Well-posedness for the {N}avier-{S}tokes equations.
\newblock {\em Adv. Math.}, 157(1):22--35, 2001.

\bibitem[Lio57]{Lions1957_L2}
J.-L. Lions.
\newblock Sur les probl\`emes mixtes pour certains syst\`emes paraboliques dans
  des ouverts non cylindriques.
\newblock {\em Ann. Inst. Fourier (Grenoble)}, 7:143--182, 1957.

\bibitem[Lun95]{Lunardi1995_semigp}
A.~Lunardi.
\newblock {\em Analytic semigroups and optimal regularity in parabolic
  problems}, volume~16 of {\em Progress in Nonlinear Differential Equations and
  their Applications}.
\newblock Birkh\"{a}user Verlag, Basel, 1995.

\bibitem[Mou15]{Moussai2015_Realization_p<1}
M.~Moussai.
\newblock Realizations of homogeneous {B}esov and {T}riebel-{L}izorkin spaces
  and an application to pointwise multipliers.
\newblock {\em Anal. Appl. (Singap.)}, 13(2):149--183, 2015.

\bibitem[Pee76]{Peetre1976_Besov}
J.~Peetre.
\newblock {\em New thoughts on {B}esov spaces}, volume No. 1 of {\em Duke
  University Mathematics Series}.
\newblock Duke University, Mathematics Department, Durham, NC, 1976.

\bibitem[PV19]{Portal-Veraar2019_SPDE-Lp}
P.~Portal and M.~Veraar.
\newblock Stochastic maximal regularity for rough time-dependent problems.
\newblock {\em Stoch. Partial Differ. Equ. Anal. Comput.}, 7(4):541--597, 2019.

\bibitem[Saw18]{Sawano2018_Besov}
Y.~Sawano.
\newblock {\em Theory of {B}esov spaces}, volume~56 of {\em Developments in
  Mathematics}.
\newblock Springer, Singapore, 2018.

\bibitem[Str80]{Strichartz1980_BMOs}
R.~S. Strichartz.
\newblock Bounded mean oscillation and {S}obolev spaces.
\newblock {\em Indiana Univ. Math. J.}, 29(4):539--558, 1980.

\bibitem[Tri83]{Triebel1983Spaces}
H.~Triebel.
\newblock {\em Theory of function spaces}, volume~78 of {\em Monographs in
  Mathematics}.
\newblock Birkh\"{a}user Verlag, Basel, 1983.

\bibitem[Tri20]{Triebel2020_IV}
H.~Triebel.
\newblock {\em Theory of function spaces {IV}}, volume 107 of {\em Monographs
  in Mathematics}.
\newblock Birkh\"{a}user/Springer, Cham, 2020.

\bibitem[Zat20]{Zaton2020wp}
W.~Zato\'{n}.
\newblock Tent space well-posedness for parabolic {C}auchy problems with rough
  coefficients.
\newblock {\em J. Differential Equations}, 269(12):11086--11164, 2020.

\end{thebibliography}

\end{document}